\documentclass[a4paper]{amsart}

\usepackage[english]{babel}

\usepackage[letterpaper,top=2cm,bottom=2cm,left=3cm,right=3cm, marginparwidth=1.75cm]{geometry}

\usepackage{amsmath}
\usepackage{amssymb}
\usepackage{amsthm}
\usepackage{mathtools}
\usepackage{graphicx}
\usepackage{tikz}
\usepackage{quiver}
\usepackage{verbatim}
\usetikzlibrary{cd, intersections, calc, arrows.meta}

\usepackage[dvipsnames]{xcolor}

\usepackage[all]{xy}
\definecolor{darkspringgreen}{rgb}{0.09, 0.45, 0.27}
\definecolor{dartmouthgreen}{rgb}{0.05, 0.5, 0.06}
\usepackage[colorlinks,final,hyperindex]{hyperref}
\usepackage[capitalize]{cleveref}
\hypersetup{citecolor=dartmouthgreen, urlcolor=blue, linkcolor=darkspringgreen}

\newtheoremstyle{results}
{8pt}
{8pt}
{\itshape}
{}
{\bfseries}
{}
{.5em}
{}
\theoremstyle{results}
\newtheorem{thm}{Theorem}[section]
\newtheorem{theorem}{Theorem}[section]
\newtheorem{lemma}[theorem]{Lemma}
\newtheorem{corollary}[theorem]{Corollary}
\newtheorem{prop}[theorem]{Proposition}

\newtheoremstyle{definitions}
{7pt}
{7pt}
{}
{}
{\bfseries}
{}
{.5em}
{}

\theoremstyle{definitions}
\newtheorem{definition}[thm]{Definition}
\newtheorem{defi}[thm]{Definition}

\newtheorem{example}[thm]{Example}

\newtheorem{remark}[thm]{Remark}

\newtheorem*{claim*}{Claim}
\newtheorem*{conjecture*}{Conjecture}

\crefname{thm}{theorem}{theorems}
\Crefname{thm}{Theorem}{Theorems}
\crefname{lemma}{lemma}{lemmas}
\Crefname{lemma}{Lemma}{Lemmas}
\crefname{proposition}{proposition}{propositions}
\Crefname{proposition}{Proposition}{Propositions}
\crefalias{proposition}{proposition}
\crefname{corollary}{corollary}{corollaries}
\Crefname{corollary}{Corollary}{Corollaries}
\crefname{definition}{definition}{definitions}
\Crefname{definition}{Definition}{Definitions}
\crefname{conjecture}{conjecture}{conjectures}
\Crefname{conjecture}{Conjecture}{Conjectures}
\crefname{example}{example}{examples}
\Crefname{example}{Example}{Examples}
\crefname{remark}{remark}{remarks}
\Crefname{remark}{Remark}{Remarks}

\DeclareMathOperator{\supp}{supp}

\newcommand{\C}{\operatorname{\mathcal{C}}}
\DeclareMathOperator{\Id}{Id}
\DeclareMathOperator{\id}{id}
\DeclareMathOperator{\op}{op}
\DeclareMathOperator{\Hom}{Hom}
\DeclareMathOperator{\End}{End}
\DeclareMathOperator{\PSh}{PSh}

\newcommand{\Cen}{\operatorname{\mathcal{Z}}}

\newcommand{\Top}{\operatorname{\mathsf{Top}}}

\newcommand{\Pos}{\operatorname{\mathsf{Pos}}}
\newcommand{\OM}{\operatorname{OM}}
\newcommand{\UM}{\operatorname{UM}}

\newcommand{\Po}{\operatorname{\mathsf{P}}}
\newcommand{\Qo}{\operatorname{\mathsf{Q}}}

\newcommand{\U}{\operatorname{U}}

\renewcommand{\L}{\mathcal{L}}

\newcommand{\rk}{\mathrm{rk}\,}
\newcommand{\si}{\mathsf{s}}

\newcommand{\N}{\mathcal{N}}
\newcommand{\B}{\mathrm{B}}

\newcommand{\Fl}{\operatorname{\mathcal{F}l}}

\newcommand{\Nmat}{\mathrm{N}}

\newcommand{\OS}{\mathrm{OS}}

\newcommand{\mat}{\mathrm{M}}
\newcommand{\om}{\mathcal{M}}
\newcommand{\Mat}{\om}
\newcommand{\umat}{\underline{\om}}

\newcommand{\La}{\mathcal{L}}

\newcommand{\sC}{\underline{C}}

\newcommand{\chain}{\mathcal{F}}
\newcommand{\Cofib}{\mathrm{Cofib}}
\newcommand{\Cyl}{\mathrm{Cyl}}
\newcommand{\Th}{\mathrm{Th}}
\newcommand{\gen}{\omega^{+}}

\newcommand{\Lie}{\mathsf{Lie}}
\newcommand{\Topes}{\mathsf{Topes}}

\newcommand{\Prop}{\mathsf{Prop}}
\newcommand{\WTopes}{\mathsf{WTopes}}
\renewcommand{\O}{\mathcal{O}}
\newcommand{\res}{\operatorname{res}}
\newcommand{\Sal}{\mathsf{Sal}}
\newcommand{\DF}{\mathsf{DF}}

\newcommand{\Del}{\mathsf{Del}}

\newcommand{\Pop}{\operatorname{\mathcal{P}}}

\newcommand{\Gr}{\operatorname{\Gamma}}

\newcommand{\Pro}{\operatorname{\mathbb{P}}}
\newcommand{\Sph}{\operatorname{\mathbb{S}}}
\newcommand{\A}{\mathcal{A}}
\newcommand{\W}{\mathcal{W}}
\renewcommand{\S}{\mathcal{S}}
\newcommand{\pr}{\mathrm{pr}}

\newcommand{\Real}{\mathbb{R}}
\newcommand{\Cmplx}{\operatorname{\mathbb{C}}}

\newcommand{\Field}{\operatorname{\mathbb{F}}}
\newcommand{\Z}{\mathbb{Z}}
\newcommand{\h}{\mathfrak{h}}
\newcommand{\Q}{\mathbb{Q}}

\title{Presheaf structures governed by oriented matroids \\  and
pseudohyperplane arrangements}

\author[B.\,Coron]{Basile Coron}
\address{B. Coron, LAREMA, 49100, Angers, France}\email{basile.coron@univ-angers.fr}

\author[D.\,Lyskov]{Denis Lyskov}
\address{Denis Lyskov: \newline
Université Paris Cité, IMJ-PRG, 75013, Paris,
France
\newline
Higher School of Economics, Faculty of Mathematics, 119048, Moscow, Russia}
\email{ddlyskov@gmail.com}

\date{}

\begin{document}

\begin{abstract}
We study higher complements of pseudohyperplane arrangements and the specialization maps between them, induced by degenerations. We develop a presheaf formalism on oriented matroids and weak maps, which model degenerations. Our main constructions are the posets of weighted and weightwise topes, which provide functorial combinatorial models for higher complements and their specialization morphisms. Their presheaf structure yields a deletion–contraction homotopy pushout, from which we derive a uniform combinatorial proof of the integral cohomology presentation for all $n\geq2$. This presentation is natural with respect to weak maps and identifies cohomology rings with higher Orlik–Solomon algebras extending the known formulas for linear arrangements to non-realizable oriented matroids. Finally, we equip Deligne groupoids with a presheaf structure and identify them functorially with the fundamental groupoids of the complexified complements.
\end{abstract}
\maketitle 
\setcounter{tocdepth}{1}
\tableofcontents

\section{Introduction}

The topology of arrangement complements lies at the crossroads between combinatorics and geometry. Orlik and Solomon~\cite{orlik1980combinatorics} have proved that the rational cohomology ring of a complex hyperplane arrangement complement can be described purely in terms of its underlying matroid. However, this result does not extend to the full homotopy type: Rybnikov~\cite{rybnikov2011fundamental} showed that there exist complex arrangements with the same underlying matroid but non-isomorphic fundamental groups of their complements. It was later shown~\cite{ArtalGuervilleViu2020} that this distinction persists even for complexifications of real arrangements. To retrieve the homotopy type of a complexified arrangement complement, the richer oriented data is essential. In a landmark paper, Salvetti~\cite{Salvetti1987} constructed a finite cell complex, completely determined by the combinatorial description of the faces and chambers of the underlying real arrangement, with the homotopy type of its complexified complement. \\

Oriented matroids extend chamber combinatorics beyond linear arrangements. By the topological representation theorem of Folkman and Lawrence~\cite{FolkmanLawrence1978}, they admit representations by pseudosphere arrangements, even when no linear realization exists. Gel'fand--Rybnikov and Bj\"orner--Ziegler~\cite{GelfandRybnikov1989,BjornerZiegler1992} extended the combinatorial study of complements to this setting; Deshpande~\cite{deshpande2016arrangements} gave a geometric interpretation using pseudohyperplane arrangements and their microbundle complements.

In this paper we study higher complements of pseudohyperplane arrangements and, in particular, the maps between them. For a central pseudohyperplane arrangement $\A=(H_e)_{e\in E}$ in $\Real^r$, its $n$-th complement is
\[
\U_n(\A)=(\Real^r)^n\setminus\bigcup_{e\in E}H_e^{\times n}.
\]
For $n=1$ this is the real complement; for $n=2$ and linear hyperplanes it is the usual complexified complement. For braid arrangements, for instance, these spaces are ordered configuration spaces of points in $\Real^n$. Higher Salvetti posets were introduced by Bj\"orner--Ziegler, Berger and Tamaki~\cite{BjornerZiegler1992,Berger2009HigherComplements,Tamaki2012} to model higher complements of pseudosphere arrangements. \\

An important motivation for this article, is to understand how higher complements vary in continuous families of arrangements. The main observation is that a degeneration of arrangements $\A\rightsquigarrow\A'$, given by a continuous family which is topologically trivial away from the special parameter, induces a specialization morphism
\[
\U_n(\A')\longrightarrow\U_n(\A)
\]
in the homotopy category of spaces. This map is obtained by comparing the special and generic fibres through the total complement of the family (Section~\ref{sec:degenerations}). We seek combinatorial models that describe each individual complements, as well as the specialization maps between them. \\

We organize such models as presheaves $\OM^{\mathrm{op}} \rightarrow \mathrm{Top}$ over the category $\OM$ whose objects are oriented matroids, and morphisms are given by a combinatorial counterpart of degeneration called weak maps. Gel'fand and MacPherson's work on Pontrjagin classes~\cite{GelfandMacPherson1992} initiated the use of oriented matroids in manifold and bundle theory, where weak specializations organise the combinatorics of varying tangent spaces. In this article, we use a generalization of the standard fixed-ground-set weak maps~\cite{anderson2001representing}, allowing reorientations and maps between different ground sets. We also develop a parallel theory for unoriented matroids, and classical weak maps between them. \\

The distinguished weak maps $\triangle_F\colon\om\to\om/F\oplus\om|_F$, associated to any flat $F$, reveal the operadic aspect of the theory. A presheaf $\mathcal O$ turns them into composition maps
\[
\mathcal O(\om/F)\otimes\mathcal O(\om|_F)\longrightarrow\mathcal O(\om),
\]
whose associativity follows from functoriality. On braid matroids these recover ordinary operadic compositions. More generally, these products are the oriented counterparts of the first author's $\mathfrak{LBS}$-operadic compositions, in particular those indexed by minimal building sets~\cite{coron2024matroidsfeynmancategorieskoszul}, and of his $\mathcal P$-operadic compositions indexed by intervals in geometric lattices~\cite{Coron2025OperadicKLS}. On graphic matroids they give the contraction products of the second author's contractads~\cite{Lyskov2024GeneralizationOperads}. The presheaf formalism extends this operadic structure to all weak maps.\\

The usual covector-wise maps between Salvetti posets give functoriality for strong maps between oriented matroids, but do not extend directly to arbitrary weak maps. Hence, higher Salvetti posets are insufficient for our purpose. To address this issue we introduce the posets of weighted topes $\Topes_n$ and weightwise topes $\WTopes_n$ associated to any matroid. Both constructions form $\OM$-presheaves, and for every pseudohyperplane arrangement $\A$ we prove
\[
|\Topes_n(\om(\A))|\simeq
|\WTopes_n(\om(\A))|\simeq\U_n(\A).
\]
We compare these new models with higher Salvetti posets and, when $n=2$, with the existing tope model of Delucchi and Falk~\cite{DelucchiFalk2017}. Proposition~\ref{prop:degeneration-compatibility} identifies the pullback induced by a degeneration with its specialization morphism. Thus these poset morphisms give explicit representatives for specialization morphisms, determined by the induced weak maps.\\

Beyond complement models, Section~\ref{sec:weak} includes other examples of $\OM$-presheaves, such as rainbow-circuit-free colourings~\cite{berczi2020rainbow}, circuit varieties~\cite{ClarkeGraceMohammadiMotwani2023}, holonomy Lie algebras~\cite{suciu2025resonance}, as well as higher Orlik--Solomon algebras.\\

The main application of our formalism is to establish the following deletion--contraction homotopy pushout of posets (Theorem~\ref{thm:deletion-contraction-pushout})

\begin{equation*}
\begin{tikzcd}[column sep=large]
\Topes_n(\om/e)\times\Topes_n(\om|_e)
  \arrow[r]\arrow[d]
  &\Topes_n(\om)\arrow[d]\\
\Topes_n(\om/e)\arrow[r]
  &\Topes_n(\om\setminus e)\arrow[ul, phantom,
        "\overset{h}{\ulcorner}"{description, pos=0.15}],
\end{tikzcd}
\end{equation*}
which relates higher complements of deletion and contraction at an element $e$, in a purely combinatorial setting. The geometric principle behind this homotopy pushout is familiar in arrangement theory: deleting a hyperplane amounts to filling in a neighbourhood of the complement of the restricted arrangement. At the combinatorial level, the presheaf structure is needed to even define the gluing maps of the square. It supplies the top horisontal arrow from the weak map $\om\to\om/e\oplus\om|_e$, and the lower horizontal arrow from $\om\setminus e\to\om/e$. \\

From this square, we derive the classical deletion-contraction sequence in integral cohomology 
\[
0 \to H^{\bullet}(\Topes_n(\om\setminus e))\rightarrow H^{\bullet}(\Topes_n(\om))\to H^{\bullet-(n-1)}(\Topes_n(\om/e))\to 0
\]
This sequence is then used to recover the classical integral cohomology presentation
\[
H^\bullet(\U_n(\A);\Z)\cong\OS_n^\bullet(\om(\A)),
\qquad n\geq2,
\]
for any pseudohyperplane arrangement. Here, $\OS^{\bullet}_n(\om)$ has generators in degree $n-1$. For even $n$ this is a regrading of the ordinary Orlik--Solomon algebra; for odd $n$ it is a regrading of Cordovil's algebra~\cite{Cordovil2002CommutativeAlgebraOrientedMatroids}. Theorem~\ref{thm::pres_H(Topes)} further proves that the identification above is an isomorphism of $\OM$-copresheaves. For $n=2$, the cohomology identification for arbitrary oriented matroids is due to Gel'fand and Rybnikov~\cite{GelfandRybnikov1989}; a proof in the more general setting of complex matroids appears in~\cite[Theorem~4.9]{Ziegler1993ComplexMatroid}. In higher dimensions our theorem extends the formula of de Longueville and Schultz for linear arrangements~\cite{delongueville2001cohomology} to non-realisable oriented matroids. Our new proof works uniformly for all $n\geq2$, deriving the generators, circuit relations and their functoriality from the presheaf structure and deletion--contraction. It uses only categorical and combinatorial arguments, which remain available when there is no linear or complex realization on which to define logarithmic differential forms. \\

As additional applications, we obtain generalizations of several classical results. We give a splitting of the stable homotopy type of higher complements, we show connectivity results and vanishing of higher Hurewicz homomorphisms. Finally, Section~\ref{sec:deligne} equips positive Deligne categories and Deligne groupoids with $\OM$-presheaf structures and proves $\Del\simeq\Pi_1(\Topes_2)$, extending Deligne's combinatorial approach~\cite{deligne1972immeubles}. A categorical circle action, compatible with weak maps, recovers the scalar circle action at the groupoid level in the realizable case. \\

\textbf{AI usage statement.} The project originated before the available AI tools were
strong enough to meaningfully contribute. In the later stages ChatGPT pro was
used for proofreading, discussions and literature searches.

\section{Oriented matroids and Pseudohyperplane arrangements} In this section, we provide the background and notation used throughout the paper. We recall the basics of unoriented matroids and oriented matroids and the representation of oriented matroids by pseudohyperplane/pseudosphere arrangements. We then recall higher arrangement complements, Salvetti posets, Orlik--Solomon and Cordovil algebras. 
\subsection{Matroids and hyperplane arrangements} We recall the basics of unoriented matroids, refering to~\cite{oxley2011matroid} for more details.

A matroid $\mat$ is a pair $(E(\mat),\rk_{\mat})$ consisting of a finite set $E(\mat)$ and a rank function $\rk_{\mat}\colon\Pop(E(\mat))\to \mathbb{N}$ satisfying the following axioms.
\begin{itemize}
    \item[(i)] $\rk_{\mat}(\varnothing)=0$
    \item[(ii)] $\rk_{\mat} A \leqslant \rk_{\mat}A\cup \{e\}\leqslant \rk_{\mat}A+1, \quad \forall S \subseteq E(\mat), e\in E(\mat)$.
    
    \item[(iii)] $\rk_{\mat} (A\cap B)+\rk_{\mat}(A\cup B)\leq \rk_{\mat} A+\rk_{\mat} B, \quad \forall A, B \subseteq E(\mat).$
\end{itemize}
Throughout this paper, we will always assume our matroids to be \textit{loopless}, that is, containing no element of rank zero. A subset $I\subseteq E(\mat)$ is \textit{independent} if $\rk_{\mat} I=|I|$; an independent set of maximal cardinality is called a \textit{basis}. Minimal dependent sets are called \textit{circuits}. Recall that there are axiomatic descriptions of matroids in terms of circuits and bases respectively.

A subset $F\subseteq E(\mat)$ is called a \textit{flat} if for any $x\not\in F$ we have $\rk_{\mat} F\cup\{x\}>\rk_{\mat} F$. We will denote by $\Fl(\mat)$ the poset of flats of $\mat$ ordered by inclusion. A matroid is \textit{simple} if every singleton is a rank $1$ flat, i.e., there are no elements $e\neq e'$ that are \textit{parallel}, meaning that $\rk_{\mat}\{e,e'\}=1$. Recall that there is a one-to-one correspondence
\[
\Fl(-)\colon \text{Simple Matroids}\overset{1:1}{\longrightarrow} \text{ Geometric Lattices}.
\] 
\begin{example}[Examples of matroids] Let us list several classes of examples of matroids. 
\begin{itemize}
    \item[(Uniform)] For integers $n,r$, the uniform matroid $\U_{r,n}$ is the matroid on the ground set $[n]=\{1,2,\cdots,n\}$ with the rank function
    \[
    \rk_{\U_{r,n}} (S)=\min (|S|,r)
    \]
    \item[(Graphic)] For a loopless graph $\Gr$ with vertex set $V_{\Gr}$ and edge set $E_{\Gr}$, the graphic matroid $\mat(\Gr)$ is the matroid on the ground set $E_{\Gr}$ whose circuits are exactly the cycles of $\Gr$.
    \item[(Realizable)] For a field $\Field$ and a central hyperplane arrangement $\A=\{H_1,\cdots,H_n\}\subset \Field^r$, the matroid $\mat(\A)$ has ground set $[n]$ with rank function
    \[
    \rk_{\mat(\A)}(S)=\mathrm{codim}_{\Field^r}(\cap_{H\in S} H)
    \] We provide some examples of realizable and non-realizable matroids in Figure~\ref{fig::fano_matroid} and Figure~\ref{fig::pappus}
\end{itemize}
\end{example}
We recall several important constructions on matroids:
\begin{itemize}
    \item[(Simplification)] For a matroid $\om$, its simplification $\si\mat$ is the unique simple matroid with the same intersection lattice $\Fl(\mat)=\Fl(\si\mat)$. Explicitly, elements of $\si\mat$ are rank-one flats of $\mat$ with the rank function induced from $\mat$.
    \item[(Direct Sum)] For a pair of matroids $\mat,\Nmat$, their \textit{direct sum} $\mat\oplus \Nmat$ is a matroid on the ground set $E(\mat)\cup E(\Nmat)$ and rank function $\rk_{\mat\oplus \Nmat}(S)=\rk_{\mat}(S\cap E(\mat))+\rk_{\Nmat}(S\cap E(\Nmat))$. This construction is determined by the isomorphism of intersection lattices
    \[
    \Fl(\mat\oplus\Nmat)\cong \Fl(\mat)\times\Fl(\Nmat)
    \]
    \item[(Restriction)] For a subset $S\subset E(\mat)$, the restriction matroid, denoted by $\mat|_S$, is the induced matroid on $S$. Sometimes, we will denote by $\mat\setminus S:=\mat|_{E(\mat)\setminus S}$ the deletion matroid
    \item[(Contraction)] For a flat $F$ of $\mat$, the contraction matroid $\mat/F$ is the matroid on the ground set $E(\mat)\setminus F$ with the rank function
    \[
    \rk_{\mat/F}(S)= \rk_{\mat} S\cup F-\rk F.
    \] In the realizable case, a contraction of $\mat(\A)$ is again realizable
    \[
    \mat(\A)/F=\mat(\A^F),
    \] where $\A^{F}=\{H_e\cap H_F\}_{e\in E\setminus F}\subset H_F=\cap_{f\in F} H_f$ is the \textit{restricted arrangement} 
    
    The definition of contraction does not require $F$ to be a flat, but this condition ensures that $\mat/F$ is loopless. 
\end{itemize}   

\begin{figure}[ht]
    \centering
    \begin{tikzpicture}[
    line width=0.9pt,
    point/.style={circle, fill=black, inner sep=2.2pt},
    every label/.style={font=\small},
    label distance=2pt, scale=0.8
]
    \coordinate (A) at ( 90:3);
    \coordinate (B) at (210:3);
    \coordinate (C) at (330:3);

    \coordinate (D) at ($(A)!0.5!(B)$);
    \coordinate (E) at ($(A)!0.5!(C)$);
    \coordinate (F) at ($(B)!0.5!(C)$);
    \coordinate (O) at (0,0);

    \draw (A) -- (B) -- (C) -- cycle;
    \draw (A) -- (F);
    \draw (B) -- (E);
    \draw (C) -- (D);
    \draw (O) circle[radius=1.5];

    \node[point, label= 90:{$1$}] at (A) {};
    \node[point, label=210:{$2$}] at (B) {};
    \node[point, label=330:{$3$}] at (C) {};
    \node[point, label=150:{$4$}] at (D) {};
    \node[point, label= 30:{$5$}] at (E) {};
    \node[point, label=270:{$6$}] at (F) {};
    \node[point, label=  0:{$7$}] at (O) {};
\end{tikzpicture}
    \caption{Fano matroid $\mathsf{F}_7$ on $7$ elements. Lines correspond to rank $2$ flats that contain $3$ elements. This matroid is representable only over fields of characteristic $2$}\label{fig::fano_matroid}
\end{figure}

\begin{figure}[ht]
    \centering
    \begin{tikzpicture}[
    x=1.2cm, y=1.2cm,
    line width=0.8pt,
    line cap=round,
    line join=round,
    every node/.style={font=\small}, scale=0.65
]
    \foreach \shift/\name/\nonpappus in {0/P/0,8/N/1} {
        \begin{scope}[xshift=\shift cm]

            \coordinate (1) at (0,3);
            \coordinate (2) at (2,3);
            \coordinate (3) at (5,3);
            \coordinate (4) at (0,0);
            \coordinate (5) at (2,0);
            \coordinate (6) at (5,0);

            \coordinate (7) at (1,1.5);
            \coordinate (9) at (3.5,1.5);

            \ifnum\nonpappus=0\relax
                \coordinate (8) at (2.5,1.5);
            \else
                \coordinate (8) at (2.15,1.71);
            \fi

            \draw (1) -- (3);  
            \draw (4) -- (6);  
            \draw (1) -- (5);  
            \draw (2) -- (4);  
            \draw (1) -- (6);  
            \draw (2) -- (6);  
            \draw (3) -- (5);  

            \ifnum\nonpappus=0\relax
                \draw (4) -- (3); 
                \draw (7) -- (9); 
            \else
                \draw (4)
                    .. controls (1.2,1.28) and ({43/15},2.28)
                    .. (3);
            \fi

            \foreach \i in {1,...,9}
                \fill (\i) circle[radius=2pt];

            \foreach \i in {1,2,3}
                \node[above=4pt] at (\i) {$\i$};

            \foreach \i in {4,5,6,7,8,9}
                \node[below=4pt] at (\i) {$\i$};

            \node[font=\large] at (2.5,-0.85)
                {$\mathsf{\name}_9$};

        \end{scope}
    }
\end{tikzpicture}
    \caption{Pappus $\mathsf{P}_9$ and Non-Pappus $\mathsf{N}_9$ matroids on $9$ elements. The first one is realizable over any field with at least $4$ elements. The second one isn't realizable over any field}\label{fig::pappus}
\end{figure}

\subsection{Oriented matroids and real hyperplane arrangements}
In this section we review the notion of an oriented matroid and introduce the notation needed later. We refer to~\cite{bjorner1999oriented} for further details.

A \emph{signed set}, or \emph{sign vector}, on a finite set $E$ is a map $X:E\to\{+,-,0\}$. We write
\[
 X^{\pm}=\{e\in E\mid X(e)=\pm\},\qquad
 \underline{X}=\supp X=X^+\cup X^-.
\]
The zero sign vector is denoted by $0$, and $-X$ is obtained by reversing every nonzero sign. Identifying $+$ and $-$ with $1$ and $-1$, we define the pointwise product by $(X\cdot Y)(e)=X(e)Y(e)$. A signed set is \emph{constant} if its restriction to its support is constant. The \emph{composition} of two signed sets $X,Y$ is the signed set $X\circ Y$ defined by 
\[
 (X\circ Y)(e)=
 \begin{cases}
 X(e),&X(e)\ne0,\\
 Y(e),&X(e)=0.
 \end{cases}
\]
We write $X\preceq Y$ if $X(e)\in\{0,Y(e)\}$ for every $e\in E$, or equivalently $X^\pm\subseteq Y^\pm$. This partial order is also denoted by $\leq$. The \emph{separator} of $X$ and $Y$ is
\[
 S(X,Y)=\{e\in\underline{X}\cap\underline{Y}\mid X(e)=-Y(e)\}.
\]
We say that $X$ and $Y$ are \emph{orthogonal}, written $X\perp Y$, if their supports are disjoint or $X\cdot Y$ takes both signs on their common support. As with ordinary matroids, there are several axiom
systems for oriented matroids~\cite[Sec~3]{bjorner1999oriented}, we use only one of them.

\begin{definition}[Circuit axioms]\label{def:oriented-matroid-circuits}
An \emph{oriented matroid} is a pair $\om=(E,\C)$, where $E=E(\om)$ is a finite ground set and $\C\subseteq\{+,-,0\}^{E}$ satisfies:
\begin{itemize}
 \item[(C0)] $0\notin\C$.
 \item[(C1)] If $C\in\C$, then $-C\in\C$.
 \item[(C2)] If $C,D\in\C$ and $\underline{C}\subseteq\underline{D}$, then $C=\pm D$.
 \item[(C3)] If $C,D\in\C$, $C\ne-D$, and $e\in S(C,D)$, then there exists $Z\in\C$ such that
 \[
 Z^+\subseteq(C^+\cup D^+)\setminus\{e\},\qquad
 Z^-\subseteq(C^-\cup D^-)\setminus\{e\}.
 \]
\end{itemize}
The elements of $\C$ are the \emph{signed circuits} of $\om$.
\end{definition}

Their supports are the circuits of the \emph{underlying matroid} $\umat$. As in Section~2.1, we assume our oriented matroids to be \emph{loopless}, meaning that every circuit has support of cardinality at least two. Distinct elements $e,f$ are \emph{parallel}, respectively \emph{antiparallel}, if some circuit $C$ has support $\{e,f\}$ and $C(e)=-C(f)$, respectively $C(e)=C(f)$.

A \emph{covector} of $\om$ is a signed set orthogonal to every circuit. We denote the set of covectors, ordered by $\preceq$, by
\[
 \La(\om)=\{X\in\{+,-,0\}^{E}\mid X\perp C\text{ for all }C\in\C\}.
\]
$\La(\om)$ contains $0$ and is closed under sign reversal and composition. A \emph{tope} is a maximal covector; since $\om$ is loopless, these are precisely the covectors with support $E$. Their set is denoted by $\Topes(\om)$.

As for unoriented matroids, hyperplane arrangements provide interesting examples of oriented matroids. 
\begin{example}[realizable oriented matroids]
    Let $\A = \{H_e, e \in E \} \subseteq \Real^r$ be a real hyperplane arrangement. For each $e \in E$, we choose a labeling $H_e^{+},H_{e}^{-}$ of the two components of $\Real^r \setminus H_e,$ and we set $H_e^0 = H_e$. For all $x \in \Real^r$, the \emph{sign vector} $\sigma(x)$ of $x$ is defined by 
    \[
        \sigma(x)(e) = \epsilon \Leftrightarrow x \in H_e^{\epsilon}, \quad \epsilon \in \{+, -, 0\}.
    \]
    The set $\L(\A) = \{\sigma(x) \mid x \in \Real^n \}$ is the set of covectors of an oriented matroid, denoted $\om(\A)$. In this situation we say that $\A$ is a \emph{representation} (or also \emph{realization}) of the oriented matroid $\om(\A).$ As was the case for unoriented matroids, oriented matroids are not necessarily representable by hyperplane arrangements.  
\end{example}

\begin{remark} Note that an oriented matroid structure is additional data on a matroid, rather than a property of it; such a structure need not exist in general. For example, the Fano matroid $\mathrm{F}_7$ from Figure~\ref{fig::fano_matroid} is a minimal example of a matroid that does not admit an orientation~\cite{BlandLasVergnas1978}. As another example, every uniform matroid admits a representation over $\Real$, which implies that uniform matroids are orientable, but in~~\cite{Ringel1956}, Ringel constructed an oriented rank $3$ uniform matroid on $9$ elements $\mathrm{Rin}(3,9)$ of $\U_{3,9}$ that gives a minimal example of an oriented uniform matroid that is not stretchable.
\end{remark}

Basic operations on matroids lift to oriented matroids as well, see~\cite[Sections~3.3 and~7.6]{bjorner1999oriented}
\begin{itemize}
    \item[(Direct Sum)] For a pair of oriented matroids $\om,\N$, their direct sum $\om\oplus \N$ is determined by
    \[
    \L(\om\oplus\N)\cong \L(\om)\times \L(\N), \quad \Topes(\om\oplus\N)\cong \Topes(\om)\times\Topes(\N)
    \]
    \item[(Restriction)]  For $S\subset E(\om)$, the restriction matroid $\om|_S$ is determined by
    \[
 \C(\om|_S)=\{C|_S\mid C\in\C,\ \supp C\subseteq S\},
 \qquad
 \La(\om|_S)=\{X|_S\mid X\in\La(\om)\}.
\]
    \item[(Contraction)] For a flat $F\subset \om$, the contraction $\om/F$ is determined by
    \[
    \La(\om/F)=
    \{X|_{E\setminus F}\mid X\in\La(\om),\ X|_F=0\}.
    \]
\end{itemize}

\subsection{Topological representation of oriented matroids}\label{sec:pseudosphere} 
Despite this negative result, oriented matroids are always representable in a generalized sense. We explain this result below. 

A \emph{signed pseudosphere} in $\Sph^{r-1}$ is a subset $S\subseteq \Sph^{r-1}$ which can be carried to an equator by a homeomorphism of $\Sph^{r-1}$, together with a choice of labels $S^+$ and $S^-$ for the two complementary components. We use the convention $\Sph^{-1}=\varnothing$; its sides in $\Sph^0$ are the two points.

A \emph{pseudosphere arrangement} in $\Sph^{r-1}$, $r\geq 1$, is a finite indexed family $\mathcal S=(S_e)_{e\in E}$ of signed pseudospheres in $\Sph^{r-1}$ such that:
\begin{itemize}
 \item[(A1)] Every intersection $S_I=\bigcap_{e\in I}S_e$ is a pseudosphere (with the convention $S_\varnothing=\Sph^{r-1}$).
 \item[(A2)] If $S_I\not\subseteq S_e$, then $S_I\cap S_e$ is a pseudosphere in $S_I$ with sides $S_I\cap S_e^+$ and $S_I\cap S_e^-$.
\end{itemize}
The arrangement is \emph{essential} if $S_E=\varnothing$, and \emph{centrally symmetric} if $-S_e=S_e$ and $-S_e^+=S_e^-$ for every $e$. Repeated pseudospheres are allowed, so parallel and antiparallel elements are included.

Put $S_e^0=S_e$. The \emph{sign vector} of $x\in\Sph^{m}$ is defined by
\[
 \sigma(x)(e)=\varepsilon
 \quad\Longleftrightarrow\quad x\in S_e^\varepsilon,
 \qquad \varepsilon\in\{+,-,0\}.
\]

\begin{theorem}[{\cite[Theorem~5.2.1]{bjorner1999oriented}}]\label{thm:topological-representation}
For an essential pseudosphere arrangement $\mathcal S$ in $\Sph^{r-1}$, the set
\[
 \La(\mathcal S)=\{\sigma(x)\mid x\in\Sph^{r-1}\}\cup\{0\}
\]
is the covector set of a loopless oriented matroid of rank $r$. Conversely, every loopless oriented matroid $\om$ admits such a representation, with $\La(\mathcal S)=\La(\om)$, and the arrangement can be chosen centrally symmetric.
\end{theorem}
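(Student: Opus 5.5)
This is the Folkman--Lawrence topological representation theorem, cited from~\cite[Theorem~5.2.1]{bjorner1999oriented}. I would not reprove it from scratch here. I will nevertheless sketch the standard route, which splits into the two directions of the statement.

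\emph{Arrangement to oriented matroid.} Given an essential arrangement $\mathcal S$ in $\Sph^{r-1}$, I first show that the sign vectors $\sigma(x)$ partition $\Sph^{r-1}$ into cells. I proceed by induction on $|E|$ and on $r$, using (A1)--(A2). Each intersection $S_I$ is a pseudosphere, and each new $S_e$ cuts every cell of $S_I$ it meets into two open sides and a lower-dimensional part. Hence every fibre $\sigma^{-1}(X)$ is an open ball of dimension $\rk$-determined size, and the closures form a regular cell decomposition. I then verify the covector axioms for $\La(\mathcal S)$:
\begin{itemize}
\item it contains $0$ and is closed under negation, which uses essentiality and a local antipodal argument, or central symmetry if assumed;
\item it is closed under composition: $X\circ Y$ is realised by points near the cell of $X$ in the direction of the cell of $Y$;
\item it satisfies strong elimination: for $e\in S(X,Y)$, a path from cell $X$ to cell $Y$ must cross $S_e$, and the crossing point gives the required $Z$.
\end{itemize}
Looplessness holds because each $S_e$ is a proper pseudosphere. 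The rank is $r$ because a maximal chain of covectors has length $r$ by dimension count.

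\emph{Oriented matroid to arrangement.} This is the hard direction, and I expect it to be the main obstacle. I would follow the shellability approach of Edmonds--Mandel as presented in Bj\"orner et al.
\begin{enumerate}
\item Show that $\La(\om)\setminus\{0\}$, with a top element adjoined, is the face lattice of a regular CW decomposition of $\Sph^{r-1}$. Concretely, prove that the order complex of $\La(\om)_{>0}$ is a PL sphere by exhibiting a recursive coatom ordering (shelling) of the tope poset. Such an ordering is obtained from a linear extension of the tope graph distance from a fixed base tope.
\item For each $e$, let $S_e$ be the subcomplex of covectors with $X(e)=0$, and $S_e^{\pm}$ the unions of cells with $X(e)=\pm$.
\item Show that $S_e$ is a pseudosphere. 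It is the face complex of the contraction $\om/\bar e$, so it is a PL sphere of dimension $r-2$ by induction on rank. Its complement has two ball components, again by shellability restricted to the half-space $X(e)=+$.
\end{enumerate}
The crux is proving that these subcomplexes are \emph{tamely} embedded, i.e.\ flat, so that a homeomorphism carries $S_e$ to an equator. Here I would rely on PL topology: a PL codimension-one sphere in a PL sphere that bounds PL balls on both sides is standard (Newman/Alexander). Conditions (A1)--(A2) then follow inductively, since intersections correspond to contractions. Central symmetry is automatic from $X\mapsto -X$ acting as a free involution on the cell complex. Given all this, I would simply cite~\cite[Theorem~5.2.1]{bjorner1999oriented} for the full details.
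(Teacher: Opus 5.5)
The paper gives no proof of this statement; it only quotes the Folkman--Lawrence theorem from \cite[Theorem~5.2.1]{bjorner1999oriented}. Your final appeal to that reference is therefore exactly what the paper does, and your outline of the converse is the standard Edmonds--Mandel argument: shellability of $\La(\om)\cup\{\hat 1\}$ via linear extensions of the tope poset, PL balls on both sides of each $S_e$, flatness via the Alexander trick, and (A1)--(A2) through contractions.

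Taken as a sketch, though, one step is wrong as stated. Central symmetry is \emph{not} automatic from the free involution $\iota\colon X\mapsto -X$ of the cell complex. A free involution of a sphere need not be conjugate to the antipodal map; universal covers of fake real projective spaces give counterexamples. You must therefore actually construct a homeomorphism $h$ onto $\Sph^{r-1}$ with $h\circ\iota=-h$. This is easy by induction on the rank:
\begin{itemize}
\item applying the induction hypothesis to the contraction by $e$ gives an equivariant homeomorphism $h_0$ from $S_e$ onto the equator;
\item by the Alexander trick, extend $h_0$ to a homeomorphism from the ball $\overline{S_e^{+}}$ onto the upper hemisphere;
\item for $p\in\overline{S_e^{-}}=\iota(\overline{S_e^{+}})$, put $h(p)=-h(\iota p)$.
\end{itemize}
The two definitions agree on $S_e$ precisely because $h_0$ is equivariant. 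Since $\iota$ preserves every $S_f$ and exchanges its two sides, every $h(S_f)$ is then a centrally symmetric pseudosphere.

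A smaller gap sits in the first direction, in the elimination step. A path from a point of the cell of $X$ to a point of the cell of $Y$ does meet $S_e$. However, nothing controls the other signs at the meeting point, so it need not give the required covector $Z$, namely $Z(e)=0$ and $Z(f)=(X\circ Y)(f)$ for $f\notin S(X,Y)$. The fix has two steps.
\begin{itemize}
\item First replace the endpoints by points of the cells of $X\circ Y$ and $Y\circ X$, using closure under composition. These two covectors have the same zero set $Q$ and agree off $S(X,Y)$.
\item Then run the path inside the set of points of $S_Q$ lying in $S_f^{(X\circ Y)(f)}$ for all $f\notin S(X,Y)\cup Q$.
\end{itemize}
This set is a chamber of the induced arrangement on the sphere $S_Q$, hence connected. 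In the degenerate case where it is all of a $0$-sphere $S_Q$, simply take $Z=0$. The point where the path crosses $S_e$ then has the required sign vector.
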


\begin{figure}[ht]
    \centering
        \begin{tikzpicture}[
  x=5cm, y=5cm,
  line cap=round, line join=round,
  npfront/.style={draw=black, line width=0.7pt},
  npback/.style={
    draw=black!30,
    line width=0.4pt,
    dash pattern=on 1.7pt off 1.4pt
  },
  every node/.style={font=\small},
  declare function={
    npbend(\t)=0.22*sin(\t)*
      (max(0,1-(cos(\t)/sin(25))^2))^2;
    npx(\t)=cos(\t+2)/sqrt(1+npbend(\t)^2);
    npy(\t)=(-sin(21)*sin(\t+2)+cos(21)*npbend(\t))
      /sqrt(1+npbend(\t)^2);
  }, scale=0.7
]
  \fill[black!2] (0,0) circle[radius=1];
  \draw[black!45, line width=0.5pt]
    (0,0) circle[radius=1];

  \foreach \sty/\lo/\hi in {
    npback/180/360, npfront/0/180
  } {
    \foreach \i/\a/\b/\c in {
      1/-5/3/3,  2/-1/3/3,  3/5/3/3,
      4/-5/-3/3, 5/-1/-3/3, 6/5/-3/3,
      7/-3/0/3,  9/2/0/3
    } {
      \pgfmathsetmacro{\nn}{sqrt(\a*\a+\b*\b+\c*\c)}
      \pgfmathsetmacro{\nx}{
        (cos(2)*\a-sin(2)*\b)/\nn}
      \pgfmathsetmacro{\ny}{
        (-sin(21)*sin(2)*\a-sin(21)*cos(2)*\b
         +cos(21)*\c)/\nn}
      \pgfmathsetmacro{\nz}{
        (cos(21)*sin(2)*\a+cos(21)*cos(2)*\b
         +sin(21)*\c)/\nn}
      \pgfmathsetmacro{\qq}{sqrt(\nx*\nx+\ny*\ny)}

      \draw[\sty]
        plot[variable=\t, domain=\lo:\hi, samples=181]
        ({(-\ny*cos(\t)-\nz*\nx*sin(\t))/\qq},
         {( \nx*cos(\t)-\nz*\ny*sin(\t))/\qq});
    }

    \pgfmathsetmacro{\start}{\lo-2}
    \pgfmathsetmacro{\finish}{\hi-2}
    \draw[\sty]
      plot[variable=\t, domain=\start:\finish, samples=361]
      ({npx(\t)},{npy(\t)});
  }

  \foreach \xx/\yy in {
    -0.024678/-0.913391,
     0.624331/ 0.436405,
     0.484256/-0.313546,
    -0.654094/-0.741836,
     0.616891/-0.635153,
    -0.544108/-0.300676,
    -0.661534/ 0.177006,
    -0.024678/ 0.406891
  } {
    \fill (\xx,\yy) circle[radius=1.5pt];
  }

  \foreach \xx/\yy in {
     0.514182/ 0.794104,
     0.349743/-0.560290,
     0.948105/ 0.283359,
     0.937069/-0.125122,
    -0.514182/ 0.806973,
    -0.361925/-0.621555,
    -0.411794/-0.076501,
    -0.959142/-0.101392,
     0.300261/-0.011261,
     0.123517/-0.203944,
    -0.034899/-0.358150,
    -0.229668/-0.223729
  } {
    \fill (\xx,\yy) circle[radius=1pt];
  }

  \foreach \i/\ang in {
    1/70.677, 2/212.414, 3/108.782,
    4/231.170, 5/12.960, 6/306.772,
    7/46.569, 8/180, 9/144.243
  } {
    \draw[black!45, line width=0.35pt]
      (\ang:1.005) -- (\ang:1.045);
    \node at (\ang:1.085) {$\i$};
  }

\end{tikzpicture}
    \caption{Pseudocircle realisation of the Non-Pappus matroid $\mathsf{N}_9$}
    \label{fig:pseudocirc_N9}
\end{figure}

See Figure \ref{fig:pseudocirc_N9} for a pseudosphere representation of the Non-Pappus matroid. If $\mathcal{S}$ is a pseudosphere arrangement, we denote by $\om(\S)$ the oriented matroid having covectors $\La(\S)$.  For an essential arrangement representing $\om$, the nonempty strata $F_X=\sigma^{-1}(X)$, $X\in\La(\om)\setminus\{0\}$, form a regular cell decomposition of $\Sph^{r-1}$, with
\[
 F_X\subseteq\overline{F_Y}\quad\Longleftrightarrow\quad X\preceq Y.
\]
In particular, the connected components of the \emph{pseudosphere complement}
\[
 \U(\mathcal S)=\Sph^{r-1}\setminus\bigcup_{e\in E}S_e
\]
are the \emph{chambers}, indexed by $\Topes(\om)$.

Coning the pseudospheres gives the central \emph{pseudohyperplane arrangement} $\A=(H_e)_{e\in E}$ in $\Real^r$, where
\[
 H_e=\{0\}\cup\{tx\mid t>0,\ x\in S_e\}.
\]
Each pair $(\Real^r,H_e)$ is homeomorphic to $(\Real^r,\Real^{r-1})$. The coned faces, together with the origin, have face poset $\La(\om)$.

\begin{figure}[htbp]
 \centering
 \begin{tikzpicture}[scale=.9, every node/.style={font=\small}]
  \draw[gray] (0,0) circle (1);
  \foreach \a/\col/\i in {20/BrickRed/1,75/RoyalBlue/2,140/ForestGreen/3}{
   \fill[\col] (\a:1) circle (2pt);
   \fill[\col] ({\a+180}:1) circle (2pt);
   \node[text=\col] at (\a:1.3) {$S_{\i}$};
  }
  \node at (0,-1.5) {$\mathcal S\subset\Sph^1$};
  \draw[->] (1.7,0) -- node[above] {coning} (3.1,0);
  \begin{scope}[xshift=4.8cm]
   \draw[gray,densely dotted] (0,0) circle (1);
   \foreach \a/\col/\i in {20/BrickRed/1,75/RoyalBlue/2,140/ForestGreen/3}{
    \draw[\col,thick] ({\a+180}:1.22) -- (\a:1.22);
    \node[text=\col] at (\a:1.5) {$H_{\i}$};
   }
   \fill (0,0) circle (1.5pt);
   \node at (0,-1.5) {$\A\subset\Real^2$};
  \end{scope}
 \end{tikzpicture}
 \caption{A rank-two example: each pair of equally coloured antipodal points is a pseudosphere in $\Sph^1$; its cone is a line in $\Real^2$.}
 \label{fig:pseudosphere-coning}
\end{figure}

For $d\geq1$, we define the \emph{$n$-dimensional complement}, or \emph{higher complexification}, by
\begin{equation}\label{eq:pseudo-higher-complement}
 \U_n(\S) = \U_n(\A)
 :=(\Real^r)^n\setminus\bigcup_{e\in E}H_e^{\times n}.
\end{equation}
Thus $\U_1(\A)$ is the ordinary complement of the coned arrangement, homeomorphic to $(0,\infty)\times\U(\mathcal S)$ when $E\ne\varnothing$. For $n=2$, this is the microbundle complement of~\cite{deshpande2016arrangements}. For linear hyperplanes, $(x,y)\mapsto x+iy$ identifies $\U_n(\A)$ with the usual complexified complement $\mathbb C^r\setminus\bigcup_{e\in E}(H_e\otimes_{\Real}\mathbb C)$.


\subsection{Salvetti posets}\label{sec:salvetti-recollections}
A landmark result~\cite{Salvetti1987} in the theory of real arrangement complements states that the homotopy type of a complexified hyperplane arrangement complement is completely determined by its underlying oriented matroid. As we recall below, this is more generally true for higher arrangement complements of pseudosphere arrangements. The main ingredient behind this result is the so-called higher Salvetti poset of an oriented matroid, introduced in~\cite[Definition~5.27]{Tamaki2012}.


\begin{definition}\label{def:higher-salvetti}
Let $\om$ be an oriented matroid. For any $n\geq1$, the $n$-th \emph{higher Salvetti poset} is the set
\[
 \Sal_n(\om)=
 \{(X_1,\ldots,X_d)\in\La(\om)^d
   \mid X_1\leq\cdots\leq X_d,\ X_d\in\Topes(\om)\}.
\]
with order defined by
\begin{equation}\label{eq:higher-salvetti-order}
 (X_1,\ldots,X_d)\leq_{\Sal}(Y_1,\ldots,Y_d)
 \quad\Longleftrightarrow\quad
 X_i\geq X_{i-1}\circ Y_i\quad(1\leq i\leq d).
\end{equation}
We call $|\Sal_d(\om)|$ the \emph{higher Salvetti complex}.
\end{definition}
The last inequality in~\eqref{eq:higher-salvetti-order} is an equality, since both $X_d$ and $X_{d-1}\circ Y_d$ are topes. In particular, $\Sal_1(\om)$ is the discrete poset $\Topes(\om)$, and $\Sal_2(\om)$ can be identified with the classical Salvetti poset of $\om$ (see~\cite{Salvetti1987}). \\

The following theorem seems to be a folklore result. We will provide an explicit proof of that result in Section~\ref{sec:new-presheaf-models}.

\begin{theorem}[{\cite{deshpande2016arrangements}\cite{Berger2009HigherComplements}\cite{BjornerZiegler1992}}]\label{thm:higher-sal-higher-cpt}
Let $\S$ be a pseudosphere arrangement. For every $n\geq2$, the complex $|\Sal_n(\om(\S))|$ is homotopy equivalent to the higher arrangement complement $\U_n(\S)$.
\end{theorem}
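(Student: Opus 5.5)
We prove this by covering $\U_n(\A)$ by open sets indexed by the elements of $\Sal_n(\om)$, and then applying a nerve lemma.

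\textbf{Step 1: the map $\Phi$.} For a point $x=(x_1,\ldots,x_n)\in(\Real^r)^n$, set $X_i=\sigma(x_i)\in\La(\om)$, using the coned arrangement, so that $\sigma(0)=0$. Define the iterated composition
\[
\Phi(x)=(X_1,\;X_1\circ X_2,\;\ldots,\;X_1\circ\cdots\circ X_n).
\]
This is an increasing chain in $\La(\om)$. The condition $x\in\U_n(\A)$ says that for each $e$ some $x_i\notin H_e$. This is exactly the condition that the last entry $X_1\circ\cdots\circ X_n$ has full support, i.e.\ is a tope. So $\Phi\colon\U_n(\A)\to\Sal_n(\om)$ is well defined and surjective. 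Surjectivity holds because every covector is realized on a nonempty cell $F_X$ of the regular cell decomposition.

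\textbf{Step 2: the cover.} For $\mathbf Y=(Y_1,\ldots,Y_n)\in\Sal_n(\om)$, let
\[
U(\mathbf Y)=\Phi^{-1}\bigl(\{\mathbf X\mid \mathbf X\leq_{\Sal}\mathbf Y\}\bigr).
\]
We first show that $\Phi$ is continuous for the Alexandrov topology, with the orientation chosen so that these down-sets pull back to open sets. Near $x$, each coordinate $X_i'$ satisfies $X_i'\geq X_i$, since cells only move to larger covectors. From this one checks $\Phi(x')\leq_{\Sal}\Phi(x)$ or the reverse; we pick whichever convention makes $U(\mathbf Y)$ open. Next, each $U(\mathbf Y)$ is contractible. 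We prove this by induction on $n$ via fibrewise projection to the last coordinate. The slices $\{x_1\mid X_1\geq Y_1\}$ are open stars of cells, hence contractible (the face stratification is regular). Given $x_1$, the admissible $x_2$ form the set of points with $X_1\circ X_2\geq Y_2$ relative to the fixed $X_1$. We expect this set to be a star-shaped union of open cells, and to deform onto a point of the corresponding cell. Finally, finite intersections behave well: $U(\mathbf Y)\cap U(\mathbf Y')$ is the union of the $U(\mathbf Z)$ over maximal common lower bounds $\mathbf Z$. It is cleaner to avoid intersections entirely by applying McCord's theorem: $\Phi$ is a map to a finite $T_0$ space whose basic opens have contractible preimages, hence $\Phi$ is a weak homotopy equivalence onto the finite space. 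Combined with $|\Sal_n(\om)|\simeq_w \Sal_n(\om)$, this gives a weak equivalence. It upgrades to a homotopy equivalence because both sides have CW homotopy types ($\U_n(\A)$ is an open subset of a manifold).

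\textbf{Main obstacle.} The hardest step is contractibility of the preimages $U(\mathbf Y)$ in the pseudohyperplane setting, where no convexity is available. We would first transfer the problem via the regular cell structure to a combinatorial statement: the preimage is homotopy equivalent to the order complex of the corresponding subposet of $\La(\om)^n$. That subposet is a product-like fibred poset, and we would show it has a cone point (the tuple with $X_i=Y_i$ in the successive "new" coordinates and zeros elsewhere), using the fibre-lemma of Quillen along the projection to the first coordinate. The inputs are the contractibility of stars of covectors and the identity $Z\circ Y\geq Y'$ for $Z$ in the star.
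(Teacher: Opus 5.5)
Your setup is the same as the paper's explicit proof. Your $\Phi$ is the paper's $\psi_n$ followed by the isomorphism $\Prop_n(\om)\cong\Sal_n(\om)$: the truncations of $\psi_n(x)$ are exactly $\sigma(x^{(1)})\circ\cdots\circ\sigma(x^{(i)})$. Nearby points satisfy $\Phi(x')\le_{\Sal}\Phi(x)$, so down-sets pull back to open sets and no choice of convention is involved. McCord's theorem then reduces everything to contractibility of $U(\mathbf Y)$.

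\textbf{The gap.} That contractibility is the whole content of the theorem, and your proposal only states an expectation for it. Two things are missing from your fibrewise sketch.

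First, the fibre condition. For $n=2$ the condition on $x_2$ given $x_1$ is $X_1\circ X_2\ge X_1\circ Y_2$, not $X_1\circ X_2\ge Y_2$ (take $X_1=-Y_2$ a tope). Equivalently, $x_2\in\bigcap_{e\in\underline{Y_2}\setminus\underline{X_1}}H_e^{Y_2(e)}$. Contractibility of such a set is not a star-shapedness statement. It is the chamber property of pseudosphere arrangements: a nonempty region of a subarrangement is an open ball.

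Second, and more seriously, the index set $\underline{Y_2}\setminus\underline{X_1}$ jumps as $x_1$ moves; it can go from a proper region to all of $\Real^r$. So ``contractible base with contractible fibres'' does not by itself make the total space contractible. The paper deals with exactly this:
\begin{itemize}
\item It fixes $x$ with $\Phi(x)=\mathbf Y$ and lets $Z_k\subseteq U(\mathbf Y)$ consist of the points agreeing with $x$ in the coordinates $>k$.
\item It shows that resetting coordinate $k$ to $x^{(k)}$ gives a homotopy equivalence $Z_k\to Z_{k-1}$. To do so it covers $Z_k$ by open products $O_I\times V_I$, with $I\subseteq E_k=\underline{Y_k}\setminus\underline{Y_{k-1}}$ and $V_I=\bigcap_{e\in I}H_e^{Y_k(e)}$.
\item Finite intersections of these products are $O_{\bigcap I_j}\times V_{\bigcup I_j}$. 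Each restricted projection is an equivalence because $V_{\bigcup I_j}$ is a chamber of a subarrangement, and the open-cover gluing lemma concludes.
\end{itemize}
Some local-product/gluing argument of this kind is the missing idea.

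\textbf{The combinatorial fallback fails as stated.} The poset $Q\subseteq\La(\om)^n$ of strata in $U(\mathbf Y)$ has no cone point in general. Take $\om=\U_{1,1}$, $n=2$, $\mathbf Y=(0,+)$. Then $U(\mathbf Y)=\{x_1\neq 0\}\cup\{x_1=0,\ x_2>0\}$ and $Q=\{(\pm,0),(\pm,+),(\pm,-),(0,+)\}$. This is a path with three minimal elements. Your candidate $(0,+)$ is incomparable to $(+,0)$ and $(+,-)$. In general the ``new coordinates'' tuple need not even consist of covectors.

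Quillen's fibre lemma along the first projection can be made to work: since $Q$ is up-closed, the lower fibre over $p$ retracts onto $\{p\}\times Q_p$. But for $n=2$ the poset $Q_p$ is the set of covectors of the region $\bigcap_{e\notin\underline p}H_e^{Y_2(e)}$, which is not a star. For three lines through the origin in $\Real^2$, with $p$ a ray on $H_1$, it is an open half-plane with two minimal covectors. So the inputs you list (stars of covectors, which trivially have minima, and the identity $Z\circ Y\ge Y'$) do not suffice. You would again need contractibility of regions of subarrangements, plus an induction on $n$ that you have not set up.
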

\begin{proof}
    The first instance of this result is due to Salvetti~\cite{Salvetti1987}, who proved it for hyperplane arrangements and $n =2$. Deshpande~\cite{deshpande2016arrangements} states and proves this result for arbitrary pseudosphere arrangements and $n=2.$ Berger in~\cite{Berger2009HigherComplements} states this result for any $n$ and gives a sketch of proof. In~\cite[Section 8]{BjornerZiegler1992} the authors study the complexification of pseudosphere arrangement complements and prove the homotopy equivalence with $\Sal_2$. They describe in Section 9 how these results can be carried over to higher complements, without stating the exact homotopy equivalence. 
\end{proof}


\subsection{Algebraic invariants of (oriented) matroids}
Predating Salvetti's result, Orlik and Solomon~\cite{orlik1980combinatorics} have shown that for any real hyperplane arrangement $\A$ (or more generally any complex hyperplane arrangement) the cohomology ring $H^{\bullet}(\U_2(\A); \Q)$ only depends on the oriented matroid $\om(\A)$ (in fact only on the underlying matroid $\umat(\A)$) and this ring has an elegant explicit presentation. By~\cite[Theorem 5.5]{delongueville2001cohomology} this result is more generally true of $\U_n(\A)$ for any $n\geq 1.$ Let us describe these rings below. The following definition makes sense for any (possibly non-realizable) oriented matroid. 
\begin{definition}\label{def:generalized-OS}
    Let $\om$ be an oriented matroid and $n\geq 2$ an integer. We define the $n$-th \emph{generalized Orlik-Solomon algebra} $\OS^{\bullet}_n(\om)$ of $\om$ by the presentation 
    \[
        \OS^{\bullet}_n(\om) \coloneqq \frac{\Z[\omega_e \mid e \in E, \deg \omega_e = n-1]}{\langle \partial \omega_C \mid C \in \C(\om)\rangle + \langle \omega_e^2 \mid e \in E\rangle},
    \]
    where, for any signed circuit $C$ with support $\sC = \{e_1,\ldots, e_d\}$ we denote 
    \[
        \partial\omega_C \coloneqq \sum_{e \in E} C(e) \omega_{C\setminus e}, \quad \textrm{ for } n \textrm{ odd,}
    \]
    and 
    \[
        \partial\omega_C \coloneqq \sum_{e \in E} (-1)^{i} \omega_{C\setminus e_i}, \quad \textrm{ for } n \textrm{ even.}
    \]
\end{definition}
In the above definition, the polynomial rings are graded commutative. In particular, for even $n$ the algebra $\OS_n$ is anticommutative, and for odd $n$ it is commutative. In the anticommutative case, the symbol $\omega_C = \omega_{e_1}\cdots\omega_{e_d}$ only makes sense after choosing a total order on $\sC$. Without this choice, the symbol $\omega_C$ is defined up to a sign.\\

For $n=2$, the algebra $\OS^{\bullet}_2(\om)$ coincides with the classical Orlik--Solomon algebra of the matroid $\umat.$ For $n = 3,$ the algebra $\OS^{\bullet}_3(\om)$ coincides with the Cordovil algebra~\cite{Cordovil2002CommutativeAlgebraOrientedMatroids} of $\om$. As mentioned above, we have the following classical theorem. 
\begin{theorem}[{\cite[Theorem 5.2]{orlik1980combinatorics}\cite[Theorem 5.5]{delongueville2001cohomology}}]\label{thm:cohomology-arrangement-OS}
    Let $\A$ be a real hyperplane arrangement. For all $n\geqslant 2$, we have an isomorphism of graded rings 
    \[
        H^{\bullet}(\U_n(\A)) \simeq \OS^{\bullet}_n(\om(\A)).
    \]
\end{theorem}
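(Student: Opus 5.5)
We argue by induction on $|E|$ using deletion--contraction, in the spirit of Orlik--Solomon and de Longueville--Schultz, working directly with the spaces $\U_n(\A)$. Fix $e\in E$. If $e$ is not a coloop and not parallel to another element, write $\A'=\A\setminus H_e$ and $\A''=\A^{H_e}$, the restriction to $H_e$. Then $\U_n(\A)=\U_n(\A')\setminus (H_e^{\times n}\cap \U_n(\A'))$. Here $H_e^{\times n}\cap\U_n(\A')\cong \U_n(\A'')$ is a closed submanifold of real codimension $n$, and its normal bundle is trivial, being the pullback of the normal bundle of $H_e$ to the $n$-fold product. The Thom isomorphism and the Gysin long exact sequence then give
\[
\cdots\to H^{k-n}(\U_n(\A''))\to H^k(\U_n(\A'))\to H^k(\U_n(\A))\xrightarrow{\ \mathrm{res}\ } H^{k-n+1}(\U_n(\A''))\to\cdots.
\]

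The first key step is to show that the map $H^\bullet(\U_n(\A'))\to H^\bullet(\U_n(\A))$ is injective, so that the Gysin sequence splits into short exact sequences. Surjectivity of the residue map follows by producing classes. Let $\omega_f$ be the pullback of the generator of $H^{n-1}(\Real^n\setminus 0)\cong H^{n-1}(\Sph^{n-1})$ along the linear projection $\U_n(\A)\to (\Real^r/H_f)^n\setminus 0\simeq \Sph^{n-1}$. The residue sends $\omega_e\cdot \iota^*x$ to $x$, and by induction $H^\bullet(\U_n(\A''))$ is generated by the $\omega$'s. Surjectivity of the residue forces the connecting maps to vanish, which gives injectivity. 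Consequently $H^\bullet(\U_n(\A))$ is free, with ranks satisfying the deletion--contraction recursion. In particular its Poincaré polynomial is $\sum_k b_k t^{k(n-1)}$, where the $b_k$ are the Whitney numbers.

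The second step is to verify the relations in $H^\bullet(\U_n(\A))$. The relation $\omega_e^2=0$ holds because $\omega_e^2$ is pulled back from $\Sph^{n-1}$, whose cohomology vanishes in degree $2n-2$. For a signed circuit $C$, restrict to the subarrangement indexed by $\sC$. After projecting away the common intersection, this is an essential arrangement of rank $|\sC|-1$ with $|\sC|$ elements in general position. Its top-degree class $\omega_C$ would lie in degree $|\sC|(n-1)$, but the complement has cohomological dimension at most $(|\sC|-1)(n-1)$ by the rank computation. The relation $\partial\omega_C=0$ itself has to be checked in this rank-$(|\sC|-1)$ case, for example by computing it inductively with the residue maps above. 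The signs $C(e)$ in the odd case come from the orientation of the normal bundles, which is governed by the signs of the circuit. This yields a well-defined map $\OS_n^\bullet(\om(\A))\to H^\bullet(\U_n(\A))$, and it is surjective by the inductive description.

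The third step establishes injectivity. The map is compatible with the algebraic deletion--contraction sequence
\[
0\to\OS_n(\om\setminus e)\to\OS_n(\om)\to\OS_n(\om/e)\to 0,
\]
and the five lemma plus induction then finishes the argument. The main obstacle is proving exactness of this algebraic sequence over $\Z$, especially the left injectivity and the identification of the kernel of the residue map. I would establish it via an nbc-basis argument: I would show that nbc monomials span $\OS_n$, via a Gröbner-type straightening using $\partial\omega_C$, and then count. The count matches the Whitney numbers, so the spanning set is a $\Z$-basis, since the topological side is free of the same rank.
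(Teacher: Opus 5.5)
Your proof is correct. It follows the same deletion--contraction skeleton as the paper, but obtains the long exact sequence in a different way.

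The paper does not prove this cited statement directly. It recovers it, for all oriented matroids, from Theorem~\ref{thm::pres_H(Topes)} together with $|\Topes_n(\om(\A))|\simeq\U_n(\A)$. The proof of Theorem~\ref{thm::pres_H(Topes)} has the same steps as yours:
\begin{itemize}
\item a deletion--contraction long exact sequence;
\item surjectivity of the residue $\partial_e$, because the contraction's cohomology is generated by the classes $\omega_f$ (Lemma~\ref{lem:connecting-gysin}), so the sequence splits;
\item an inductive check of the circuit relation on $\U_{k-1,k}$;
\item a comparison with the algebraic sequence.
\end{itemize}

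The real difference is where the long exact sequence comes from. You get it from the tubular neighbourhood and Thom isomorphism of the codimension-$n$ submanifold $H_e^{\times n}\cap\U_n(\A')$. This is quick and rigorous for linear arrangements. The paper instead builds the combinatorial homotopy pushout of weighted-tope posets (Theorem~\ref{thm:deletion-contraction-pushout}). It does so because for pseudohyperplanes, and for non-realizable oriented matroids, no smooth normal bundle is available. That route also yields naturality of the isomorphism with respect to weak maps, which yours does not address; the statement does not require it.

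Your last step is somewhat more self-contained than the paper's. Spanning by nbc monomials, together with the topological rank count, gives the isomorphism outright. The paper instead takes exactness of the algebraic sequence $0\to\OS_n(\om\setminus e)\to\OS_n(\om)\to\OS_n(\om/e)\to0$ over $\Z$ as known.

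The one place you are vague is the circuit relation. The argument you need is the paper's Lemma~\ref{lem:presentation-circuit-matroids}:
\begin{itemize}
\item For the $\U_{k-1,k}$ subarrangement, $\partial_e(\partial\omega_C)=\pm\partial\omega_{C\setminus e}$, which is $0$ by induction.
\item So $\partial\omega_C=\alpha\,\omega_{\sC\setminus e}$ lies in the image of the Boolean deletion, whose top cohomology is $\Z\,\omega_{\sC\setminus e}$.
\item A second residue $\partial_f$ with $f\neq e$ gives $\pm\alpha\,\omega_{\sC\setminus\{e,f\}}=0$. This monomial has infinite order, so $\alpha=0$.
\end{itemize}
Orientations matter in the base case $|\sC|=2$, where you use that the antipodal map has degree $(-1)^n$. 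They then propagate through the identification of $\omega_f|_{\U_n(\A'')}$ with the class of $\A''$.
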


Since generalized Orlik--Solomon algebras can be defined for arbitrary oriented matroids, it is natural to ask if the above theorem works more generally for any pseudosphere arrangements. We will show that this is true in Section~\ref{sec:deletion-contraction-cohomology-ring}, via purely combinatorial arguments. \\

\section{Weak maps of matroids and Presheaves}\label{sec:weak} In this section we introduce the categorical framework for matroid invariants. We define categories $\OM$ and $\UM$ of (un)oriented matroids and (signed) weak maps, and study presheaves and copresheaves on these categories. We list several examples of such (co)presheaves, including topes $\Topes$ and generalized Orlik--Solomon algebras $\OS_n$. We explain how degenerations of (pseudo)hyperplane arrangements give rise to weak maps and maps between their complements (defined up to homotopy), motivating the poset models constructed in Section~\ref{sec:new-presheaf-models}. 

\subsection{Weak maps of oriented matroids} 

We consider the category $\OM$ of oriented matroids whose objects are loopless oriented matroids and whose morphisms are \textit{signed weak maps}. A signed weak map $\phi\colon \om \to \N$ is a pair consisting of a ground set map $f_{\phi}\colon E(\om)\to E(\N)$ and a signed set $\epsilon_{\phi}\colon E(\om)\to \{\pm\}$ such that, for every tope $X$ of $\N$, its pullback $(\phi^*X)(e):=\epsilon_{\phi}(e)X(f_{\phi}(e))$ is a tope of $\om$, i.e., $\phi^*\Topes(\N)\subset \Topes(\om)$.

Let us list some basic examples of signed weak maps:
\begin{description}
    \item[Automorphisms] Every automorphism of an oriented matroid defines an automorphism 
    \[
        (\phi,+)\colon \om\overset{\cong}{\to} \om.
    \]
    \item[Reorientation] Reorienting a subset $S\subseteq E(\om)$ gives an isomorphism $(\id,\epsilon_{S})\colon\om\overset{\cong}{\to}\om_{-S}$, with ground-set map $\id$ and signs negative precisely on $S$.
    \item[Submatroid] A submatroid $\N\subset \om$ defines a morphism $\N\to \om$ in $\OM$. 
\end{description}

When $\phi\colon\om\to\N$ has the identity as its ground-set map and constant sign $+$, we recover the definition of a weak map of matroids in the sense of~\cite{anderson2001representing}. We provide several examples of such weak maps. Also see Section~\ref{sec:degenerations} for examples of weak maps coming from degenerations of pseudohyperplane arrangements.
\begin{example}[Canonical weak map]\label{ex:canonical-weak-map}
For any oriented matroid $\om$, we have a weak map $\U_{E(\om)} \rightarrow \om$ given by the identity and the constant reorientation $+$. Here, $\U_{E(\om)}$ denotes the free oriented matroid on $E(\om)$. This weak map will be called the \emph{canonical weak map}. 
\end{example}
\begin{example}[Operadic map]\label{ex:operadic_maps}
Let $F$ be a flat of $\om$, and let $X$ and $Y$ be topes of the contracted matroid $\om/F$ and the restricted matroid $\om|_F$ respectively. By definition, we can lift $X$ to a covector $X'$ of $\om$ with zero set $F$ and lift $Y$ to a tope $Y'$ of $\om$. The covector composition $X\oplus Y:=X'\circ Y'$ is a tope of $\om$ and does not depend on the choices of lifts, so $\Topes(\om/F)\times \Topes(\om|_F)\subset \Topes(\om)$. In other words, we have a weak map
\[
\triangle_F\colon \om\to \om/F\oplus\om|_F.
\]
\end{example}
There is a special class of weak maps called \textit{framings}; see~\cite[Sec~5.4.1]{Anderson_2025} for details. We are interested in the following example of framing, which will play an important role in Section~\ref{sec:deletion-contraction-cohomology-ring}.
\begin{example}[Deletion-Contraction]\label{ex:deletion_contraction_OM} For an oriented matroid $\om$ and an element $e\in E({\om})$ that is a rank-one flat, every tope of the contraction $\om/e$ lifts to a covector of $\om$ and hence to a tope of $\om$ that restricts to a tope of the deletion $\om\setminus e$. In other words, we have a weak map $\om\setminus e\to \om/e$.
\begin{lemma}
The weak map $\om\setminus e\to \om/e$ fits into a pullback square in $\OM$
\begin{equation}
    \begin{tikzcd}
    	{\om\setminus e} & \om \\
    	{\om/e} & {\om/e\oplus\om|_e}
    	\arrow[from=1-1, to=1-2]
    	\arrow[from=1-1, to=2-1]
    	\arrow["\lrcorner"{anchor=center, pos=0.125}, draw=none, from=1-1, to=2-2]
    	\arrow[from=1-2, to=2-2]
    	\arrow[from=2-1, to=2-2]
        \end{tikzcd}
\end{equation}
\end{lemma}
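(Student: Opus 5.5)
I will first make the four maps in the square explicit as signed weak maps. All of them have constant sign $+$. The top arrow is the inclusion of the submatroid $\om\setminus e\hookrightarrow\om$, with ground-set map the inclusion $E\setminus e\hookrightarrow E$. The right arrow is $\triangle_{\{e\}}$ from Example~\ref{ex:operadic_maps}, with ground-set map $\id_E$, where $E(\om/e\oplus\om|_e)=(E\setminus e)\sqcup\{e\}$. The left arrow is the deletion--contraction weak map of Example~\ref{ex:deletion_contraction_OM}, with ground-set map $\id_{E\setminus e}$. The bottom arrow is the inclusion of the summand $\om/e\hookrightarrow\om/e\oplus\om|_e$. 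Commutativity is then immediate at the level of ground sets and signs: both composites send $E\setminus e$ to $E\setminus e$ by the identity with sign $+$. I will also check quickly that the left map is weak. Given $X\in\Topes(\om/e)$, lift it to a covector $X'$ with $X'(e)=0$, compose with a tope, and restrict to $E\setminus e$. This restriction agrees with $X$, because $X$ already has full support on $E\setminus e$.

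For the universal property, I will take an oriented matroid $\mathcal{P}$ with signed weak maps $\alpha\colon\mathcal P\to\om$ and $\beta\colon\mathcal P\to\om/e$ satisfying $\triangle_e\circ\alpha=\iota\circ\beta$. The ground-set equation $f_\alpha=f_\beta$, viewed as maps into $E$, forces $f_\alpha$ to avoid $e$. The sign equation forces $\epsilon_\alpha=\epsilon_\beta$. So there is at most one factorisation through $\om\setminus e$, namely $\gamma=(f_\beta,\epsilon_\beta)$ with codomain $E\setminus e$. This settles uniqueness, and also shows that $\gamma$ is compatible with both projections.

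It remains to show that $\gamma$ is a signed weak map $\mathcal P\to\om\setminus e$. Let $T\in\Topes(\om\setminus e)$. Then $T=\tilde T|_{E\setminus e}$ for some tope $\tilde T$ of $\om$, and
\[
\gamma^*T=\alpha^*\tilde T,
\]
because $f_\alpha$ misses $e$. The right-hand side is a tope of $\mathcal P$ since $\alpha$ is weak. I note that only $\alpha$ is actually needed here.

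I do not expect a real obstacle in this argument. The one point needing care is to confirm that the pullback in $\OM$ is governed purely by ground-set maps and signs. Composition of signed weak maps is $(f_\psi\circ f_\phi,\ \epsilon_\phi\cdot(\epsilon_\psi\circ f_\phi))$, so equality of morphisms reduces to equality of these data. I should also make sure that the ground set of the direct sum is the disjoint union, so that $e$ and $E\setminus e$ cannot collide. With these checked, the argument amounts to computing the pullback of the underlying ground sets.
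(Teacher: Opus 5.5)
Your proof is correct. Like the paper, you reduce everything to ground-set data. Commutativity forces $f_\alpha=f_\beta$ to miss $e$ and gives $\epsilon_\alpha=\epsilon_\beta$. Uniqueness is then automatic, because morphisms of $\OM$ are the pairs $(f,\epsilon)$ themselves.

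Where you differ is in checking that the induced pair $\mathcal P\to\om\setminus e$ is weak. The paper first shows that $\Topes$ turns the square into a pushout of sets: a tope of $\om\setminus e$ has two lifts to $\om$ if it is a tope of $\om/e$, and one otherwise. It then appeals to that universal property, which implicitly uses the weakness of both given maps. You use only two facts: restriction $\Topes(\om)\to\Topes(\om\setminus e)$ is surjective, and $\alpha$ is weak. This is more elementary. It also makes explicit that the weakness of $\beta$ is redundant, since any weak map into $\om$ whose ground-set map misses $e$ factors through $\om\setminus e$.

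The paper's route has the advantage of producing the tope-level pushout. That is exactly the statement Theorem~\ref{thm:deletion-contraction-pushout} later upgrades to a homotopy pushout. Your route gives a cleaner, fully spelled-out verification of the categorical pullback itself.

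Two small points are worth adding:
\begin{itemize}
\item Surjectivity of restriction on topes deserves a one-line justification. Restrict a covector of $\om$, composing with a tope first if its value at $e$ is $0$.
\item The standing hypothesis that $\{e\}$ is a flat should be stated. It is what makes $\triangle_{\{e\}}$ available and $\om/e$ loopless, so that topes of $\om/e$ have full support on $E\setminus e$, as you use.
\end{itemize}
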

\begin{proof} For a tope $X$ of $\om\setminus e$, its preimage with respect to the restriction map $\res\colon \Topes(\om)\to \Topes(\om\setminus e)$ consists of two elements if $X$ is a tope of $\om/e$ and one element otherwise. This means that we have a pushout square at the level of tope sets
\begin{equation}\label{diag:topes_pushout}
    \begin{tikzcd}
	{\Topes(\om/e)\times \Topes(\om|_e)} & {\Topes(\om)} \\
	{\Topes(\om/ e)} & {\Topes(\om\setminus e)}
	\arrow[from=1-1, to=1-2]
	\arrow[from=1-1, to=2-1]
	\arrow[from=1-2, to=2-2]
	\arrow[from=2-1, to=2-2]
    \arrow["\lrcorner"{anchor=center, pos=0.125, rotate=180}, draw=none, from=2-2, to=1-1]
\end{tikzcd}
\end{equation}  Let $\phi\colon \N\to \om/e$ and $\psi\colon \N\to \om/e$ be signed weak maps that make the deigram commute. At the level of ground sets and signs, $\phi$ and $\psi$ are given by the same data $(f\colon E(\N)\to E(\om)\setminus\{e\},\epsilon\colon E(\N)\to \{\pm\})$, and, by the universal property of~\eqref{diag:topes_pushout}, these data determine a unique signed weak map $\N\to \om\setminus e$.
\end{proof} 
\end{example}

\subsection{Presheaves on oriented matroids}
Since, for a pair of oriented matroids, we have $\Topes(\om\oplus \N)\cong \Topes(\om)\times\Topes(\N)$, direct sum is the categorical coproduct in $\mathrm{OM}$. The initial object in $\OM$ is the empty matroid $\varnothing=\U_{0,0}$\footnote{We use convention $\Topes(\varnothing)=\{*\}$}. 

Let $\C=(\C,\otimes,1_{\C})$. In most cases, by $\C$ we mean the category of sets/topological spaces/posets/categories or the category of vector
spaces under the tensor product $(\mathsf{Vect}_{\Field},\otimes,\Field)$. 
\begin{definition}
An $\OM$-presheaf with values in $\C$ is a contravariant monoidal functor
\[
\O\colon (\OM,\oplus,\varnothing)^{\op}\to (\C,\otimes, 1)
\] In a dual way, we define an $\OM$-copresheaf as a monoidal covariant functor. We denote by $\PSh_{\OM}(\C)$ the category of $\C$-valued $\OM$-presheaves.
\end{definition} 

In other words, given an $\OM$-presheaf $\O$, for every signed weak map $\phi\colon \om\to \N$, we have a morphisms $\phi^*\colon \O(\N)\to \O(\om)$, and the presheaf satisfies a decomposition $\O(\om\oplus\N)\cong \O(\om)\otimes\O(\N)$.  

\begin{example}[Topes]
By construction, topes of oriented matroids form an $\OM$-presheaf
\[
\Topes(-)\colon \OM^{\op}\to \mathrm{Sets}
\]Moreover, this presheaf is represented by an oriented matroid $\U_{1,1}$
\[
\Topes(-)\cong \Hom_{\OM}(-;\U_{1,1})
\]
By the Yoneda lemma, this $\OM$-presheaf is determined by the universal property
\[
\Hom_{\mathsf{PSh}_{\OM}}(\Topes,\O)\cong \O(\U_{1,1})
\]
\end{example}
The following example will be central in this article. 
\begin{prop}\label{prop:presheaf-Orlik-Solomon}
For every $n\geq 2$, the generalized Orlik--Solomon algebras $\OS_n$ form a $\OM$-copresheaf
\[
\OS_n\colon\OM\to \mathsf{grCAlg}
\] defined by the rule
\[
\OS_n(\phi)\colon \OS_n(\om)\to \OS_n(\N), \quad \omega_e\mapsto \epsilon^{n}_{\phi}(e)\omega_{f_{\phi}(e)}.
\]
\end{prop}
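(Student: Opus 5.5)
The plan is to define $\OS_n(\phi)$ on the free graded-commutative algebra $\Z[\omega_e\mid e\in E(\om)]$ by $\omega_e\mapsto\epsilon_\phi(e)^n\omega_{f_\phi(e)}$ and to check three things. First, the ideal of $\OS_n(\om)$ is sent into the ideal of $\OS_n(\N)$, so the map descends. Second, the assignment respects identities and composition. Third, it is monoidal with respect to $\oplus$.

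Functoriality is immediate. The composite of $(f_\phi,\epsilon_\phi)$ and $(f_\psi,\epsilon_\psi)$ is $(f_\psi f_\phi,\ (\epsilon_\psi\circ f_\phi)\cdot\epsilon_\phi)$, and $\epsilon\mapsto\epsilon^n$ is multiplicative. Monoidality holds because $\OS_n(\om\oplus\N)\cong\OS_n(\om)\otimes\OS_n(\N)$. To see this, note that circuits of a direct sum are the circuits of either summand, so the presentation splits. The squares $\omega_e^2$ are also harmless: they map to $\pm\omega_{f(e)}^2$, which is a relation. In odd degree $n-1$ (even $n$), $\omega^2=0$ holds anyway by anticommutativity.

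The real content is that $\partial\omega_C$ maps into the ideal for every signed circuit $C$ of $\om$. The key combinatorial input is that a signed weak map pulls back topes to topes. Dually, every circuit of $\om$ pushes forward to something dependent in $\N$. Precisely, I would prove the following claim.

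\textbf{Claim.} Suppose $C\in\C(\om)$. Then either $f_\phi$ is not injective on $\sC$, or the signed set $\phi_*C$ on $f_\phi(\sC)$, with $(\phi_*C)(f(e))=\epsilon(e)C(e)$, contains (is $\succeq$) a signed circuit of $\N$.

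To prove the claim, suppose $f$ is injective on $\sC$ and $\phi_*C$ contains no signed circuit of $\N$. Then by a standard duality (Farkas-type) lemma in oriented matroids there is a tope $T$ of $\N$ with $T|_{f(\sC)}=\phi_*C|_{f(\sC)}$. Its pullback $\phi^*T$ then agrees with $C$ on $\sC$, which contradicts orthogonality of the tope $\phi^*T$ of $\om$ with $C$.

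Given the claim, I would treat the two cases.

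In the non-injective case, two elements of $C$ map to the same $g$. Every term of $\phi(\partial\omega_C)$ except the two omitting one of these elements contains $\omega_g^2$, hence vanishes. The two surviving terms must cancel, and this is a sign check. For odd $n$ the coefficients are $C(e)\epsilon(e)$ and $C(e')\epsilon(e')$; for even $n$ they come from the ordering. The sign check should use that $e,e'$ become parallel or antiparallel in $\N$ consistently with $C$, i.e.\ apply the claim to the circuit $\{g\}$-type dependency. This is where I expect the main obstacle: for even $n$ it is routine, but for odd $n$ it needs that the image of $C$ restricted to $e,e'$ is compatible with signs.

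In the injective case, $\phi(\partial\omega_C)=\pm\partial\omega_{\phi_*C}$ up to the signs $\epsilon^n$, which match Cordovil's convention for odd $n$ and disappear for even $n$. It remains to show that $\partial\omega_D\in I$ whenever the signed set $D$ merely contains a signed circuit $D'$. For this I would use the standard identities $\omega_{D'}\in I$ (from $\omega_{D'}=\pm\omega_e\partial\omega_{D'}$ and $\omega_e^2=0$) and the derivation property $\partial(\omega_{D'}\omega_S)=\partial\omega_{D'}\,\omega_S\pm\omega_{D'}\partial\omega_S$. Together these show $\partial\omega_D\in I$.
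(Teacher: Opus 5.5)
Your functoriality, monoidality and square-relation checks are fine. The injective case is also fine: your Claim is proved exactly as in the paper, since a tope of $\N$ extending the image sign vector would pull back to a tope of $\om$ agreeing with $C$ on $\sC$. Likewise, the identity $\partial\omega_D=\partial\omega_{D'}\,\omega_S\pm\omega_{D'}\,\partial\omega_S$ together with $\omega_{D'}=0$ is the paper's final step.

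The gap is the non-injective case for odd $n$. Suppose exactly one pair $e\neq e'$ of $\sC$ collides, with common image $g$. Put $A=f_\phi(\sC)$ and $s=\epsilon_\phi C$. Up to the global sign $\prod\epsilon_\phi$, the two surviving terms are $s_e\,\omega_A$ and $s_{e'}\,\omega_A$. They cancel only if $s_e=-s_{e'}$, and nothing forces this.

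Concretely, let $\om$ be realised by $(1,0),(0,1),(-1,-1)$ in $\Real^2$, so $C=(+,+,+)$ is a signed circuit. Let $\N$ be the rank-one oriented matroid on $\{g,h\}$ with $g,h$ antiparallel. Then $1,2\mapsto g$, $3\mapsto h$ with all signs $+$ is a signed weak map, since the pulled-back topes are $(+,+,-)$ and $(-,-,+)$. Under it, $\partial\omega_C\mapsto 2\,\omega_g\omega_h$: the two terms add rather than cancel. Your proposed remedy, that $e,e'$ ``become parallel or antiparallel consistently with $C$'', cannot work. They become the \emph{same} element, and the weak-map condition imposes no relation between $s_e$ and $s_{e'}$. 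For even $n$ your cancellation is correct regardless of signs, so the problem is confined to odd $n$.

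What is missing is that in the equal-sign case you do not need cancellation. When $s_e=s_{e'}$, the sign vector $X$ on $A$ with $X(f_\phi(e))=s_e$ is well defined. Your tope-pullback argument then applies verbatim and gives $X\succeq D$ for some signed circuit $D$ of $\N$. Hence $\omega_A=\pm\omega_{D}\,\omega_{A\setminus\underline D}=0$ in $\OS_n(\N)$, and the surviving $\pm2\,\omega_A$ vanishes. In the example, $\omega_g\omega_h=0$ because $\{g,h\}$ is a circuit.

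This is how the paper organises the argument. It uses cancellation only for collisions when $n$ is even and for collisions with opposite signs when $n$ is odd. In every remaining case, including repeated labels with equal signs, it applies circuit extraction to the signed multiset $(a_i,s_i)$. It then writes the image as $(\partial\omega_D)\prod_{j\in J}\omega_{a_j}$ plus multiples of $\omega_D$.

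The fix is to strengthen your Claim: either two colliding elements receive opposite signs, or the (then well-defined) image sign vector contains a signed circuit of $\N$. With that change, your proof goes through.
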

\begin{proof} The proposed assignment preserves the square-zero relations, so it remains to check the circuit relations. Let $C$ be a signed circuit of $\om$, with ordered support $\{e_1,\ldots,e_d\}$, and put
\[
 a_i=f_{\phi}(e_i),\qquad s_i=\epsilon_{\phi}(e_i)C(e_i)\in\{\pm1\}.
\]
The image of $\partial\omega_C$ is $R$ when $n$ is even, and $(\prod_{i=1}^d\epsilon(e_i))R$ when $n$ is odd, where
\[
 R=
 \begin{cases}
 \displaystyle\sum_{i=1}^d(-1)^i\prod_{j\ne i}\omega_{a_j},&n\text{ even},\\[6pt]
 \displaystyle\sum_{i=1}^d s_i\prod_{j\ne i}\omega_{a_j},&n\text{ odd}.
 \end{cases}
\]
All products are taken in increasing order of their indices, retaining repeated occurrences. We show that $R=0$ in $\OS_n(\N)$.

First, for any signed circuit $D$ of $\N$, its monomial $\omega_D$ vanishes: multiplying $\partial\omega_D=0$ by $\omega_a$, for any $a\in\underline D$, leaves a single term $\pm\omega_D$, since every other term contains $\omega_a^2$.

If $n$ is even and $a_i=a_j$ for distinct indices, reorder the support of $C$ so that these two occurrences come first; this changes $R$ only by an overall sign. The two summands omitting these occurrences cancel, and every other summand contains $\omega_{a_i}^2$. Hence $R=0$ whenever an image label is repeated. If $n$ is odd and $a_i=a_j$ with $s_i=-s_j$, the same cancellation holds because the generators commute. Thus we may assume that the signs $s_i$ define a sign vector $X$ on $A=\{a_1,\ldots,a_d\}$ by $X(a_i)=s_i$; for even $n$, the labels $a_i$ are moreover distinct.

The sign vector $X$ cannot be a tope of $\N|_A$. Indeed, such a tope extends to a tope $T$ of $\N$: lift it to a covector of $\N$ and compose with any tope. Then $\phi^*T$ would be a tope of $\om$ agreeing with $C$ on $\underline C$, contradicting circuit--covector orthogonality. Since $X$ has full support on $A$ but is not a covector of $\N|_A$, some signed circuit $D$ of this restriction is not orthogonal to $X$. Reversing $D$ if necessary gives $D\preceq X$. This is also a signed circuit of $\N$.

Choose a set of indices $I\subseteq\{1,\ldots,d\}$ containing exactly one occurrence of each element of $\underline D$, and put $J=\{1,\ldots,d\}\setminus I$. For even $n$, reorder once more so that the indices in $I$ come first, changing $R$ only by an overall sign. In either parity, the summands of $R$ indexed by $I$ add up to
\[
 (\partial\omega_D)\prod_{j\in J}\omega_{a_j}=0.
\]
Here the order on $\underline D$ is induced by $I$, and for odd $n$ the coefficients agree because $D(a_i)=s_i$ for $i\in I$. Every remaining summand contains all the selected occurrences indexed by $I$, so is a multiple of $\omega_D=0$. This also handles repeated image labels with equal signs when $n$ is odd. Thus every circuit relation maps to zero, and $\OS_n(\phi)$ is well defined. The identity and composition laws follow on generators.

Finally, the circuits of a direct sum are precisely the circuits of its two summands, so the presentations give natural isomorphisms
\[
 \OS_n(\om\oplus\N)\cong\OS_n(\om)\otimes\OS_n(\N),
 \qquad \OS_n(\varnothing)\cong\Z,
\]
compatible with the maps above. Hence $\OS_n$ is a monoidal covariant functor.
\end{proof}

\subsection{Degenerations of pseudohyperplane arrangements}\label{sec:degenerations}Geometrically, weak maps correspond to degenerations of pseudohyperplane arrangements. We give an explicit definition of this notion below. 
\begin{definition} A \textit{degeneration} of pseudohyperplane arrangements, denoted $\A\rightsquigarrow \A'$, is a family of pseudohyperplane arrangements $\A(t) = \{(H_e(t), H^{+}_e(t), H^{-}_e(t)), e \in E\}$ in $\Real^r$ indexed by $t\in [0,1]$, with $\A'=\A(0)$ and $\A=\A(1)$, such that 
    \begin{enumerate} 
        \item[(Continuity)] For every $e\in E$, there exists a continuous function 
        \[
            F_e:[0,1]\times \Real^r \rightarrow \Real,
        \]
        such that $H_e(t) = \{F(t, \cdot) = 0\}$ and $H^{\pm}_e(t) = \{\pm F_e(t, \cdot) > 0 \}$.
        \item[(Trivialization)] There exists a homeomorphism 
        \[
            \Phi:(0,1]\times \Real^r \xrightarrow{\simeq}(0,1]\times \Real^r
        \]
        of the form $\Phi(t,x) = (t, h_t(x))$ with $h_1 = \Id$, such that 
        \[
            h_t(H^{\epsilon}_e(1)) = H^{\epsilon}_e(t), \quad \forall e \in E, \forall t\in (0,1], \forall\epsilon \in \{0, +, -\}.
        \]
    \end{enumerate}
\end{definition}
\begin{lemma}
A degeneration $\A\rightsquigarrow\A'$ induces a weak map of oriented matroids 
\[
    \om(\A) \rightarrow \om(\A').
\]
\end{lemma}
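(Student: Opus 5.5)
We take the identity on $E$ as the ground-set map and the constant sign $+$. By the definition of signed weak maps, we must show that every tope of $\om(\A')$ is a tope of $\om(\A)$, i.e.\ $\Topes(\om(\A'))\subseteq\Topes(\om(\A))$. The idea is that a point in an open chamber of the special fibre stays in a chamber with the same sign vector for nearby generic parameters, and the trivialization identifies the generic fibres with $\A=\A(1)$.

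First, let $T\in\Topes(\om(\A'))$. Since the oriented matroid is loopless, a tope has full support. Choose $x\in\Real^r$ with $\sigma_{\A(0)}(x)=T$. Then $T(e)F_e(0,x)>0$ for every $e\in E$. Each $F_e$ is continuous on $[0,1]\times\Real^r$ and $E$ is finite, so there is $t_0\in(0,1]$ with $T(e)F_e(t_0,x)>0$ for all $e$. By the Continuity condition this says $x\in H_e^{T(e)}(t_0)$ for every $e$, so $\sigma_{\A(t_0)}(x)=T$.

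Next, put $y=h_{t_0}^{-1}(x)$. The Trivialization condition gives $h_{t_0}(H^\epsilon_e(1))=H^\epsilon_e(t_0)$ for every $\epsilon\in\{0,+,-\}$. Because $h_{t_0}$ is a bijection, it follows that $y\in H_e^{T(e)}(1)$ for each $e$. Hence $\sigma_{\A}(y)=T$, so $T$ is a covector of $\om(\A)$. It has full support, so it is a tope.

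Finally, composition with another degeneration is not required; the pair (identity, $+$) is then a signed weak map $\om(\A)\to\om(\A')$. There is no real obstacle. The only point to check is consistency of the sign labels. The Continuity condition fixes $H^\pm_e(t)=\{\pm F_e(t,\cdot)>0\}$ at every $t$, including $t=0$. The Trivialization condition respects the labels $\epsilon$. These two facts make the sign vectors agree without any reorientation.
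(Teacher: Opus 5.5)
Your proof is correct and follows the paper's argument. You move a chamber point of $\A(0)$ to a small positive parameter $t_0$ using continuity of the $F_e$, then transport it to $\A(1)$ via $h_{t_0}^{-1}$. You are also more explicit than the paper about why $h_{t_0}^{-1}(x)$ has the same sign vector: $h_{t_0}$ is a bijection and the trivialization preserves the labels $\epsilon$.
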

\begin{proof}
Let $T$ be a tope of $\om(\A')=\om(\A(0))$ and $x_T \in \Real^r$ an element of the corresponding region of $\U(\A')$, i.e.,
\[
    \forall e\in E: T(e)F_e(0,x_T) > 0.
\]
By continuity of all the $F_e$'s, there exists $\epsilon > 0$ such that 
\[
 \forall e\in E:  T(e)F_e(\epsilon,x_T) > 0,
\]
for all $e\in E$. Using the trivialization $\Phi$, we obtain the existence of some element $x_T' \in \Real^r$ such that 
\[
\forall e\in E:    T(e)F_e(1,x'_T) >0,
\] 
so $T$ is a tope of $\om(\A)=\om(\A(1))$ as well.
\end{proof}
Let us consider a typical example of a degeneration.
\begin{example}[Pappus pushout square] Let us consider the Non-Pappus pseudocircle configuration from Figure~\ref{fig:pseudocirc_N9}. The degeneration from Figure~\ref{fig:deformation_pappus} produces a weak morphism of oriented matroids $\mathsf{N}_9\to\mathsf{P}_9$.
\begin{figure}
\centering
\resizebox{0.98\linewidth}{!}{%
\begingroup
\def\NPamplitude{0.22}
\begin{tikzpicture}[
  x=2.75cm, y=2.75cm,
  line cap=round, line join=round,
  every node/.style={font=\small},
  npfront/.style={draw=black, line width=0.48pt},
  npback/.style={draw=black!28, line width=0.3pt,
    dash pattern=on 1.3pt off 1.2pt},
  npmotion/.style={-{Stealth[length=1.3mm,width=0.95mm]},
    line width=0.45pt, shorten <=1.3pt},
  declare function={
    npbend(\t,\e)=\e*sin(\t)*
      (max(0,1-(cos(\t)/sin(25))^2))^2;
    npx(\t,\e)=cos(\t+2)/sqrt(1+npbend(\t,\e)^2);
    npy(\t,\e)=(-sin(21)*sin(\t+2)+cos(21)*npbend(\t,\e))
      /sqrt(1+npbend(\t,\e)^2);
  }
]
  \foreach \offset/\deform/\letter in {
    0/\NPamplitude/N, 3.24/0/P
  } {
    \begin{scope}[shift={(\offset,0)}]
      \fill[black!2] (0,0) circle[radius=1];
      \draw[black!45,line width=0.4pt]
        (0,0) circle[radius=1];

      \foreach \sty/\lo/\hi/\weight/\shade in {
        npback/180/360/0.55/35, npfront/0/180/0.95/100
      } {
        \foreach \i/\a/\b/\c in {
          1/-5/3/3,  2/-1/3/3,  3/5/3/3,
          4/-5/-3/3, 5/-1/-3/3, 6/5/-3/3,
          7/-3/0/3,  9/2/0/3
        } {
          \pgfmathsetmacro{\nn}{sqrt(\a*\a+\b*\b+\c*\c)}
          \pgfmathsetmacro{\nx}{
            (cos(2)*\a-sin(2)*\b)/\nn}
          \pgfmathsetmacro{\ny}{
            (-sin(21)*sin(2)*\a-sin(21)*cos(2)*\b
             +cos(21)*\c)/\nn}
          \pgfmathsetmacro{\nz}{
            (cos(21)*sin(2)*\a+cos(21)*cos(2)*\b
             +sin(21)*\c)/\nn}
          \pgfmathsetmacro{\qq}{sqrt(\nx*\nx+\ny*\ny)}
          \draw[\sty,name path global=np-\i]
            plot[variable=\t,domain=\lo:\hi,samples=181]
            ({(-\ny*cos(\t)-\nz*\nx*sin(\t))/\qq},
             {( \nx*cos(\t)-\nz*\ny*sin(\t))/\qq});
        }

        \ifdim\deform pt>0pt
          \pgfmathsetmacro{\gstart}{\lo+65}
          \pgfmathsetmacro{\gfinish}{\lo+115}
          \draw[draw=black!\shade,line width=0.3pt,
                densely dotted,opacity=0.45]
            plot[variable=\t,domain=\gstart:\gfinish,samples=101]
            ({npx(\t,0)},{npy(\t,0)});
        \fi

        \pgfmathsetmacro{\start}{\lo-2}
        \pgfmathsetmacro{\finish}{\hi-2}
        \draw[\sty,draw=black!\shade,line width=\weight pt,
              name path global=np-8]
          plot[variable=\t,domain=\start:\finish,samples=361]
          ({npx(\t,\deform)},{npy(\t,\deform)});

        \ifdim\deform pt>0pt
          \pgfmathsetmacro{\mt}{\lo+90}
          \pgfmathsetmacro{\mend}{0.20*\deform}
          \draw[npmotion,draw=black!\shade]
            plot[variable=\e,domain=\deform:\mend,samples=17]
            ({npx(\mt,\e)},{npy(\mt,\e)});
        \fi
      }

      \foreach \xx/\yy in {
        -0.024678/-0.913391,  0.624331/ 0.436405,
         0.484256/-0.313546, -0.654094/-0.741836,
         0.616891/-0.635153, -0.544108/-0.300676,
        -0.661534/ 0.177006, -0.024678/ 0.406891
      } {\fill (\xx,\yy) circle[radius=1.15pt];}

      \foreach \xx/\yy in {
         0.514182/ 0.794104,  0.349743/-0.560290,
         0.948105/ 0.283359,  0.937069/-0.125122,
        -0.514182/ 0.806973, -0.361925/-0.621555,
        -0.411794/-0.076501, -0.959142/-0.101392,
         0.300261/-0.011261
      } {\fill (\xx,\yy) circle[radius=0.8pt];}

      \ifdim\deform pt>0pt
        \path[name intersections={of=np-7 and np-8,by=np78}];
        \path[name intersections={of=np-9 and np-8,by=np89}];
        \fill (np78) circle[radius=0.8pt];
        \fill (np89) circle[radius=0.8pt];
        \fill (-0.034899,-0.358150) circle[radius=0.8pt];
        \node[font=\scriptsize,anchor=west,
              fill=black!2,inner sep=0.6pt]
          at (-0.012,-0.255) {$\varepsilon$};
      \else
        \fill (-0.034899,-0.358150) circle[radius=1.5pt];
      \fi

      \foreach \i/\ang in {
        1/70.677, 2/212.414, 3/108.782,
        4/231.170, 5/12.960, 6/306.772,
        7/46.569, 8/180, 9/144.243
      } {
        \draw[black!45,line width=0.3pt]
          (\ang:1.005) -- (\ang:1.045);
        \node at (\ang:1.09) {$\i$};
      }
      \ifdim\deform pt>0pt
        \node[font=\normalsize] at (0,-1.21)
          {$\mathsf{N}_9\quad(\varepsilon>0)$};
      \else
        \node[font=\normalsize] at (0,-1.21)
          {$\mathsf{P}_9\quad(\varepsilon=0)$};
      \fi
    \end{scope}
  }

  \draw[-{Stealth[length=2mm,width=1.4mm]},line width=0.65pt]
    (1.19,0.12) -- node[above=3pt,font=\normalsize]
    {$\varepsilon\to0$} (2.02,0.12);
\end{tikzpicture}
\endgroup

 }
   
    \caption{Pappus circle configuration as a degeneration of Non-Pappus pseudocircle configuration}
    \label{fig:deformation_pappus}
\end{figure}
It is obtained from a local deformation that transforms an independent set $\{7,8,9\}$ into a dependent set of rank $2$. This observation tells us that this morphism fits into a pushout square
\[\begin{tikzcd}
	{\U_{3,3}} & {\U_{2,3}} \\
	{\mathsf{N}_9} & {\mathsf{P}_9}
	\arrow[from=1-1, to=1-2]
	\arrow[from=1-1, to=2-1]
	\arrow[from=1-2, to=2-2]
	\arrow[from=2-1, to=2-2]
	\arrow["\lrcorner"{anchor=center, pos=0.125, rotate=180}, draw=none, from=2-2, to=1-1]
\end{tikzcd}\]
\end{example}

However, according to our literature search, it is not clear when weak maps that are the identity on the ground set, are induced by sequences of degenerations. We refer to~\cite{dobbins2025homotopyequivalencegrassmanniansmacphersonians} for a study of the case of rank $3$ matroids.

Our interest in the notion of degeneration stems from the following considerations. For any degeneration $\A(t)$ and integer $n\geq 1$, one can consider the \emph{total higher complement} $\W_n$ of $\A$, defined as 
\[
    \W_n = \{ (t,x) \in [0,1]\times (\Real^r)^n \mid x \in \U_n(\A(t))\}.
\] Note that $\W_n$ is a topological manifold with boundary $\partial\W_n=\W_n|_{t=0}\cup \W_n|_{t=1}\cong \U_n(\A')\cup \U_n(\A)$, and interior $\W^{\circ}_n=\W_n\cap \{t\neq 0,1\}$
\begin{lemma}\label{lem:geometric_lemma}
The inclusion
\[
\W^{>0}_n:=\W_n\cap\{t>0\}\hookrightarrow \W_n 
\] is a homotopy equivalence.
\end{lemma}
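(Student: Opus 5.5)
The plan is to show that the open subset $\W^{>0}_n$ is a deformation retract, up to homotopy, of $\W_n$. The idea is to push the special fibre slightly into positive time. The main input is that $\W_n$ is open in $[0,1]\times(\Real^r)^n$. A point $(0,x)$ with $x\in\U_n(\A')$ means that for every $e$ some coordinate $x_i$ has $F_e(0,x_i)\neq 0$. By continuity of the $F_e$, this persists on a neighbourhood $[0,\delta)\times V$. Hence $\W_n$ contains an open neighbourhood of $\{0\}\times\U_n(\A')$, so the slice $\U_n(\A')$ has a ``collar direction'' into $t>0$.

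The first step is to build a continuous function $\delta\colon \U_n(\A')\to(0,1]$ such that $[0,\delta(x)]\times\{x\}\subset\W_n$. Using the openness above, each $x$ has a neighbourhood $V_x$ and a $\delta_x>0$ with $[0,\delta_x)\times V_x\subset\W_n$. A partition of unity on the paracompact space $\U_n(\A')$, subordinate to $\{V_x\}$, glues these into a continuous $\delta$ with $[0,\delta(x)]\times\{x\}\subset\W_n$. Better still, I would define a continuous positive function $\delta$ on all of $\W_n$ by the same argument, again with $[0,\delta(t,x)]\times\{x\}\subset\W_n$ where needed. Concretely, I would take
\[
\delta(t,x)=\tfrac12\sup\{s\le 1:\ [0,s]\times\{x\}\subset \W_n\}\ \text{if } t=0,
\]
smoothed by a partition of unity, or simply use the partition-of-unity construction on $\W_n$ itself.

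The second step is to define the map $g\colon\W_n\to\W^{>0}_n$ by $g(t,x)=(\max(t,\delta(x)),x)$, with a homotopy $H_s(t,x)=(\max(t,s\delta(x)),x)$. I need to check that $H_s$ stays inside $\W_n$. If the time coordinate is $t$ itself, the point $(t,x)$ is in $\W_n$. Otherwise the time is at most $\delta(x)$, and then the point lies in the interval $[0,\delta(x)]\times\{x\}$ chosen to sit in $\W_n$. For this to work, $\delta$ must be defined on the projection of $\W_n$ to $(\Real^r)^n$ in a way that guarantees $[0,\delta(x)]\times\{x\}\subset\W_n$. This is the delicate point: a point $(t,x)$ with $t>0$ need not have $(0,x)\in\W_n$.

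To handle this, I would replace the ``max'' construction by one that only moves points with small $t$ near the special fibre. Let $U\subset\W_n$ be the open set of $(t,x)$ with $[0,t]\times\{x\}\subset\W_n$; it contains the special fibre. On $U$ define $\delta$ as above, continuous and positive, with $[0,\delta(x)]\times\{x\}\subset\W_n$, and chosen so that $\delta(x)<\eta(x)$ for some $\eta$ with $[0,\eta(x)]\times\{x\}\subset U$. Pick a bump function $\rho$ on $\W_n$ equal to $1$ near $\{t=0\}$ and supported in $U$. Then $H_s(t,x)=(\max(t,s\rho\,\delta(x)),x)$ is continuous, stays in $\W_n$, preserves $\W^{>0}_n$, and $H_1$ lands in $\W^{>0}_n$. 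This is because $\rho=1$ wherever $t=0$, and where $\rho<1$ we already have $t>0$. Therefore $H_1\colon\W_n\to\W^{>0}_n$ is a homotopy inverse to the inclusion: both composites are homotopic to the identity through $H_s$, which preserves $\W^{>0}_n$.

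The main obstacle is this bookkeeping of the choice of $\delta$ and $\rho$ so that all straight-line time pushes remain in $\W_n$; the rest is formal. Note that the trivialization $\Phi$ is not needed here; only continuity of the $F_e$ and openness of $\W_n$ are used.
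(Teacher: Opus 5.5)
Your argument is correct, but it takes a different route from the paper.

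The paper's proof observes that $\W_n$ is a topological manifold with boundary $\W_n|_{t=0}\cup\W_n|_{t=1}$. It invokes Brown's collaring theorem to show that $\W_n^{\circ}\hookrightarrow\W_n$ is a homotopy equivalence. The same argument, applied to the manifold $\W^{>0}_n$ with boundary $\W_n|_{t=1}$, shows that $\W_n^{\circ}\hookrightarrow\W^{>0}_n$ is one too, and the paper concludes by two-out-of-three.

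You avoid manifold topology entirely. You use only that $\W_n$ is open in $[0,1]\times(\Real^r)^n$, so the $t$-coordinate itself serves as an explicit collar near the special fibre. The positive function $\delta$ on $\U_n(\A')$ and the cutoff $\rho$ give the deformation $H_s(t,x)=(\max(t,s\rho\,\delta(x)),x)$. It stays in $\W_n$, preserves $\W^{>0}_n$, and pushes $\W_n$ into $\W^{>0}_n$, so $H_1$ is an explicit homotopy inverse to the inclusion.

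The paper's proof is shorter, but it depends on a nontrivial theorem and on $\W_n$ being a manifold with boundary. Yours is elementary and self-contained, and it works verbatim for any open subset of $[0,1]\times Y$ with $Y$ metrizable.

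A few small points to tidy up:
\begin{itemize}
\item Openness of $U$ needs the tube lemma, using compactness of $[0,t]$.
\item When you glue the local constants $\delta_x$ coming from half-open intervals $[0,\delta_x)$, shrink them first (e.g.\ use $\delta_x/2$). Otherwise the closed interval $[0,\delta(x)]\times\{x\}$ is not guaranteed to lie in $\W_n$.
\item The auxiliary function $\eta$ is unnecessary. The condition $[0,\delta(x)]\times\{x\}\subset\W_n$ already says $(\delta(x),x)\in U$.
\end{itemize}
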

\begin{proof}
By~\cite[Theorem~2, p.~339]{brown1962collars}, the boundary $\partial\W_n$ is collared in $\W_n$, which implies that the inclusion $\W_n^{\circ} \hookrightarrow \W_n$ is a homotopy equivalence (see the argument in the proof of ~\cite[Theorem 9.26]{lee2013smooth}). By the same argument, the inclusion $\W_n^{\circ} \hookrightarrow \W_n^{>0}$ is also a homotopy equivalence, which implies the desired equivalence by the two-out-of-three property.   
\end{proof}
Thanks to the trivialization of $\A(t)$ over $(0,1]$, the inclusion $\U_n(\A)\subset \W^{>0}_n$ is a deformation retract, so, by Lemma~\ref{lem:geometric_lemma}, the inclusion $\U_n(\A)\subset \W_n$ is a homotopy equivalence as well. Hence, we can invert this inclusion in the homotopy category of spaces to obtain a map
\begin{equation}\label{eq::homotopy_map_complements}
        \U_n(\A') \overset{\iota_0}{\hookrightarrow} \W_n \overset{\iota^{-1}_1}{\rightarrow} \U_n(\A).
\end{equation}
We show in Section~\ref{sec:weight_topes} that this map is completely determined by the induced weak map 
\[
\om(\A) \rightarrow \om(\A'),
\]
and construct an $\OM$-presheaf $\Topes_n$ that models these maps.

\begin{remark} Let us note that the existing model given by the higher Salvetti posets $\Sal_n(\om)$ described in~\ref{sec:salvetti-recollections} is ill-suited for this task. The reason is that, for a signed weak map $\phi$, the naive correspondence 
\[
(X_1\leq X_2\leq \cdots \leq X_n)\mapsto(\phi^*X_1\leq \cdots \phi^*X_n) 
\] forms a well-defined map $\phi^*\colon \Sal_n(\N)\to \Sal(\om)$ only if $\phi$ is a \textit{strong map}, i.e., if the pullback preserves covectors $\phi^*\La(\N)\subset \La(\om)$. Note that not all weak maps/degenerations are strong. For example, there is a degeneration of real line arrangements that produces a weak map $\triangle_3\colon \U_{2,3}\to \U_{1,2}\oplus \U_{1,1}$
\[
\{\{x\},\{y\},\{tx+y\}\} \overset{t\to 0}{\rightsquigarrow}\{\{x\},\{y\},\{y\}\}.
\] We see that $(0,0,+)$ is a covector of $\U_{1,2}\oplus \U_{1,1}$, but is not a covector of $\U_{2,3}$.
\end{remark}

\subsection{Presheaves on unoriented matroids} We can develop a theory of presheaves for unoriented matroids as follows. 

Let $\UM$ be the category whose objects are loopless unoriented matroids and whose morphisms are\textit{weak maps}: a map $f\colon E(\mat)\to E(\Nmat)$ is weak if  $\rk_{\Nmat} f(S)\leq \rk_{\mat} S$ for every $S\subseteq E(\mat)$; see~\cite[Sec~2.1]{stamps2013topological}.

All examples of morphisms discussed in the oriented case exist on the unoriented level as well. As an example, for a flat $F$ of $\mat$, we have a weak map $\triangle_F\colon \mat\to \mat/F\oplus\mat|_F$ that is an identity on the level of ground sets, since
\[
    \rk_{\mat/F\oplus \mat|_F}(S)=\rk_{\mat/F}(S\setminus F)+\rk _{\mat|_F}(S\cap F)=\rk_{\mat}(S\cup F)+\rk_{\mat}(S\cap F)-\rk_{\mat}(F)\leq \rk_{\mat}(S)
\] 
\begin{claim*}
Assigning to an oriented matroid its underlying matroid $\umat$ defines a functor
\[
\underline{(-)}\colon\OM\to \UM, \quad \om\mapsto \underline{\om}, \quad \phi=(f_{\phi},\epsilon_{\phi})\mapsto f
\]    
\end{claim*}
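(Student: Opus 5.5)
The claim has two parts. First, for a signed weak map $\phi=(f_\phi,\epsilon_\phi)\colon\om\to\N$, the ground-set map $f=f_\phi$ is a weak map $\umat\to\underline{\N}$, that is, $\rk_{\underline\N}f(S)\leq\rk_{\umat}S$ for every $S\subseteq E(\om)$. Second, identities and composition are preserved. Functoriality is immediate, since composition of signed weak maps composes the ground-set maps (and multiplies the signs). So the whole content is the rank inequality.

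To prove the rank inequality I will argue through independent sets. It suffices to show that if $I\subseteq E(\om)$ is a set on which $f$ is injective and $f(I)$ is independent in $\underline\N$, then $I$ is independent in $\umat$. Indeed, given $S$, choose a basis $B'$ of $f(S)$ in $\underline\N$ and pick preimages $I\subseteq S$ with $f|_I\colon I\to B'$ bijective. Then $I$ is independent, so $\rk_{\umat}S\geq|I|=\rk_{\underline\N}f(S)$.

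Now suppose $I$ is dependent in $\umat$. Then $I$ contains the support of a signed circuit $C$ of $\om$. Since $f$ is injective on $\underline C$ and $f(\underline C)\subseteq f(I)$ is independent in $\underline\N$, the restriction $\N|_{f(\underline C)}$ has no circuits. Every sign vector on $f(\underline C)$ is therefore a tope of this restriction: it is orthogonal to all (no) circuits, and it has full support.

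Take the sign vector $X(f(e))=\epsilon_\phi(e)C(e)$ on $f(\underline C)$, which is well defined by injectivity. As in the proof of Proposition~\ref{prop:presheaf-Orlik-Solomon}, lift $X$ to a covector of $\N$ (using $\La(\N|_A)=\{Y|_A\}$) and compose it with any tope to get a tope $T$ of $\N$ with $T|_{f(\underline C)}=X$. Then $\phi^*T$ is a tope of $\om$ with $(\phi^*T)(e)=\epsilon_\phi(e)^2C(e)=C(e)$ on $\underline C$. So $\phi^*T$ agrees with $C$ on the whole support of $C$, and the product $\phi^*T\cdot C$ takes only the sign $+$ on $\underline C$. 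This contradicts $\phi^*T\perp C$.

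The main point of care is exactly this lifting step. One must check that the restriction of $\N$ to the independent set $f(\underline C)$ is free, so that its covectors are all of $\{+,-,0\}^{f(\underline C)}$, and that a covector of a restriction extends to a tope of $\N$. Both follow from the restriction formula for covectors recalled above, together with the closure of $\La(\N)$ under composition. Once this is in place the rest is bookkeeping.
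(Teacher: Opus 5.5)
Your proof is correct and follows essentially the same route as the paper. Both reduce the rank inequality to showing that a set on which $f_\phi$ is injective with independent image is itself independent, and both do this by extending sign patterns on the image to topes of $\N$ and pulling them back along $\phi$.

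The differences are cosmetic. The paper pulls back all patterns to conclude that the restriction of $\om$ is free, whereas you pull back the one pattern coming from a circuit $C$ and contradict orthogonality directly. You also spell out the basis-lifting reduction, which the paper leaves implicit.
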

\begin{proof}
It suffices to show that, for $S\subset E(\om)$, if $f_{\phi}|_S$ is injective and $f_{\phi}(S)$ is independent, then $S$ is independent as well. Since $f_{\phi}(S)$ is independent, every sign pattern on $f(S)$ extends to a tope of $\N$. Pulling these topes along $\phi$ shows that $\Topes(\om|_S)=\{\pm\}^S$, so $\om|_S$ is free, i.e., $S$ is independent.
\end{proof}

Similarly to the oriented case, we define a $\C$-valued $\UM$-presheaf as a contravariant monoidal functor
\[
\O\colon (\UM,\oplus,\varnothing)^{\op}\to (\C,\otimes, 1)
\] Thanks to the underlying matroid functor, every $\UM$-presheaf $\O$ defines an $\OM$-presheaf $\underline{\O}$ by the rule $\underline{\O}(\om):=\O(\umat)$.
\begin{example}[$\OS$-algebras]
Recall that the algebra $\OS$ is defined for every unoriented matroid. Moreover, as in the oriented case, $\OS$ algebras form an $\UM$-copresheaf $\OS\colon \UM\to \mathsf{grCAlg}$ by the rule
\[
\OS(f)\colon \OS(\mat)\to \OS(\Nmat), \quad \omega_e\mapsto \omega_{f(e)}.
\] Indeed, we have to check that, for a circuit $C$ of $\om$, the image $\OS(f)(\partial\omega_{C})$ is zero in $\OS(\Nmat)$. If $|f(C)|<|C|$, then the monomial $\OS(f)(\omega_{C})$ contains a square monomial $\omega^2_e$, so $\OS(f)(\partial\omega_{C})$. If $|f(\C)|=|\C|$, then $\rk_{\Nmat}f(\C)\leq \rk_{\mat}\C<|\C|=|f(\C)|$, which implies that $f(\C)$ is dependent set and hence $f(\partial\omega_{\C})=\partial\omega_{f(\C)}=0$. 

In particular, this implies that the $\OM$-copresheaf $\OS_n$, for even $n$, is obtained from the $\UM$-presheaf by precomposing with the underlying matroid functor.
\end{example}

We finish this subsection by considering three additional examples.
\begin{example}[Rainbow circuit free $r$-colorings]
For a matroid $\om$, a rainbow circuit free $r$-colorings of a matroid $\mat$ is a coloring of the ground set $c\colon E(\mat)\to [r]$ such that every circuit contains two elements of the same color~\cite{berczi2020rainbow}. Equivalently, every subset on which $c$ is injective is independent, or $|c(S)|\leq \rk_{\mat} S$ for all $S\subset E(\om)$. In other words, it determines a weak map to the rank $r$ free matroid $c\colon\mat\to \U_{r,r}$. Such colorings therefore form the representable $\UM$-presheaf $\Hom_{\UM}(-;\U_{r,r})$.
\end{example}
\begin{example}[Circuit varieties]
For a vector space $\Field^d$ and a matroid $\mat$, its (affine) \textit{circuit variety} is an affine algebraic variety~\cite{ClarkeGraceMohammadiMotwani2023}
\[
\C^{\mathrm{aff}}_d(\mat)=\{(v_e)\in \Field^d\mid\dim \langle v_e\rangle_{e\in S}\leq \rk_{\mat}S \text{ for every }S\subset E(\mat)\}.
\] If we replace vectors with points in projective space $\Pro_{\Field}^{d-1}$, we obtain a projective version $\C^{\mathrm{proj}}_{d-1}(\mat)$. Note that, at the level of sets, we have
\[
\C^{\mathrm{aff}}_d(\mat)=\Hom_{\UM}(\mat;\mathrm{AG}(d;\Field)),\quad \C^{\mathrm{proj}}_d(\mat)=\Hom_{\UM}(\mat;\mathrm{PG}(d;\Field)),
\] where $\mathrm{AG}(d;\Field)$ and $\mathrm{PG}(d;\Field)$ are affine/projective geometry matroids. Moreover, we can see that the induced $\UM$-presheaf structures indeed are morphisms of algebraic varieties. So, $\C^{\mathrm{aff}}_d$ and $\C^{\mathrm{proj}}_d$ form $\UM$-presheaves in the category of algebraic varieties.
\end{example}

\begin{example}[Holonomy Lie algebras]\label{ex:holonomy-presheaf}
For a matroid $\mat$, its integral holonomy Lie algebra $\h(\mat)$ is the free Lie algebra on $\OS^1(\mat)^*$ modulo the ideal generated by the image of the dual multiplication map
\[
 \OS^2(\mat)^*\longrightarrow
 \bigwedge^2\OS^1(\mat)^*,
\] see~\cite{suciu2025resonance} for details. Taking duals of the $\UM$-copresheaf structure on $\OS$ induces an $\UM$-presheaf structure on $\h$ in the category of Lie algebras under direct sums 
\[
\h\colon (\UM,\oplus,\varnothing)^{\op}\to (\mathsf{LieAlg},\oplus,0)
\] Explicitly, for a simple matroid $\mat$, $\h(\mat)$ has a presentation 
\[
 \h(\mat)=\Lie\langle x_e:e\in E(\mat)\rangle\big/
 \left\langle\left[x_e,\sum_{u\in F}x_u\right]:
 e\in F,\ F\in\Fl^{\rk F=2}(\mat),\ \rk F=2\right\rangle.
\] And for a weak map $f\colon \mat\to \Nmat$ of simple matroids, we have
\[
f^*\colon \h(\Nmat)\to \h(\mat), \quad x_e\mapsto \sum_{u\in f^{-1}(e)} x_u
\] For a nonsimple matroid, use one generator for each rank-one flat; equivalently $\h(\mat):=\h(\si\mat)$.
\end{example}

\subsection{Remark on simplifications and localisations} Note that all the examples of $\OM/\UM$-presheaves considered are unchanged with respect to simplifications. In this section we give the precise localisations of matroid categories through which the considered examples factor.

For example, in the unoriented case, for any matroid we have a simplification weak map $\si\colon \mat\to \si\mat$. Note that simplifications are functorial and determine a functor
\[
\si\colon\UM\to \UM_{\mathrm{simp}},
\] where $\UM_{\mathrm{simp}}$ is the full subcategory of unoriented matroids. Moreover, $\si$ is a right adjoint functor to the embedding $\UM_{\mathrm{simp}}\subset \UM$, $\Hom_{\UM_{\mathrm{simp}}}(\si\mat,\Nmat)\cong \Hom_{\UM}(\mat,\iota(\Nmat))$. All in all, we have
\begin{prop}
For an $\UM$-presheaf $\O$, if $\si^*\colon \O(\mat)\to \O(\si\mat)$ is an isomorphism for every $\mat$, then $\O$ factors through simple matroids
\[
\O\colon \UM\overset{\si}{\to} \UM_{\mathrm{simp}}\to \C.
\]
\end{prop}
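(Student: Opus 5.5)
The plan is to define the factorization $\bar\O\colon\UM_{\mathrm{simp}}^{\op}\to\C$ as the restriction of $\O$ along the inclusion $\iota\colon\UM_{\mathrm{simp}}\subset\UM$. We then construct a natural isomorphism $\O\cong\bar\O\circ\si$, whose components are the given isomorphisms. A small caution on direction: $\si\colon\mat\to\si\mat$ is a morphism in $\UM$, so the presheaf gives $\si^*\colon\O(\si\mat)\to\O(\mat)$. We read the hypothesis as saying that this map, possibly written in the opposite direction in the statement, is an isomorphism for every $\mat$.

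\textbf{Step 1: functoriality of $\si$.} Write $\eta_\mat\colon\mat\to\si\mat$ for the weak map sending $e$ to its rank-one flat $\mathrm{cl}(e)$. For a weak map $f\colon\mat\to\Nmat$, define $\si f\colon\si\mat\to\si\Nmat$ by $\mathrm{cl}(e)\mapsto\mathrm{cl}(f(e))$.
\begin{itemize}
\item This is well defined. If $e,e'$ are parallel, then $\rk_\Nmat\{f(e),f(e')\}\leq 1$, so $f(e)$ and $f(e')$ are equal or parallel (looplessness is used here).
\item It is weak. Any $T\subseteq E(\si\mat)$ is the image of a set of representatives $S$ with $\rk_{\si\mat}T=\rk_\mat S$, and ranks in $\si\Nmat$ are computed from representatives.
\end{itemize}
The construction gives $\eta_\Nmat\circ f=\iota(\si f)\circ\eta_\mat$, and $\si$ preserves identities and composition. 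This naturality square is the only real input. It also gives the adjunction quoted before the proposition, with $\eta$ as unit, although we will not need the adjunction itself.

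\textbf{Step 2: the natural isomorphism.} Set $\theta_\mat:=\eta_\mat^*\colon\O(\si\mat)\to\O(\mat)$, which is invertible by hypothesis. Applying the contravariant functor $\O$ to the square in Step 1 gives
\[
f^*\circ\theta_\Nmat=\theta_\mat\circ(\si f)^*,
\]
so $\theta\colon\bar\O\circ\si\Rightarrow\O$ is a natural isomorphism. It follows that $\O(f)=\theta_\mat\circ\bar\O(\si f)\circ\theta_\Nmat^{-1}$, which is exactly the factorization up to the canonical isomorphism $\theta$. On simple matroids $\eta$ is the identity, so $\theta$ is the identity there and the factorization is strict on that subcategory.

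\textbf{Step 3: monoidality.} The category $\UM_{\mathrm{simp}}$ is closed under $\oplus$, and $\si(\mat\oplus\Nmat)=\si\mat\oplus\si\Nmat$ with $\eta_{\mat\oplus\Nmat}=\eta_\mat\oplus\eta_\Nmat$. Hence $\bar\O$ is monoidal and $\theta$ is a monoidal transformation, because the monoidal constraints of $\O$ are natural with respect to $\eta_\mat\oplus\eta_\Nmat$.

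The only step requiring any care is the well-definedness of $\si f$ in Step 1, which uses looplessness of $\Nmat$. Everything else is formal.
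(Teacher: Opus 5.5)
Your argument is correct and is essentially the paper's own. The paper gives no separate proof, only the preceding remarks that $\si$ is functorial and adjoint to the inclusion $\UM_{\mathrm{simp}}\subset\UM$; your naturality square $\eta_{\Nmat}\circ f=\iota(\si f)\circ\eta_{\mat}$ is exactly the unit of that adjunction (as the paper's displayed hom-isomorphism shows, this makes $\si$ a left adjoint), and $\theta=\eta^*$ makes the implicit factorization explicit, including your correct reading of the hypothesis as invertibility of $\eta_{\mat}^*\colon\O(\si\mat)\to\O(\mat)$.
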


In contrast to the unoriented case, simplifications of oriented matroids are not canonical, because of antiparallel elements. Instead of this, we consider a localisation. Let $\W_{\Topes}$ be the class of signed weak maps whose pullbacks $\phi^*\colon \Topes(\N)\overset{\cong}{\to} \Topes(\om)$ are isomorphisms. Note that all examples of $\OM$-presheaves described above factor through the corresponding localisation $\OM[\W_{\Topes}^{-1}]$. The following proposition characterizes this category. The proof is straightforward but lengthy, and we omit it.

\begin{prop} The following holds in $\OM[\W_{\Topes}^{-1}]$
\begin{itemize}
    \item[(i)] We have an explicit description of morphisms
    \[
    \Hom_{\OM[\W_{\Topes}^{-1}]}(\om,\N)=\Hom_{\OM}(\om,\N)/\sim,\] 
    where $\phi\sim \psi$ if  $\phi^*=\psi^*\colon \Topes(\N)\to \Topes(\om)$
    \item[(i')] Further, if $\N$ is simple, we have 
    \[
    \Hom_{\OM[\W_{\Topes}^{-1}]}(\om,\N)=\Hom_{\OM}(\om,\N)
    \]
    \item[(ii)] Every oriented matroid is isomorphic to a simple one
    \item[(iii)] All in all, the full subcategory $\OM_{\mathrm{simp}}\subset \OM$ of simple oriented matroids embeds fully faithfully in the localisation, giving an equivalence
    \[
    \OM_{\mathrm{simp}}\cong \OM[\W_{\Topes}^{-1}]
    \] 
\end{itemize} In particular, if an $\OM$-presheaf $\O$ sends weak equivalences to isomorphisms, then it admits a factorisation
\[
\O\colon \OM\to \OM[\W^{-1}_{\Topes}]\simeq \OM_{\mathrm{simp}}\to \C
\]
\end{prop}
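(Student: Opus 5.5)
We prove (ii) first, then (i) and (i'), and deduce (iii) and the factorisation. The basic observation is that a signed weak map $\phi$ acts on presheaves only through $\phi^*\colon\Topes(\N)\to\Topes(\om)$. This is literally true for the functor $\Topes=\Hom_{\OM}(-,\U_{1,1})$, and for the other examples it is checked in the same way.

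\textbf{Step (ii).} Let $\om$ be an oriented matroid. In each parallel/antiparallel class choose a representative, and let $\N$ be the restriction to these representatives; this is a simple oriented matroid. Define $\rho\colon\om\to\N$ by sending $e$ to its representative $\bar e$, with sign $+$ if $e$ is parallel to $\bar e$ and $-$ if antiparallel. Since parallel elements carry equal signs in every tope and antiparallel ones opposite signs, $\rho^*$ is exactly the inverse of the restriction map $\Topes(\om)\to\Topes(\N)$. In particular $\rho^*$ is a bijection, so $\rho\in\W_{\Topes}$, and $\om\cong\N$ in the localisation.

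\textbf{Steps (i) and (i').} We show that $\W_{\Topes}$ admits a calculus of fractions, or, more concretely, that the quotient category $\OM/\!\sim$ already inverts $\W_{\Topes}$ and is universal for doing so. For inversion: if $\phi\colon\om\to\N$ is in $\W_{\Topes}$, then $\phi^*$ is a bijection on topes. Any element of $\N$ not in the image of $f_\phi$ is forced to be parallel to some image element, because otherwise some tope pair would differ only there. Using this, we define a weak map $\psi\colon\N\to\om$ by choosing, for each element of $\N$, a preimage up to parallelism with the appropriate sign. Then $\psi^*=(\phi^*)^{-1}$, so $\psi$ inverts $\phi$ modulo $\sim$.

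Conversely, suppose $\phi\sim\psi$. Both factor through the simplification $\om\to\si\om$ from (ii), and two maps out of a simple matroid with equal tope pullbacks coincide. The reason is that the ground-set map and signs can be read off from which coordinate of the tope pullback flips when a single target coordinate is flipped, which is possible for a simple target up to parallelism. Hence $\phi$ and $\psi$ become equal after inverting $\W_{\Topes}$. This gives (i).

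For (i'), when $\N$ is simple the same reading-off argument applied to the target shows that $\phi^*$ determines $(f_\phi,\epsilon_\phi)$ modulo parallel elements of $\om$. We then need these to agree on the nose. This is the delicate point: elements of $\om$ parallel to each other can still map to different elements of $\N$ only if those targets have identical signs on all pulled-back topes, which contradicts the simplicity of $\N$.

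\textbf{Step (iii).} Fully faithfulness follows from (i'), and essential surjectivity from (ii). The factorisation of a presheaf inverting $\W_{\Topes}$ is then the universal property of the localisation.

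The main obstacle is the reading-off argument in (i'). For a simple $\N$ we must check that flipping a single coordinate of a tope can always be realised. This is not literally true, since not every tope has every element on its boundary. We will instead argue with the covectors of rank-one flats: each element $a\in E(\N)$ is the zero set of a cocircuit-adjacent pair of topes differing only at $a$. Such adjacent pairs exist precisely because $\{a\}$ is a flat, i.e.\ because of simplicity. Pulling back such a pair shows that $f_\phi^{-1}(a)$, together with the signs on it, is determined by $\phi^*$.
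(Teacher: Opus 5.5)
The paper omits the proof of this proposition (``straightforward but lengthy''), so your argument is judged on its own. Three parts are fine. Your (ii) works. Your strategy for (i), showing that the quotient $\OM/\!\sim$ inverts $\W_{\Topes}$ and is universal for doing so, is sound. Your construction of $\psi$ with $\psi^*=(\phi^*)^{-1}$ for $\phi\in\W_{\Topes}$ is also fine. The precise reason for the latter: if a whole parallel class $P$ of $\N$ missed the image of $f_\phi$, a covector $X$ with zero set $P$ would give topes $X\circ T$ and $X\circ(-T)$ that differ exactly on $P$ but have the same pullback.

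The gap is in the converse step of (i). There you must show that every functor $F$ inverting $\W_{\Topes}$ identifies $\phi\sim\psi\colon\om\to\N$. You simplify the \emph{source} and assert that two maps out of a simple matroid with equal tope pullbacks coincide. Both claims are false. Let $\N$ be the rank-one oriented matroid on two parallel elements $a,b$. Let $\phi,\psi\colon\U_{1,1}\to\N$ send the element to $a$, respectively $b$, with sign $+$. Then $\phi^*=\psi^*$ and the source is simple, yet $\phi\neq\psi$. Likewise, $\id_\N$ does not factor in $\OM$ through $\N\to\si\N$, since that would force a ground-set map constant on $\{a,b\}$. Your justification (read off which coordinate flips when a target coordinate is flipped) only works when the \emph{target} is simple.

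The fix is to simplify the target instead. Let $\rho_\N\colon\N\to\si\N$ be your map from (ii); it lies in $\W_{\Topes}$. The maps $\rho_\N\phi$ and $\rho_\N\psi$ have equal pullbacks and a simple target. Hence they are equal in $\OM$ by the rigidity you prove for (i'). That rigidity works as follows: since $\{a\}$ is a flat, there are two topes of $\si\N$ differing only at $a$, and their pullbacks recover $f^{-1}(a)$ and the signs on it. Therefore $F(\phi)=F(\rho_\N)^{-1}F(\rho_\N\phi)=F(\psi)$, so $\OM/\!\sim$ has the universal property.

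With this correction the rest of your outline goes through. Two smaller points. In (i') no ``modulo parallel elements of $\om$'' caveat is needed, because rigidity into a simple target holds on the nose. The object you want there is a covector with zero set $\{a\}$, not a cocircuit: a cocircuit's zero set is a hyperplane, which is not a singleton in rank at least $3$.
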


\section{Poset models for higher arrangement complements}\label{sec:new-presheaf-models}
In this section we construct new poset models $\Topes_n$ and $\WTopes_n$ for higher arrangement complements and compare them with the Salvetti and Delucchi-Falk models. We show that these new models carry an $\OM$-presheaf structure modeling the maps between arrangement complements induced by degenerations (see Section~\ref{sec:degenerations}).

\subsection{Homotopy theory of posets}\label{sec:sub:Poset} 
Let us recall a few results and notions in the homotopy theory of posets; we refer to~\cite{barmak2011algebraic} for details.

For a finite poset $\Po$, write $|\Po|$ for the geometric realization of its nerve $\mathsf{N}(\Po)$, equivalently of its order complex $\Delta(\Po)$. We say that a poset map $\phi\colon \Po\to \Qo$ is a (homotopy) equivalence if $|\phi|\colon |\Po|\to|\Q|$ is a homotopy equivalence.

Instead of geometric realisations, we can equip a poset $\Po$ with the lower Alexandrov topology, whose basis consists of the lower ideals $\{\downarrow p\}_{p\in \Po}$. These two models are equivalent, since there is a functorial weak equivalence of topological spaces $|\Po|\overset{\simeq}{\to} \Po$~\cite[Thm~2]{McCord1966}.

The following theorems will be our main tools to prove that two posets are homotopy equivalent. The first is a poset version of Quillen's Theorem of type A
\begin{theorem}[{\cite[Proposition~1.6]{Quillen1978}}]\label{thm:quillen-type-A}
A map of posets $\phi\colon \Po\to \Qo$ is an equivalence if for all $q \in \Qo$ the lower fiber 
\[\phi^{-1}(\downarrow q):=\{p\in \Po \mid \phi(p)\leq q\}\]
is contractible. 

More generally, for a continuous map $f\colon X\to \Po$ from a topological space to a finite poset, if, for every $p\in \Po$, the preimage $\phi^{-1}(\downarrow p)$ is contractible, then $\phi$ is a weak equivalence.
\end{theorem}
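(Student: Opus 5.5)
We prove the second, topological statement first and then deduce the poset statement from it using McCord's comparison map $\mu_{\Po}\colon|\Po|\to\Po$ \cite[Thm~2]{McCord1966}.

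\emph{Topological statement.} Let $f\colon X\to\Po$ be continuous, where $\Po$ carries the lower Alexandrov topology, and put $U_p:=f^{-1}(\downarrow p)$.
\begin{enumerate}
\item The sets $\{\downarrow p\}_{p\in\Po}$ form a basis-like open cover of $\Po$: each point $p$ has $\downarrow p$ as its minimal open neighbourhood, and $\downarrow p\cap\downarrow q=\bigcup_{s\leq p,\,s\leq q}\downarrow s$.
\item Hence the sets $U_p$ form an open cover of $X$ closed under the same kind of intersections: $U_p\cap U_q=\bigcup_{s\leq p,q}U_s$.
\item For each $p$, the restriction $U_p\to\downarrow p$ is a weak equivalence. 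Indeed, $\downarrow p$ has a maximum, so it is contractible: the homotopy $\downarrow p\times[0,1]\to\downarrow p$ sending $(x,t)$ to $x$ for $t<1$ and to $p$ for $t=1$ is continuous in the Alexandrov topology. By hypothesis $U_p$ is contractible as well.
\end{enumerate}
The key input is McCord's local-to-global principle. If $f\colon X\to Y$ is continuous and $Y$ has a basis-like open cover $\mathcal U$ such that every restriction $f^{-1}(U)\to U$ is a weak equivalence, then $f$ is a weak equivalence. If I must prove this rather than cite it, I would argue as follows. First treat a cover by two open sets, using the gluing lemma for weak equivalences of excisive triads (the Mayer--Vietoris / homotopy-pushout comparison). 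Then pass to finite unions by induction on the number of basic opens. Because $\Po$ is finite, no colimit argument over infinite unions is needed. The intersections arising at each stage are again unions of basic opens with fewer generators, so the induction closes.

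\emph{Poset statement.} Apply the above to the composite
\[
f:=\phi\circ\mu_{\Po}\colon|\Po|\to\Qo .
\]
Then $f^{-1}(\downarrow q)=\mu_{\Po}^{-1}\big(\phi^{-1}(\downarrow q)\big)$. Here $\phi^{-1}(\downarrow q)$ is a down-set of $\Po$, and its $\mu_{\Po}$-preimage is the open subset of $|\Po|$ consisting of the open simplices whose minimal vertex lies in that down-set. This open set deformation retracts onto the subcomplex $|\phi^{-1}(\downarrow q)|$ by straight-line retraction away from the vertices outside the down-set. That subcomplex is contractible by hypothesis, so $f$ is a weak equivalence.

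Naturality of McCord's map gives $\mu_{\Qo}\circ|\phi|=\phi\circ\mu_{\Po}$. Since $\mu_{\Qo}$ is a weak equivalence, two-out-of-three shows that $|\phi|$ is a weak equivalence. As $|\phi|$ is a map of CW complexes, Whitehead's theorem upgrades it to a homotopy equivalence.

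The main obstacle is the local-to-global gluing step, and in particular verifying that restrictions to intersections $U_p\cap U_q$ remain weak equivalences. These intersections are not themselves basic, so they must be handled inductively as unions of basic opens.
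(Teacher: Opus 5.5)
The paper gives no proof of this statement. The poset case is quoted from Quillen, and the topological case is asserted as a generalization; the only related input is McCord's comparison map $|\Po|\to\Po$ recalled in Section~\ref{sec:sub:Poset}.

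\textbf{Your argument is correct.} It is the finite-space route, as in \cite{barmak2011algebraic}:
\begin{enumerate}
\item You prove the topological form from McCord's basis-like open cover theorem.
\item You then obtain the poset form from $f=\phi\circ\mu_{\Po}$, using three facts: for a down-set $D$, the preimage $\mu_{\Po}^{-1}(D)$ deformation retracts onto $|D|$; naturality $\mu_{\Qo}\circ|\phi|=\phi\circ\mu_{\Po}$ holds, since both sides send a point of the open simplex of $x_0<\cdots<x_k$ to $\phi(x_0)$; and two-out-of-three plus Whitehead finish the argument.
\end{enumerate}

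\textbf{Comparison with Quillen's route.} Quillen instead obtains Proposition~1.6 as a case of Theorem~A \cite{quillen2006higher}. One takes the bisimplicial set of pairs of chains $x_0\le\cdots\le x_p$ in $\Po$ and $y_0\le\cdots\le y_q$ in $\Qo$ with $\phi(x_p)\le y_0$. Over $|\Po|$ its fibres are the posets $\{y\ge\phi(x_p)\}$, which have a minimum; over $|\Qo|$ its fibres are the lower fibres $\phi^{-1}(\downarrow y_0)$. The realization lemma then does the rest.
\begin{itemize}
\item Quillen's argument is purely simplicial, works for functors between arbitrary small categories, and gives a homotopy equivalence directly. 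It says nothing when the source is not simplicial.
\item Your route delivers exactly the topological form the paper actually uses in Theorem~\ref{thm:prop-model-pseudosphere}, where the source is $\U_n(\A)$. It recovers the poset form as a corollary, which matches the paper's ``more generally'' phrasing.
\end{itemize}

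\textbf{A small repair to your optional proof of McCord's principle.} Inducting on the number of basic opens does not close. An intersection $\downarrow p\cap\downarrow q$ can require more basic opens than you started with, for example when $p$ and $q$ both lie above many pairwise incomparable elements.

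Induct instead on the cardinality of the down-set $W$:
\begin{itemize}
\item If $W$ has a unique maximal element, it is basic.
\item Otherwise pick a maximal $m$ and write $W=(W\setminus\{m\})\cup\downarrow m$. Both pieces are open, and their intersection $\downarrow m\setminus\{m\}$ is strictly smaller than $W$.
\end{itemize}
The gluing lemma for weak equivalences over a two-set open cover then closes the induction; empty intersections are harmless. Alternatively, simply cite McCord's theorem, which needs no finiteness.
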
 
The second is a special case of the Alexandrov nerve theorem
\begin{theorem}[{\cite{alexandrov1926kombinatorische}}]\label{thm:cover-contractible}
    If a finite poset $\Po$ admits a covering $\Po=\cup_{e\in E} \mathcal{I}_{e} $ by lower ideals such that, for all $\varnothing\neq S\subseteq E$, the intersection $\cap_{i\in S} \mathcal{I}_{e}$ is nonempty and contractible, then $\Po$ is contractible.
\end{theorem}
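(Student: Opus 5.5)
The plan is induction on $|E|$, combined with the nerve theorem for the order complex. Throughout, write $\mathcal{I}_e$ for the lower ideals covering $\Po$. Each $\mathcal{I}_e$ is a subposet of $\Po$, and since the sets are lower ideals, the order complex satisfies
\[
\Delta(\textstyle\bigcap_{e\in S}\mathcal{I}_e)=\bigcap_{e\in S}\Delta(\mathcal{I}_e),
\qquad \Delta(\Po)=\bigcup_{e\in E}\Delta(\mathcal{I}_e).
\]
The first equality holds because a chain lies in every $\mathcal{I}_e$ exactly when all its elements do. For the second, a chain in $\Po$ has a maximum element $p$, which lies in some $\mathcal{I}_e$. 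Since $\mathcal{I}_e$ is a lower ideal, the whole chain lies in $\mathcal{I}_e$. This step uses the lower-ideal hypothesis essentially: for arbitrary subposets a chain need not lie in a single piece.

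So $|\Po|$ is covered by the subcomplexes $|\mathcal{I}_e|$, and every nonempty finite intersection of them is nonempty and contractible. The classical nerve theorem for finite covers of a simplicial complex by subcomplexes then gives $|\Po|\simeq|\Delta(E)|$, the full simplex on $E$, which is contractible.

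To stay self-contained, I would instead argue by induction on $|E|$ using a gluing lemma. The case $|E|=1$ is immediate. For the inductive step, fix $e_0\in E$ and set $A=\bigcup_{e\neq e_0}\Delta(\mathcal{I}_e)$ and $B=\Delta(\mathcal{I}_{e_0})$.

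First, $A$ is the order complex of the lower ideal $\bigcup_{e\neq e_0}\mathcal{I}_e$, which satisfies the hypothesis with a smaller index set. By induction, $A$ is contractible.

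Second, $A\cap B$ is the order complex of $\bigcup_{e\neq e_0}(\mathcal{I}_e\cap\mathcal{I}_{e_0})$. The ideals $\mathcal{I}_e\cap\mathcal{I}_{e_0}$ are again lower ideals, and all their intersections are intersections of the original family, hence nonempty and contractible. By induction, $A\cap B$ is contractible.

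Finally, $A\cup B$ is a union of two contractible subcomplexes with contractible intersection. By the gluing lemma, or equivalently by van Kampen together with Mayer–Vietoris and Whitehead's theorem applied to CW pairs, $A\cup B=\Delta(\Po)$ is contractible.

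The only delicate point is the identification of the intersection $A\cap B$ with the order complex of the union of the ideals $\mathcal{I}_e\cap\mathcal{I}_{e_0}$. Again a chain lies in a union of lower ideals exactly when its top element does, so this identification holds. I expect no real obstacle beyond checking this, together with citing the gluing lemma for homotopy pushouts of subcomplexes, which the paper already uses implicitly.
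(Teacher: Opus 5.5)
Your argument is correct, and the self-contained induction takes a different route from the paper, which gives no proof and simply cites Alexandrov's nerve theorem.

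Your first reduction follows the paper's route exactly: $|\Po|$ is covered by the subcomplexes $\Delta(\mathcal{I}_e)$, and their nerve is the full simplex on $E$. You also correctly identify the role of the lower-ideal hypothesis: it is what makes $\Delta(\Po)=\bigcup_e\Delta(\mathcal{I}_e)$ hold.

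The induction on $|E|$ replaces the general nerve theorem by the two-piece gluing lemma. It works because the hypothesis ``every nonempty intersection is nonempty and contractible'' passes to two families with smaller index sets. These are the family $(\mathcal{I}_e)_{e\neq e_0}$ and the restricted family $(\mathcal{I}_e\cap\mathcal{I}_{e_0})_{e\neq e_0}$. The induction hypothesis must range over all finite posets, as you implicitly assume.

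The citation gives more: for an arbitrary cover, $|\Po|$ is homotopy equivalent to the nerve. Your induction proves only contractibility. In exchange, it uses nothing beyond homotopy invariance of pushouts along subcomplex inclusions, plus nonemptiness of $A\cap B$, which your induction hypothesis supplies.

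One small remark: your base case $|E|=1$ tacitly assumes $E\neq\varnothing$. The statement needs this anyway, since for $E=\varnothing$ the hypothesis is vacuous while $\Po=\varnothing$ is not contractible.
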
  

\subsection{Weighted Topes}\label{sec:weight_topes} In this section, we construct an $\OM$-presheaf of weighted topes $\Topes_n$ in posets that forms a model for higher complement arrangements $\U_n(\A)$ similarly to Salvetti posets, and show that $\Topes_n$ carry the desired presheaf structure alluded to in Section~\ref{sec:degenerations}. 

\begin{defi}
Let $\om$ be an oriented matroid and $n\geq 1$ an integer. The $n$-th \emph{poset of weighted topes} of $\om$, denoted $\Topes_n(\Mat)$, is the poset with underlying set $\Topes(\Mat)\times [n]^{E(\om)}$ and relations 
\begin{equation}\label{eq:def-order-weighted-tope}
(X_1,w_1)\leq (X_2,w_2) \quad \stackrel{\mathrm{def}}{\Leftrightarrow} \quad  
	\forall e\in E(\om)\colon (w_1(e)=w_2(e), X_1(e)=X_2(e))\text{ or } (w_1(e)<w_2(e)).
\end{equation}
\end{defi} 
Similarly to the usual topes, weighted topes form an $\OM$-presheaf in posets
\begin{equation}\label{prop:presheaf-structure-weighted-topes}
\Topes_n\colon \OM^{\op}\to \Pos,
\end{equation} defined by the rule
\[
\phi^*\colon \Topes_n(\N)\to \Topes_n(\om), \quad (X,w_X)\mapsto (\phi^*X,f^*_{\phi}w_X),
\]
Let us explain the relation of weighted topes to higher complements arrangements. Recall from Section~\ref{sec:pseudosphere} that for any pseudohyperplane arrangement $\A = \{H_e, e \in E\}$ in $\Real^r$ and any $x \in \Real^r$, we have defined the sign vector $\sigma(x) \in \{+,-,0\}$ which records on which side of $\Real^r \setminus H_e$ the point $x$ lies, for every $e$ in $E.$ This sign vector allows us to define a map 
\begin{equation}\label{eq:psi_n_map}
\psi_n\colon \U_n(\A)\to \Topes_n(\Mat(\A)),    
\end{equation}
by setting
\[
\psi_n(x^{(1)},\ldots, x^{(n)}) =  (X_x(e)\colon=(\sigma(x^{(w_x(e))})(e), w_x(e)\colon=\min\{i \mid \sigma(x^{(i)})(e)\neq 0\}).
\] 
\begin{claim*}
$\psi_n$ is a well-defined continuous map.
\end{claim*}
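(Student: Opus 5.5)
Well-definedness splits into two checks: $w_x(e)$ exists, and $X_x$ is a tope. Since $x\in\U_n(\A)$, the tuple $(x^{(1)},\dots,x^{(n)})$ does not lie in $H_e^{\times n}$ for any $e$. So for each $e$ some coordinate $x^{(i)}$ has $\sigma(x^{(i)})(e)\neq0$, and $w_x(e)\in[n]$ is defined. Moreover $X_x(e)\in\{\pm\}$ for every $e$. To see that $X_x$ is a tope, I would express it as an iterated composition of covectors:
\[
X_x=\sigma(x^{(1)})\circ\sigma(x^{(2)})\circ\cdots\circ\sigma(x^{(n)}).
\]
This holds by the definition of composition, since the first nonzero entry at $e$ is the one at position $w_x(e)$. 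Each $\sigma(x^{(i)})$ is a covector of $\om(\A)$: by coning, $\sigma(x)$ for $x\neq0$ is the sign vector of $x/|x|$ in the pseudosphere arrangement, and $\sigma(0)=0$. The covector set is closed under composition (recalled in the paper), so $X_x$ is a covector. It has full support, so it is a tope.

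For continuity, $\Topes_n(\om)$ carries the lower Alexandrov topology of Section~\ref{sec:sub:Poset}. It therefore suffices to show that for each $(Y,v)$ the preimage $\psi_n^{-1}(\downarrow(Y,v))$ is open in $\U_n(\A)$. Unwinding \eqref{eq:def-order-weighted-tope}, $\psi_n(x)\leq(Y,v)$ means that for every $e$, either $w_x(e)<v(e)$, or $w_x(e)=v(e)$ and $X_x(e)=Y(e)$. I would rewrite this as a finite intersection over $e$ of the sets
\[
U_e=\{x\mid \exists\, i<v(e)\colon x^{(i)}\notin H_e\}\ \cup\ \{x\mid x^{(j)}\in H_e\ \forall j<v(e),\ x^{(v(e))}\in H_e^{Y(e)}\}.
\]
It remains to check that each $U_e$ is open.

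This openness check is the only real obstacle, because the second piece is not open by itself: it contains the closed condition $x^{(j)}\in H_e$. The fix is to observe that $U_e$ coincides with the single set
\[
\{x\mid \exists\, i\le v(e)\colon x^{(i)}\in H_e^{+}\cup H_e^-,\ \text{and if } i=v(e) \text{ is the first such index, then } x^{(i)}\in H_e^{Y(e)}\},
\]
and then show openness pointwise. Take $x\in U_e$. If $w_x(e)<v(e)$, the open condition $x^{(w_x(e))}\notin H_e$ persists in a neighbourhood, and any nearby point $x'$ has $w_{x'}(e)\le w_x(e)<v(e)$. If $w_x(e)=v(e)$, then near $x$ we still have $x'^{(v(e))}\in H_e^{Y(e)}$, since this is an open half-space. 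So either $w_{x'}(e)<v(e)$, or $w_{x'}(e)=v(e)$ with the correct sign. In both cases the whole neighbourhood lies in $U_e$.

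The underlying principle is that $w$ is upper semicontinuous and the sign at the minimal index is locally constant, which is exactly the compatibility with the order $\leq$. Hence each preimage of a basic open set is open, and $\psi_n$ is continuous.
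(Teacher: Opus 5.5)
Your proof is correct and follows the paper's argument: the same check that $w_x(e)$ exists, the same identity $X_x=\sigma(x^{(1)})\circ\cdots\circ\sigma(x^{(n)})$, and the same description of $\psi_n^{-1}(\downarrow(Y,v))$ as a finite intersection over $e$. The only difference is that the paper skips your pointwise openness argument: since $A\cup(A^c\cap B)=A\cup B$, the closed condition $x^{(j)}\in H_e$ for $j<v(e)$ can be dropped, so each per-element set is directly the union of the open sets $\{x\mid x^{(i)}\notin H_e\}$, $i<v(e)$, and $\{x\mid x^{(v(e))}\in H_e^{Y(e)}\}$.
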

\begin{proof}
By definition of $\U_n(\A)$, for every $x \in \U_n(\A)$ and $e \in E$ there exists an index $i$ such that $\sigma(x^i)(e) \neq 0$, which shows that $w_x(e)$ is well-defined. Moreover, we see that we have the identity of sign vectors
\[
    X_x = \sigma(x^{(1)})\circ \cdots \circ \sigma(x^{(n)}),
\]
which proves that $X_x$ is a tope of $\om$. Hence, the map $\psi_n$ is well-defined and lands in $\Topes_n(\om)$. The preimage of lower ideal has the form
\[
\psi_n^{-1}(\downarrow (X,w_X))=\bigcap_{e\in E}\left(\bigcup_{i<w_X(e)}\left\{x \mid x^{(i)}\notin H_e\right\}\;\cup\;\left\{x \mid x^{(w(e))}\in H_e^{X(e)}\right\}\right).
\] The subsets in the union are open, as are their finite unions and intersections. Thus the preimages are open, and $\psi_n$ is continuous.
\end{proof}
 Let us state the main results of this section.

\begin{theorem}
For any pseudohyperplane arrangement $\A$ and every $n\geq1$, the map~\eqref{eq:psi_n_map} is a weak equivalence
\[
\psi_n\colon\U_n(\A)\overset{\simeq}{\longrightarrow}\Topes_n(\om(\A))
\]
\end{theorem}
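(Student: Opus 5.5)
The plan is to apply the continuous-map version of Quillen's Theorem~A (Theorem~\ref{thm:quillen-type-A}) to $\psi_n$, which is continuous by the Claim above. It then suffices to show that for every weighted tope $(X,w)\in\Topes_n(\om(\A))$ the open set
\[
V_n(X,w):=\psi_n^{-1}(\downarrow(X,w))
\]
is contractible. Explicitly, $x=(x^{(1)},\dots,x^{(n)})$ lies in $V_n(X,w)$ iff for each $e$, either $x^{(i)}\notin H_e$ for some $i<w(e)$, or $x^{(w(e))}\in H_e^{X(e)}$. I would prove contractibility by induction on $n$, and for fixed $n$ allow an arbitrary central pseudohyperplane arrangement. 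This matters because the inductive step passes to subarrangements $\A|_S$, which are again pseudohyperplane arrangements in $\Real^r$ representing $\om|_S$.

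\emph{Base case $n=1$.} Here $w\equiv1$ and $V_1(X,w)$ is the open region $\bigcap_e H_e^{X(e)}$. This is the open cone over the chamber of $X$ in $\Sph^{r-1}$. That chamber is an open cell of the regular cell decomposition, hence an open ball, so its cone is contractible.

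\emph{Inductive step.} Put $E_1=w^{-1}(1)$. Membership forces $x^{(1)}\in R:=\bigcap_{e\in E_1}H_e^{X(e)}$. The set $R$ is the union of the coned open faces $F_Y$ with $Y|_{E_1}=X|_{E_1}$, i.e.\ the open star of the face of $X|_{E_1}$, and is contractible. The key observation is the following. Fix $x^{(1)}$ with sign vector $Y=\sigma(x^{(1)})$ and zero set $Z(Y)=E\setminus\underline Y$. Every $e\notin Z(Y)$ with $w(e)>1$ is automatically satisfied. The remaining conditions on $(x^{(2)},\dots,x^{(n)})$ say exactly that this tuple lies in
\[
V_{n-1}\bigl(X|_{Z(Y)},(w-1)|_{Z(Y)}\bigr)
\]
for the arrangement $\A|_{Z(Y)}$ (here $Z(Y)\cap E_1=\varnothing$). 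By induction each such fibre is contractible. Moreover the fibres are monotone: if $Y\preceq Y'$ then $Z(Y')\subseteq Z(Y)$, so the fibre over $Y$ is contained in the fibre over $Y'$.

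\emph{Assembling the fibres.} I would formalise the fibrewise argument by composing the first projection with $\sigma$, obtaining a continuous map
\[
\pi\colon V_n(X,w)\to \Po_R:=\{Y\in\La(\om)\mid Y|_{E_1}=X|_{E_1}\}.
\]
Here $\Po_R$ carries the opposite order, so that open stars become lower ideals. I would again apply Theorem~\ref{thm:quillen-type-A} to $\pi$, which requires that $\Po_R$ and the preimages $\pi^{-1}(\downarrow Y)$ be contractible. The poset $\Po_R$ has a minimum for the opposite order, namely the maximal covector $X$, so it is contractible. The preimage $\pi^{-1}(\downarrow Y)$ consists of the tuples with $x^{(1)}$ in the open star $\mathrm{st}(F_Y)$ and the tail in the fibre over $\sigma(x^{(1)})$; each such fibre contains the fibre $V_{n-1}(Y)$ over $Y$. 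I would show $\pi^{-1}(\downarrow Y)$ deformation retracts onto $\mathrm{st}(F_Y)\times V_{n-1}(Y)$, which is contractible. The idea is to shrink $x^{(1)}$ radially onto $F_Y$ inside the star, using the conical structure of the regular cell decomposition, while keeping the tail fixed. This shrinking only enlarges the zero set, so the tail condition must be handled with care.

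\emph{Main obstacle.} The hard step is exactly this deformation. As $x^{(1)}$ moves toward $F_Y$, conditions on $Z(Y)\setminus Z(\sigma(x^{(1)}))$ switch on. So one must first move the tail into the smaller fibre $V_{n-1}(Y)$, continuously in $x^{(1)}$. My first attempt would be to avoid an explicit homotopy altogether: apply Theorem~\ref{thm:cover-contractible} to a Quillen-type comparison instead. Concretely, I would exhibit $\pi^{-1}(\downarrow Y)$ as a homotopy colimit, over the contractible face poset of $\mathrm{st}(F_Y)$, of contractible fibres $V_{n-1}(Y')$ with inclusion maps. A homotopy colimit of contractible spaces over a contractible indexing category is contractible. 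Making this identification rigorous requires a local product structure near each open face, and I expect that to be the technical core of the proof.
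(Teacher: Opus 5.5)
Your route differs from the paper's. The paper only proves contractibility of the fibres of $\psi_n$ over \emph{proper} weighted topes, where a preimage point exists and one retracts onto it coordinate by coordinate. It then compares $\Prop_n\subseteq\Topes_n$ combinatorially through $\WTopes_n$ and circuit elimination. You instead aim to show directly that $V_n(X,w)$ is contractible for every weighted tope, including non-proper ones. Your fibre computation over the first coordinate and its monotonicity are correct.

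\textbf{The gap.} The argument stops exactly where the work is: the contractibility of $\pi^{-1}(\downarrow Y)$ is never proved. Holding the tail fixed while pushing $x^{(1)}$ into $F_Y$ leaves the space, because a tail in the fibre over $\sigma(x^{(1)})$ need not lie in the smaller fibre over $Y$. You give no $x^{(1)}$-dependent deformation of tails, and the homotopy-colimit identification is only announced.

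Separately, your reason for $\Po_R$ being contractible is false. $X$ is not the largest element of $\Po_R$ in the original order, since every tope agreeing with $X$ on $E_1$ is also maximal. For three lines in $\Real^2$ and $E_1=\{e_1\}$, $\Po_R$ is a zigzag of three chambers and two rays, with no maximum or minimum. $\Po_R$ is contractible, but only because $\sigma|_R$ has contractible lower fibres and $R$ is contractible. The argument below gives this as well.

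\textbf{The fix: an open cover by products.} This makes both $\pi$ and any local product structure unnecessary. For a covector $Y$ let
\[
\mathrm{st}(F_Y)=\{x\mid\sigma(x)\succeq Y\}=\bigcap_{e\in\underline Y}H_e^{Y(e)}.
\]
This is an open coned chamber of a subarrangement (or $\Real^r$ if $Y=0$), hence contractible. Your fibre computation says precisely that
\[
V_n(X,w)=\bigcup_{Y\in\Po_R}\mathrm{st}(F_Y)\times V_{n-1}\bigl(X|_{Z(Y)},(w-1)|_{Z(Y)}\bigr),
\]
where the inclusion $\supseteq$ is your monotonicity.

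Suppose the intersection indexed by $Y_1,\ldots,Y_m$ is nonempty. Then it equals $\mathrm{st}(F_{Y_1\circ\cdots\circ Y_m})\times V_{n-1}$, the latter taken for the subarrangement on $Z(Y_1)\cup\cdots\cup Z(Y_m)$. This is contractible by induction. Here you need the induction hypothesis for arbitrary subarrangements, since such a union is in general not a flat.

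The second factors are nonempty, so this cover has the same nerve as the cover of $R$ by the stars $\mathrm{st}(F_Y)$, $Y\in\Po_R$. The nonempty intersections of the latter are again stars. Two applications of the nerve theorem for finite open covers then give $V_n(X,w)\simeq R\simeq\ast$.

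With this in place your induction is complete. It yields the theorem in one stroke, bypassing $\Prop_n$, $\WTopes_n$ and circuit elimination. The paper's longer route has the advantage of producing the combinatorial equivalences and descent lemmas that it reuses for the deletion--contraction pushout.
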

\begin{proof}
The proof of this theorem follows from the results of Sections~\ref{sec:weightwise_topes},~\ref{sec:proper_topes}, and~\ref{sec:comparison-topes-prop}. Explicitly, the strategy consists of the following steps:
\begin{itemize}
    \item[(i)] We show that the subposet of weighted topes in the image of $\psi_n$, $\Prop_n(\om(\A)):=\mathrm{Im}(\psi_n)\subseteq \Topes_n(\om(\A))$, called \textit{proper topes}, is isomorphic to the Salvetti poset (Proposition~\ref{prop:prop-iso-sal}) and that $\psi_n$ induces a weak equivalence $\U_n(\A)\simeq \Prop_n(\om(\A))$ (Theorem~\ref{thm:prop-model-pseudosphere})
    \item[(ii)] We consider posets of weightwise topes $\WTopes_n(\om)$ that contain weighted topes. For the sequence of inclusions $\Prop_n(\om)\subseteq \Topes_n(\om)\subseteq \WTopes_n(\om)$, the inclusions $\Topes_n(\om)\subseteq \WTopes_n(\om)$ and $\Prop_n(\om)\subseteq \WTopes_n(\om)$ are homotopy equivalences (Theorem~\ref{thm::equivalence-Tope-Topeext} and Theorem~\ref{thm:equivalence-higher-proper-uni}). By the two-out-three property, the inclusion $\Prop_n(\om)\subseteq \Topes_n(\om)$ is a homotopy equivalence as well, which completes the proof.
\end{itemize}
\end{proof}

The following Proposition explains the relation to degenerations of pseudohyperplane arrangements.
\begin{prop}\label{prop:degeneration-compatibility}
For a degeneration $\A\rightsquigarrow\A'$ of pseudohyperplane arrangements,
let $\phi\colon\om(\A)\to\om(\A')$ be the induced weak map, and let
$s_n\colon\U_n(\A')\to\U_n(\A)$ be the morphism of
\eqref{eq::homotopy_map_complements}. The following diagram commutes in
$\mathrm{Ho}(\Top)$:
\[
\begin{tikzcd}
\U_n(\A') \arrow[r,"s_n"] \arrow[d,"\psi_n"'] &
\U_n(\A) \arrow[d,"\psi_n"] \\
{\Topes_n(\om(\A'))} \arrow[r,"\phi^*"'] &
{\Topes_n(\om(\A))}.
\end{tikzcd}
\]
\end{prop}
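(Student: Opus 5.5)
Since $s_n=\iota_1^{-1}\circ\iota_0$ with $\iota_1\colon\U_n(\A)\hookrightarrow\W_n$ a homotopy equivalence (Lemma~\ref{lem:geometric_lemma} together with the trivialization over $(0,1]$), it suffices to build a single continuous map $\Psi\colon\W_n\to\Topes_n(\om(\A))$, with $\Topes_n$ carrying the Alexandrov topology, whose restrictions to the two boundary fibres are
\[
\Psi\circ\iota_1=\psi_n,\qquad \Psi\circ\iota_0\simeq\phi^*\circ\psi_n .
\]
Composing the second relation with $\iota_1^{-1}$ then gives $\psi_n\circ s_n=\phi^*\circ\psi_n$ in $\mathrm{Ho}(\Top)$. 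Two maps into a finite poset $P$ (a finite $T_0$ space) are homotopic whenever they are pointwise comparable, since the straight-line homotopy $X\times[0,1]\to P$ that uses $f$ on $[0,1)$ and $g$ at $1$ is continuous when $f\le g$. So it is enough to arrange that the second relation is a pointwise inequality.

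\textbf{Construction of $\Psi$.} For $t>0$, use the trivialization $h_t$ and transport the sign data back to $\A=\A(1)$. Concretely, define $\Psi(t,x)$ as the weighted tope computed exactly as $\psi_n$, but with the signs $\sigma_t(x^{(i)})(e)=\operatorname{sign}F_e(t,x^{(i)})$ for the arrangement $\A(t)$. Because $h_t$ preserves every stratum $H_e^\epsilon$, the sign vectors of $\A(t)$ are covectors of $\om(\A)$, so $\Psi(t,x)\in\Topes_n(\om(\A))$. At $t=0$, set $\Psi(0,x)=\phi^*\psi_n(x)$. Since $\phi$ is the identity on ground sets with signs $+$ for a degeneration, this equals the weighted tope read off with the signs $\sigma_0$. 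It lies in $\Topes_n(\om(\A))$ because $\phi$ is a weak map, so topes pull back to topes.

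\textbf{Continuity.} In every fibre $\Psi$ is given by the same formula in terms of the signs $\operatorname{sign}F_e(t,x^{(i)})$. The preimage of $\downarrow(X,w)$ is the set described in the continuity claim for $\psi_n$:
\[
\bigcap_{e}\Bigl(\bigcup_{i<w(e)}\{F_e(t,x^{(i)})\neq0\}\cup\{X(e)F_e(t,x^{(w(e))})>0\}\Bigr).
\]
This set is open in $\W_n$ because each $F_e$ is continuous on $[0,1]\times\Real^r$. For the formula to be globally consistent, the weighted tope produced at $t=0$ must be a tope of $\om(\A)$; this holds because the composed sign vector $\sigma_0(x^{(1)})\circ\cdots\circ\sigma_0(x^{(n)})$ is a tope of $\om(\A')$, hence of $\om(\A)$ via $\phi$. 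With this, $\Psi\circ\iota_0=\phi^*\psi_n$ holds on the nose, and $\Psi\circ\iota_1=\psi_n$ holds since $h_1=\Id$.

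\textbf{Main obstacle.} The delicate point is the step claiming that, for $t>0$, the value of $\Psi(t,x)$ is a tope of $\om(\A)$. The sign of $F_e(t,\cdot)$ must match the labelled sides $H_e^{\pm}(t)$, which holds by the Continuity axiom. The composed sign vector must then be a tope of $\om(\A(t))\cong\om(\A)$, which the trivialization supplies. If sign conventions of $\phi$ allowed reorientations, the $\epsilon_\phi$ factor would have to be tracked, but a degeneration with fixed labels gives $\epsilon=+$. Once these checks are done, the diagram commutes strictly before inverting $\iota_1$, not merely up to homotopy.
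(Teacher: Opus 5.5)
Your proposal is correct and is essentially the paper's argument: your map $\Psi\colon\W_n\to\Topes_n(\om(\A))$ is exactly the composite $r\circ\pi$ of the paper's map $\pi\colon\W_n\to\Cyl(\phi^*)$ with the retraction $r$ onto the generic summand (which satisfies $rj_1=\id$ and $rj_0=\phi^*$), so you simply bypass the cylinder poset. A small benefit of collapsing the cylinder is that your argument only uses that $\iota_1$ is a homotopy equivalence, not that $\psi_n$ (or $\pi$) is one; also, your pointwise-comparability remark is unnecessary, since both boundary identities $\Psi\circ\iota_1=\psi_n$ and $\Psi\circ\iota_0=\phi^*\circ\psi_n$ hold on the nose.
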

\begin{proof}
Consider the cylinder poset
\[
\Cyl(\phi^*)=
\Topes_n(\om(\A))\coprod\Topes_n(\om(\A')),
\]
with the original orders on the two summands and cross-relations
\[
g\leq g'\quad\Longleftrightarrow\quad g\leq\phi^*(g')
\qquad
\bigl(g\in\Topes_n(\om(\A)),\ g'\in\Topes_n(\om(\A'))\bigr).
\]
Denote the
inclusions of the generic and special summands by $j_1$ and $j_0$,
respectively. The map
\[
r\colon\Cyl(\phi^*)\longrightarrow\Topes_n(\om(\A)),
\]
given by the identity on the generic summand and by $\phi^*$ on the
special summand, is monotone and satisfies
\[
rj_1=\id,\qquad j_1r\leq\id,\qquad rj_0=\phi^*.
\]
Thus $|j_0|$ and $|r|$ are mutually inverse homotopy equivalences.\\

For $x\in\U_n(\A(t))$, write $\psi_{n,t}(x)$ for its
weighted tope computed using $\A(t)$. The trivialization of $\A(t)$ over $(0,1]$ identifies $\om(\A(t))$ with $\om(\A)$ for $t>0$. We therefore have a continuous map
\[
\pi\colon \W_n\longrightarrow\Cyl(\phi^*),\qquad
\pi(t,x)=
\begin{cases}
j_1\bigl(\psi_{n,t}(x)\bigr),&t>0,\\
j_0\bigl(\psi_{n,0}(x)\bigr),&t=0,
\end{cases}
\]
where $\W_n$ is the total higher complement defined in Section~\ref{sec:degenerations}. Let $i_1\colon\U_n(\A)\to \W_n$ and
$i_0\colon\U_n(\A')\to \W_n$ be the endpoint inclusions. By construction, we have a diagram
\[
\begin{tikzcd}[column sep=small]
\U_n(\A') \arrow[r,"i_0"] \arrow[d,"\psi_{n,0}"',"\simeq"] &
\W_n \arrow[d,"\pi"] &
\U_n(\A) \arrow[l,"i_1"',"\simeq"]
\arrow[d,"\psi_{n,1}","\simeq"'] \\
{\Topes_n(\om(\A'))} \arrow[r,"j_0"'] &
{\Cyl(\phi^*)} &
{\Topes_n(\om(\A))} \arrow[l,"j_1","\simeq"'].
\end{tikzcd}
\]
where both squares are commutative. The right square shows that $\pi$ is a homotopy equivalence, so the outer rectangle also commutes. 
\end{proof}

\subsection{Proper topes}\label{sec:proper_topes}
To characterize the image of $\psi_n$, we introduce the following definition. 
\begin{defi}
For an oriented matroid $\Mat$, we say that a weighted tope $(X,w)$ is \textit{proper} if, for each circuit $C$, we have the equality 
\[\min\{w(e)\mid C(e)X(e)=+\}=\min\{w(e) \mid C(e)X(e)=-\}.\]
We let $\Prop_n(\Mat) \subseteq \Topes_n(\Mat)$ denote the subposet of proper weighted topes. 
\end{defi}
\begin{prop}\label{prop:image-of-psi-is-proper}
For every pseudohyperplane arrangement $\A$ and every $n\geq1$, a weighted tope lies in the image of the map $\psi_n$ if and only if it is proper:
\[
\psi_n(\U_n(\A))=\Prop_n(\om(\A)).
\]
\end{prop}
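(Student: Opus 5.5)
We prove the two inclusions separately. Both rest on a layered description of a weighted tope. Given $(X,w)\in\Topes_n(\om)$, set $E_i=\{e\mid w(e)=i\}$ and $F_i=\{e\mid w(e)\geq i\}$, so that $F_1=E\supseteq F_2\supseteq\cdots$. For a point $x=(x^{(1)},\dots,x^{(n)})$ the layers of $\psi_n(x)$ are determined as follows: $\sigma(x^{(i)})$ vanishes on $F_{i+1}$, and $\sigma(x^{(i)})$ agrees with $X$ on $E_i$. The entries of $\sigma(x^{(i)})$ on $E\setminus F_i$ are unconstrained; they are forgotten by $\psi_n$.

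\emph{Image $\subseteq$ proper.} Let $x\in\U_n(\A)$ and let $C$ be a signed circuit. Put $m=\min_{e\in\sC}w_x(e)$. Then $\sigma(x^{(m)})$ is a covector that vanishes on $\{e\in\sC\mid w(e)>m\}$ and agrees with $X$ on $\{e\in\sC\mid w(e)=m\}$, and this second set is nonempty. Orthogonality of $\sigma(x^{(m)})$ with $C$ forces $C\cdot X$ to take both signs on $\{e\in\sC\mid w(e)=m\}$. Hence both minima in the definition of properness equal $m$.

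\emph{Proper $\subseteq$ image.} Let $(X,w)$ be proper. We construct the points $x^{(i)}$ layer by layer. For each $i$ we need a covector $Y_i\in\La(\om)$ such that $Y_i|_{F_{i+1}}=0$ and $Y_i|_{E_i}=X|_{E_i}$. Given such covectors, we choose $x^{(i)}$ in the cell $\sigma^{-1}(Y_i)$ of the coned arrangement, which is nonempty because $\La(\A)=\La(\om(\A))$. Then $\psi_n(x)=(X,w)$. Here $x\in\U_n(\A)$ automatically: for $e\in E_i$, $x^{(i)}\notin H_e$.

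The covectors of $\om$ vanishing on $F_{i+1}$ are exactly the covectors of the contraction $\om/\overline{F_{i+1}}$, where $\overline{\,\cdot\,}$ denotes closure. The first step is to show that $F_{i+1}$ is already a flat. Suppose $e\in\overline{F_{i+1}}\setminus F_{i+1}$. Then some circuit $C$ satisfies $\sC\subseteq F_{i+1}\cup\{e\}$ and $e\in\sC$. In this circuit $e$ is the unique element of minimal weight. So the minimum of $w$ over one sign class of $C\cdot X$ is $w(e)\leq i$, while over the other sign class it is $>i$. This contradicts properness.

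It remains to find a covector of $\om/F_{i+1}$ agreeing with $X$ on $E_i$; its values on $E\setminus F_i$ are free. Equivalently, $X|_{E_i}$ must be a tope of the minor $(\om/F_{i+1})|_{E_i}$. That minor has ground set $E_i$, since every element of $E_i\subseteq E\setminus F_{i+1}$ survives the contraction. Its circuits are the minimal nonempty restrictions $C|_{E_i}$, where $C$ is a circuit of $\om$ with $\sC\subseteq F_i$ (minor circuit description, \cite[Prop.~3.3.2]{bjorner1999oriented}). A full-support sign vector is a tope precisely when it is orthogonal to all circuits. For such a $C$ meeting $E_i$, properness says both sign classes of $C\cdot X$ attain the minimum weight. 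Because $\sC\subseteq F_i$, that minimum is $i$. Hence $C\cdot X$ takes both signs on $\sC\cap E_i$, which is orthogonality. So $X|_{E_i}$ is a tope, it lifts to the required $Y_i$, and the construction is complete.

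The main obstacle is this last step: making sure the circuit description of $(\om/F_{i+1})|_{E_i}$ is used correctly. In particular, one must check that circuits of the minor come from circuits of $\om$ contained in $F_i$, and not from arbitrary circuits. If that identification turns out to be delicate, I would fall back on covector elimination. Start from the tope $X$ and, by induction on $|F_{i+1}|$, eliminate the elements of $F_{i+1}$ while preserving the signs on $E_i$, using properness to rule out any obstruction.
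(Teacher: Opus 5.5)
Your proof is correct, and the forward inclusion is the paper's argument. For the converse you take a longer route than the paper.

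The paper does not work layer by layer. It takes the cumulative truncations $X_i$, equal to $X$ on $\{w\le i\}$ and to $0$ on $\{w>i\}$, and checks directly that each is orthogonal to every circuit $C$. If $\min_{\underline{C}}w>i$, the supports are disjoint. Otherwise properness supplies two elements of $\underline{C}$ of that minimal weight, hence in the support of $X_i$, on which $C\cdot X_i$ takes opposite signs.

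This $X_i$ already meets both of your requirements, $X_i|_{F_{i+1}}=0$ and $X_i|_{E_i}=X|_{E_i}$. So the passage through the minor $(\om/F_{i+1})|_{E_i}$ can be avoided. In particular, flatness of $F_{i+1}=\{w>i\}$ comes for free as the zero set of the covector $X_i$, and no circuit description of minors is needed. The paper's choice also produces exactly the chain $X_1\le\cdots\le X_n$ that it reuses for the isomorphism $\Prop_n(\om)\cong\Sal_n(\om)$.

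Your version is nonetheless sound. Your concern about the minor step is unfounded: circuits of $(\om/F_{i+1})|_{E_i}$ are indeed restrictions of circuits of $\om$ supported in $F_{i+1}\cup E_i=F_i$. Your route also gives a more structural reading of properness. The sets $\{w>i\}$ form a flag of flats, and each weight layer $X|_{E_i}$ is a tope of $(\om/F_{i+1})|_{E_i}$, with the lower layers left free. The covector-elimination fallback is unnecessary.
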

\begin{proof}
Write $\om=\om(\S)$ and let $\sigma\colon\Real^r\to\La(\om)$ be the sign map of the coned arrangement, with $\sigma(0)=0$.

($\Rightarrow$) Let $x=(x^{(1)},\ldots,x^{(n)})\in\U_n(\S)$ and $\psi_n(x)=(X_x,w_x)$. For a circuit $C$, put
\[
m=\min\{w_x(e)\mid e\in\underline C\}.
\]
By definition of $w_x(e)$, we have that the support of $C$ meets the support of the covector $\sigma(x^{(m)})$. By circuit--covector orthogonality, this implies that we have two distinct elements $e \neq e' \in E$ satisfying 
\[
    C(e)\sigma(x^{(m)})(e) = + , \quad \textrm{ and } \quad C(e)\sigma(x^{(m)})(e') = -.
\]
By minimality of $m$, we must have $X_x(e) = \sigma(x^{(m)})(e)$ and $X_x(e') = \sigma(x^{m})(e')$, so we get 
\[
\{C(e)X_x(e)\mid e\in\underline C,\ w_x(e)=m\}=\{+,-\}.
\]

($\Leftarrow$) Let $(X,w)\in\Prop_n(\om)$. Define the truncated sign vectors
\[
X_i(e)=
\begin{cases}
0,&w(e) > i,\\
X(e),& w(e) \leqslant i,
\end{cases}
\qquad 1\leq i\leq n.
\]
For each circuit $C$, put $m=\min\{w(e)\mid e\in\underline C\}$. If $i<m$, then $Y_i$ vanishes on $\underline C$. If $i\geq m$, properness of $(X,w)$ supplies $e,f\in\underline C$ with $w(e)=w(f)=m$ and
\[
C(e)X_i(e)=+,\qquad C(f)X_i(f)=-.
\]
Hence $X_i$ is orthogonal to every circuit and is therefore a covector. Choose $x^{(i)}\in\Real^r$ with $\sigma(x^{(i)})=Y_i$, taking $x^{(i)}=0$ when $X_i=0$. We have $x \in \U_n(\A)$ because $x^{(n)} \in \U_1(\A)$, and we have $\psi_n(x)=(X,w)$, which is the desired result. 
\end{proof}
The next result identifies the posets of proper topes with higher Salvetti posets. 
\begin{prop}\label{prop:prop-iso-sal}
For all oriented matroid $\om$ and all $n\geqslant 1$, the posets $\Prop_n(\Mat)$ and $\Sal_n(\Mat)$ are isomorphic.
\end{prop}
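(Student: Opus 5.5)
The plan is to use the truncation construction from the proof of Proposition~\ref{prop:image-of-psi-is-proper}: it gives an explicit order-preserving bijection $\Phi\colon\Prop_n(\om)\to\Sal_n(\om)$ whose inverse is also order-preserving. Throughout we set $X_0=0$ and read $Z\leq W$ for covectors as $Z\preceq W$.

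\textbf{The map and its inverse.} For a proper weighted tope $(X,w)$, define the truncations
\[
X_i(e)=X(e)\ \text{if } w(e)\leq i,\qquad X_i(e)=0\ \text{otherwise},\qquad 1\leq i\leq n,
\]
and set $\Phi(X,w)=(X_1,\ldots,X_n)$.
\begin{itemize}
\item The argument in Proposition~\ref{prop:image-of-psi-is-proper}, direction $(\Leftarrow)$, shows that each $X_i$ is a covector; this uses only properness and no realization.
\item The chain condition $X_1\preceq\cdots\preceq X_n$ is clear from the definition.
\item Since $w$ takes values in $[n]$, we have $X_n=X$, which is a tope.
\end{itemize}
Hence $\Phi$ lands in $\Sal_n(\om)$.

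Conversely, for a chain $(X_1\preceq\cdots\preceq X_n)$ with $X_n$ a tope, put $X=X_n$ and $w(e)=\min\{i\mid X_i(e)\neq0\}$. This is well defined because $X_n$ has full support.

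We check that $(X,w)$ is proper. Fix a circuit $C$ and let $m=\min_{e\in\underline C}w(e)$. Then $X_m$ is nonzero on $\underline C$ and $X_{m-1}$ vanishes there. Since $X_m\perp C$ and their supports meet, $X_m\cdot C$ takes both signs on $\underline C\cap\underline{X_m}$. Every $e$ in this intersection has $w(e)=m$ and $X_m(e)=X(e)$. This is exactly the equality of minima in the definition of a proper tope. The two constructions are visibly mutually inverse.

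\textbf{Comparison of orders.} The remaining step, which I expect to be the main bookkeeping obstacle, is showing that $\Phi$ matches the weighted-tope order with the Salvetti order. Both orders are conditions checked coordinatewise in $e$. So fix $e$, and write $a=w(e)$ and $b=v(e)$ for $(X,w)$ and $(Y,v)$ with truncations $X_i$ and $Y_i$. The Salvetti condition $X_{i-1}\circ Y_i\preceq X_i$ at the coordinate $e$ is analysed in three ranges of $i$.
\begin{itemize}
\item For $i<a$: we have $X_i(e)=X_{i-1}(e)=0$, so the condition forces $Y_i(e)=0$, i.e. $i<b$. Over all such $i$ this amounts to $b\geq a$.
\item For $i=a$: we have $X_{a-1}(e)=0$ and $X_a(e)=X(e)$, so the condition is $Y_a(e)\in\{0,X(e)\}$. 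That is, either $b>a$, or $b=a$ and $Y(e)=X(e)$.
\item For $i>a$: we have $X_{i-1}(e)=X_i(e)\neq0$, so the condition holds automatically.
\end{itemize}
Combining the three ranges, the Salvetti relation at $e$ holds exactly when either $a<b$, or $a=b$ and $X(e)=Y(e)$. This is precisely relation \eqref{eq:def-order-weighted-tope}.

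Therefore $\Phi$ is an isomorphism of posets.
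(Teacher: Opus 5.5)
Your proof is correct and follows the paper's argument: you use the same truncation map $(X,w)\mapsto(X_1,\ldots,X_n)$, the same inverse $w(e)=\min\{i\mid X_i(e)\neq 0\}$, and the same orthogonality argument to show that the inverse lands in proper topes. Your coordinatewise analysis of the Salvetti condition is slightly more complete than the paper's: it shows the two order relations are equivalent, whereas the paper only checks explicitly that the truncation map is order-preserving and does not verify that its inverse is.
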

\begin{proof}
Consider the map $f\colon \Prop_n(\Mat)\to \Sal_n(\Mat)$ defined by the rule
\[
(X,w)\mapsto (X_1,\cdots,X_n), \quad X_i(e)=\begin{cases} X(e),\text{ if }w(e)\leq i
\\
0, \text{ otherwise}\end{cases}
\] 
By the proof of the previous proposition (second case), each $X_i$ is a covector, so $f(X,w)$ is indeed an element of $\Sal_n(\om)$. Let us check that $f$ is a poset map. We need to check that if $(X,w_X)\leq (Y,w_{Y})$, then we have $X_{i+1}\geq X_i\circ Y_{i+1}$ for all $i$ (we use the notation $X_0=0$). Indeed, from the formulas
\[
(X_i\circ Y_{i+1})(e)=\begin{cases}
X(e),\text{ if }i\geq w_X(e)
\\
Y(e),\text{ if }w_X(e)=w_Y(e)=i+1,
\\
0, \text{ if } w_X(e)>i+1
\end{cases}, \quad X_{i+1}(e)=\begin{cases}
X(e),\text{ if }i+1\geq w_X(e)
\\
0, \text{ if } w_X(e)>i+1
\end{cases},
\] 
we see that $X_{i+1}(e)\neq (X_i\circ Y_{i+1})(e)$ only if $w_Y(e)>w_X(e)=i+1$, and in this case $(X_i\circ Y_{i+1})(e)=0$.

We consider the inverse map $g\colon \Sal_n(\Mat)\to \Prop_n(\Mat)$
\[
(X_1,\cdots,X_n)\mapsto (X_n,w),\text{ where } w(e)=\min\{i \mid  X_i(e)\neq 0\}. 
\] 
We have to check that $(X_n,w)$ is proper. For a circuit $C$, let $w_0=\min\{w(e) \mid e\in \underline{C}\}=\min \{i \mid \underline{X_i}\cap \underline{C}\neq \varnothing\}$. Since $\underline{X_{w_0}}\cap \underline{C}\neq \varnothing$, the signed set $X_{w_0}\cdot C$ is non constant. Note that, for $e\in \underline{C}$, $X_{w_0}(e)\neq 0$ only if $w(e)=w_0$, so there exists a pair $e,e'\in \underline{C}$ with $w(e)=w(e')=w_0$ and $X_n(e)C(e)\neq X_n(e')C(e')$. This implies $\phi((X_1,\cdots,X_n))\in \Prop_n(\Mat)$.
\end{proof}
We are now able to give an explicit proof of Theorem~\ref{thm:higher-sal-higher-cpt}. 
\begin{theorem}\label{thm:prop-model-pseudosphere}
Let $\A$ be a pseudohyperplane arrangement. For every $n\geq1$, the map $\psi_n$ is a weak equivalence
\[
\psi_n\colon\U_n(\A)\overset{\simeq}{\longrightarrow}\Prop_n(\om(\A))
\]
\end{theorem}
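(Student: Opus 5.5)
We would apply the second form of Quillen's Theorem A (Theorem~\ref{thm:quillen-type-A}) to the continuous map $\psi_n\colon\U_n(\A)\to\Prop_n(\om(\A))$. By Proposition~\ref{prop:image-of-psi-is-proper} it lands in $\Prop_n$, and it is continuous because $\Prop_n$ carries the subspace Alexandrov topology. It therefore suffices to show that for every proper weighted tope $(X,w)$ the open set
\[
U_{(X,w)}:=\psi_n^{-1}(\downarrow(X,w))
\]
is contractible. Unwinding the order on weighted topes, a point $x=(x^{(1)},\ldots,x^{(n)})$ lies in $U_{(X,w)}$ if and only if, for every $e\in E$, either some $x^{(i)}$ with $i<w(e)$ is off $H_e$, or $x^{(i)}\in H_e$ for all $i<w(e)$ and $x^{(w(e))}\in H_e^{X(e)}$.

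We would prove contractibility by induction on $n$, peeling off the first coordinate. Let $E_1=\{e\mid w(e)=1\}$, and let $X_1$ be the truncated covector of Proposition~\ref{prop:prop-iso-sal}. Projecting to $x^{(1)}$ gives a map from $U_{(X,w)}$ onto the open star $\mathrm{st}(X_1)=\{y\in\Real^r\mid \sigma(y)\geq X_1\}$. Indeed, for $e\in E_1$ we need $x^{(1)}\in H_e^{X(e)}$, and since $X_1$ is a covector the set of such $y$ is exactly the union of cells $F_Z$ with $Z\geq X_1$ (in the coned arrangement). This star is contractible: in the regular cell decomposition of the coned arrangement it is an open cone neighbourhood of the cell $F_{X_1}$, and one can radially contract it onto a point of $F_{X_1}$ (or use Theorem~\ref{thm:cover-contractible} on the face poset $\{Z\geq X_1\}$, which has minimum $X_1$). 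The fibre over $y$ with $\sigma(y)=Z$ is the set of $(x^{(2)},\ldots,x^{(n)})$ satisfying the conditions only for the elements $e$ with $Z(e)=0$. These conditions are exactly $U_{(X',w')}$ for the contracted/restricted arrangement on the flat $\underline{Z}^c$, i.e. for $\om|_{E\setminus\underline Z}$ (realized by the pseudohyperplanes through the stratum), with weights $w'=w-1$ in $n-1$ slots. Here one must check that $(X|,w-1)$ is again proper, which follows from restricting the properness condition to circuits contained in $E\setminus\underline Z$. By induction every fibre is contractible.

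To assemble, we would avoid fibration issues by filtering. The fibre over $y$ depends only on $Z=\sigma(y)$, and it shrinks as $Z$ grows, since it is defined by fewer conditions when there are fewer zeros, so the fibres get larger. We would therefore replace the projection by a map to the finite poset $\{Z\in\La(\om)\mid Z\geq X_1\}^{\op}$ and apply Quillen A again, or use a homotopy colimit/open-cover argument (Theorem~\ref{thm:cover-contractible} style) over the cells of the star. The pieces over each closed-up cell are products (cell)$\times$(fibre) up to homotopy, so the whole space is a homotopy colimit of contractible spaces over a poset with a minimum element, hence contractible. The base case $n=1$ is immediate: $U_{(X,w)}$ is the open chamber $\{x\mid \sigma(x)=X\}$, a cone over an open ball.

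The main obstacle is this assembling step, namely making rigorous that $U_{(X,w)}$ is contractible when the fibres over the base $\mathrm{st}(X_1)$ vary with the stratum and the projection is not a fibration. The first thing to try is to exhibit an explicit deformation. We would deform $x^{(1)}$ within $\mathrm{st}(X_1)$ toward a generic point of the cell $F_{X_1}$ while only increasing zero sets, which keeps us inside $U_{(X,w)}$ because enlarging the zero set of $x^{(1)}$ only relaxes the conditions on the later coordinates. This retracts $U_{(X,w)}$ onto the subspace with $\sigma(x^{(1)})=X_1$, which is $F_{X_1}\times U'$, and $U'$ is contractible by induction.
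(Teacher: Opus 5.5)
Your reduction is fine up to the assembly step: Quillen A, the image of $x\mapsto x^{(1)}$ being $\mathrm{st}(X_1)=\bigcap_{e\in E_1}H_e^{X(e)}$, and the fibre over $y$ being a lower fibre of $\psi_{n-1}$ for the subarrangement $(H_e)_{e\in E\setminus\underline{\sigma(y)}}$ with weights $w-1$. This is provided your induction on $n$ runs over arbitrary, possibly non-essential, subarrangements. But the assembly, which you flag yourself, is the real content of the theorem, and the one concrete mechanism you offer fails because the monotonicity runs the other way. For $e$ with $w(e)\geq 2$, the condition at $e$ holds automatically when $x^{(1)}\notin H_e$. It constrains the later coordinates only when $x^{(1)}\in H_e$. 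So enlarging the zero set of $x^{(1)}$ \emph{adds} conditions, and the fibre over $Z$ is contained in the fibre over $Z'$ whenever $Z\leq Z'$. Sliding $x^{(1)}$ toward $F_{X_1}$ therefore leaves $U_{(X,w)}$. Concretely, take $r=1$, $n=2$, a single hyperplane $H_e=\{0\}$ and $g=(+,2)$. Then $U_g=\Real^2\setminus(\{0\}\times(-\infty,0])$, $E_1=\varnothing$, $F_{X_1}=\{0\}$, and your deformation carries $(1,-1)\in U_g$ to $(0,-1)\notin U_g$.

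Your other suggestions do not close the gap either. Quillen A for $x\mapsto\sigma(x^{(1)})$ into $\{Z\geq X_1\}^{\op}$ needs contractibility of $\{x\in U_g\mid \sigma(x^{(1)})\geq Z\}$, which for $Z=X_1$ is $U_g$ itself, so it is circular. The part of $U_g$ over a closed cell is not a product, since fibres shrink on its boundary. Open cells do not form an open cover.

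What is missing is an open cover adapted to this semicontinuity. For $I\subseteq E\setminus E_1$, let $O_I\subseteq\mathrm{st}(X_1)$ be the open set of $y$ with $\sigma(y)(e)\neq 0$ for all $e\notin I$. Let $U'_I$ be the lower fibre for $(H_e)_{e\in I}$ with weights $w-1$; it is nonempty and contractible by properness of the restriction and your induction. Then $O_I\times U'_I\subseteq U_g$, these sets cover $U_g$, and $(O_I\times U'_I)\cap(O_J\times U'_J)=O_{I\cap J}\times U'_{I\cup J}$. The open-cover gluing lemma then makes the projection $U_g\to\mathrm{st}(X_1)$ an equivalence, and with this cover your first-coordinate induction goes through.

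This is exactly the paper's mechanism, except that the paper peels off the \emph{last} free coordinate. It fixes $x$ with $\psi_n(x)=g$, freezes the coordinates $i>k$ at $x^{(i)}$, and covers by $O_I\times V_I$ with $V_I=\bigcap_{e\in I}H_e^{X(e)}$ for $I\subseteq E_k$. Its fibres are then chambers of subarrangements, so no induction on $n$ over subarrangements is needed.
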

\begin{proof}
Denote by $\A=(H_e)_{e\in E}$ the coned pseudohyperplanes in $\Real^r$. By Theorem~\ref{thm:quillen-type-A}, it suffices to prove that for all $g=(X,w)\in\Prop_n(\om(\A))$, the lower fiber $U_g:=\psi_n^{-1}(\downarrow g)$ is contractible. Notice that we have 
\[
U_g
=\bigcap_{e\in E}
\left(
 \bigcup_{i<w(e)}\{y\mid y^{(i)}\notin H_e\}
 \ \cup\ \{y\mid y^{(w(e))}\in H_e^{X(e)}\}
\right),
\]
so in particular, the lower fiber $U_g$ is open in $(\Real^r)^n$. 

Choose $x\in\U_n(\A)$ with $\psi_n(x)=g$, using Proposition~\ref{prop:image-of-psi-is-proper}. Following the coordinate-by-coordinate strategy of~\cite[Theorem~5.8]{beuckelmann2022small}, set
\[
Z_k=\{y\in U_g\mid y^{(i)}=x^{(i)}\text{ for }i>k\},
\qquad 0\leq k\leq n.
\]
Then $Z_n=U_g$ and $Z_0=\{x\}$. We show that replacing the $k$-th coordinate by $x^{(k)}$ defines a homotopy equivalence
\[
p_k\colon Z_k\longrightarrow Z_{k-1}.
\]
We start by checking that $p_k$ is well-defined. Let $y\in Z_k$ and $z = p_k(y) = (y^{(1)}, \ldots , y^{(k-1)}, x^{(k)}, \ldots , x^{(n)}).$ For any $e\in E$, if $w_y(e) < k$ then we must have $w_z(e) = w_y(e)$ and $X_z(e) = X_y(e)$. If $w_y(e) \geqslant k$ then we have $w_z(e) \leqslant w_y(e)$ and $X_z(e) = X_x(e).$ In both cases we see that $(X_z(e), w_z(e))$ satisfies the right condition for all $e$, so that $z$ belongs to $Z_{k-1}.$

We now prove that $p_k$ is a homotopy equivalence. We prove this using chamber contractibility and finite open covers. Put $E_k=\{e\in E\mid w_x(e)=k\}$. Identifying $Z_{k-1}$ with its first $k-1$ coordinates, we have the identification
\[
Z_k=
\left\{(b,z)\in Z_{k-1}\times\Real^r\ \middle|\
 \forall e\in E_k,\ 
 \left(\exists i<k:\ b^{(i)}\notin H_e\right)
 \text{ or }z\in H_e^{X_x(e)}
\right\}.
\]
Under this identification, $p_k$ is the projection onto $B$.

For $I\subseteq E_k$, define the open sets
\[
O_I=\{b\in B\mid
 \forall e\in E_k\setminus I,\ \exists i<k:\ b^{(i)}\notin H_e\},
\qquad
V_I=\bigcap_{e\in I}H_e^{X(e)},
\]
with $V_\varnothing=\Real^r$. Every $V_I$ contains $x^{(k)}$ and is contractible. Indeed, when $I\ne\varnothing$, its radial section
\[
V_I\cap\Sph^{r-1}=\bigcap_{e\in I}S_e^{X(e)}
\]
is a nonempty chamber of the subarrangement $(S_e)_{e\in I}$, hence an open ball by the chamber property of pseudosphere arrangements~\cite[Lemma~5.1.8]{bjorner1999oriented}. 

The families $(O_I\times V_I)_{I\subseteq E_k}$ and $(O_I)_{I\subseteq E_k}$ are finite open covers of $Z_k$ and $Z_{k-1}$, respectively; in particular, $O_{E_k}=Z_{k-1}$. For any nonempty finite family $I_1,\ldots,I_m\subseteq E_k$, the corresponding intersection map is
\[
\bigcap_{j=1}^m(O_{I_j}\times V_{I_j})
=
O_{\bigcap_{j=1}^m I_j}\times V_{\bigcup_{j=1}^m I_j}
\ \longrightarrow\
O_{\bigcap_{j=1}^m I_j}
=
\bigcap_{j=1}^m O_{I_j}.
\]
It is a homotopy equivalence because its second factor is contractible. The open-cover gluing lemma for continuous maps now implies that $p_k$ is a homotopy equivalence. Consequently
\[
U_g=Z_n\simeq Z_{n-1}\simeq\cdots\simeq Z_0=\{x\},
\]
and all the lower fibers of $\psi_n$ are contractible. 
\end{proof}
Our next goal is to show that the posets of weighted topes also model higher arrangement complements. In order to achieve this, we introduce yet another new poset model, which will play an auxiliary role. 

\subsection{Weightwise topes and comparison with weighted topes}\label{sec:weightwise_topes}
\begin{definition}
    For an oriented matroid $\Mat$, we say that a weighted signed set $(X,w)$ with $\underline{X}=E$ is a \emph{weightwise tope} if $X$ contains no circuit of constant weight. We denote by $\WTopes_n(\om)$ the poset of weightwise topes with the order given by \eqref{eq:def-order-weighted-tope}.
\end{definition} 
In particular, $\WTopes_n(\om)$ is a subposet of $\Topes_n(\om).$ Similarly to weighted topes, the posets of weightwise topes carry an operadic structure. 
\begin{prop}
   For all $n\geqslant 1$, the family of posets $\WTopes_n$ carries a presheaf structure defined by the rule 
    \[
        \WTopes_n(\phi)\colon \WTopes_n(\N)\to \WTopes_n(\om), \quad (X, w) \mapsto (\epsilon (X\circ f), w\circ f),
    \]
for every morphism $\phi = (f, \epsilon)\colon \om \rightarrow \N$.
\end{prop}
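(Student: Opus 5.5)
The plan is to verify directly the four things the statement requires. First, the rule sends weightwise topes of $\N$ to weightwise topes of $\om$. Second, it is order preserving. Third, it is functorial. Fourth, it is monoidal with respect to $\oplus$. The only non-formal point is the first one. For it I will reuse the argument from the proof of Proposition~\ref{prop:presheaf-Orlik-Solomon}: a sign pattern on a subset $A\subseteq E(\N)$ that is not a tope of $\N|_A$ must contain a signed circuit of $\N$.

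\emph{Well-definedness.} Let $(X,w)\in\WTopes_n(\N)$ and put $(X',w')=(\epsilon(X\circ f),w\circ f)$. Since $\underline X=E(\N)$, we have $\underline{X'}=E(\om)$. Suppose $C$ is a signed circuit of $\om$ with $C\preceq X'$ and with $w'$ constant, say equal to $k$, on $\underline C$. Let $A=f(\underline C)$. Then $w$ is constant equal to $k$ on $A$.

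The restriction $X|_A$ cannot be a tope of $\N|_A$. If it were, it would lift to a covector of $\N$, and composing with any tope would give a tope $T$ of $\N$ extending it. Then $\phi^*T$ would be a tope of $\om$ agreeing with $C$ on $\underline C$, since $\phi^*T(e)=\epsilon(e)X(f(e))=C(e)$ there. This contradicts circuit--covector orthogonality.

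Since $X|_A$ has full support on $A$ but is not a covector of $\N|_A$, some signed circuit $D$ of $\N|_A$, which is also a circuit of $\N$, is not orthogonal to $X|_A$. After replacing $D$ by $-D$ if needed, we get $D\preceq X$. Because $\underline D\subseteq A$, the weight $w$ is constant on $\underline D$. This contradicts $(X,w)\in\WTopes_n(\N)$. Note that no sign conflicts arise on $A$, since $X$ is a single function on $E(\N)$. This is simpler than the odd-$n$ case in Proposition~\ref{prop:presheaf-Orlik-Solomon}.

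\emph{Monotonicity.} The order \eqref{eq:def-order-weighted-tope} is a conjunction of conditions, one for each element $e$. If $(X_1,w_1)\le(X_2,w_2)$ in $\WTopes_n(\N)$, then for each $e\in E(\om)$ the condition at $f(e)$ is either $w_1(f(e))<w_2(f(e))$, or equality of weights together with $X_1(f(e))=X_2(f(e))$. Multiplying both signs by $\epsilon(e)$ preserves the equality, so the pulled-back pair satisfies the condition at $e$.

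\emph{Functoriality and monoidality.} For morphisms $\psi=(g,\eta)\colon\Nmat\to\om$ and $\phi=(f,\epsilon)$, the composite has ground-set map $f\circ g$ and signs $\eta\cdot(\epsilon\circ g)$. Hence $\psi^*\phi^*=(\phi\psi)^*$ and $\id^*=\id$ follow elementwise. The signed circuits of $\om\oplus\N$ are exactly the circuits of the two summands. Therefore a pair $(X,w)$ on $E(\om)\sqcup E(\N)$ is a weightwise tope precisely when both of its restrictions are. This gives natural isomorphisms $\WTopes_n(\om\oplus\N)\cong\WTopes_n(\om)\times\WTopes_n(\N)$ and $\WTopes_n(\varnothing)=\{*\}$, which are compatible with the pullbacks.

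I expect the main obstacle to be the well-definedness step, specifically producing the circuit $D\preceq X$ of constant weight. This is resolved by the tope-extension and orthogonality argument above.
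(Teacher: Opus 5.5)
Your proof is correct and takes essentially the paper's route. The paper argues in one line that if $\phi^*(X,w)$ contained a constant-weight circuit $C$, then the signed multiset $f(C)$ would contain a constant-weight signed circuit of $\N$; your tope-extension and orthogonality argument (the same one used in the proof of Proposition~\ref{prop:presheaf-Orlik-Solomon}) supplies the justification of that step, which the paper leaves implicit, and your checks of monotonicity, functoriality and compatibility with direct sums spell out what the paper treats as immediate.
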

\begin{proof}
    Let $(X,w) \in \WTopes_n(\N)$ be a weightwise tope. If the weighted sign vector $(\epsilon (X\circ f), w\circ f)$ contains a signed circuit $C$ of constant weight, then the signed multiset $f(C)$ contains a signed circuit of $\N$ of constant weight, and so $(X, w)$ is not a weightwise tope, a contradiction. Monotonicity is immediate. 
\end{proof}

The main result of this section is the following theorem. 

\begin{theorem}\label{thm::equivalence-Tope-Topeext} 
    For any oriented matroid $\om$ and any $n\geq 1$, the inclusion 
    \[
        j:\Topes_n(\Mat)\hookrightarrow\WTopes_n(\Mat)
    \]
    is a homotopy equivalence.
\end{theorem}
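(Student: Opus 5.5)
The plan is to apply Quillen's Theorem A (Theorem~\ref{thm:quillen-type-A}) to the inclusion $j$. First record why $j$ makes sense: a tope $X$ is orthogonal to every circuit, so no signed circuit $C$ satisfies $C\preceq X$. Hence every weighted tope is a weightwise tope, and $\Topes_n(\om)$ is a full subposet of $\WTopes_n(\om)$ with the same order~\eqref{eq:def-order-weighted-tope}. It then suffices to show that for every $(Y,v)\in\WTopes_n(\om)$ the lower fiber
\[
P_{(Y,v)}=\{(X,w)\in\Topes_n(\om)\mid (X,w)\leq (Y,v)\}
\]
is contractible.

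\textbf{Unpacking the fiber.} Unwinding the order, $(X,w)\leq(Y,v)$ means that for each $e$ either $w(e)<v(e)$, or $w(e)=v(e)$ and $X(e)=Y(e)$.

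\textbf{Nonemptiness.} Put $E_k=\{e\mid v(e)=k\}$. The weightwise condition says $Y|_{E_1}$ conforms to no signed circuit of $\om|_{E_1}$. Since a full-support sign vector is a tope exactly when it contains no signed circuit, $Y|_{E_1}$ is a tope of $\om|_{E_1}$. It therefore lifts to a tope $X$ of $\om$, and $(X,w)$ with $w\equiv1$ lies in $P_{(Y,v)}$, because for $e\notin E_1$ we have $w(e)=1<v(e)$.

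\textbf{Contractibility by induction on $\sum_e v(e)$.} The base case is $v\equiv 1$. There the fiber is the single point $(Y,1)$, since $Y$ is then itself a tope.

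For the inductive step, choose the top level $m=\max v$ and the set $E_m$. Split $P_{(Y,v)}$ according to the set $S\subseteq E_m$ of elements at which $w(e)=m$; on $S$ the sign is forced to be $Y$. Then cover $P_{(Y,v)}$ by the lower ideals
\[
\mathcal I_e=\{(X,w)\in P_{(Y,v)}\mid w(e)<m\},\qquad e\in E_m,
\]
together with the lower ideals generated by elements agreeing with $Y$ on all of $E_m$. An intersection of ideals $\mathcal I_e$ over $e\in T$ is exactly the fiber over $(Y,v')$, where $v'$ is $v$ lowered by one on $T$.

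If $(Y,v')$ is still weightwise, these intersections are contractible by induction. If it is not, I would replace $Y$ by a suitable composition with a tope, or pass to the fiber over the closest weightwise element below. The last piece, the part where $w=m$ on all of $E_m$, should be treated via the operadic decomposition of Example~\ref{ex:operadic_maps}: it looks like a product of a fiber for $\om|_{E_m}$-type data and a fiber for the contraction. With the pieces contractible, Theorem~\ref{thm:cover-contractible} gives contractibility of $P_{(Y,v)}$.

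\textbf{Alternative via homotopy retractions.} If the cover argument becomes messy, I would instead look for monotone maps $p\geq f(p)\leq c$ with $c$ a fixed cone point. For $f$, one natural candidate lowers all weights with $w(e)<v(e)$ to the smallest admissible value while recomposing signs as $X_{\le k}\circ Y$. The sign recomposition is needed to keep the sign vector a tope.

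\textbf{Main obstacle.} The crux is the sign bookkeeping. Lowering weights forces sign agreements between elements that previously had independent signs, and the result must remain a tope. Checking that the relevant intermediate sign vectors, which are compositions of truncations of $X$ and $Y$, are topes will use the weightwise condition on $(Y,v)$ level by level: $Y$ restricted to $E_{\leq k}$ must contain no circuit of constant weight. This is where I expect the real work, and where the composition and orthogonality properties of covectors from Section~2 must be invoked carefully.
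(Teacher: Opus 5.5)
Your frame matches the paper's: Quillen's Theorem A for $j$, induction, and the nerve theorem. Your nonemptiness argument is correct. The gap is in the inductive step, which carries all the content.

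\textbf{The top-level cover does not work.} Your cover by the ideals $\mathcal I_e$, $e\in E_m$, does not have the intersections you claim. An element of $\mathcal I_e$ may have $w(e)=m-1$ and $X(e)=-Y(e)$, and such an element is not below $(Y,v-\mathbf 1_e)$. So $\bigcap_{e\in T}\mathcal I_e$ is the union of the fibres over all $2^{|T|}$ weighted sign vectors obtained by lowering $v$ on $T$ with arbitrary signs there. This union is not a principal lower ideal, and you have no control over which of those sign choices are weightwise. A small example shows this. Take a parallel pair $a,b$ with signed circuit $(+,-)$, and let $Y=(+,-)$, $v=(1,2)$. The fibre is $\{((+,+),(1,1))\}$. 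It lies in $\mathcal I_b$ but is not below $((+,-),(1,1))$, and that element is not even weightwise. Your fallback, ``replace $Y$ by a suitable composition with a tope'', is not an argument. The remaining piece, where $w\equiv m$ on $E_m$, is not a product coming from $\triangle_F$. The set $E_m$ need not be a flat, and topes with prescribed signs on $E_m$ do not form $\Topes(\om/F)\times\Topes(\om|_F)$. Its intersections with the $\mathcal I_e$ would also need to be shown contractible. The retraction alternative founders on the same sign bookkeeping you leave open.

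\textbf{The missing idea: cover by a circuit, not by the top weight level.} If $Y$ is not a tope, choose a signed circuit $C\preceq Y$. By orthogonality, every tope $X$ has some $s\in\supp C$ with $X(s)=-C(s)=-Y(s)$. Hence $(X,w)\le(Y,v)$ forces $w(s)<v(s)$, so $s\in J=\supp C\cap\{v>1\}$. Define $g_S$ for $S\subseteq J$ by flipping the signs on $S$ to $-C$ and lowering the weights on $S$ by one. Then $(X,w)\le g_{\{s\}}$.

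This gives a cover by principal ideals $\downarrow g_{\{s\}}$ whose intersections are again principal, namely $\downarrow g_S$, because the sign at each lowered coordinate is forced. The induction then needs each $g_S$ to be weightwise, which is not automatic. It requires choosing $C$ to minimize, lexicographically from the top weight level down, the number of its elements at each level. With that choice, the conformal elimination of Lemma~\ref{lem:conformal-elimination} shows that a constant-weight circuit in $g_S$ would produce a circuit $Z\preceq Y$ with smaller $\mu_v$ (Lemma~\ref{lem:circuit-descent-uniform}). Without this choice of $C$ and the descent lemma, your induction does not close.
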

The proof follows the same pattern as in~\cite[Thm.~4.5]{beuckelmann2022small}. We split the proof into two lemmas, which will be of independent interest. For the proof, we will use the equivalent \emph{strong circuit elimination} axiom: for $C,D\in\C$, $e\in S(C,D)$ and $f\in\supp D\setminus\supp C$, there exists $Z\in\C$ such that
\begin{equation}\label{eq:strong-elimination}
 Z^\pm\subseteq(C^\pm\cup D^\pm)\setminus\{e\},
 \qquad f\in\supp Z.
\end{equation}
The following consequence is the form needed in this article.

\begin{lemma}[{\cite[Propositions 3.1 and 3.4]{Fukuda2004}}]\label{lem:conformal-elimination}
Let $C$ and $D$ be circuits of an oriented matroid $\om$. If $S(C,D)$ is nonempty
and $f\in\supp D\setminus\supp C$, then there exist $e\in S(C,D)$ and a
circuit $Z$ such that
\begin{equation*}
 f\in\supp Z,\qquad Z\preceq C\circ D,\qquad \textrm{and } \quad  Z(e)=0.
\end{equation*}
\end{lemma}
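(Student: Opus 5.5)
The plan is to start from the strong circuit elimination axiom~\eqref{eq:strong-elimination} and then repair, one element at a time, the places where the resulting circuit fails to be conformal to $C\circ D$. The potential decreases strictly at each step, so the procedure terminates. Throughout, we work in the class $\mathcal Z$ of signed circuits $Z$ of $\om$ with
\[
 Z^\pm\subseteq C^\pm\cup D^\pm,\qquad f\in\supp Z,
\]
and we attach to each $Z\in\mathcal Z$ its \emph{bad set} $B(Z)=\{g\in\supp C\mid Z(g)=-C(g)\}$.

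First I will record two elementary observations about $Z\in\mathcal Z$.

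(a) $B(Z)\subseteq S(C,D)$. Indeed, if $Z(g)=-C(g)\neq 0$, then the sign $Z(g)$ must come from $D^\pm$, so $D(g)=-C(g)$.

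(b) If $B(Z)=\varnothing$, then $Z\preceq C\circ D$. On $\supp C$ we have $C\circ D=C$, and $Z$ either vanishes there or agrees with $C$. Off $\supp C$, any nonzero sign of $Z$ lies in $D^\pm$ and so agrees with $D=C\circ D$.

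Next I produce an initial element of $\mathcal Z$. Pick any $e_0\in S(C,D)$ and apply strong elimination~\eqref{eq:strong-elimination} to $C$, $D$, $e_0$, $f$. This gives $Z_0\in\mathcal Z$ with $Z_0(e_0)=0$. If $B(Z_0)=\varnothing$, then by (b) the pair $(e_0,Z_0)$ already satisfies the lemma.

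Otherwise, I iterate as follows. Given $Z\in\mathcal Z$ with $B(Z)\neq\varnothing$, choose $g\in B(Z)$. Then $g\in S(C,Z)$. Moreover $f\in\supp Z\setminus\supp C$, since $f\in\supp D\setminus\supp C$. We also have $Z\neq -C$, because $f\in\supp Z$ but $f\notin\supp C$. Apply strong elimination to $C$, $Z$, $g$, $f$ to get a circuit $Z'$ with
\[
 Z'^\pm\subseteq (C^\pm\cup Z^\pm)\setminus\{g\}
\]
and $f\in\supp Z'$. Then $Z'\in\mathcal Z$, and $Z'(g)=0$ with $g\in S(C,D)$ by (a).

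The key check is that $B(Z')\subseteq B(Z)\setminus\{g\}$. Suppose $Z'(h)=-C(h)\neq0$. This sign cannot come from $C^\pm$, so it comes from $Z^\pm$, which gives $Z(h)=-C(h)$, that is $h\in B(Z)$. And $h\neq g$ because $Z'(g)=0$.

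Hence $|B|$ strictly decreases, and after finitely many steps we reach $Z$ with $B(Z)=\varnothing$. The last elimination was performed at some $g=:e\in S(C,D)$ with $Z(e)=0$, and (b) gives $Z\preceq C\circ D$; this proves the lemma.

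The main obstacle is only the sign bookkeeping in the key check, which guarantees that no new bad elements are created. I expect it to go through as sketched, since the new circuit's signs are drawn from $C$, which is never bad, and from the previous circuit.
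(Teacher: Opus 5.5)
Your proof is correct and complete. Observations (a) and (b) hold. Every elimination step is a legitimate instance of \eqref{eq:strong-elimination}: at each stage $g\in S(C,Z)$ and $f\in\supp Z\setminus\supp C$, the latter because $f\notin\supp C$ is fixed from the start. Membership in $\mathcal Z$ is inherited, since $(Z')^{\pm}\subseteq C^{\pm}\cup Z^{\pm}\subseteq C^{\pm}\cup D^{\pm}$. The key inclusion $B(Z')\subseteq B(Z)\setminus\{g\}$ holds because a sign of $Z'$ opposite to $C$ can only be inherited from $Z$. Hence $|B(Z)|$ strictly decreases, and the last eliminated element is a valid $e$.

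The paper gives no argument of its own here; it simply quotes the statement from Fukuda. Your route makes the lemma self-contained. It derives the conformal version from exactly the strong elimination axiom \eqref{eq:strong-elimination} that the paper records just before the statement. The one idea is to always eliminate against the fixed circuit $C$, so that $C$ never contributes a bad sign and the class $\mathcal Z$ is preserved.

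The procedure yields only an existential $e\in S(C,D)$, and this cannot be improved. Take the realizable orientation of $\U_{2,4}$ whose signed circuits are the sign vectors of minimal-support vectors in the span of $(1,1,1,0)$ and $(-1,-3,0,1)$. Let $C=(+,+,+,0)$, $D=(-,-,0,+)$ and $f=4$. The only circuit $Z\preceq C\circ D$ with $f\in\supp Z$ is $(+,0,+,+)$, so $e=1$ cannot be the eliminated element. Your iteration starting from $e_0=1$ correctly ends at $e=2$.

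An existential $e$ is, however, all that the application in Lemma~\ref{lem:circuit-descent-uniform} requires. There one only uses that some coordinate of $A_0=S(C,D)$, of old weight $k+1$, drops out.
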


\begin{lemma}\label{lem:circuit-descent-uniform}
Let $\om$ be loopless and $g=(X,w)\in\WTopes_n(\om)$. Suppose $X$ is not a tope. Choose $C\preceq X$ minimizing
lexicographically the tuple
\[
 \mu_w(C)=\bigl(|\supp C\cap w^{-1}(m)|,\ldots,
                    |\supp C\cap w^{-1}(1)|\bigr).
\]
For any $S\subseteq\supp C\cap\{w>1\}$, define
\[
 g_S=(X_S,w_S),\qquad
 w_S=w-\mathbf 1_S,\qquad
 X_S(f)=\begin{cases}-X(f)&f\in S,\\ X(f)&f\notin S.\end{cases}
\]
Then the weighted sign vector $g_S$ belongs to $\WTopes_n(\om)$.
\end{lemma}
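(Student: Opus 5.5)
The strategy is to argue by contradiction. From a circuit of constant weight in $g_S$ we will build a circuit $Z\preceq X$ whose $\mu_w$ is lexicographically smaller than $\mu_w(C)$, which contradicts the choice of $C$.

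First the setup. $X$ has full support and is not a tope, so it is not a covector: some signed circuit is not orthogonal to $X$. After reversing it, it is conformal to $X$, so circuits $C\preceq X$ exist and $C$ is well defined. Since $g\in\WTopes_n(\om)$, $w$ is not constant on $\supp C$. The sign vector $X_S$ still has full support, and $w_S\geq 1$ because $S\subseteq\{w>1\}$; so $g_S$ is a weighted signed set with $\underline{X_S}=E$.

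Suppose $D$ is a signed circuit with $D\preceq X_S$ and $w_S$ constant, equal to $k$, on $\supp D$.
\begin{itemize}
 \item If $\supp D\cap S=\varnothing$, then $D\preceq X$ and $w_S=w$ on $\supp D$. So $D$ is a constant-weight circuit of $g$, which is impossible.
 \item Otherwise $\supp D\cap S\neq\varnothing$. On $\supp D\cap S$ we have $w=k+1$ and $D=-X=-C$. On $\supp D\setminus S$ we have $w=k$ and $D=X$.
\end{itemize}
In the second case the separator is exactly $S(C,D)=\supp D\cap S\neq\varnothing$, since on $\supp D\cap\supp C\setminus S$ the signs of $C$ and $D$ agree.

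Next I rule out $\supp D\subseteq\supp C$. By (C2) this would force $D=\pm C$. The case $D=C$ is impossible because $D$ disagrees with $C$ on $S(C,D)\neq\varnothing$. If $D=-C$, then $\supp C=\supp D\cap S$, so $w\equiv k+1$ on $\supp C$, contradicting that $C$ is not of constant weight. Hence there is $f\in\supp D\setminus\supp C$. Lemma~\ref{lem:conformal-elimination} then gives $e\in S(C,D)$ and a circuit $Z$ with
\[
Z\preceq C\circ D,\qquad Z(e)=0,\qquad f\in\supp Z .
\]
Now $C\circ D$ equals $C=X$ on $\supp C$. Outside $\supp C$ it equals $D$, and there $D$ agrees with $X$ because $S\subseteq\supp C$. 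So $C\circ D\preceq X$, and hence $Z\preceq X$.

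The key step, which I expect to be the main point to check carefully, is the lexicographic comparison $\mu_w(Z)<\mu_w(C)$. We have
\[
\supp Z\subseteq(\supp C\setminus\{e\})\cup(\supp D\setminus\supp C).
\]
Every element of $\supp D\setminus\supp C$ lies outside $S$, so it has $w$-weight $k$. The removed element $e$ has $w(e)=k+1$.
\begin{itemize}
 \item At every level above $k+1$, $Z$ has at most as many elements as $C$, since those elements all come from $\supp C$.
 \item If some higher level is strictly smaller, we are done.
 \item Otherwise level $k+1$ is strictly smaller, since $e$ has been removed and no new element of weight $k+1$ is added.
\end{itemize}
Since the tuple $\mu_w$ is read from the top weight down, in either case $\mu_w(Z)<\mu_w(C)$. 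This contradicts the minimality of $C$. Therefore $g_S$ contains no circuit of constant weight, i.e.\ $g_S\in\WTopes_n(\om)$.
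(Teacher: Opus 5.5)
Your proof is correct and follows the paper's argument essentially step for step. A constant-weight circuit $D\preceq X_S$ must meet $S$, has separator $S(C,D)=\supp D\cap S$, and cannot equal $\pm C$; conformal elimination (Lemma~\ref{lem:conformal-elimination}) then yields a circuit $Z\preceq X$ with $\mu_w(Z)<\mu_w(C)$. You also spell out why $C\circ D\preceq X$ and give the lexicographic comparison in full, where the paper is brief.
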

\begin{proof}
The case $S=\varnothing$ is immediate. Suppose, for a contradiction, that a circuit
$D\preceq X_S$ has constant new weight $k$. Set
\[
 A_0=\supp D\cap S,\qquad B_0=\supp D\setminus S.
\]
We have $A_0\ne\varnothing$, since otherwise $D$ already violates
uniformity of $g$. On $A_0$, the old weight is $k+1$ and $D=-C$;
on $B_0$, the old weight is $k$ and $D=X$. Hence $S(C,D)=A_0$.
We have $D\ne C$. Also $D=-C$ would force $\supp C\subseteq S$,
making $C\preceq X$ a circuit of constant old weight $k+1$, which is
impossible. Circuit incomparability therefore supplies
$f\in\supp D\setminus\supp C$.

Apply Lemma \ref{lem:conformal-elimination} to eliminate $e$ and keep $f$. The resulting circuit $E$ satisfies $E \preceq X$.
Every coordinate added to $\supp C$ has old weight $k$, while at least
one coordinate of old weight $k+1$ is removed. No higher-weight coordinate
is added, so $\mu_w(E)<\mu_w(C)$, contrary to the choice of $C$. This finishes the proof. 
\end{proof}
\begin{lemma}\label{lem:contractible-fibers-uniform}
For every $g\in\WTopes_n(\om)$, the poset
\[
 \Topes_n(\om)\cap\downarrow g
\]
is contractible.
\end{lemma}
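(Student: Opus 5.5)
Write $F_g:=\Topes_n(\om)\cap\downarrow g$. I will argue by induction on a complexity of $g=(X,w)$, the same pattern as~\cite[Thm.~4.5]{beuckelmann2022small}. If $X$ is already a tope, then $g\in\Topes_n(\om)$, so $F_g$ has maximum $g$ and is contractible. Otherwise $X$ is not a tope, and I fix the circuit $C\preceq X$ minimizing $\mu_w$ as in Lemma~\ref{lem:circuit-descent-uniform}. Since $g$ is weightwise, $w$ is not constant on $\supp C$. Hence
\[
T:=\supp C\cap\{w>\min_{\supp C} w\}
\]
is nonempty and contained in $\{w>1\}$. Lemma~\ref{lem:circuit-descent-uniform} gives $g_S\in\WTopes_n(\om)$ for every $S\subseteq T$. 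Note that $g_S\leq g$, because on $S$ the weight drops strictly, and everywhere else $g_S$ agrees with $g$.

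The key claim is that $F_g=\bigcup_{f\in T}F_{g_{\{f\}}}$, and more generally
\[
\bigcap_{f\in S}F_{g_{\{f\}}}=F_{g_S}\qquad (\varnothing\neq S\subseteq T).
\]
For the intersection formula, observe that $(Y,v)\leq g_{\{f\}}$ for all $f\in S$ means exactly that $(Y,v)\leq g$ and, for each $f\in S$, either $v(f)<w(f)-1$ or $(v(f),Y(f))=(w(f)-1,-X(f))$. This is precisely $(Y,v)\leq g_S$.

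For the covering, take $(Y,v)\in F_g$ and suppose it lies in no $F_{g_{\{f\}}}$. Then for every $f\in T$ we have $v(f)=w(f)$ and $Y(f)=X(f)$, or else $v(f)=w(f)-1$ and $Y(f)=X(f)$. In either case $Y$ agrees with $X$ on $T$. On the remaining part $\supp C\setminus T$, where $w$ takes its minimal value $m$ on $\supp C$, the relation $(Y,v)\leq g$ forces $v\leq m$. If $v<m$ somewhere there, I expect to still obtain a contradiction by orthogonality. I would try to show $Y\cdot C$ is constant on $\supp C$, contradicting that $Y$ is a tope while $C$ is a circuit. This uses that $Y=X$ on the elements where $v=w$, and I expect it to need a refinement of the choice of $T$. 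For instance, I could take $T=\supp C\cap\{w>1\}$ and first dispose of the case where $C$ has an element of weight $1$ by a separate argument.

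This covering step is the main obstacle. If it fails as stated, I would instead define the cover by $F_{g_S}$ over sets $S$ chosen as minimal sign-changes forced by orthogonality with $C$.

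Granting the cover, each intersection $F_{g_S}$ is contractible by induction. The induction runs on $\sum_e w(e)$, which strictly decreases from $g$ to $g_S$, with the tope case as the base. Each $F_{g_S}$ is a lower ideal of $F_g$, and all these intersections are nonempty because they are contractible. Theorem~\ref{thm:cover-contractible} then shows that $F_g$ is contractible.
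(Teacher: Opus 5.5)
The overall architecture of your proposal is the paper's: induction, the cover of $\Topes_n(\om)\cap\downarrow g$ by the lower ideals $F_{g_{\{f\}}}$, the intersection formula $\bigcap_{f\in S}F_{g_{\{f\}}}=F_{g_S}$, Lemma~\ref{lem:circuit-descent-uniform}, and Theorem~\ref{thm:cover-contractible}. However, the covering step you flag genuinely fails for your index set $T=\supp C\cap\{w>\min_{\supp C}w\}$.

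\emph{Counterexample.} Let $\om$ have rank one on $\{a,b\}$ with $a,b$ parallel. Then $C=(+,-)$ is a signed circuit and the topes are $(+,+)$ and $(-,-)$. Take $n=3$ and $g=((+,-),(2,3))$, which is weightwise. Here $T=\{b\}$ and $g_{\{b\}}=((+,+),(2,2))$. The weighted tope $((-,-),(1,3))$ lies below $g$ but not below $g_{\{b\}}$, since its weight at $b$ is $3>2$.

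The underlying problem is that the coordinate where a tope is forced to disagree with $C$ can carry the minimal $w$-weight on $\supp C$, as long as that weight exceeds $1$.

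\emph{Fix.} Take $J=\supp C\cap\{w>1\}$. This is the set the paper uses, and Lemma~\ref{lem:circuit-descent-uniform} still applies to every $S\subseteq J$. With this choice no separate argument for weight-one elements is needed.
\begin{itemize}
\item Let $(Y,v)\le g$ with $Y$ a tope. Orthogonality of $Y$ and $C$ (their supports meet, since $\supp Y=E$) gives some $s\in\supp C$ with $Y(s)=-C(s)=-X(s)$.
\item Since $(Y,v)\le g$ and the signs differ at $s$, we must have $v(s)<w(s)$. Hence $w(s)>1$, so $s\in J$.
\item Then $v(s)\le w(s)-1$ and $Y(s)=-X(s)$, so $(Y,v)\le g_{\{s\}}$.
\end{itemize}
Elements of $\supp C$ of weight $1$ are forced to have $v=w$ and $Y=X$, so they can never be the witness. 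The set $J$ is nonempty because $w$ is not constant on $\supp C$. With this $J$, the rest of your argument (induction on $\sum_e w(e)$, nonemptiness via contractibility, and the nerve theorem) goes through verbatim. Your fallback with ``minimal sign-changes'' is unnecessary.
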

\begin{proof}
Write $g = (X, w) \in \WTopes_n(\om)$. If $X$ is a tope, the poset $ \Topes_n(\om)\cap\downarrow g$ has an upper bound given by $g$, and so it is contractible. \\

Otherwise, choose $C$ as in Lemma \ref{lem:circuit-descent-uniform} and put $J=\supp C\cap\{w>1\}$. 
This set is nonempty by uniformity. Every tope $Y$ disagrees with $C$
on some coordinate of its support. If $(Y,v)\leq g$, that coordinate
lies in $J$ and $(Y,v)\leq u_{\{s\}}$. Consequently
\[
 \Topes_n(\om)\cap\downarrow g
 =\bigcup_{s\in J}\bigl(\Topes_n(\om)\cap\downarrow g_{\{s\}}\bigr).
\]
The intersection indexed by a nonempty $S\subseteq J$ is exactly
$\Topes_n(\om)\cap\downarrow g_S$. By Lemma \ref{lem:circuit-descent-uniform} and the induction hypothesis,
this intersection is nonempty and contractible. Theorem~\ref{thm:cover-contractible} concludes the proof. 
\end{proof}
\begin{proof}[Proof of Theorem \ref{thm::equivalence-Tope-Topeext}] 
Lemma \ref{lem:contractible-fibers-uniform} shows that the lower fiber 
\[
    j^{-1}(\downarrow g) = \Topes_m(L)\cap\downarrow g
\]
is contractible for every $g \in \WTopes_n(\om)$. By Quillen's Theorem A~\cite{quillen2006higher}, this shows that $j$ is a homotopy equivalence.

\end{proof} 
	
\subsection{Comparison between $\Topes_n$ and $\WTopes_n$}\label{sec:comparison-topes-prop}
In this section we show that the posets of weightwise topes $\WTopes_n$ form a model of higher arrangement complements, and so do the posets of weighted topes by Theorem~\ref{thm::equivalence-Tope-Topeext}.
\begin{theorem}\label{thm:equivalence-higher-proper-uni}
Let $\om$ be an oriented matroid and $n \geqslant 2$ an ingeger. The inclusion 
\[
\Prop_n(\om) \subseteq \WTopes_n(\om)
\]
is a homotopy equivalence.
\end{theorem}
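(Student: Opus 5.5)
The plan is to apply Quillen's Theorem~A (Theorem~\ref{thm:quillen-type-A}) to the inclusion $i\colon\Prop_n(\om)\hookrightarrow\WTopes_n(\om)$, but with \emph{upper} fibres $\Prop_n(\om)\cap\uparrow g$, using the dual form of Theorem~\ref{thm:quillen-type-A} applied to opposite posets. Lower fibres cannot work. Take $g=(X,w)$ weightwise and a circuit $C$ whose minimal-weight layer has weight $1$ and on which $C\cdot X\equiv +$, while the rest of $C$ has larger weight and opposite sign. Then every element below $g$ keeps that layer unchanged, hence still violates properness, so the lower fibre is empty. Upper fibres instead allow weights to be \emph{raised}, and this is where $n\geq2$ enters. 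First I record that $\Prop_n(\om)\subseteq\WTopes_n(\om)$: a circuit $C\preceq X$ of constant weight would have $C\cdot X\equiv+$ on its minimal layer, contradicting properness.

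Fix $g=(X,w)\in\WTopes_n(\om)$ and call a signed circuit $C$ a \emph{defect} of $g$ if, with $m=\min w|_{\supp C}$, the product $C\cdot X$ is constantly $+$ on $\supp C\cap w^{-1}(m)$. Thus $g$ is proper if and only if it has no defect, and in that case $\Prop_n(\om)\cap\uparrow g$ has minimum $g$ and is contractible. Otherwise, the first point is that a defect satisfies $m<n$. If $m=n$, then all of $\supp C$ has weight $n$ and $C\preceq X$, which is a constant-weight circuit and contradicts $g\in\WTopes_n(\om)$. So the set $J=\supp C\cap w^{-1}(m)$ consists of coordinates whose weight can be raised. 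For $\varnothing\neq S\subseteq J$ I define $g^S=(X,w+\mathbf 1_S)$; the sign does not need to change, since raising the weight already makes $g^S\geq g$. Every proper $(Y,v)\geq g$ must have $v(s)>w(s)$ for some $s\in J$. Indeed, otherwise $Y=X$ and $v=m$ on $J$, while all other coordinates of $\supp C$ have $v\geq w>m$, so $C$ remains a defect of $(Y,v)$. Hence
\[
\Prop_n(\om)\cap\uparrow g=\bigcup_{s\in J}\bigl(\Prop_n(\om)\cap\uparrow g^{\{s\}}\bigr),
\qquad
\bigcap_{s\in S}\bigl(\Prop_n(\om)\cap\uparrow g^{\{s\}}\bigr)=\Prop_n(\om)\cap\uparrow g^{S},
\]
which is a cover by upper ideals. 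Theorem~\ref{thm:cover-contractible}, applied to the opposite poset, then reduces the claim to the contractibility and nonemptiness of the fibres over the $g^S$. I would prove this by downward induction on a potential such as the total weight $\sum_e w(e)$, which strictly increases and is bounded by $n|E|$.

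The main obstacle is the analogue of Lemma~\ref{lem:circuit-descent-uniform}: I need that $g^S$ is again a weightwise tope for every nonempty $S\subseteq J$ (nonemptiness of the fibre then follows by induction, since a weightwise tope of maximal potential is all-weight-$n$ with no constant-weight circuit, hence proper). To arrange this I choose the defect $C$ carefully, as in Lemma~\ref{lem:circuit-descent-uniform}: among defects, minimize lexicographically a measure such as $\bigl(m,\ |J|,\ \ldots\bigr)$ recording the low-weight layers of $\supp C$. Suppose $D\preceq X$ were a circuit of constant weight $k$ for $w+\mathbf 1_S$. Then $D$ meets $S$, in which case $k=m+1$ and $D$ also contains old-weight-$(m+1)$ coordinates outside $S$. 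I would then eliminate, via Lemma~\ref{lem:conformal-elimination}, between $D$ and $\pm C$ at a coordinate of the separator, keeping some $f\in\supp D\setminus\supp C$. The aim is a new defect whose minimal layer is strictly smaller in the measure, contradicting the choice of $C$. Getting the signs to match is the delicate point. $C$ is conformal to $X$ only on its minimal layer, and I expect to have to replace $C$ by a circuit chosen more cleverly (for instance one conformal to $X$ on $\{w\le m+1\}\cap\supp C$), or to refine the potential. This is the step I would work out first. Once it holds, Quillen's Theorem~A gives that $i$ is a homotopy equivalence.
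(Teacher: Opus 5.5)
\textbf{The upper-fibre route fails.} The upper fibres of the inclusion are not contractible in general, so the dual form of Theorem~\ref{thm:quillen-type-A} is not available. This is independent of the weightwise-ness of $g^S$ that you flagged.

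Let $\om$ be the rank-$2$ oriented matroid of three lines through the origin of $\Real^2$. Its signed circuits are $\pm C$ with $C=(+,+,+)$, so its topes are the six sign vectors other than $\pm(+,+,+)$. For $n=2$ take $g=(X,w)=((+,+,-),(1,2,2))$.
\begin{itemize}
\item $g$ is a weighted tope, hence weightwise, and it is not proper.
\item $C$ is its only defect, and $J=\{1\}$.
\item Every $(Y,v)\geq g$ has $v=2$ and $Y=X$ on $\{2,3\}$.
\end{itemize}
So $\Prop_2(\om)\cap\uparrow g$ consists of the two incomparable elements $((+,+,-),(2,2,2))$ and $((-,+,-),(2,2,2))$; it is $S^0$. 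The same happens for $((+,+,-),(1,n,n))$ with any $n\geq2$.

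This example also locates the false step in your cover. The inequality $v(s)>w(s)$ does not imply $(Y,v)\geq(X,w+\mathbf 1_{\{s\}})$: when $v(s)=w(s)+1$, the order \eqref{eq:def-order-weighted-tope} requires $Y(s)=X(s)$. Here $((-,+,-),(2,2,2))$ lies above $g$ but not above $g^{\{1\}}$. Raising a weight leaves the new sign unconstrained, both signs can give proper elements, and that is what disconnects the fibre.

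\textbf{Lower fibres do work.} Your reason for discarding them is wrong: coordinates of $\supp C$ of weight larger than $m$ may be lowered to weight $m$ with a sign that makes the minimal layer bichromatic. In fact lower fibres are never empty. Put $A=w^{-1}(1)$. Weightwiseness makes $X|_A$ a tope of $\om|_A$, which extends to a tope $Y$ of $\om$, and $(Y,\mathbf 1)\leq g$ is proper. In the example, $\Prop_2(\om)\cap\downarrow g$ is the zigzag $((+,-,+),\mathbf 1)<((+,-,-),(1,1,2))>((+,-,-),\mathbf 1)<((+,+,-),(1,2,1))>((+,+,-),\mathbf 1)$, which is contractible.

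This is the route the paper takes, by induction on $g$:
\begin{itemize}
\item If $g\notin\Topes_n(\om)$, it factors $\Prop_n(\om)\cap\downarrow g$ through the contractible poset of Lemma~\ref{lem:contractible-fibers-uniform}.
\item If $g\in\Topes_n(\om)\setminus\Prop_n(\om)$, it takes the largest $i$ with $X_i$ not a covector, and a $\mu_w$-minimal circuit $C_0$ conformal to $X_i$ on their common support. It then forms $g_S$ by lowering weights by one on $S\subseteq J$ \emph{and setting the sign to $-C_0$ there}.
\end{itemize}
Going downward, that sign is forced by properness. Every proper $(Y,v)\leq g$ has some $s\in J$ with $Y(s)=-C_0(s)$ and $v(s)<w(s)$. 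Hence the principal ideals $\downarrow g_{\{s\}}$ genuinely cover the fibre and intersect in $\downarrow g_S$. The remaining work is to show that each $g_S$ is weightwise.
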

\begin{proof}
    The proof follows the argument of Theorem \ref{thm::equivalence-Tope-Topeext}. If $(X,w)$ is a weighted sign vector and $i \geq 1$ is an integer, we denote by $X_i$ the sign vector defined by
    \[
        X_i(e) = 
        \left\{\begin{array}{cl}
            X(e) &\textrm{ if } w(e) \leq i, \\
            0 &\textrm{ otherwise.}
        \end{array}\right.
    \]
    By~Proposition~\ref{prop:prop-iso-sal}, the elements of $\Prop_n(\om)$ are exactly the weighted sign vectors $(X, w)$ such that $X_i$ is a covector of $\om$ for all $i\geq 1.$ For the purpose of this proof only, we define auxiliary subposets $P_i \subseteq \WTopes_n(\om)$ for all $i$ between $1$ and $n$, by
    \[
    P_i \coloneqq \{ (X,w) \in \WTopes_n(\om) \mid X_i, \ldots,X_n \textrm{ are covectors} \}.
    \]
    This gives a sequence of posets 
    \[
        P_1 = \Prop_n(\om) \subseteq \cdots \subseteq P_n = \Topes_n(\om) \subseteq \WTopes_n(\om).
    \]
    We prove Theorem \ref{thm:equivalence-higher-proper-uni} by showing that all the lower fibers $\iota/g$ of the inclusion 
    \[
        \iota: \Prop_n(\om) \hookrightarrow \WTopes_n(\om)
    \]
    are contractible, by induction on $g$ in the finite poset $\WTopes_n(\om)$. If $g\in\Prop_n(\om)$, then $\iota/g$ has a greatest element $g$, so this lower fiber is contractible. If $g\notin\Topes_n(\om)$, consider the inclusion
    \[
        \iota/g \hookrightarrow \Topes_n(\om)\cap\downarrow g.
    \]
    Its lower fiber at $h\in\Topes_n(\om)\cap\downarrow g$ is $\iota/h$, which is contractible by induction since $h<g$. The target is contractible by Lemma~\ref{lem:contractible-fibers-uniform}, so Theorem~\ref{thm:quillen-type-A} shows that $\iota/g$ is contractible.

    We may therefore assume that $g=(X,w)\in\Topes_n(\om)\setminus\Prop_n(\om)$. Choose $1\leq i<n$ such that $g\in P_{i+1}\setminus P_i$. Since $X_i$ is not a covector, there exists a signed circuit $C_0$ such that $X_i$ is not orthogonal to $C_0$. Up to passing to $-C_0$, we have
    \[
        \forall e \in \underline{X_i} \cap \underline{C_0} \quad C_0(e) = X_i(e) \quad \textrm{ and } \quad \underline{X_i} \cap \underline{C_0} \neq \emptyset.
    \]
    Choose such a $C_0$ minimizing $\mu_w$ as in Lemma~\ref{lem:circuit-descent-uniform}, and put
    \[
        H=\{e\in E\mid w(e)>i\},\qquad
        J=\underline{C_0}\cap\{e\in E\mid w(e)>1\}.
    \]
    Since $X$ is a tope, $C_0$ also meets $H$, so $J$ is nonempty. For every $S\subseteq J$, define $g_S=(X_S,w_S)$ by
    \[
        w_S=w-\mathbf 1_S,\qquad
        X_S(e)=\begin{cases}
            -C_0(e),&e\in S,\\
            X(e),&e\notin S.
        \end{cases}
    \]
    Thus $g_S\leq g$, with strict inequality when $S$ is nonempty. We have the following key claim. \\ \\
    \textbf{Claim.} For every $S\subseteq J$, the weighted sign vector $g_S$ is a weightwise tope.
    \begin{proof}[Proof of Claim.]
        The set $\underline{C_0}\cap H$ is independent, being a proper subset of the support of a circuit. Hence $C_0|_H$ extends to a tope $T$ of $\om|_H$. Let $\widetilde X$ agree with $X$ outside $H$ and with $T$ on $H$. Then $\widetilde g=(\widetilde X,w)$ is weightwise: a circuit of constant weight would be contained either in $X$ or in $T$. Moreover, $C_0\preceq\widetilde X$, and every circuit contained in $\widetilde X$ meets $E\setminus H$ and is therefore nonorthogonal to $X_i$. Thus $C_0$ is still $\mu_w$-minimal among these circuits. Lemma~\ref{lem:circuit-descent-uniform} shows that $\widetilde g_S=(\widetilde X_S,w_S)$ is a weightwise tope, where $\widetilde X_S$ equals $-C_0$ on $S$ and $\widetilde X$ elsewhere.

        Suppose that $D\preceq X_S$ is a circuit of constant new weight $k$. If $k\leq i$, then $D\preceq\widetilde X_S$: on $S$ the two sign vectors agree, and outside $S$ every element of $D$ has old weight $k\leq i$. This contradicts the preceding application of Lemma~\ref{lem:circuit-descent-uniform}.

        If $k>i$, then $\underline D\setminus S$ is nonempty. Indeed, otherwise circuit incomparability would give $\underline D=\underline{C_0}$, contrary to the fact that $C_0$ meets $\{w\leq i\}$. On $\underline D\setminus S$ the old weight is $k$ and $D$ agrees with $X_k$, whereas on $\underline D\cap S$ the old weight is $k+1$ and $X_k$ vanishes. Thus $D$ is not orthogonal to the covector $X_k$, a contradiction.
    \end{proof}
    For $p=(Y,v)\in\iota/g$, properness supplies an element $s\in\underline{C_0}$ with $Y(s)=-C_0(s)$ and $v(s)=\min_{e\in\underline{C_0}}v(e)\leq i$. Since $C_0$ agrees with $X$ on $\underline{C_0}\cap\{w\leq i\}$ and $p\leq g$, we have $v(s)<w(s)$. Hence $s\in J$ and $p\leq g_{\{s\}}$. This shows that the lower ideals $\iota/g_{\{s\}}$, $s\in J$, cover $\iota/g$. Theorem~\ref{thm:cover-contractible} concludes the proof.
\end{proof}
We have the following immediate corollary of Theorem~\ref{thm::equivalence-Tope-Topeext} and Theorem~\ref{thm:equivalence-higher-proper-uni}. 
\begin{corollary}\label{coro:prop-tope-equivalence}
    Let $\om$ be an oriented matroid, and $n\geqslant 2$ an integer. The inclusion 
    \[
        \Prop_n(\om) \subseteq \Topes_n(\om)
    \]
    is a homotopy equivalence. 
\end{corollary}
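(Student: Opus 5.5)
The corollary will follow from the two preceding theorems by a two-out-of-three argument on geometric realizations. Consider the chain of inclusions of posets
\[
\Prop_n(\om)\overset{\iota'}{\hookrightarrow}\Topes_n(\om)\overset{j}{\hookrightarrow}\WTopes_n(\om),
\]
whose composite $j\circ\iota'$ is the inclusion $\iota\colon\Prop_n(\om)\subseteq\WTopes_n(\om)$ of Theorem~\ref{thm:equivalence-higher-proper-uni}. Geometric realization is functorial, so $|\iota|=|j|\circ|\iota'|$.

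The first step is to invoke Theorem~\ref{thm::equivalence-Tope-Topeext}, which says $|j|$ is a homotopy equivalence for all $n\geq1$. The second step is to invoke Theorem~\ref{thm:equivalence-higher-proper-uni}, which says $|\iota|$ is a homotopy equivalence for $n\geq2$. This is where the hypothesis $n\geq 2$ enters.

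Next, choose a homotopy inverse $k$ of $|j|$. Then
\[
|\iota'|\simeq k\circ|j|\circ|\iota'|=k\circ|\iota|.
\]
The right-hand side is a composite of homotopy equivalences, so $|\iota'|$ is one as well. This is the two-out-of-three property for homotopy equivalences.

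No step here is a real obstacle; the substantive work lies in the two cited theorems. The only point needing care is that these theorems are stated for the same inclusions appearing in the chain. In particular, $\Prop_n(\om)$ must sit inside $\Topes_n(\om)$, which holds because proper weighted topes are by definition weighted topes. Likewise $\Topes_n(\om)\subseteq\WTopes_n(\om)$ is the inclusion $j$, since a tope contains no circuit at all and so no circuit of constant weight. Finally, all three posets carry the order \eqref{eq:def-order-weighted-tope} as induced subposets, so the inclusions are poset maps and compose as claimed.
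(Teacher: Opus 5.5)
Your proposal is correct and is exactly the paper's argument: the corollary is deduced from Theorems~\ref{thm::equivalence-Tope-Topeext} and~\ref{thm:equivalence-higher-proper-uni} by two-out-of-three applied to the chain $\Prop_n(\om)\subseteq\Topes_n(\om)\subseteq\WTopes_n(\om)$. Your remark that topes contain no circuits, so that $\Topes_n(\om)\subseteq\WTopes_n(\om)$, gives the correct direction of inclusion. The paper's sentence calling $\WTopes_n(\om)$ a subposet of $\Topes_n(\om)$ is a slip.
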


\subsection{The Delucchi-Falk model} 
In~\cite{DelucchiFalk2017}, Delucchi and Falk introduced another poset model of pairs of topes that is equivalent to the $2$-dimensioanl Salvetti poset. We show that these posets form an $\OM$-presheaf as well and that this presheaf is homotopy equivalent to $\Topes_2$

\begin{definition} 
For an oriented matroid $\om$, the Delucchi-Falk poset $\mathsf{DF}(\om)$ is the poset with underlying set $\Topes(\om)\times\Topes(\om)$ and with the order
\[
(X_1;X_2)\leq (X'_1,X'_2):\Leftrightarrow S(X_1;X'_1)\subseteq S(X_2;X'_1)\subseteq S(X'_2;X'_1)
\] where $S(X,Y)=\{e\in \supp X\cap \supp Y| X(e)\neq Y(e)\}$. Equivalently, the order is given by 
\[
(X_1;X_2)\leq (X'_1,X'_2):\Leftrightarrow  X_1\in [X'_1,X_2]\text{ and }X_2\in [X'_1,X'_2],
\] where $A\in [B,C]$ means that, for all $e$, $A(e)\in \{B(e),C(e)\}$.
\end{definition}

Note that these posets form an $\OM$-presheaf in posets $\DF\colon \OM^{\op}\to \Pos$ defined by
\[
\phi^*\colon \DF(\N)\to \DF(\om), \quad (X,Y)\mapsto (\phi^*X,\phi^*Y),
\] We only have to check that $\phi^*$ is order preserving. It follows from the identity
\[
S(\phi^*X,\phi^*Y)=f_{\phi}^{-1}(S(X,Y)).
\]
\begin{theorem} We have an equivalence of
$\OM$-presheaves
\[
\Phi\colon \DF\overset{\simeq}{\to} \Topes_2
\] defined by
\[
(X,Y)\mapsto (X,w_{XY}), \quad w_{XY}(e)=\begin{cases}
1,\text{ if }X(e)=Y(e)
\\
2,\text{ if }X(e)\neq Y(e)
\end{cases}
\]
\end{theorem}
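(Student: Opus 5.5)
We need three things: $\Phi$ is a well-defined monotone map for each $\om$, it is natural in signed weak maps, and it is a homotopy equivalence objectwise. The first two are elementwise sign bookkeeping. For the third, we first identify the image of $\Phi$ and then use the comparison results of Section~\ref{sec:comparison-topes-prop}.

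\textbf{Monotonicity.} Suppose $(X_1,X_2)\leq(X_1',X_2')$, that is, $X_1\in[X_1',X_2]$ and $X_2\in[X_1',X_2']$. Fix $e$ and check \eqref{eq:def-order-weighted-tope} for $(X_1,w_{X_1X_2})\leq(X_1',w_{X_1'X_2'})$.
\begin{itemize}
\item If $w'(e)=1$, then $X_1'(e)=X_2'(e)$. This forces $X_2(e)=X_1'(e)$, and then $X_1(e)=X_1'(e)$, so $w(e)=1$ with equal signs.
\item If $w'(e)=2$ and $w(e)=2$, then $X_1(e)\neq X_2(e)$. Since $X_1(e)\in\{X_1'(e),X_2(e)\}$, we get $X_1(e)=X_1'(e)$.
\item If $w'(e)=2$ and $w(e)=1$, the strict inequality $w(e)<w'(e)$ already suffices.
\end{itemize}

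\textbf{Naturality.} For $\phi=(f,\epsilon)$, the pullback $\phi^*$ multiplies both topes by the same sign $\epsilon(e)$. Hence $X(f(e))=Y(f(e))$ if and only if $(\phi^*X)(e)=(\phi^*Y)(e)$, i.e. $w_{\phi^*X\,\phi^*Y}=f^*w_{XY}$. So $\Phi$ commutes with the presheaf maps, and it is clearly compatible with direct sums.

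\textbf{Image of $\Phi$.} $\Phi$ is injective, because $Y$ is recovered from $(X,w)$ as $X$ on $\{w=1\}$ and $-X$ on $\{w=2\}$. Its image is therefore
\[
\mathrm{Im}\,\Phi=\{(X,w)\in\Topes_2(\om)\mid X_1\circ(-X)\text{ is a tope}\},
\]
where $X_1$ is the truncation of $X$ to $\{w=1\}$ used in Proposition~\ref{prop:prop-iso-sal}. If $(X,w)$ is proper, then $X_1$ is a covector, so $X_1\circ(-X)$ is a tope. This gives a chain of inclusions
\[
\Prop_2(\om)\subseteq\mathrm{Im}\,\Phi\cong\DF(\om)\subseteq\Topes_2(\om).
\]
By Corollary~\ref{coro:prop-tope-equivalence}, the composite is a homotopy equivalence. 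By two-out-of-three, it suffices to show that $\Prop_2(\om)\hookrightarrow\mathrm{Im}\,\Phi$ is an equivalence.

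\textbf{Key step.} We prove contractibility of the lower fibres $\iota/g$ of $\Prop_2(\om)\hookrightarrow\mathrm{Im}\,\Phi$ by induction on $g$, copying the proof of Theorem~\ref{thm:equivalence-higher-proper-uni} with $n=2$ and $i=1$.

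For $g=(X,w)\in\mathrm{Im}\,\Phi\setminus\Prop_2$, take a $\mu_w$-minimal circuit $C_0$ with $C_0$ agreeing with $X_1$ on the common support. Form the elements $g_S$ for $S\subseteq J$. The proof of Theorem~\ref{thm:equivalence-higher-proper-uni} already shows that the $\iota/g_{\{s\}}$ cover $\iota/g$ and that the $g_S$ are weightwise topes with all truncations covectors.

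What is new, and what I expect to be the main obstacle, is showing $g_S\in\mathrm{Im}\,\Phi$. Concretely, the Delucchi–Falk partner of $g_S$ must be a tope. It agrees with $-C_0$ on $S$ and with the original partner $Y$ elsewhere, since on $S$ the new weight is $1$ and the new sign is $-C_0(s)$. I would try to prove that it is a tope by circuit elimination between $C_0$ and a hypothetical violating circuit, as in Lemma~\ref{lem:circuit-descent-uniform}, using $\mu_w$-minimality of $C_0$.

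\textbf{Fallback.} If this step resists, I would instead use the known equivalence $\DF(\om)\simeq\Sal_2(\om)\cong\Prop_2(\om)$ of Delucchi–Falk. I would then check that $\Phi$ composed with this equivalence agrees, up to a poset homotopy given by a comparable pair of monotone maps, with the inclusion $\Prop_2\subseteq\Topes_2$.
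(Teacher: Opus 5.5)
\textbf{Gap in the key step.} You leave the decisive step unproved, and the route you propose for it cannot work. The proof of Theorem~\ref{thm:equivalence-higher-proper-uni} only gives $g_S\in\WTopes_n(\om)$. It does not say that the truncations of $g_S$ are covectors; for $n=2$ that would mean $g_S$ is proper. It does not even say that $X_S$ is a tope. In fact $g_S$ need not lie in $\mathrm{Im}\,\Phi$.

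Take the lines $x,\ y,\ x+y,\ x-y$ in $\Real^2$ (elements $1,2,3,4$) and $g=((+,+,+,-),(1,2,2,2))$. Here $X_1=(+,0,0,0)$ is not a covector, while the partner $(+,-,-,+)$ is a tope. So $g\in\mathrm{Im}\,\Phi\setminus\Prop_2(\om)$. The circuits $(+,+,-,0)$, $(+,-,0,-)$ and $(+,0,-,-)$ all conform to $X_1$ and all have $\mu_w=(2,1)$. Hence $C_0=(+,+,-,0)$ is an admissible choice, with $J=\{2,3\}$. But $g_{\{2\}}$ has sign vector $(+,-,+,-)$, which would require $0<x<y<0$, so $g_{\{2\}}\notin\Topes_2(\om)$. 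With $C_0=(+,-,0,-)$ instead, $g_{\{2\}}$ keeps the tope $X$, but its partner $(+,+,-,+)$ is not a tope. So an induction carried out inside $\mathrm{Im}\,\Phi$ along the $g_S$ breaks down.

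\textbf{The repair.} You do not need $g_S\in\mathrm{Im}\,\Phi$ at all. For $g\in\mathrm{Im}\,\Phi$, the lower fiber of $\Prop_2(\om)\hookrightarrow\mathrm{Im}\,\Phi$ at $g$ is the set $\{p\in\Prop_2(\om)\mid p\le g\}$. This is literally the lower fiber of $\Prop_2(\om)\hookrightarrow\WTopes_2(\om)$ at $g$. The proof of Theorem~\ref{thm:equivalence-higher-proper-uni} shows these are contractible for every $g\in\WTopes_2(\om)$, because its induction runs in $\WTopes_2$, where the $g_S$ do live. Theorem~\ref{thm:quillen-type-A} then shows that $\Prop_2(\om)$ includes by an equivalence into any subposet lying between it and $\WTopes_2(\om)$, in particular into $\mathrm{Im}\,\Phi$.

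You must also check that $\Phi$ reflects the order, so that $\DF(\om)\cong\mathrm{Im}\,\Phi$ with the order induced from $\Topes_2(\om)$. This is a three-case check, just like your monotonicity check. With these two additions your sandwich argument is complete.

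\textbf{Comparison with the paper.} The paper instead imports the Delucchi--Falk equivalence $\Sal_2(\om)\to\DF(\om)$. This map sends a tope $T$ with a covector $C\preceq T$ to $(T,C\circ(-T))$. The paper then observes that composing with $\Phi$ gives $\Sal_2(\om)\cong\Prop_2(\om)\subseteq\Topes_2(\om)$. That is exactly your fallback. The triangle commutes strictly, not just up to homotopy: $\Phi(T,C\circ(-T))=(T,w)$ with $w=1$ on $\underline{C}$ and $w=2$ elsewhere. Your repaired route is self-contained within the paper and yields the Delucchi--Falk equivalence as a consequence, rather than depending on it.
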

\begin{proof}
Let us firstly check that $\Psi_{\om}\colon \DF(\om)\to \Topes_2(\om)$ is order-preserving
\[
(X_1,Y_1)\leq (X_2,Y_2)\Rightarrow (X_1,w_{X_1Y_1})\leq (X_2,w_{X_2Y_2})
\] We check it coordinatewise: if $X_1(e)\neq X_2(e)$, the relation $(X_1,Y_1)\leq (X_2,Y_2)$ implies that $X_1(e)=Y_1(e)$ and $X_2(e)\neq Y_2(e)$, hence $w_{X_1Y_1}(e)=1<w_{X_2Y_2}(e)=2$. If $X_1(e)\neq X_2(e)$, the inequality $X_1(e)\neq Y_1(e)$ always leads to $X_2(e)\neq Y_2(e)$, which implies $w_{X_1Y_1}(e)\leq w_{X_2Y_2}(e)$. 

Compatibility of $\Psi$ with pullbacks follows from
\[
\Phi_{\om}(\phi^*(X,Y))=\Phi_{\om}((\phi^*X,\phi^*Y))=(\phi^*X,w_{\phi^*X\phi^*Y})=(\phi^*X,f^*_{\phi}w_{XY})=\phi^*\Phi_{\N}((X,Y)).
\] So, $\Psi$ is a well-defined morphism of poset presheaves.

It remains to check that $\Psi_{\om}$ is a homotopy equivalence for every $\om$. Recall that there is a homotopy equivalence~\cite[Lemma~3.12]{DelucchiFalk2017}
\[
\Sal_2(\om)\overset{\simeq}{\to} \mathsf{DF}(\om), \quad (T,C)\mapsto (T,C\circ T^{\op})
\] Note that it fits into the following commutative triangle
\[\begin{tikzcd}
	{\DF(\om)} && {\Topes_2(\om)} \\
	& {\Sal(\om)}
	\arrow["{\Psi_{\om}}", from=1-1, to=1-3]
	\arrow["\simeq", from=2-2, to=1-1]
	\arrow["\simeq"', from=2-2, to=1-3]
\end{tikzcd}\] By the two-out-three property, $\Psi_{\om}$ is an equivalence.
\end{proof}

    Recall that there is an anti-automorphism $\rho\colon \mathsf{DF}(\om)\to \mathsf{DF}(\om)^{\op}, \quad (X_1,X_2)\mapsto (-X_2,X_1)$ of order $4$~\cite[Lemma~3.13]{DelucchiFalk2017}. It defines a presheaf anti-automorphism 
    \[
    \mathsf{DF}\overset{\cong}{\to} \mathsf{DF}^{\op}.
    \]The corresponding $\Z_4$-action on $\Delta(\mathsf{DF})$ corresponds to the discrete subgroup $\{\pm1,\pm i\}\subseteq S^1$.

\section{Combinatorial properties of $\Topes_n$ and $\WTopes_n$}
In this section we initiate the combinatorial study of the new poset models that were introduced in Section~\ref{sec:new-presheaf-models}. 
\subsection{Purity and rank of $\Topes_n$ and $\WTopes_n$}
Recall that a finite poset $\Po$ is called \emph{pure} if all the maximal chains of $\Po$ have the same cardinality. This is equivalent to the order complex $\Delta(\Po)$ being pure. If $P$ is pure, we define the rank of $\Po$, denoted by $\rk \Po$, to be the cardinality of any maximal chain of $\Po$ minus one. This is also the dimension of $\Delta(\Po)$. It is a well-known fact that for every oriented matroid $\om$ and every integer $n \geq 2$, the poset of proper topes $\Prop_n(\om)$ is pure of rank $\rk \om$. In this section we study the purity and rank of the posets of weighted and weightwise topes. The main results in this direction are the following propositions. 
\begin{prop}\label{prop:purity-Topes}
Let $\om$ be an oriented matroid, and $n\geq 2$ an integer. The poset $\Topes_n(\om)$ is pure if and only if $\om$ is simple. In that case, the rank of $\Topes_n(\om)$ is $(n-1)\cdot E(\om).$
\end{prop}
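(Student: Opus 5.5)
The plan is to read off the rank from the total weight $|w|:=\sum_{e\in E}w(e)$, and to show that in the simple case every cover relation raises $|w|$ by exactly one. First observe that $|w|$ is strictly monotone. Indeed, if $(X_1,w_1)<(X_2,w_2)$, then every coordinate satisfies $w_1(e)\le w_2(e)$. If equality held in every coordinate, the definition would force $X_1=X_2$. Since $|E|\le |w|\le n|E|$, every chain therefore has at most $(n-1)|E|+1$ elements.

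\textbf{Extremal elements.} Next I will identify the minimal and maximal elements of $\Topes_n(\om)$.
\begin{itemize}
\item An element $(X,w)$ with $w(e)<n$ for some $e$ is not maximal, since $(X,w)<(X,w+\mathbf 1_e)$.
\item Any $(X,w)$ with $w\equiv n$ is maximal: a larger element must have the same weights, and hence the same signs.
\item Dually, $(X,w-\mathbf 1_e)<(X,w)$ whenever $w(e)>1$. So the minimal elements are exactly those with $w\equiv 1$.
\end{itemize}
These three facts hold for every $\om$. Consequently a maximal chain has exactly $(n-1)|E|+1$ elements if and only if each of its steps raises $|w|$ by exactly $1$.

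\textbf{Simple case.} Suppose $\om$ is simple. I claim that any strict relation $(X_1,w_1)<(X_2,w_2)$ with $|w_2|-|w_1|\ge 2$ can be refined by an intermediate element. Let $D=\{e\mid w_1(e)<w_2(e)\}$.
\begin{itemize}
\item If some $e$ has $w_2(e)-w_1(e)\ge2$, then $(X_1,w_1+\mathbf 1_e)$ lies strictly between the two elements.
\item Otherwise $w_2=w_1+\mathbf 1_D$ with $|D|\ge2$. For $A\subseteq D$, let $Y_A$ agree with $X_2$ on $A$ and with $X_1$ elsewhere. One checks directly from \eqref{eq:def-order-weighted-tope} that $(Y_A,w_1+\mathbf 1_A)$ lies between the two elements precisely when $Y_A$ is a tope. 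Outside $D$ the two topes agree, so $S(X_1,X_2)\subseteq D$.
  \begin{itemize}
  \item If $X_1=X_2$, any singleton $A$ works.
  \item Otherwise I use that the tope graph is connected with geodesics crossing each separating element once. Hence some tope adjacent to $X_1$ differs from it exactly on a set $A\subseteq S(X_1,X_2)$ that is a parallel class. In a simple oriented matroid (no parallel or antiparallel pairs) this class is a singleton $\{s\}$. Then $A=\{s\}$ is a nonempty proper subset of $D$ and $Y_A$ is a tope.
  \end{itemize}
\end{itemize}
Hence all covers have increment $1$, and every maximal chain runs from $w\equiv1$ to $w\equiv n$ in $(n-1)|E|$ steps. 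This gives purity and the rank $(n-1)|E|$.

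\textbf{Non-simple case.} Suppose $e\neq f$ are parallel or antiparallel, and use $n\ge2$. Take any tope $X$ and let $X'$ be $X$ with both $e$ and $f$ flipped; this is again a tope, by applying the parallel-class argument above. Consider
\[
(X,w_1)<(X',w_1+\mathbf 1_{\{e,f\}}),\qquad w_1\equiv 1.
\]
By the criterion above, an intermediate element would require flipping exactly one of $e,f$, which no tope does. So this relation is a cover with increment $2$. Extend it to a maximal chain; that chain has fewer than $(n-1)|E|+1$ elements. On the other hand, raising the weights of a fixed tope one coordinate at a time gives a maximal chain with $(n-1)|E|+1$ elements. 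So $\Topes_n(\om)$ is not pure.

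\textbf{Main obstacle.} The delicate step is the refinement in the case $X_1\neq X_2$. There I must justify, via the tope-graph and parallel-class facts from \cite{bjorner1999oriented}, that some single element of $S(X_1,X_2)$ can be flipped in $X_1$ to give a tope. I must also make precise that ``simple'' excludes antiparallel pairs as well as parallel ones.
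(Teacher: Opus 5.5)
Your treatment of the extremal elements and of the simple case is correct and is essentially the paper's argument. Covers raise $\sum_e w(e)$ by exactly one. A coordinate that jumps by two can be split by raising that weight alone. Otherwise, a tope-graph neighbour of $X_1$ that flips a single element of $S(X_1,X_2)$ gives an intermediate element.

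The gap is in the non-simple direction, at ``take any tope $X$ and let $X'$ be $X$ with both $e$ and $f$ flipped; this is again a tope''. This is false in general, and the tope-graph fact you invoke does not give it. That fact only says that some neighbour of $X$ differs from $X$ on some parallel class; it gives no control over which class.
\begin{itemize}
\item If the parallel class $P$ of $e$ contains a third element $g$, flipping only $e$ and $f$ breaks the fixed sign relation between $e$ and $g$. So $X'$ is never a tope.
\item Even when $P=\{e,f\}$, $X'$ is a tope only if $P$ is a wall of the chamber of $X$. Take the lines $x=0$, $y=0$, $x+y=0$, $x+y=0$ in $\mathbb{R}^2$. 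The chamber $\{x>0,\ y>0\}$ has tope $(+,+,+,+)$, but $(+,+,-,-)$ is not a tope.
\end{itemize}

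The repair is to choose the tope deliberately. A geodesic in the tope graph from any tope $T$ to $-T$ crosses $P$ at some step $Y\to Y'$ with $S(Y,Y')=P$. Write $\mathbf 1$ for the constant weight $1$. By your own criterion, $(Y,\mathbf 1)<(Y',\mathbf 1+\mathbf 1_P)$ is then a cover that raises the total weight by $|P|\geq 2$. The reason is that a sign vector flipping a nonempty proper subset of $P$ is never a tope.

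The paper avoids the adjacency question altogether. It compares $(T,w)$ with $w\equiv 1$ to $(-T,w')$ with $w'\equiv n$. In any chain between them, the step at which the sign of $e$ changes must also change the sign of $f$, so it raises both weights. Hence no maximal chain between these two extremal elements has $(n-1)|E|+1$ elements. Raising the weights of $T$ one at a time, on the other hand, gives a maximal chain that does.
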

\begin{proof}
    If $\om$ admits two parallel or antiparallel elements $e$ and $f$, then, for any tope $T$ of $\om$, let us denote by $p$ the element of $\Topes_n(\om)$ with tope $T$ and constant weight $1,$ $q$ the element of $\Topes_n(\om)$ with tope $T$ and constant weight equal to $d$, and $q^{-}$ the element of $\Topes_n(\om)$ with tope $-T$ and constant weight $d.$ There is a maximal chain from $p$ to $q$ of size $(n-1)\cdot \#E(\om)+1 $, given by raising the weights of $p$ one by one, without changing any sign. On the other hand, one can see that there is no maximal chain between $p$ and $q^{-}$ having size $(n-1)\cdot \#E(\om)+1$, because as soon as one changes the sign of $e$, one must also change the sign of $f$ in the same step. Thus, in this case, $\Topes_n(\om)$ is not pure. \\ 

    Let us now assume that $\om$ is simple, and consider two elements $(T, w) \lessdot (T', w')$ of $\Topes_n(\om)$ forming a covering relation in $\Topes_n(\om)$. We will show that we have the equality 
    \begin{equation}\label{eq:covering-relation-weight}
        \sum_{e\in E(\om)} (w'(e) - w(e)) = 1. 
    \end{equation}
    Indeed, note that there cannot be any element $e \in E(\om)$ such that $w'(e) - w(e) > 1,$ as otherwise the element obtained from $(T,w)$ by raising the weight of $e$ by one and keeping the same signs would lie strictly between $(T,w)$ and $(T', w')$. In addition, if there are two distinct elements $e\neq f$ such that we have both $w'(e)- w(e) > 0$ and $w'(f) - w(f) >0$, then we see that $T(e)$ must be different from $T'(e)$, as otherwise the element obtained from $(T, w)$ by raising only the weight at $e$ by one would lie strictly between $(T,w)$ and $(T, w')$. The same goes for the sign at $f$, so the topes $T$ and $T'$ differ on both $e$ and $f$. This means that the Hamming distance between $T$ and $T'$
    \begin{equation*}
     \#\{ e \in E(\om) \mid T(e) \neq T'(e)\}
    \end{equation*}
    is at least two. By \cite[Proposition 4.2.3]{bjorner1999oriented} and the simplicity of $\om$, this implies that the distance between $T$ and $T'$ in the tope graph of $\om$ is also at least two. One can then use a tope between $T$ and $T'$ in that tope graph to produce an element lying strictly between $(T,w)$ and $(T',w')$, which is again a contradiction. Hence, we have the claimed equality \eqref{eq:covering-relation-weight}. 
    
    Finally, notice that the maximal elements of $\Topes_n(\om)$ have constant weight equal to $d$ and the minimal elements have constant weight equal to $1$. This fact, together with \eqref{eq:covering-relation-weight}, shows that the maximal chains of $\Topes_n(\om)$ have size $(d-1)\cdot \#E(\om) + 1,$ which is the desired result. 
\end{proof}
\begin{prop}\label{prop:purity-Uni}
    For every oriented matroid $\om$ and all $n\geq 2$, the poset $\WTopes_n(\om)$ is pure of rank $(n-1)\cdot \#E(\om).$ 
\end{prop}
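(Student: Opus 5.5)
The plan is to imitate the proof of Proposition~\ref{prop:purity-Topes}. We identify the extremal elements of $\WTopes_n(\om)$, and then show that every covering relation raises the total weight $|w|:=\sum_{e\in E(\om)}w(e)$ by exactly one. Simplicity will not be needed, because in $\WTopes_n(\om)$ we may change a single sign without worrying about parallel elements.

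\emph{Step 1: extremal elements.} Suppose $(X,w)\in\WTopes_n(\om)$ and some $w(e)>1$. We aim to find an element strictly below it; the rank-one flat case from Step 2 should produce it (lower $w(e)$ by one and keep or flip the sign). Dually, if some $w(e)<n$, raising $w(e)$ by one while keeping $X(e)$ should stay weightwise. A violating circuit $D$ would contain $e$ and have constant weight $w(e)+1$, and the choice of sign gives room to avoid this. Consequently the minimal elements are exactly those of constant weight $1$, and the maximal elements are those of constant weight $n$. An element of constant weight $1$ has no circuit $C\preceq X$, so $X$ is a tope; similarly at weight $n$. Hence every maximal chain runs from $|w|=\#E$ to $|w|=n\#E$.

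\emph{Step 2: covering relations.} Let $g=(X,w)<g'=(X',w')$ in $\WTopes_n(\om)$ with $|w'|-|w|\geq 2$. We want to produce an intermediate element. Let $A=\{e\mid w(e)<w'(e)\}$. Pick $e\in A$ and consider the candidate
\[
g_e=(X^{(e)},w+\mathbf 1_{e}),\qquad X^{(e)}(e)=\begin{cases}X'(e)&\text{if }w'(e)=w(e)+1,\\ X(e)&\text{otherwise,}\end{cases}
\]
with $X^{(e)}=X$ off $e$. Directly from \eqref{eq:def-order-weighted-tope} we get $g<g_e\leq g'$, and $g_e\neq g'$ because $|w'|-|w|\geq2$. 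The only issue is whether $g_e$ is weightwise. If it is not, there is a signed circuit $D_e\preceq X^{(e)}$ with $e\in\underline{D_e}$ and constant weight $w(e)+1$, whose other elements $f$ have $w(f)=w(e)+1$ and $D_e(f)=X(f)$. In that case we try another $e$.

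\emph{Step 3 (main obstacle): some $e\in A$ works.} Suppose every $e\in A$ yields an obstructing circuit $D_e$. We first choose $e\in A$ with $w'(e)$ minimal, and then $w(e)$ maximal among those. Look at $f\in\underline{D_e}\setminus\{e\}$. If every such $f$ lies outside $A$, then $w'(f)=w(f)$ and $X'(f)=X(f)$. In the case where the sign was set to $X'(e)$, this gives $D_e\preceq X'$ with constant weight for $w'$, contradicting $g'\in\WTopes_n(\om)$. If the sign $X(e)$ was kept, then $w'(e)>w(e)+1$, and the other alternative, lowering $g'$ at $e$, can be tested symmetrically. So some $f\in\underline{D_e}$ lies in $A$, at the same weight level $w(e)+1$. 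Iterating produces further obstructing circuits $D_f$. We would combine $D_e$ and $D_f$ using conformal elimination (Lemma~\ref{lem:conformal-elimination}) at an element of $S(D_e,D_f)$, and run a $\mu_w$-descent exactly as in Lemma~\ref{lem:circuit-descent-uniform}. The aim is to obtain a constant-weight circuit already contained in $X$ or in $X'$, which is a contradiction. Making this exchange argument precise, in particular controlling signs on the separator, is the step I expect to be hardest. If it resists, I would fall back on a downward candidate $(X^{(e)},w'-\mathbf 1_e)$ below $g'$ and argue that at least one of the two families always works. Once Step 3 is done, every cover raises $|w|$ by exactly one, and the rank $(n-1)\cdot\#E(\om)$ follows.
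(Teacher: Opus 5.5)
\textbf{There is a genuine gap: Step~3 is not proved.} Your overall plan is the paper's: show that minimal and maximal elements have constant weight $1$ and $n$, then show that every cover raises $|w|$ by exactly one. But Step~3 carries the whole covering argument, and there you only sketch a hoped-for conformal-elimination/$\mu_w$-descent with a fallback.

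Moreover, the two choices you make in Steps~2--3 are exactly the ones that can fail. Take $e,f$ parallel with signed circuit $C=(+,-)$ on $(e,f)$, and $n=3$.
\begin{itemize}
\item For $g=((+,-),(1,2))<g'=((+,+),(2,3))$, your rule ($w'(e)$ minimal) selects $e$. Then $g_e=((+,-),(2,2))$ contains $C$ at constant weight, so the iteration is genuinely needed.
\item For the same $g$ and $g'=((-,-),(3,2))$, one has $A=\{e\}$ with a jump of $2$. Freezing the sign at $X(e)=+$ again gives $((+,-),(2,2))$, which is not weightwise. Your remark that this case ``can be tested symmetrically'' supplies no argument.
\end{itemize}
The first example also refutes your Step~1 claim that raising $w(e)$ while keeping $X(e)$ stays weightwise. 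Your reference to a ``rank-one flat case from Step~2'' points to nothing.

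The missing idea is short and needs no $\mu_w$-descent.

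(a) If $w'(e)\geq w(e)+2$, the sign at $e$ plays no role in $g_e\leq g'$. So both signs give elements strictly between $g$ and $g'$. Suppose both were obstructed, by circuits $D_\pm$ with $D_\pm(e)=\pm$, agreeing with $X$ elsewhere, and with all other elements of $w$-weight $w(e)+1$. Looplessness gives $D_+\neq -D_-$. Eliminating $e$ then yields a circuit $Z\preceq X$ of constant $w$-weight $w(e)+1$, contradicting $g\in\WTopes_n(\om)$. The same two-circuit elimination, at level $w(e)+1$ for raising and $w(e)-1$ for lowering, is exactly what makes both halves of your Step~1 work.

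(b) If every jump equals one, choose $e\in A$ with $w(e)$ \emph{maximal}, not $w'(e)$ minimal. Every $f\in\underline{D_e}\setminus\{e\}$ has $w(f)=w(e)+1>w(e)$, so $f\notin A$. Hence $w'(f)=w(f)$ and $X'(f)=X(f)=D_e(f)$, and also $D_e(e)=X'(e)$. Thus $D_e\preceq X'$ has constant $w'$-weight, contradicting $g'\in\WTopes_n(\om)$. Equivalently, your iteration $e\mapsto f$ terminates simply because $w$ strictly increases along it.

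This is precisely how the paper's proof shows that a cover $g\lessdot g'$ must equal $g_e$, hence raises $|w|$ by one.
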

\begin{proof}
    By the proof of Theorem \ref{thm::equivalence-Tope-Topeext}, the minima of $\WTopes_n(\om)$ are the elements which have constant weight $1$, and the maxima of $\WTopes_n(\om)$ are the elements which have constant weight $n$. It remains to determine the covering relations of $\WTopes_n(\om)$. Let $(T, w) \lessdot (T', w')$ be two elements of $\WTopes_n(\om)$ forming a covering relation. As in the proof of Proposition \ref{prop:purity-Topes}, we will show that we have the equality 
    \begin{equation}\label{eq:covering-relation-weight-uniform}
        \sum_{e\in E(\om)} (w'(e) - w(e)) = 1. 
    \end{equation}
    The proof is slightly different from the case of $\Topes_n$. First, suppose for a contradiction that there exists an element $e$ such that we have $w'(e) - w(e) > 1$. Then, there exists a sign $\epsilon$ such that the weighted sign vector $(T^{\epsilon}, w^{e})$ defined by 
    \begin{equation*}
        T^{\epsilon}(f)= \begin{cases} T(f) \textrm{ if } f \neq e, \\ \epsilon \textrm{ otherwise,} \end{cases}  \quad \textrm{ and } \quad 
        w^{e}(f) = \begin{cases}
            w(f) \textrm{ if } f \neq e, \\ w(e) + 1 \textrm{ otherwise,}
        \end{cases}
    \end{equation*}
    belongs to $\WTopes_n(\om).$ Indeed, if this were false, we would have two circuits $C_1,C_2$ contained respectively in $T^{+}$ and $T^{-}$, of constant weight $w(e)+1.$ Applying circuit elimination to theses two circuits would then contradict the fact that $(T, w)$ is a weightwise tope. It then appears that $(T^{\epsilon}, w^{e})$ lies strictly between $(T, w)$ and $(T', w')$, which is a contradiction. 

    Next let us consider an element $e$ such that we have $w'(e) = w(e) + 1$, and which has maximal $w$-weight amongst such elements. The element $(T^{e}, w^{e})$ defined by 
     \begin{equation*}
        T^{e}(f)= \begin{cases} T(f) \textrm{ if } f \neq e, \\ T'(f) \textrm{ otherwise,} \end{cases}  \quad \textrm{ and } \quad 
        w^{e}(f) = \begin{cases}
            w(f) \textrm{ if } f \neq e, \\ w(e) + 1 \textrm{ otherwise,}
        \end{cases}
    \end{equation*}
    belongs to $\WTopes_n(\om)$. Indeed, $T^{e}$ does not contain any circuit of constant weight different from $w(e) + 1$ because $(T, w)$ is a weightwise tope, and $T^e$ does not contain any circuit of constant weight $w(e) +1$ because $(T',w')$ is a weightwise tope. We have the inequalities
    \[
        (T,w) < (T^{e},w^{e}) \leq (T',w'),
    \]
    which implies that we have $(T^{e},w^{e}) = (T',w')$. This concludes the proof of \eqref{eq:covering-relation-weight-uniform}. By equation \eqref{eq:covering-relation-weight-uniform} and the description of the extrema of $\WTopes_n(\om)$, it follows that all the maximal chains of $\WTopes_n(\om)$ have cardinality $(n-1)\cdot \#E(\om).$
\end{proof}
\subsection{The CW property for $\Topes_n$ and $\WTopes_n$}
Recall that a finite poset $\Po$ is called a \emph{CW poset} if it is the poset of cells of a regular CW complex. By a classical result of Bjorner~\cite[Proposition 3.1]{Bjorner1984}, a poset $\Po$ is a CW poset if and only if it is pure and for every element $p\in \Po$ the strict lower ideal 
\[
    \Po_{<p} \coloneqq \{ q \in \Po \mid q < p\} 
\]
has the homotopy type of a sphere of dimension $\rk p - 1$. We have the following classical result. 

\begin{prop}[{\cite{BjornerZiegler1992}}]\label{prop:proper-topes-is-CW}
    For every oriented matroid $\om$ and every integer $n\geq 2$, the poset $\Prop_n(\om)$ is a CW poset. 
\end{prop}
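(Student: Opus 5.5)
Via Proposition~\ref{prop:prop-iso-sal}, it suffices to prove that $\Sal_n(\om)$ is a CW poset, and I would do this with Bj\"orner's criterion~\cite[Proposition 3.1]{Bjorner1984}: purity, plus the requirement that every strict lower ideal $\Sal_n(\om)_{<p}$ be a sphere of dimension $\rk p-1$. The idea is to realize $\Sal_n(\om)$ directly as the face poset of a regular cell complex built from the covector (face) lattice, in the spirit of Salvetti's original construction and of~\cite{BjornerZiegler1992}.

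\emph{Step 1: a product description of lower ideals.} Take $p=(X_1\le\cdots\le X_n)$ with $X_n$ a tope. The order~\eqref{eq:higher-salvetti-order} says $q=(Y_1,\dots,Y_n)\le p$ if and only if $Y_i\ge Y_{i-1}\circ X_i$ for all $i$. I would argue inductively on $i$: once $Y_1,\dots,Y_{i-1}$ are fixed, the admissible $Y_i$ are the covectors with $Y_i\ge Y_{i-1}\circ X_i$. Since $\La(\om)_{\ge Z}\cong\La(\om/\underline{Z})$ (upper intervals of the face lattice are contractions), these choices form the covector lattice of the contraction of $\om$ by the zero set of $Y_{i-1}\circ X_i$, that is, a closed cell. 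More concretely, I would construct an order isomorphism between $\Sal_n(\om)_{\le p}$ and an iterated product/join of face posets of dual cells, namely the zonotopal cells of the contractions $\om_{X_{i-1}}/\underline{X_{i}}$ in the standard Salvetti picture, where each $X_{i}$ determines a cell of dimension $\rk$ of the appropriate contraction. The resulting closed cell has dimension $\rk p=\sum_i(\rk\om - \rk(\text{zero set of }X_i))$. Purity then follows by exhibiting this rank function and checking that covering relations change it by exactly one.

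\emph{Step 2: sphericity.} Given Step 1, each $\Sal_n(\om)_{\le p}$ is the face poset of a product of polytopal or PL-ball cells, namely duals of face lattices of oriented matroids, which are shellable PL spheres by~\cite[Thm.~4.3.3/4.7.x]{bjorner1999oriented}. A product of balls is a ball, so the proper part is a sphere of the correct dimension, and Bj\"orner's criterion applies. I expect the main obstacle to lie in Step 1: showing that the recursive constraint $Y_i\ge Y_{i-1}\circ X_i$ really yields a product rather than a twisted fibration, because the admissible range of $Y_i$ depends on $Y_{i-1}$. To handle this, I would reorganize the data as $X_i$ together with the ``new" part $Y_i|_{\text{zero set of }X_{i-1}}$ and use the decomposition $\La(\om)_{\ge Z}\cong\La(\om/\underline Z)$ level by level, mimicking the $n=2$ argument of Salvetti/Bj\"orner–Ziegler. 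If this becomes too messy, the fallback is to prove sphericity of $\Sal_n(\om)_{<p}$ directly by induction on $n$, using an explicit shelling, or by appealing to the Alexandrov nerve theorem (Theorem~\ref{thm:cover-contractible}) applied to the boundary.
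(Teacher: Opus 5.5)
\textbf{The gap is Step~1.} The problem is structural, not a matter of bookkeeping. The closed cells $\Sal_n(\om)_{\le p}$ are in general neither products nor joins of the zonotopal face posets attached to the levels $X_1,\dots,X_{n-1}$, so Step~2 has nothing to apply to.

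\emph{Product fails.} Take $\om=\U_{1,1}$ and $n=3$. The lower ideal of $p=(0,0,+)$ is $\{(0,0,+),(0,+,+),(0,-,-),(+,+,+),(-,-,-)\}$, and both vertices lie below both edges, so it is a bigon. It has five elements, so it is not a product of two posets each with more than one element. A bigon is also not a zonotopal $2$-cell. Your level-by-level description ($Y_1\ge X_1=0$, then $Y_2\ge Y_1\circ X_2$) would instead predict a square, the product of two segments, with nine faces.

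\emph{Join fails too.} Take $\om=\U_{1,1}\oplus\U_{1,1}$ and $p=((0,0),(0,0),(+,+))$. The lower ideal is a product of two bigons, with $25$ elements. The zonotopal cells attached to $X_1=X_2=(0,0)$ are squares: their product has $81$ faces, and the cone over the join of their boundaries has $17$. The dependence of the range of $Y_i$ on $Y_{i-1}$ is exactly what produces such cells, and reorganizing the data will not turn it into a product.

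\emph{The fallbacks do not close the gap.} A nerve argument gives at best the homotopy type of $\Sal_n(\om)_{<p}$, and Theorem~\ref{thm:cover-contractible} only detects contractibility. Bj\"orner's Proposition~3.1 in~\cite{Bjorner1984} requires $|\Sal_n(\om)_{<p}|$ to be \emph{homeomorphic} to a sphere. ``An explicit shelling'' is precisely the content that is missing.

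\textbf{Comparison with the paper.} The paper does not argue cell by cell; it cites Bj\"orner--Ziegler~\cite{BjornerZiegler1992} (Corollary~8.11 for $n=2$, extended in their Section~9). What you are missing is a global source of sphericity. In outline: the weighted sign vectors (allowing also $w(e)=\infty$) stratify the unit sphere of $(\Real^r)^n$ into a regular cell decomposition with shellable face poset. The strata lying in the complement form an upper set, and $\Prop_n(\om)$ is that upper set with the order reversed. Hence $|\Prop_n(\om)_{<p}|$ is the order complex of the open upper interval above the cell $p$, that is, its link in a shellable sphere, and therefore a sphere.

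\textbf{Two smaller slips in Step~1.}
First, upper intervals of the covector lattice are restrictions to the zero set, $\La(\om)_{\ge Z}\cong\La(\om|_{E\setminus\underline Z})$, not $\La(\om/\underline Z)$. Contracting by the zero set of $Z$ governs the faces $\le Z$, not those $\ge Z$.
Second, the dimension of the cell of $(X_1,\dots,X_n)$ is $\sum_{i<n}\rk(E\setminus\underline{X_i})$. Your formula runs the wrong way: it gives the vertices $(T,\dots,T)$ rank $n\,\rk\om$ and the top cells $(0,\dots,0,T)$ rank $\rk\om$.
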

\begin{proof}
    For $n=2$, this is a consequence of \cite[Corollary 8.11]{BjornerZiegler1992}, which is proved in the more general setting of $2$-pseudoarrangements. As described in \cite[Section 9]{BjornerZiegler1992}, this result can be further generalized to $d$-pseudoarrangements for all $n$, which gives the result for $n>2.$
\end{proof}
The next proposition shows that, in contrast, the posets of weighted topes and weightwise topes are almost never CW posets. 
\begin{prop}
    Let $\om$ be an oriented matroid and $n\geq 2$ an integer. The poset $\Topes_n(\om)$ is a CW poset if and only if $\om$ is a free oriented matroid. The same statement is true for $\WTopes_n(\om).$
\end{prop}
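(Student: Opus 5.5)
Since $\Topes_n(\om)$ and $\WTopes_n(\om)$ both contain $\Prop_n(\om)$, and the latter is a CW poset by Proposition~\ref{prop:proper-topes-is-CW}, the natural strategy is to show that the extra elements destroy the CW property unless $\om$ is free. We use Björner's criterion: a CW poset is pure and every strict lower ideal $\Po_{<p}$ is a sphere of dimension $\rk p-1$.

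\emph{Free case.} If $\om$ is free, every sign vector is a tope and there are no circuits. Hence $\Topes_n(\om)=\WTopes_n(\om)=\Prop_n(\om)$, and the order is a product over $E$. For a single element the poset is $\Topes_n(\U_{1,1})$, with two elements $(\pm,i)$ at each level $i$, each lying above both elements of level $i-1$. This is the standard CW poset of $\Sph^{n-1}$ built from two cells in each dimension. A product of CW posets is not a CW poset in general, so we should not argue that way. Instead we identify $\Topes_n(\U_{E})$ directly with $\Prop_n(\U_E)$ and invoke Proposition~\ref{prop:proper-topes-is-CW}. Alternatively, the strict lower ideal of $(X,w)$ is the boundary of a join of cells, which is a join of spheres and hence a sphere.

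\emph{Non-free case, $\Topes_n$.} If $\om$ is not simple, $\Topes_n(\om)$ is not pure by Proposition~\ref{prop:purity-Topes}, so it is not CW. So assume $\om$ is simple but not free, and fix a circuit $C$ with $|\underline C|\geq 3$. We look for a non-proper weighted tope $g=(X,w)$ whose lower ideal is not a sphere of the right dimension; it suffices to take $g$ of small rank. The plan is to choose $w$ equal to $1$ off $\underline C$ and so that $w$ is constant on $\underline C$ except for a single raised element $e$ with $w(e)=2$. Here $X$ is a tope for which $X\cdot C$ equals $+$ on $\underline C\setminus e$ and $-$ at $e$; such a tope exists because $C$ with one sign flipped is not a circuit pattern and restricts to a tope of $\om|_{\underline C}$. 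Then $\rk g=1$, so $\Po_{<g}$ should be $\Sph^0$, that is, exactly two elements. The elements below $g$ are $(Y,\mathbf 1)$ with $Y$ agreeing with $X$ off $e$, and $Y=X$ always works. The element with $e$ flipped gives $Y\cdot C$ constant, which contradicts tope–circuit orthogonality. So $\Po_{<g}$ has a single point and is contractible, not $\Sph^0$.

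The main obstacle is the parametrization. With weights at least $1$ one cannot lower only $e$; it is the element below $g$ with $e$ lowered that fails, while the others are valid. This needs a careful check that $(X,w)$ is itself a weighted tope (immediate, since $X$ is a tope) and that its rank is $1$ via the covering relation \eqref{eq:covering-relation-weight}.

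\emph{Non-free case, $\WTopes_n$.} Here purity holds by Proposition~\ref{prop:purity-Uni}, so we again exhibit a rank-one element whose lower ideal is a single point. Take $X$ with $X|_{\underline C}=C$ except $X(e)=-C(e)$, with $w=2$ at $e$ and $w=1$ elsewhere. We choose $X$ off $\underline C$ to be a tope extension, so that no constant-weight circuit appears, using the fact that $\underline C\setminus e$ is independent. Below $g$, flipping $e$ to $C(e)$ at weight $1$ creates the constant-weight circuit $C$. The only remaining candidate is $X$ at weight $1$, and we must check that $X$ contains no circuit. If that fails, we choose $C$ so that $X$ can be taken to be a tope, which is the same tope as in the $\Topes_n$ case. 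The lower ideal is then a point, not $\Sph^0$, so $\WTopes_n(\om)$ is not CW.
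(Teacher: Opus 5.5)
Your proof is correct and is essentially the paper's. The free case is the identification $\Topes_n(\om)=\WTopes_n(\om)=\Prop_n(\om)$ together with Proposition~\ref{prop:proper-topes-is-CW}. The non-free case uses the same witness: a tope with weight $2$ at a single element $e$ and weight $1$ elsewhere, whose sign flip at $e$ is not a tope, so that its strict lower ideal is the single point given by the same tope with constant weight $1$. You identify this point correctly, whereas the paper's proof misprints it as $(X,\mathsf 1)$ with $X$ its non-tope. The only substantive difference is that you build the tope from a circuit $C$ rather than from an edge of the cube $\{\pm\}^{E}$ joining a tope to a non-tope.

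Your separate purity argument for non-simple $\om$ is unnecessary, since the circuit construction works verbatim for a two-element circuit. It is better dropped, because it relies on the purity clause in the paper's paraphrase of Bj\"orner's criterion, and that clause fails for face posets of general regular CW complexes. Similarly, your unused aside that products of CW posets need not be CW is false: the face poset of a product of regular CW complexes is the product of their face posets.
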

\begin{proof}
    If $\om$ is Boolean, then we have the equalities of posets 
    \[
        \Prop_n(\om) = \Topes_n(\om) = \WTopes_n(\om),
    \]
    so the result follows from Proposition \ref{prop:proper-topes-is-CW}. 

    On the other hand, if $\om$ is not Boolean, then one can find two sign vectors $X$ and $T$ such that $T$ is a tope, $X$ is not a tope, and $X$ and $T$ differ at a single element $e$. Then, one can consider the weighted sign vector $(T,w)$, where the weight $w$ is defined by 
    \[
        w(f) = \begin{cases}
            1 \textrm{ if } f \neq e, \\
            2 \textrm{ otherwise.}
        \end{cases}
    \]
    This defines an element of $\Topes_n(\om) \subseteq \WTopes_n(\om)$. For $P = \Topes_n(\om), \WTopes_n(\om)$, the subposet $P_{<(T,w)}$ contains a single element which is the weighted tope $(X, \mathsf{1})$. Hence, the subposet $P_{<(T,w)}$ does not have the homotopy type of a sphere, neither $ \Topes_n(\om)$ nor $\WTopes_n(\om)$ is a CW poset.  
\end{proof}
\section{Deletion-Contraction and Cohomology rings}\label{sec:deletion-contraction-cohomology-ring}
In this section we study the cohomology rings of the posets of weighted topes and the copresheaf structure induced by the presheaf structure on $\Topes_n$. More precisely, we show (Theorem~\ref{thm::pres_H(Topes)}) that we have an isomorphism of copresheaves in graded commutative algebras over $\Z$
\[
    H^{\bullet}(\Topes_n) \simeq \OS_n^{\bullet},
\]
where the copresheaf $\OS^{\bullet}_n$ has been defined in Proposition~\ref{prop:presheaf-Orlik-Solomon}. If we restrict to the algebra structure, this generalizes Theorem~\ref{thm:cohomology-arrangement-OS} to pseudosphere arrangements, which seems to be a folklore result.

Our strategy for computing the cohomology ring of $\Topes_n(\om)$ for arbitrary oriented matroids is to set-up a geometric deletion-contraction argument in a purely combinatorial setting. This argument goes as follows.

Assume $\A$ is a real hyperplane arrangement, and $H$ is a hyperplane of $\A.$ The intersection $\U_2(\A\setminus H)\cap H$ is a subvariety of $\U_2(\A\setminus H)$ which can be identified with the complexified arrangement complement of the restriction $\A^H$. Consider a tubular neighbourhood $N$ of $\U_2(\A\setminus H)\cap H \subseteq \U_2(\A\setminus H)$. The boundary $\partial N_1$ at radius $1$ is homeomorphic to $\U_2(\A^{H})\times S^1$. Finally, the complement $\U_2(\A\setminus H) \setminus N_{<1}$ is homotopy equivalent to $\U_2(\A)$. We can glue $N_{<1}$ and $\U_2(\A\setminus H) \setminus N_{<1}$ along $\partial N_1$ to obtain $\U_2(\A\setminus H).$ This gives the pushout square of spaces 
 \begin{equation*}
        \begin{tikzcd}
        \U_2(\A^H)\times S^1 \arrow[r] \arrow[d] & \U_2(\A)\arrow[d]\\ 
        \U_2(\A^H) \arrow[r] & \U_2(\A\setminus H). 
        \end{tikzcd}
\end{equation*}
Since the inclusion $\partial N_1 \hookrightarrow N_{\leq 1}$ is a cofibration, the above diagram is a homotopy pushout square.

\subsection{Deletion-contraction pushout for $\Topes_n$}

We start by recalling how to compute homotopy pushouts of posets. We say that a diagram of posets is a homotopy pushout square if the corresponding diagram of topological spaces is a homotopy pushout square
    \begin{equation*}
        \begin{tikzcd}
        {\Po} \arrow[r,"f"] \arrow[d, "g"] & {\Qo}\arrow[d]\\ 
        {\mathsf{R}} \arrow[r] & {\mathsf{S}} \arrow[ul, phantom,
        "\overset{h}{\ulcorner}"{description, pos=0.25}] 
        \end{tikzcd}\text{ is a homotopy pushout if }
         \begin{tikzcd}
        {\vert\Po\vert} \arrow[r,"|f|"] \arrow[d, "|g|"] & {\vert\Qo\vert}\arrow[d]\\ 
        {\vert\mathsf{R}\vert} \arrow[r] & {\vert\mathsf{S}\vert} \arrow[ul, phantom,
        "\overset{h}{\ulcorner}"{description, pos=0.25}] 
        \end{tikzcd} \text{ is a homotopy pushout in }\Top
    \end{equation*}
Given a span $\mathsf{R}\overset{g}{\leftarrow}\Po\overset{f}{\to}\Qo$,
one can construct an explicit model for a homotopy pushout $\mathsf{R} \cup^h_{\Po} \Qo$~\cite[Corollary 3.3]{FernandezMinian2016}. The underlying set of $\mathsf{R} \cup^h_{\Po} \Qo$ is $\mathsf{R} \sqcup \Po \sqcup \Qo$ and the order relations are given by the order relations from $\mathsf{R}$, $\Po$ and $\Qo$, together with the cross relations
\[
    p \leqslant r \quad \forall p,r \textrm{ s.t. } f(p) \leqslant r,  \quad \textrm{ and } \quad  p \leqslant q \quad \forall q,r \textrm{ s.t. } g(p) \leqslant r.
\]
Let us consider the deletion-contraction pullback square in $\OM$ from Example~\ref{ex:deletion_contraction_OM}
\begin{equation}
    \begin{tikzcd}
    	{\om\setminus e} & \om \\
    	{\om/e} & {\om/e\oplus\om|_e}
    	\arrow[from=1-1, to=1-2]
    	\arrow[from=1-1, to=2-1]
    	\arrow["\lrcorner"{anchor=center, pos=0.125}, draw=none, from=1-1, to=2-2]
    	\arrow[from=1-2, to=2-2]
    	\arrow[from=2-1, to=2-2]
        \end{tikzcd}
\end{equation} Recall that $\Topes$ sends this pullback square to a pushout square of sets. The following theorem is the higher derived version of this statement

\begin{theorem}[Deletion-contraction homotopy pushout square]\label{thm:deletion-contraction-pushout}
Let $n\geq 2$ be an integer. For every oriented matroid $\om$ and a flat $e\in E(\om)$, the square of posets
\[
\begin{tikzcd}
\Topes_n(\om/e)\times \Topes_n(\om|_e) \arrow[r, "\mu_e"] \arrow[d,"\pr_1"] & \Topes_n(\om) \arrow[d, "\res_{E\setminus e}"]\\ 
\Topes_n(\om/e) \arrow[r, "\res_{\backslash/e}"] & \Topes_n(\om\setminus e)\arrow[ul, phantom,
        "\overset{h}{\ulcorner}"{description, pos=0.25}]
\end{tikzcd}
\]
is a homotopy pushout square.     
\end{theorem}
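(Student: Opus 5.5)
The plan is to use the explicit Fernández--Minián model. Write $\mathsf{H}$ for the homotopy pushout poset $\Topes_n(\om/e)\cup^h_{\Topes_n(\om/e)\times\Topes_n(\om|_e)}\Topes_n(\om)$ built from the span $(\pr_1,\mu_e)$. Since the square commutes (both composites send $(X,w;Y,v)$ to $(X,w)$ viewed on $E\setminus e$), there is a comparison map
\[
\Phi\colon \mathsf{H}\longrightarrow \Topes_n(\om\setminus e),
\]
given by $\res_{\backslash/e}$ on $\Topes_n(\om/e)$, by $\res_{E\setminus e}$ on $\Topes_n(\om)$, and by the common composite on the product. First I check that $\Phi$ is monotone. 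For the cross relations this is exactly the commutativity of the square together with monotonicity of the four maps. By Theorem~\ref{thm:quillen-type-A}, the statement then reduces to showing that every lower fiber $\Phi^{-1}(\downarrow g)$, for $g=(X,w)\in\Topes_n(\om\setminus e)$, is contractible.

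Next I identify this fiber as a local homotopy pushout. The cross relations of $\mathsf{H}$ only involve elements lying over $\downarrow g$, so the fiber $\Phi^{-1}(\downarrow g)$ is itself the Fernández--Minián homotopy pushout of the restricted span
\[
R_g\leftarrow P_g\to Q_g .
\]
Here $R_g=\res_{\backslash/e}^{-1}(\downarrow g)$, $Q_g=\res_{E\setminus e}^{-1}(\downarrow g)$, and $P_g=R_g\times\Topes_n(\om|_e)$. Since $e$ is a rank-one flat, $\Topes_n(\om|_e)=\{\pm\}\times[n]$, which realizes to $S^{n-1}$ (it is $\Topes_n(\U_{1,1})\simeq\U_n$ of a point arrangement in $\Real$). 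So it suffices to prove two things:
\begin{itemize}
\item[(a)] $R_g$ is contractible or empty;
\item[(b)] $Q_g$ is contractible when $R_g=\varnothing$, and $Q_g\simeq$ the cone-collapse of $R_g\times S^{n-1}$ otherwise.
\end{itemize}
More precisely, in the second case I want the map $Q_g\to \ast$ together with $R_g\times S^{n-1}\to Q_g$ to exhibit $Q_g$ as "$R_g\times D^n$ glued to a contractible piece". Then the local homotopy pushout $R_g\cup_{R_g\times S^{n-1}}Q_g$ is contractible.

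Concretely, I would decompose $Q_g$ according to the value at $e$. Let $Q_g^{\epsilon,k}$ be the subposet where $(Y(e),v(e))$ is bounded by $(\epsilon,k)$. An element $(Y,v)$ can be lowered at $e$ to weight $1$ (keeping signs elsewhere) unless $Y$ with $e$ removed fails to be a tope with both signs. That happens exactly when the part of $Y$ at weights $<v(e)$ forces $e$ through a circuit of the form $\{e\}\cup$ (stuff), i.e.\ when the weight-$\le v(e)$ truncation lies in $\om/e$. I would use this to build a cover of $Q_g$ by lower ideals, in the spirit of Lemma~\ref{lem:contractible-fibers-uniform} and Theorem~\ref{thm:cover-contractible}. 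The pieces where the sign at $e$ is free should deformation-retract (by lowering $v(e)$ to $1$) onto the two copies $\{(Y,v):Y(e)=\pm\}$. The remaining "wall" pieces are indexed by $R_g\times\Topes_n(\om|_e)$ through $\mu_e$.

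For (a), I would argue as in Lemma~\ref{lem:circuit-descent-uniform}: pass through $\WTopes_n(\om/e)$, use Theorem~\ref{thm::equivalence-Tope-Topeext}, and use the fact that a lower ideal of a weightwise tope intersected with weighted topes is contractible.

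The main obstacle is (b), i.e.\ the precise description of $Q_g$ relative to the image of $\mu_e$. Here the interaction between the weight at $e$ and circuits through $e$ (which are exactly the circuits witnessing that a restriction lies in $\om/e$) must be controlled. I would try first to prove that the inclusion of the subposet $Q_g^{\mathrm{low}}=\{(Y,v)\in Q_g: v(e)=1\}\cup\mu_e(P_g)$ into $Q_g$ is a homotopy equivalence, via Quillen fibers and a $\mu_w$-minimal circuit descent as in Lemma~\ref{lem:circuit-descent-uniform}. After that, the contractibility of the local pushout should follow from an explicit comparison with the set-level pushout~\eqref{diag:topes_pushout}, where fibers over topes of $\om/e$ have two points and all others have one.
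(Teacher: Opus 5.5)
Your setup agrees with the paper's: the Fern\'andez--Mini\'an model, the comparison map $\Phi$, Quillen's Theorem~A on lower fibers, and the identification of $\Phi^{-1}(\downarrow g)$ with the local homotopy pushout of $R_g\leftarrow R_g\times\Topes_n(\om|_e)\to Q_g$ are all correct. The reduction to (a) and (b) breaks down, however, because (a) is false.

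\textbf{Counterexample to (a).} Let $\om$ have rank $2$ on $\{1,2,3\}$ with signed circuit $(+,+,+)$, realized by $(1,0),(0,1),(-1,-1)$, and let $e=3$. Then $\om\setminus e$ is free and $\Topes(\om/e)=\{(+,-),(-,+)\}$. Take $g=((+,+),(2,2))\in\Topes_n(\om\setminus e)$. Then $R_g$ consists of the two chains $((+,-),(1,1))<((+,-),(2,1))$ and $((-,+),(1,1))<((-,+),(1,2))$. There are no relations between the chains, so $R_g\simeq S^0$.

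Your argument for (a) through $\WTopes_n(\om/e)$ only applies when $g\in\WTopes_n(\om/e)$. Here the tope $(+,+)$ of $g$ contains the circuit $(+,+)$ of $\om/e$ with constant weight $2$. In such cases the fiber is contractible only because $Q_g$ interacts with every component of $R_g\times S^{n-1}$. Your (b) presupposes (a) and does not account for this.

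Moreover, (b) carries the real content and is only sketched. For example, lowering $v(e)$ to $1$ is not monotone as soon as two comparable elements have different signs at $e$, because they become incomparable. So it does not give the retractions you describe.

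The missing idea is to stop analysing $R_g$ and $Q_g$ separately and to cover the whole fiber. The paper splits according to whether the tope $T$ of $g=(T,w)$ is a tope of $\om/e$.

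\textbf{Case 1: $T$ is a tope of $\om/e$.} Then $R_g$ has maximum $g$. There is a closure operator sending $f\in\Topes_n(\om)$ to $\mu_e(g,f_e)$, $(f,q)$ to $(g,q)$, and every element of $R_g$ to $g$. Following it by the retraction $\mu_e(g,q)\mapsto(g,q)$ contracts the fiber. This is essentially your picture.

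\textbf{Case 2: $T$ is not a tope of $\om/e$.} Let $T^\epsilon$ be the unique extension of $T$ to a tope of $\om$, and let $K=\Phi^{-1}(\downarrow g^{\epsilon,n})$, which has a maximum. Choose a $\mu_w$-minimal circuit $C\preceq T^{-\epsilon}$ and put $J=(\supp C\setminus e)\cap\{w>1\}$. Orthogonality with $C$, after extending contraction topes by $0$ at $e$, gives
\[
\Phi^{-1}(\downarrow g)=K\cup\bigcup_{s\in J}\Phi^{-1}(\downarrow g_{\{s\}}),
\]
where $g_S$ flips signs and lowers weights on $S$. The fibers over the $g_S$ are contractible by induction on $g$. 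Each $K\cap\Phi^{-1}(\downarrow g_S)$ retracts via $p\mapsto\mu_e(p)$ onto $\Topes_n(\om)\cap\downarrow g^{\epsilon,n}_S$, which is contractible by Lemma~\ref{lem:contractible-fibers-uniform}. Theorem~\ref{thm:cover-contractible} then concludes.

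In the example above, the cover is $K$ together with the fibers over $((-,+),(1,2))$ and $((+,-),(2,1))$. This is exactly how the two components of $R_g$ get absorbed.
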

\begin{proof}
Let $H$ denote the homotopy pushout $\Topes_n(\om/e)\cup^h_{\Topes_n(\om/e) \times \Topes_n(\om|_e)}\Topes_n(\om)$. We have a natural map of posets $h: H \rightarrow \Topes_n(\om \setminus e)$ defined by 
\begin{align*}
    h(g) &= \res_{\backslash/e}(g), \quad \forall g \in\Topes_n(\om/e)  \\
    h(g) &= \res_{E\setminus e}(\mu_e(g)), \quad \forall g \in \Topes_n(\om/e)\times \Topes_n(\om|_e)\\
    h(g) &= \res_{E\setminus e}(g), \quad  \forall g \in \Topes_n(\om).
\end{align*}
To prove that $h$ is a homotopy equivalence, we show that the lower fiber $H\downarrow g \coloneqq h^{-1}(\downarrow g)$ is contractible for all $g = (T, w)\in \Topes_n(\om\setminus e)$. To simplify notation, put $Q = \Topes_n(\om|_e)$. 

\medskip\noindent\emph{Case 1: $T$ is a tope of $\om/e$.}
View $g$ as an element of $\Topes_n(\om/e)$. 
For $f\in \Topes_n(\om)$, let $f_e\in Q$ be its signed weight at $e$.
The map $c:(H\downarrow g) \to (H \downarrow g)$ defined on the three components of $H\downarrow g$ by 
\begin{align*}
 c(f)& =\mu_e(g,f_e),\quad \forall f \in (H\downarrow g) \cap \Topes_n(\om),  \\
 c(f,q) &=(g,q),\quad \forall (f,q) \in (H\downarrow g)\cap (\Topes_n(\om/e) \times Q),\\
 c(f) & = g, \quad \forall f \in (H\downarrow g) \cap \Topes_n(\om/e). 
\end{align*}
is monotone, idempotent, and satisfies $\id\leq c$. The image of $c$ consists of a copy of $Q$ inside $(H\downarrow g)\cap\Topes_n(\om)$ given by the elements of the form $\mu(g,q)$ with $q \in Q$, a copy of $Q$ inside $\Topes_n(\om/e)\times Q$ given by the elements $(g,q)$ with $q \in Q$, together with the single element $g \in \Topes_n(\om/e)$. In this image we have the mixed relations
\[
 (g, q) \leq \mu_e(g,q')\ \Longleftrightarrow\ q\leq q',
\]
On this image, the map $\mu_e(g,q)\mapsto (g,q)$, fixing $(g,q)$ and $g$, is monotone,
idempotent, and pointwise below the identity. Its image has maximum $g$ and so it is contractible.
These two homotopy equivalences prove that $H\downarrow g$ is contractible.

\medskip\noindent\emph{Case 2: $T$ is not a tope of $\om/e$.}
There is a unique sign $\epsilon$ such that $T^\epsilon$, the extension
of $T$ by $\epsilon$ at $e$, is a tope of $\om$. Let $g^{\epsilon, n}$ denote the element of $(H \downarrow g) \cap \Topes_n(\om)$ which is the extension of $g$ by sign $\epsilon$ at $e$ and weight $n$ at $e$. Set also $K = H \downarrow g^{\epsilon, n}$. Finally, consider also the weighted sign $g^{-\epsilon, n+1}$ defined similarly, with sign $-\epsilon$ at $e$, and weight $n+1$ at $e.$ \\

The element $g^{-\epsilon, n+1}$ belongs to $\WTopes_{n+1}(\om).$ The weight vector $T^{-\epsilon}$ is not a tope so it must contain a signed circuit. Choose a circuit $C\preceq T^{-\epsilon}$ minimizing $\mu_{\widehat w}(C)$.
Necessarily $C(e)=-\epsilon$, and $C(s)=T(s)$ on
$\supp C\setminus\{e\}$. Put
\[
 J=(\supp C\setminus\{e\})\cap\{s:w(s)>1\}.
\]
For $S\subseteq J$, define $g_S=(T_S,w_S)$ by flipping signs and lowering
weights by one on $S$, as in Lemma~\ref{lem:circuit-descent-uniform}.
Applying that lemma to $g^{-\epsilon, n+1}$, then restricting to $E\setminus e$, gives
\begin{equation}\label{eq:uniform-gs}
 g_S\in\WTopes_n(\om\setminus e).
\end{equation}
Note that restriction preserves uniformity because every circuit of a deletion
is a circuit of the original matroid. For $S\ne\varnothing$, we can assume that $H\downarrow g_S$ is contractible by induction.

We claim that
\begin{equation}\label{eq:cover}
 H \downarrow g =K\ \cup\ \bigcup_{s\in J}H \downarrow g_{\{s\}}.
\end{equation}
Indeed, if $f\in (H\downarrow g)\cap \Topes_n(\om)$ is not below $g^{\epsilon, n}$, its signed weight at $e$
must be $(-\epsilon,n)$. Its tope agrees with $C$ at $e$, so
circuit--covector orthogonality forces disagreement with $C$ at some
$s\in\supp C\setminus\{e\}$. Similarly, for an element $f$ in $\Topes_n(\om/e)$ or $\Topes_n(\om/e)\times Q$, extend its underlying contraction tope by zero at $e$. Orthogonality with $C$ again forces disagreement at such an $s$. In either situation, if $f = (Y,v)$, then
\[
 Y(s)=-T(s),\qquad v(s)<w(s).
\]
Thus $s\in J$ and $f\leq g_{\{s\}}$, proving \eqref{eq:cover}.

If $J=\varnothing$, this already proves $H \downarrow g =K$. For nonempty $S\subseteq J$, we have the immediate identity
\[
 \bigcap_{s\in S}(H\downarrow g_{\{s\}}) = H\downarrow g_S,
\]
which is contractible by induction. It remains to handle intersections with $K$.

Consider the weighted sign vector $g^{\epsilon, n}_{S}$ on $E$, which is the extension of $g_S$ by sign $\epsilon$ at $e$ and weight $n$ at $e$. The element $g^{\epsilon, n}_{S}$ belongs to $\WTopes_n(\om).$ The intersection $K\cap (H\downarrow g_S)$ only contains elements in $\Topes_n(\om)$ or in $\Topes_n(\om/e) \times Q.$ Fixing the elements in $\Topes_n(\om)$ and sending $p\mapsto\mu(p)$ is a monotone
idempotent map pointwise above the identity. Its image is precisely
\[
 \Topes_n(\om)\cap\downarrow g^{\epsilon, n}_S.
\]
By Lemma~\ref{lem:contractible-fibers-uniform}, this image is contractible, and hence
$K\cap (H\downarrow g_S)$ is contractible.

To conclude, we see that all nonempty finite intersections in the lower-ideal cover
\eqref{eq:cover} are contractible, so by Theorem~\ref{thm:cover-contractible} the lower fiber $H\downarrow g$ is contractible. Applying Theorem~\ref{thm:quillen-type-A} finishes the proof of Theorem~\ref{thm:deletion-contraction-pushout}. 
\end{proof} Let us state some consequences of this theorem. For the rank $1$ matroid $\U_{1,1}$, the poset $\Topes_n(\U_{1,1})$ has the form
\begin{equation}\label{eq:topes(u1,1)}
    \Topes_n(\U_{1,1})=\{(\pm,i)\mid i=1,\ldots,n\}, \quad (s,i)<(t,j) \text{ if } i<j.
\end{equation} Its geometric realisation $|\Topes_n(\U_{1,1})|\cong S^{n-1}$ is homeomorphic to an $(n-1)$-sphere. This leads to the following 
\begin{corollary}[Deletion-contraction cofiber sequence]\label{cor:cofiber}
We have a homotopy equivalence 
\[
\Cofib(|\Topes_n(\om)|\to|\Topes_n(\om/e)|)\simeq \Sigma_+^n|\Topes_n(\om/e)_+|,
\] where $\Cofib$ denotes the homotopy cofiber, $\Sigma_+^nX=\Sigma^n(X_+)$ is an iterated suspension.
\end{corollary}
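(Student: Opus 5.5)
The plan is to read off the cofiber from the homotopy pushout of Theorem~\ref{thm:deletion-contraction-pushout}, using that in a homotopy pushout square the homotopy cofibers of the two vertical maps are equivalent. One point about the statement: the only map out of $|\Topes_n(\om)|$ in that square is the right vertical restriction $\res_{E\setminus e}\colon \Topes_n(\om)\to\Topes_n(\om\setminus e)$. So the cofiber in the statement must be of this map, and I would prove
\[
\Cofib\bigl(|\Topes_n(\om)|\to|\Topes_n(\om\setminus e)|\bigr)\simeq\Sigma^n\bigl(|\Topes_n(\om/e)|_+\bigr).
\]

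\textbf{Step 1.} Realize the square of Theorem~\ref{thm:deletion-contraction-pushout}. The nerve commutes with finite products of posets, and geometric realization of finite simplicial sets preserves finite products. Hence $|\Topes_n(\om/e)\times\Topes_n(\om|_e)|\cong|\Topes_n(\om/e)|\times|\Topes_n(\om|_e)|$, and $|\pr_1|$ is the projection.

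Since $e$ is a rank-one flat, $\om|_e\cong\U_{1,1}$ up to the looplessness convention. By \eqref{eq:topes(u1,1)}, $|\Topes_n(\om|_e)|\cong S^{n-1}$. Writing $A=|\Topes_n(\om/e)|$, the left vertical map is therefore the projection $A\times S^{n-1}\to A$.

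\textbf{Step 2.} Use the standard fact that in a homotopy pushout
\[
\begin{tikzcd}
P\arrow[r]\arrow[d]&Q\arrow[d]\\
R\arrow[r]&S
\end{tikzcd}
\]
the induced map $\Cofib(P\to R)\to\Cofib(Q\to S)$ is a homotopy equivalence. To see this, replace the square by a strict pushout along a cofibration, using mapping cylinders; both cofibers are then the quotient $S/Q\cong R/P$ computed on the same cell structure. Pasting cofiber squares with the homotopy pushout gives the equivalence directly. This reduces the claim to computing $\Cofib(A\times S^{n-1}\to A)$.

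\textbf{Step 3.} Add disjoint basepoints. The projection becomes $A_+\wedge(S^{n-1}_+\to S^0)$, where $S^{n-1}_+\to S^0$ collapses $S^{n-1}$ to the non-base point. Since $A_+\wedge-$ preserves homotopy cofiber sequences of pointed spaces, we get
\[
\Cofib\simeq A_+\wedge\Cofib(S^{n-1}_+\to S^0).
\]
The mapping cone of $S^{n-1}_+\to S^0$ is the unreduced cone on $S^{n-1}$ with its base glued to a point, which is $S^n$. Hence the cofiber is $A_+\wedge S^n=\Sigma^n(A_+)$, as claimed.

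\textbf{Main obstacle.} The only delicate point is basepoint bookkeeping. The cofiber of an unpointed map must be taken as the unreduced mapping cone, and this has to be matched with the smash product formula. I would handle this by working throughout with $(-)_+$ and pointed homotopy cofibers, noting that for a map $X\to Y$ of unpointed spaces the unreduced cone equals the pointed cofiber of $X_+\to Y_+$. I would also note that the equivalence is natural in $\om$ along the maps of Theorem~\ref{thm:deletion-contraction-pushout}, since every step above is functorial.
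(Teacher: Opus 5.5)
Your argument is correct and is essentially the paper's proof: both use the equivalence of the cofibers of the two vertical maps in the homotopy pushout of Theorem~\ref{thm:deletion-contraction-pushout}, then compute $\Cofib(A\times S^{n-1}\to A)$. The paper does this via the replacement $A\times S^{n-1}\hookrightarrow A\times D^n$ and the quotient $(A\times D^n)/(A\times S^{n-1})$, while you smash $S^{n-1}_+\to S^0$ with $A_+$. You are also right that the map in the statement should be $|\Topes_n(\om)|\to|\Topes_n(\om\setminus e)|$ and the target $\Sigma^n(|\Topes_n(\om/e)|_+)$, which is what the paper's diagram actually establishes.
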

\begin{proof}
Recall that a homotopy pushout induces an equivalence of cofibers 
\begin{equation}\label{eq:pushout+cofiber}
\begin{tikzcd}[column sep=large]
{|\Topes_n(\om/e)|\times |\Topes_n(\om|_e)|} \arrow[r,"|\pr_1|"] \arrow[d,"\mu_e"]
  & {|\Topes_n(\om/e)|}\arrow[r] \arrow[d]
  & {\Cofib(|\pr_1|)} \arrow[d,"\simeq"]\\
{|\Topes_n(\om)|} \arrow[r,"\res"]
  & {|\Topes_n(\om\setminus e)|} \arrow[ul, phantom,
        "\overset{h}{\ulcorner}"{description, pos=0.15}] \arrow[r]
  & \Cofib(|\res|).
\end{tikzcd}
\end{equation} Since $\om|_e=\U_{1,1}$ and $|\Topes_n(\U_{1,1})|\cong S^{n-1}$, we compute the cofiber $\Cofib(|\pr_1|)$ using the cofiber replacement $|\Topes_n(\om/e)|\times S^{n-1}\hookrightarrow |\Topes_n(\om/e)|\times D^n$ and hence 
    \[ \Cofib(|\Topes_n(\om)|\to|\Topes_n(\om/e)|)\simeq  \Cofib(|\pr_1|) \simeq \frac{|\Topes_n(\om/e)| \times D^n}{|\Topes_n(\om/e)| \times S^{n-1}} \simeq \Sigma^n|\Topes_n(\om/e)|_{+}. 
    \]
\end{proof}
As an immediate consequence, we obtain a deletion-contraction sequence for the cohomology groups of $\Topes_n(\om).$
\begin{corollary}\label{coro:deletion-contraction-sequence}
    Let $\om$ be an oriented matroid and $n\geqslant 2$ an integer. If $e \in E(\om)$ is such that $\{e\}$ is a flat, we have a long exact sequence of cohomology groups
    \[
\cdots \to H^{\bullet}(\Topes_n(\om\setminus e))\xrightarrow{\res} H^{\bullet}(\Topes_n(\om))\overset{\partial_e}{\to} H^{\bullet-(n-1)}(\Topes_n(\om/e))\to H^{\bullet+1}(\Topes_n(\om\setminus e))\to \cdots
\]
\end{corollary}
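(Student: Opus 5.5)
The plan is to obtain the long exact sequence as the cohomology long exact sequence of the cofiber sequence attached to the restriction map $\res\colon\Topes_n(\om)\to\Topes_n(\om\setminus e)$, and then to identify the cofiber using Theorem~\ref{thm:deletion-contraction-pushout} together with Corollary~\ref{cor:cofiber}.

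\textbf{Step 1: the cofiber sequence.} Replace $|\res|$ by a cofibration via the mapping cylinder. This gives a cofiber sequence
\[
|\Topes_n(\om)|\to|\Topes_n(\om\setminus e)|\to\Cofib(|\res|).
\]
Its long exact sequence in integral cohomology reads
\[
\cdots\to\widetilde H^{\bullet}(\Cofib(|\res|))\to H^{\bullet}(\Topes_n(\om\setminus e))\xrightarrow{\res^*}H^{\bullet}(\Topes_n(\om))\to\widetilde H^{\bullet+1}(\Cofib(|\res|))\to\cdots
\]
Here we use reduced cohomology of the cofiber, which is justified because the cofiber is pointed.

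\textbf{Step 2: identifying the cofiber.} By the homotopy pushout of Theorem~\ref{thm:deletion-contraction-pushout}, the vertical comparison of cofibers in diagram~\eqref{eq:pushout+cofiber} is an equivalence
\[
\Cofib(|\pr_1|)\simeq\Cofib(|\res|).
\]
Since $|\Topes_n(\U_{1,1})|\cong S^{n-1}$, the projection $\pr_1$ is the trivial sphere bundle $|\Topes_n(\om/e)|\times S^{n-1}\to|\Topes_n(\om/e)|$. Its cofiber is obtained by collapsing $|\Topes_n(\om/e)|\times S^{n-1}$ inside $|\Topes_n(\om/e)|\times D^{n}$, which gives the Thom space $\Sigma^n(|\Topes_n(\om/e)|_+)$.

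\textbf{Step 3: the suspension isomorphism.} The suspension isomorphism gives
\[
\widetilde H^{\bullet+1}\bigl(\Sigma^n|\Topes_n(\om/e)|_+\bigr)\cong H^{\bullet+1-n}(\Topes_n(\om/e))=H^{\bullet-(n-1)}(\Topes_n(\om/e)).
\]
Substituting this into the sequence of Step~1 yields the stated sequence. The map $\partial_e$ is the connecting homomorphism, i.e.\ the composite of the coboundary with the Thom/suspension isomorphism. The remaining map $H^{\bullet-(n-1)}(\Topes_n(\om/e))\to H^{\bullet+1}(\Topes_n(\om\setminus e))$ is induced by $|\Topes_n(\om\setminus e)|\to\Cofib$, followed by the same identification. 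We also check that the indices match: the term $\widetilde H^{\bullet}(\Cofib)\cong H^{\bullet-n}(\Topes_n(\om/e))$ sits to the left of $H^{\bullet}(\Topes_n(\om\setminus e))$, and after reindexing this is exactly the position displayed in the statement.

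\textbf{Main obstacle.} The only delicate point is Step~2, namely that cofibers of homotopy-equivalent maps of homotopy pushout squares agree. This holds because, in a homotopy pushout, the induced map between the homotopy cofibers of the parallel arrows is an equivalence (pasting of homotopy pushouts). Everything else consists of standard facts about cofiber sequences.
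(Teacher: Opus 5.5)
Your proposal is correct and follows the paper's own argument. The paper likewise takes the long exact sequence of the cofiber sequence of $|\res|$, identifies the cofiber with $\Sigma^n(|\Topes_n(\om/e)|_+)$ via the homotopy pushout of Theorem~\ref{thm:deletion-contraction-pushout} (this is Corollary~\ref{cor:cofiber}), and applies the suspension isomorphism; your inline cofiber computation and index check just spell out what the paper cites, and you correctly use the cofiber of $|\Topes_n(\om)|\to|\Topes_n(\om\setminus e)|$, where the paper's statement of Corollary~\ref{cor:cofiber} has the typo $\om/e$ for $\om\setminus e$.
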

\begin{proof}
 This follows from the long exact sequence induced by the cofiber sequence in Corollary~\ref{cor:cofiber} and the suspension isomorphism $H^{\bullet}(\Sigma^n_+X)\cong H^{\bullet-n}(X)$   
\end{proof}

In the next section we show that this long exact sequence splits into so-called "deletion-contraction" short exact sequences.

\subsection{Cohomology of $\Topes_n$}\label{sec:generators-of-alg}
Since $\Topes_n$ is an $\OM$-presheaf and cohomology is a contravariant functor, its cohomology $H^{\bullet}(\Topes_n)$ forms an $\OM$-copresheaf. For simplicity, for a weak signed map $\phi\colon \om\to\N$, we denote by $\phi_*$ the corresponding homomorphism of cohomology rings. In this section we establish the following isomorphism.
\begin{theorem}\label{thm::pres_H(Topes)}
For every $n\geqslant 2$, we have an isomorphism of graded algebras
\begin{equation} \label{eq:comparison-cohomology-OS}
\OS_n^\bullet(\om)\cong
H^\bullet(\Topes_n(\om)),
\end{equation}
that is compatible with the $\OM$-copresheaf structure on both sides.
\end{theorem}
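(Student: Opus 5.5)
The plan is the standard Orlik--Solomon deletion--contraction induction, with every map supplied by the presheaf structure.

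\textbf{Step 1: the comparison map.} For each $e\in E(\om)$ the ground-set map $\{e\}\to E(\om)$ with sign $+$ is a signed weak map $\iota_e\colon\U_{1,1}\to\om$. Indeed, every tope of $\om$ restricts to a tope of the free matroid on one element. By \eqref{eq:topes(u1,1)} we have $|\Topes_n(\U_{1,1})|\cong S^{n-1}$, so we fix a generator $u\in H^{n-1}(\Topes_n(\U_{1,1}))$ and set $\omega_e\coloneqq(\iota_e)_*u\in H^{n-1}(\Topes_n(\om))$.

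\textbf{Step 2: the relations hold.} First, $\omega_e^2=0$, since $\omega_e^2$ is pulled back from $H^{2n-2}(S^{n-1})=0$. Second, $\partial\omega_C=0$ for every signed circuit $C$. The inclusion $\om|_{\sC}\to\om$ is a weak map, so it suffices to treat $\om=\om|_{\sC}$, a single circuit on $d$ elements of rank $d-1$. There I will compute $H^\bullet$ of the circuit directly by deletion--contraction. Deleting $e\in\sC$ gives a free matroid $\U_{d-1,d-1}$, whose $\Topes_n$ is a product of $(n-1)$-spheres. Contracting $e$ gives a circuit of length $d-1$. The relation $\partial\omega_C=0$ should then follow from $H^{(d-1)(n-1)}$ of the circuit having rank $d-1$, not $d$. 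The signs (the $C(e)$ for odd $n$ and the $(-1)^i$ for even $n$) come from the reorientation isomorphisms $(\id,\epsilon_S)$: the antipodal map on $S^{n-1}$ has degree $(-1)^n$, which matches the $\epsilon^n_\phi(e)$ in Proposition~\ref{prop:presheaf-Orlik-Solomon}. This yields a ring map $\Theta_\om\colon\OS_n^\bullet(\om)\to H^\bullet(\Topes_n(\om))$.

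\textbf{Step 3: naturality.} Compatibility with the copresheaf structure is a statement about generators. For $\phi=(f,\epsilon)\colon\om\to\N$, the composite $\phi\circ\iota_e$ equals $\iota_{f(e)}$ precomposed with the automorphism $\epsilon(e)$ of $\U_{1,1}$. That automorphism acts on $u$ by $\epsilon(e)^n$, which is exactly the rule defining $\OS_n(\phi)$.

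\textbf{Step 4: $\Theta_\om$ is an isomorphism.} The proof is by induction on $|E|$ (after reducing to simple $\om$). The base case is the free matroid, where $\Topes_n$ is a product of spheres and $\OS_n$ is an exterior or divided-square-zero algebra. For the inductive step, take $e$ with $\om\setminus e$ of the same rank (a non-coloop); if no such $e$ exists, $\om$ is free. On the algebra side I will use the classical short exact sequence
\[
0\to\OS_n^\bullet(\om\setminus e)\to\OS_n^\bullet(\om)\to\OS_n^{\bullet-(n-1)}(\om/e)\to0,
\]
with the middle map $x\mapsto\omega_e\cdot\tilde x$ residue-inverted. Its exactness and the freeness of $\OS_n$ via nbc bases are combinatorial statements, valid for any oriented matroid and both parities. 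I will map this sequence to the long exact sequence of Corollary~\ref{coro:deletion-contraction-sequence}.

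\textbf{Main obstacle.} The hard step is showing that the residue $\partial_e$ corresponds to the Gysin-type map $x\,\omega_e\mapsto x|_{\om/e}$ in the diagram of the homotopy pushout of Theorem~\ref{thm:deletion-contraction-pushout}. The top map $\mu_e$ pulls $\omega_e$ back to $1\otimes u$, which makes the Künneth boundary computation explicit, so the squares should commute up to sign. Surjectivity of $\OS_n(\om)\to H(\Topes_n(\om))$, obtained inductively via the five lemma applied to the portion $H(\om\setminus e)\to H(\om)\to H(\om/e)$, then forces the connecting maps to vanish. The long exact sequence thus splits, and the five lemma gives the isomorphism. Since all groups are free, the statement holds integrally.
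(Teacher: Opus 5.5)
Your overall architecture is the paper's: generators $(\iota_e)_*u$, naturality via the degree $(-1)^n$ of the sign flip on $\U_{1,1}$, reduction of circuit relations to $\om|_{\sC}$, the residue formula of Lemma~\ref{lem:connecting-gysin} obtained from $\mu_e^*$ and the K\"unneth boundary, splitting of the long exact sequence, and the five lemma against the algebraic deletion--contraction sequence. Steps 1, 3 and 4 are fine. The gap is in Step 2.

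Suppose you know that $H^{(d-1)(n-1)}(\Topes_n(\om|_{\sC}))$ is free of rank $d-1$ and spanned by the $d$ monomials $\omega_{\sC\setminus e_i}$. That only tells you these monomials satisfy \emph{some} primitive linear relation. It does not tell you that the relation is $\partial\omega_C$. The reorientation isomorphisms only reduce you to $C=(+,\dots,+)$; they do not fix the coefficients.

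The rank count itself also needs the deletion--contraction sequence of the circuit to split. That already uses the residue formula you postpone to Step 4, together with the fact that $H^\bullet(\Topes_n(\om/e))$ is generated by the $\omega_f$. So Step 2 must be run as its own induction on $d$ that carries generation along, after the residue formula is established.

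The missing argument is the one in Lemma~\ref{lem:presentation-circuit-matroids}:
\begin{itemize}
\item By the residue formula, $\partial_e(\partial\omega_C)=\pm\partial\omega_{C\setminus e}$. This vanishes in $H^\bullet(\Topes_n(\om/e))$ by induction.
\item By exactness, $\partial\omega_C=\alpha\,\omega_{\sC\setminus e}$, since $\omega_{\sC\setminus e}$ spans the image of the top cohomology of the free deletion.
\item Apply $\partial_f$ for some $f\neq e$. The left-hand side is killed again, while the right-hand side goes to $\pm\alpha\,\omega_{\sC\setminus\{e,f\}}$, which is non-torsion. Hence $\alpha=0$.
\end{itemize}
For $d\geq 3$ there is an alternative. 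The line of relations is stable under the $S_d$-symmetry of the circuit $(+,\dots,+)$, and the only $S_d$-stable line in $\mathbb{Q}^d$ is spanned by the coefficient vector of $\partial\omega_C$.

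Finally, the base case $d=2$ cannot be reached by deletion--contraction at all. If $e$ has a parallel or antiparallel partner $e'$, then $\{e\}$ is not a flat and $e'$ becomes a loop in $\om/e$. You need a separate argument, as in Lemma~\ref{lem:induced-cohomology-topes}(ii). For instance, take the weak map $\U_{1,2}\to\U_{1,1}$ sending both elements to the single one, with sign $\epsilon=-$ on $e'$ in the antiparallel case. It sends $\omega_e\mapsto u$ and $\omega_{e'}\mapsto\epsilon^n u$. It is therefore an isomorphism on $H^{n-1}$ once you know $H^{n-1}(\Topes_n(\U_{1,2}))\cong\Z$, which is exactly what your reduction to simple matroids presupposes anyway.
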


The proof splits into several steps. Firstly, we determine generators of $H^{\bullet}(\Topes_n(\om))$. We recall the description of the poset $\Topes_n(\U_{1,1})$ for the rank one matroid $\U_{1,1}$~\eqref{eq:topes(u1,1)}. Since $|\Topes_n(\U_{1,1})|\cong S^{n-1}$, we have $H^{\bullet}(\Topes_n(\U_{1,1})\cong \Z[\omega^{n-1}]/(\omega^2)$, and we consider an explicit representative $\omega^+\in \mathrm{C}^{n-1}(\Topes_n(\U_{1,1}))$ of the generator defined by
\[
\omega^{+}((+,1) < (+,2) < \cdots < (+, n)) = 1
\]
and $\gen(\chain) = 0$ for every other $n$-chain $\chain$ of $\Topes_n(\om)$. For any oriented matroid $\om$ and any element $e\in E(\om)$, we define an element
\[
\omega^{+}_e \coloneqq \iota^*_e(\omega^{+}) \in H^{n-1}(\Topes_n(\om)), \text{ where } \iota_e\colon \U_{1,1}\overset{e}{\to} \om
\]
The following lemma explains the compatibility of these classes with respect to signed weak maps.
\begin{lemma}\label{lem:induced-cohomology-topes} We have
\begin{itemize}
    \item[(i)] For a signed weak map $\phi\colon \om\to \N$, we have
\begin{equation}\label{eq:signed_weak_maps_omega_+}
\phi_*\omega^+_e=\epsilon^n_{\phi}(e)\omega^+_{f_{\phi}(e)}
\end{equation}
    \item[(ii)] For parallel elements $e,e'\in E(\om)$, we have $\omega^+_e=\omega^+_{e'}$. For antiparallel elements, we have $\omega_e=(-1)^n\omega^+_{e'}$
\end{itemize}
\end{lemma}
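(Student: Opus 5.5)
Both parts reduce to a computation on $\U_{1,1}$, using functoriality of cohomology for the $\OM$-presheaf $\Topes_n$.

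\emph{Part (i).} Since $\omega^+_e=\iota_e^*(\omega^+)$ and $\phi_*$ is induced by the poset map $\phi^*\colon\Topes_n(\N)\to\Topes_n(\om)$, contravariance gives $\phi_*\omega^+_e=(\phi\circ\iota_e)^{*}\omega^+$ in the cohomological sense. The composite $\phi\circ\iota_e\colon \U_{1,1}\to\N$ is the signed weak map with ground-set map $1\mapsto f_\phi(e)$ and sign $\epsilon_\phi(e)$. It therefore factors as $\iota_{f_\phi(e)}\circ\rho$, where $\rho\colon\U_{1,1}\to\U_{1,1}$ is the identity map with sign $\epsilon_\phi(e)$. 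This reduces (i) to computing the effect of the reorientation $\rho_-$ on $H^{n-1}(\Topes_n(\U_{1,1}))\cong\Z$.

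On the poset \eqref{eq:topes(u1,1)}, the map $\rho_-^*$ is the involution $(s,i)\mapsto(-s,i)$. Under $|\Topes_n(\U_{1,1})|\cong S^{n-1}$, built as an iterated suspension of $S^0=\{(\pm,1)\}$ by the pairs $(\pm,i)$, this involution acts as the antipodal map, which has degree $(-1)^n$. A direct cochain check gives the same result: $\rho_-^*\omega^+$ is the cochain dual to the all-minus chain. In the cellular (join) picture, all-plus and all-minus chains differ by $n$ reflections, so they are cohomologous up to the sign $(-1)^n$. This yields $\phi_*\omega^+_e=\epsilon_\phi(e)^n\omega^+_{f_\phi(e)}$.

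\emph{Part (ii).} Suppose $e,e'$ are parallel or antiparallel. The map $\U_{1,1}\to\om$ sending $1\mapsto e$ should factor through the rank-one parallel class. Concretely, consider the signed weak map $\pi\colon\om\to\om$ with ground-set map sending $e'\mapsto e$ and fixing everything else, and with sign $-$ on $e'$ exactly when $e,e'$ are antiparallel. Every tope $X$ of $\om$ satisfies $X(e')=\pm X(e)$, with the sign fixed by the circuit on $\{e,e'\}$. Hence $\pi^*X=X$, so $\pi^*$ is the identity on $\Topes(\om)$ and therefore a legitimate signed weak map.

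The map $\pi^*$ on $\Topes_n(\om)$ is not the identity, since it replaces $w(e')$ by $w(e)$. It is, however, homotopic to the identity. The main step, which I expect to be the only real obstacle, is showing that $|\pi^*|\simeq\id$. I would first try to compare $\pi^*$ with $\id$ by a zig-zag of order-comparable monotone maps. Let $m(g)$ replace $w(e),w(e')$ by their maximum and keep the sign of the larger-weight element on both. Then $\id\le m$ and $\pi^*\le m$ should hold whenever comparable; comparable monotone maps induce homotopic maps on realizations. Alternatively, one can pass through the subposet of $(X,w)$ with $w(e)=w(e')$. Its inclusion has contractible lower fibers, so by Theorem~\ref{thm:quillen-type-A} it is an equivalence, and both $\pi^*$ and $\id$ retract onto it.

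Granting $|\pi^*|\simeq\id$, we get $\omega^+_{e'}=\pi_*\omega^+_{e'}$. Part (i) then gives $\pi_*\omega^+_{e'}=(\pm1)^n\omega^+_e$. This is $\omega^+_e$ in the parallel case and $(-1)^n\omega^+_e$ in the antiparallel case.
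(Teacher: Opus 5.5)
Your proof is correct. Part (i) is the paper's argument: factor $\phi\circ\iota_e$ as $\iota_{f_\phi(e)}$ or $\iota_{f_\phi(e)}\circ\tau$, and use that the sign involution of $\Topes_n(\U_{1,1})$ realizes the antipodal map of $S^{n-1}$, which has degree $(-1)^n$.

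Part (ii) takes a different route from the paper. The paper uses (i) to reduce to $\U_{1,2}$ and writes $\omega^+_2=\sigma_*\omega^+_1$ for the transposition $\sigma$. It then asserts that $\sigma$ fixes $\omega^+_1$ ``at the cochain level'' in the parallel case, and relates it to $\omega^+_1$ via $\tau$ in the antiparallel case.

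Taken literally on $\Topes_n(\U_{1,2})$, this is not an identity of cochains. Since $\sigma\circ\iota_1=\iota_2$, the cochain map induced by $\sigma^*$ sends $\omega^+_1$ exactly to $\omega^+_2$. These two cochains differ, for instance on a chain that raises only the weight of $1$. What that step really needs is one of the following:
\begin{itemize}
\item a homotopy $|\sigma^*|\simeq\id$; or
\item a passage to $\Prop_n(\U_{1,2})$, where $w(1)=w(2)$ makes the two projections to $\Topes_n(\U_{1,1})$ agree (or differ by $\tau$), combined with Corollary~\ref{coro:prop-tope-equivalence}.
\end{itemize}
Your collapse map $\pi$ and the zig-zag $\id\le m\ge\pi^*$ give such a homotopy directly on $\Topes_n(\om)$. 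This avoids both the reduction to $\U_{1,2}$ and any appeal to proper topes, so your version is more self-contained. The paper's version is shorter only because this step is left implicit.

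One fix to your definition of $m$: it should leave the tope $X$ unchanged and only raise $w(e)$ and $w(e')$ to their common maximum. The signs at $e$ and $e'$ are already tied together because $X$ is a tope. Copying the sign of the heavier element onto both would produce a non-tope in the antiparallel case.

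With this definition, $m$ is monotone. Suppose $g_1\le g_2$ and their maxima over $\{e,e'\}$ coincide. Then the element attaining the maximum for $g_1$ has the same weight in $g_2$, hence the same sign. The tope constraint transfers this sign equality to the other element. The inequalities $\id\le m$ and $\pi^*\le m$ are then immediate.
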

\begin{proof} Let $\tau\colon \U_{1,1}\to \U_{1,1}$ be the unique involution that changes a sign $\tau(+)=(-)$. The geometric realisation $|\tau|$ defines a homeomorphisms of $|\Topes_n(\U_{1,1})|\cong \Sph^{n-1}$ that homotopic to the antipodal map, which has degree $(-1)^n$; hence we have $\tau_*\omega^+=(-1)^n\omega^+$ in $H^{n-1}(\Topes_n(\U_{1,1}))$.

(i) For a signed weak map $\phi\colon \om\to \N$ and an inclusion $\iota_e\colon \U_{1,1}\to \om$, we have
\[
\phi\iota_e=\begin{cases}
\iota_{f_{\phi}(e)},\text{ if }\epsilon_{\phi}(e)=+
\\
\iota_{f_{\phi}(e)}\tau,\text{ if }\epsilon_{\phi}(e)=-
\end{cases}
\] Combining these identities with $\phi_*\omega^+_e=(\phi\iota_e)_*\omega^+$ implies the desired identity~\eqref{eq:signed_weak_maps_omega_+}.\\

(ii) Thanks to~\eqref{eq:signed_weak_maps_omega_+}, we can reduce ourseleves to the case $\om=\U_{1,2}$ the rank $1$ matroid with two elements $\{1,2\}$. Let $\sigma\colon \U_{1,2}\to \U_{1,2}$ be a transposition of elements, so we have $\omega_2=(12)_*\omega_1$ by~\eqref{eq:signed_weak_maps_omega_+}. If $e,e'$ are parallel, then we have $(12)\omega_1^+=\omega_1^+$ at the cochain level, which implies $\omega^+_1=\omega^+_2$. Otherwise, we have $(12)\omega_1^+=\omega^-=\tau\omega_1^+$ at the cochain level, which implies $\omega^+_2=(-1)^n\omega^+_1$ in cohomology. 
\end{proof}  

Next, let us describe how the connecting homomorphism map $\partial_e$ from Corollary~\ref{coro:deletion-contraction-sequence} acts on $\omega^+$ monomials

\begin{lemma}\label{lem:connecting-gysin}
For a flat $e\in E(\om)$, the connecting homomorphism
\[
\partial_e\colon H^{\bullet}(\Topes_n(\om))
\longrightarrow H^{\bullet-(n-1)}(\Topes_n(\om/e))
\] acts on $\omega^+$ monomials by the rule
\begin{equation}\label{eq:connecting-monomials}
\partial_e(\omega_e^+\omega_S^+)=\omega_S^+,
\qquad
\partial_e(\omega_S^+)=0,
\qquad \forall S\subseteq E(\om)\setminus\{e\}.
\end{equation}
\end{lemma}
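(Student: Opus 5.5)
We will compute $\partial_e$ by tracing it through the diagram \eqref{eq:pushout+cofiber} of Corollary~\ref{cor:cofiber}. The connecting map factors as
\[
H^{\bullet}(\Topes_n(\om))\xrightarrow{\mu_e^*} H^{\bullet}\bigl(\Topes_n(\om/e)\times\Topes_n(\om|_e)\bigr)\longrightarrow H^{\bullet-(n-1)}(\Topes_n(\om/e)).
\]
The first arrow is pullback along the operadic map. The second arrow is the Mayer--Vietoris coboundary of the homotopy pushout, which is the cofiber coboundary for $|\Topes_n(\om/e)|\times S^{n-1}\hookrightarrow|\Topes_n(\om/e)|\times D^n$.

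By the Künneth formula, $H^\bullet(\Topes_n(\om/e)\times S^{n-1})\cong H^\bullet(\Topes_n(\om/e))\otimes\Z[\omega^+]/(\omega^{+})^2$, which is free in the second factor. The second arrow is the map $a\otimes 1+b\otimes\omega^+\mapsto b$, up to a universal sign convention that we fix once so that the coefficient of the fundamental class is $+1$. This is the standard Gysin/Thom computation for the trivial sphere bundle: the relative class of $(D^n,S^{n-1})$ is the coboundary of $\omega^+$. We therefore only need to compute $\mu_e^*$ on monomials.

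Since $\mu_e$ is $\Topes_n(\triangle_e)$ for the weak map $\triangle_e\colon\om\to\om/e\oplus\om|_e$, and cohomology is multiplicative, Lemma~\ref{lem:induced-cohomology-topes}(i) applies with ground-set map the identity and signs $+$. It gives $\mu_e^*\omega^+_s=\omega^+_s$ for every $s$, where the right-hand side is computed in $\om/e\oplus\om|_e$. By monoidality, this means $\omega^+_s\otimes1$ for $s\neq e$ and $1\otimes\omega^+$ for $s=e$. Hence $\mu_e^*(\omega^+_S)=\omega^+_S\otimes1$ and $\mu_e^*(\omega^+_e\omega^+_S)=\pm\,\omega^+_S\otimes\omega^+$. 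Here $\omega^+_S$ on the right denotes the classes in $\Topes_n(\om/e)$, which are the images of those in $\om$ under the weak map to the contraction. The sign $\pm$ is the Koszul sign from moving $\omega^+_e$ past $\omega^+_S$; it is trivial if we order $e$ first, as the formula \eqref{eq:connecting-monomials} implicitly does. Applying the second arrow yields $\partial_e(\omega^+_e\omega^+_S)=\omega^+_S$ and $\partial_e(\omega^+_S)=0$.

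The main obstacle is making the identification of the connecting homomorphism honest. We must check that the cohomological coboundary of the homotopy pushout square, transported via the equivalence of cofibers, is exactly the composite above. We must also fix its sign so that it equals slant product with the fundamental class of $S^{n-1}$. For this we would use naturality of the long exact sequence of the pair along the map of cofiber sequences in \eqref{eq:pushout+cofiber}, together with the explicit cochain representative $\omega^+$ on $\Topes_n(\U_{1,1})$. The normalisation $\partial_e(\omega^+_e)=1$, the case $S=\varnothing$, can then be checked directly when $\om=\U_{1,1}$, where $\om\setminus e$ and $\om/e$ are empty. This calibrates the sign for all $\om$ by naturality in $\OM$.
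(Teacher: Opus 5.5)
Your proposal is correct and follows the paper's proof: you factor $\partial_e$ as $\mu_e^*$ followed by the coboundary of the cofiber sequence $X\times S^{n-1}\hookrightarrow X\times D^n$ (via the map of cofiber sequences \eqref{eq:pushout+cofiber}), and you compute $\mu_e^*$ on generators using Lemma~\ref{lem:induced-cohomology-topes} applied to $\triangle_e$. One bookkeeping correction: in the ordering $H^\bullet(\Topes_n(\om/e))\otimes H^\bullet(S^{n-1})$ that you write, $\mu_e^*(\omega_e^+\omega_S^+)=(-1)^{(n-1)|S|}\,\omega_S^+\otimes\omega^+$, since ordering $e$ first does not remove the Koszul sign; moreover the coboundary is $\alpha\otimes\omega^+\mapsto(-1)^{|\alpha|}\alpha$, as in the paper, not a map with a single global sign, and it is the cancellation of these two degree-dependent signs that yields exactly $\omega_S^+$.
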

\begin{proof} Consider the following cofiber sequence
\[
X\times S^{n-1}\longrightarrow X\times D^n
\longrightarrow \Th_n(X)\cong\Sigma_+^nX.
\]
The connecting map has the form
\[
\partial\colon H^{\bullet}(X)\otimes H^{\bullet}(S^{n-1})
\longrightarrow H^{\bullet-(n-1)}(X), \quad \alpha\otimes \omega\mapsto (-1)^{|\alpha|}\alpha, \quad \alpha\otimes 1\mapsto 0,
\] where $\omega\in H^{n-1}(S^{n-1})$ is a chosen generator. Applying this formula to the map of cofiber sequences~\eqref{eq:pushout+cofiber} and combining with Lemma~\ref{lem:induced-cohomology-topes}, we deduce the required formula.
\end{proof}

We recall the generalised Orlik--Solomon algebras $\OS_n^\bullet(\om)$ from Section~2.4. The $\OM$-copresheaf structure of $\OS_n$ was described in Proposition~\ref{prop:presheaf-Orlik-Solomon}. We first treat free oriented matroids. Let $\U_{k,k}$ denote the free oriented matroid on $k$ elements. Since $\U_{k,k}\cong\U_{1,1}^{\oplus k}$, we have
\[
\Topes_n(\U_{k,k})\cong\Topes_n(\U_{1,1})^{\times k},
\qquad
|\Topes_n(\U_{k,k})|\simeq(S^{n-1})^{\times k}.
\]
Consequently,
\[
H^\bullet(\Topes_n(\U_{k,k}))
\cong\OS_n^\bullet(\U_{k,k})
\cong
\frac{\Z[\omega_1,\ldots,\omega_k]}
{\langle\omega_j^2\mid 1\leqslant j\leqslant k\rangle}.
\]
We next consider oriented matroids whose underlying matroid is $\U_{k-1,k}$.

\begin{lemma}\label{lem:presentation-circuit-matroids}
The presentation in Theorem~\ref{thm::pres_H(Topes)} holds for oriented matroids with underlying matroid $\U_{k-1,k}$, for $k\geqslant 2$.
\end{lemma}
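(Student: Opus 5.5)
We argue by deletion--contraction, using Corollary~\ref{coro:deletion-contraction-sequence} applied to an element $e$ of the circuit. Let $\om$ have underlying matroid $\U_{k-1,k}$ on $E=\{1,\ldots,k\}$ with unique signed circuit $\pm C$, $\underline C=E$. Every singleton is a flat, the deletion $\om\setminus k$ is free ($\U_{k-1,k-1}$), and the contraction $\om/k$ has underlying matroid $\U_{k-2,k-1}$. Up to reorientation (an isomorphism in $\OM$, acting on both sides by the signs of Lemma~\ref{lem:induced-cohomology-topes}(i) and of Proposition~\ref{prop:presheaf-Orlik-Solomon}), we may assume $C=(+,\ldots,+)$. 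For $k=2$ we have $\om/k=\U_{0,1}$ with a loop; we treat this base case directly. In that case $e,e'$ are (anti)parallel, and $\Topes_n(\om)$ is checked by hand to realize $S^{n-1}$. Lemma~\ref{lem:induced-cohomology-topes}(ii) then gives exactly the relation $\partial\omega_C=0$, namely $\omega_1\mp\omega_2=0$ with the correct sign depending on parity. For $k\geq 3$ we induct on $k$: the contraction again has the form $\U_{k-2,k-1}$, with signed circuit $C|_{E\setminus k}$.

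First we build the comparison map. Send $\omega_e\mapsto\omega_e^+$. Since $\omega_e^+$ is pulled back from $S^{n-1}$, $(\omega_e^+)^2=0$. The circuit relation $\partial\omega_C=0$ is what must be shown; it lives in degree $(k-1)(n-1)$. The plan is to show simultaneously that $H^\bullet(\Topes_n(\om))$ is free with basis the monomials $\omega_S^+$, $S\subsetneq E$ with $S\neq$ the complement of a fixed element, i.e.\ the nbc basis, and that $\partial\omega_C^+=0$. Consider the long exact sequence of Corollary~\ref{coro:deletion-contraction-sequence} for $e=k$. The restriction $\res^*\colon H^\bullet(\Topes_n(\U_{k-1,k-1}))\to H^\bullet(\Topes_n(\om))$ sends $\omega_S\mapsto\omega_S^+$, by Lemma~\ref{lem:induced-cohomology-topes}(i). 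Lemma~\ref{lem:connecting-gysin} shows $\partial_k(\omega_k^+\omega_S^+)=\omega_S^+$. By induction $H^\bullet(\Topes_n(\om/k))\cong\OS_n(\om/k)$ is spanned by monomials $\omega_S^+$ with $S\subseteq E\setminus k$, so $\partial_k$ is surjective. The long exact sequence therefore splits into short exact sequences
\[0\to H^\bullet(\Topes_n(\om\setminus k))\to H^\bullet(\Topes_n(\om))\to H^{\bullet-(n-1)}(\Topes_n(\om/k))\to 0.\]
Ranks then match those of $\OS_n(\om)$, which is free of rank $2^k-2^{0}\cdot1$ counted by the same recursion. So $H^\bullet$ is free, and it is generated by the $\omega_e^+$, since the middle term is spanned by the images of $\res^*$ and the lifts $\omega_k^+\omega_S^+$.

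The main obstacle is the circuit relation itself. Consider $R=\partial\omega_C^+$ evaluated via $\omega^+$ classes. Apply $\partial_k$: the terms containing $\omega_k^+$ map, by Lemma~\ref{lem:connecting-gysin}, to $\pm\partial\omega_{C|_{E\setminus k}}^+$, which vanishes by induction. The term without $k$ is $\pm\omega^+_{E\setminus k}$, and $\partial_k$ kills it. Hence $R$ lies in the image of $\res^*$. We still need $R=0$ there, so we need a second restriction. The plan is to compare along a different deletion $\om\setminus j$, $j\neq k$, since $R$ is also in the image of $\res_j^*$ by symmetry. Alternatively, and this is what we would try first, pull back along the canonical weak maps from Example~\ref{ex:canonical-weak-map}. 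Precomposing with the weak map $\U_{k-1,k}$-orientation $\to$ a rank-$(k-2)$ degeneration is not available, so instead we use degree considerations: in degree $(k-1)(n-1)$, $H^\bullet(\Topes_n(\om\setminus k))$ is one-dimensional, spanned by $\omega_{E\setminus k}$. It therefore suffices to pair $R$ with one explicit top chain. For this we would take the fundamental class of the torus $\Topes_n(\U_{1,1})^{k-1}$ pushed forward along the weak map $\om\to\U_{k-1,k-1}$ that forgets $k$, and compute that the coefficient is $\pm1\mp1=0$, fixing the signs via Lemma~\ref{lem:induced-cohomology-topes}(i).

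Finally, compatibility with the copresheaf structure follows on generators from Lemma~\ref{lem:induced-cohomology-topes}(i) and Proposition~\ref{prop:presheaf-Orlik-Solomon}.
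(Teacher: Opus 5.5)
Up to the point $R=\partial\omega_C^+\in\operatorname{im}\res^*=\Z\,\omega^+_{E\setminus k}$, your argument is the paper's. That includes surjectivity of $\partial_k$, the splitting into short exact sequences, and the vanishing of $\partial_k R$ by induction.

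\textbf{The gap.} The gap is exactly the step you flag as the main obstacle: you never prove that $\alpha=0$ in $R=\alpha\,\omega^+_{E\setminus k}$. Without this, the map $\omega_e\mapsto\omega_e^+$ is not shown to be well defined on $\OS_n(\om)$, so the lemma is not proved. Neither of your two sketches closes it as written.
\begin{itemize}
\item Knowing that $R$ also lies in $\Z\,\omega^+_{E\setminus j}$ gives nothing until you show that $\omega^+_{E\setminus k}$ and $\omega^+_{E\setminus j}$ are independent.
\item The pairing argument uses a map that does not exist as described. A signed weak map $\om\to\U_{k-1,k-1}$ must be defined on $k$. ``Forgetting $k$'' is the deletion $\om\setminus k\to\om$, which induces $\res\colon\Topes_n(\om)\to\Topes_n(\om\setminus k)$, so it cannot push the torus fundamental class into $\Topes_n(\om)$. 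You would also need the resulting class to pair nontrivially with $\omega^+_{E\setminus k}$, and this is not addressed.
\end{itemize}

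\textbf{The paper's missing idea.} Apply a second connecting map. For $j\neq k$, $\partial_j R=0$ by the same computation you did for $\partial_k$, using the induction hypothesis for $\om/j$. On the other hand, Lemma~\ref{lem:connecting-gysin} gives $\partial_j(\alpha\,\omega^+_{E\setminus k})=\pm\alpha\,\omega^+_{E\setminus\{j,k\}}$. This is a basis monomial of infinite order in $H^\bullet(\Topes_n(\om/j))\cong\OS_n(\U_{k-2,k-1})$, so $\alpha=0$. Your first suggestion becomes this proof once you add this step.

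\textbf{Repairing the pairing idea.} It can also be turned into a genuinely different finish that uses the presheaf structure instead of a second connecting map.
\begin{itemize}
\item Take $\phi=(f_\phi,\epsilon_\phi)\colon\om\to\U_{k-1,k-1}$, with ground set $E\setminus k$, where $f_\phi(k)=j$, $\epsilon_\phi(k)=-$, and $f_\phi$ is the identity with sign $+$ on $E\setminus k$.
\item Pulled-back sign vectors are nonconstant on $\{j,k\}$, hence never equal to $\pm C=\pm(+,\ldots,+)$, so $\phi$ is a weak map.
\item By Lemma~\ref{lem:induced-cohomology-topes}(i), $\phi_*R$ is the image of $\partial\omega_C$ under $\OS_n(\phi)$, which vanishes by the computation in Proposition~\ref{prop:presheaf-Orlik-Solomon}.
\item Meanwhile $\phi_*\omega^+_{E\setminus k}=\omega^+_{E\setminus k}$ generates the top cohomology of the torus, so again $\alpha=0$.
\end{itemize}

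\textbf{Minor slip.} The rank of $\OS_n(\om)$ is $2^k-2$ (all proper subsets of $E$ except the broken circuit), not $2^k-1$. The recursion $2^{k-1}+(2^{k-1}-2)$ gives the correct value.
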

\begin{proof}
We proceed by induction on $k$. The base case $k =2$ is given by Lemma~\ref{lem:induced-cohomology-topes}. After reorientation, we may assume that $C=(+,\ldots,+)$ is a signed circuit, and we write $\U_{k-1,k}$ for this oriented matroid. Its algebra has the presentation
\[
\OS_n^\bullet(\U_{k-1,k})
=
\frac{\Z[\omega_1,\ldots,\omega_k]}
{\left\langle
\sum_{j=1}^k(-1)^{j(n-1)}
\omega_1\cdots\widehat{\omega_j}\cdots\omega_k
\right\rangle
+\langle\omega_j^2\mid 1\leqslant j\leqslant k\rangle}.
\] For the induction step, let $k\geqslant 3$ and choose an element $e\in E$. Then
\[
\U_{k-1,k}/e\cong\U_{k-2,k-1},
\qquad
\U_{k-1,k}\setminus e\cong\U_{k-1,k-1}.
\]
By induction, the cohomology of $\Topes_n(\U_{k-1,k}/e)$ is generated by the classes $\omega_f^+$, for $f\in E\setminus\{e\}$, in degree $n-1$. The generator pullback rule therefore shows that
\[
\mu_e^*\colon H^\bullet(\Topes_n(\U_{k-1,k}))
\longrightarrow
H^\bullet(\Topes_n(\U_{k-2,k-1}))
\otimes H^\bullet(\Topes_n(\U_{1,1}))
\]
is surjective. By Lemma~\ref{lem:connecting-gysin}, so is $\partial_e$. Thus the deletion-contraction long exact sequence of Corollary~\ref{coro:deletion-contraction-sequence} breaks into short exact sequences
\[
0\longrightarrow
\OS_n^q(\U_{k-1,k-1})
\xrightarrow{\ \iota_e\ }
H^q(\Topes_n(\U_{k-1,k}))
\xrightarrow{\ \partial_e\ }
\OS_n^{q-(n-1)}(\U_{k-2,k-1})
\longrightarrow 0.
\]
Here $\iota_e$ is induced by the deletion map, using the already established presentation for the free matroid.

It remains to verify the circuit relation
\[
\partial\omega_C^+
:=
\sum_{j=1}^k(-1)^{j(n-1)}
\omega_{e_1}^+\cdots\widehat{\omega_{e_j}^+}\cdots\omega_{e_k}^+
=0.
\]
By \eqref{eq:connecting-monomials} and the induction hypothesis,
\[
\partial_e(\partial\omega_C^+)
=\pm\partial\omega_{C\setminus e}^+
=0
\quad\text{in }\OS_n^\bullet(\U_{k-2,k-1}).
\]
Exactness therefore gives
\[
\partial\omega_C^+
\in
\iota_e\bigl(\OS_n^{(k-1)(n-1)}(\U_{k-1,k-1})\bigr).
\]
The group on the right is generated by $\omega_{E\setminus\{e\}}^+$, so there is an integer $\alpha_e$ such that
\[
\partial\omega_C^+=\alpha_e\omega_{E\setminus\{e\}}^+.
\]
The same argument gives $\partial_f(\partial\omega_C^+)=0$ for every $f\in E\setminus\{e\}$. Applying $\partial_f$ to the last identity yields
\[
0=\pm\alpha_e\omega_{E\setminus\{e,f\}}^+
\quad\text{in }H^\bullet(\Topes_n(\U_{k-1,k}/f))
\cong\OS_n^\bullet(\U_{k-2,k-1}).
\]
This monomial has infinite order, so $\alpha_e=0$.

The assignments $\omega_f\mapsto\omega_f^+$, for $f\in E$, thus define a homomorphism
\[
\OS_n^\bullet(\U_{k-1,k})
\longrightarrow H^\bullet(\Topes_n(\U_{k-1,k})).
\]
Comparing the algebraic and cohomological deletion-contraction short exact sequences shows that this homomorphism is an isomorphism.
\end{proof}

\begin{lemma}\label{lem:presentation-map}
For every oriented matroid $\om$, the assignment $\omega_e\mapsto\omega_e^+$ defines a homomorphism of graded algebras
\[
\Phi_{\om}\colon\OS_n^\bullet(\om)
\longrightarrow H^\bullet(\Topes_n(\om)).
\]
\end{lemma}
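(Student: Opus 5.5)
We must show that $\omega_e\mapsto\omega_e^+$ is compatible with the defining relations of $\OS_n^\bullet(\om)$. First, the free graded-commutative algebra $\Z[\omega_e\mid e\in E]$ (generators in degree $n-1$) maps to $H^\bullet(\Topes_n(\om))$, and this map is unique, since cohomology is graded commutative with the cup product. Second, the square-zero relations hold: $\omega_e^+=\iota_e^*\omega^+$ and $(\omega^+)^2=0$ in $H^{2(n-1)}(\Topes_n(\U_{1,1}))\cong H^{2(n-1)}(S^{n-1})=0$. Since $\iota_e^*$ is a ring map, $(\omega_e^+)^2=0$. It remains to show that $\partial\omega_C^+=0$ for every signed circuit $C$ of $\om$.

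\textbf{Reduction to the circuit matroid.} Let $C$ be a signed circuit with support $\sC=\{e_1,\ldots,e_k\}$. The restriction $\om|_{\sC}$ has underlying matroid $\U_{k-1,k}$, and $C|_{\sC}$ is one of its signed circuits. The inclusion $j\colon\om|_{\sC}\to\om$ is a submatroid map, hence a morphism in $\OM$ with constant sign $+$. By Lemma~\ref{lem:induced-cohomology-topes}(i), $j_*\omega^+_{e}=\omega^+_{e}$ for $e\in\sC$. The map $j_*$ is a ring homomorphism, being pullback along the poset map $\Topes_n(\om)\to\Topes_n(\om|_{\sC})$ read covariantly in $\OM$. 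It therefore sends the element $\partial\omega_C^+$ computed in $H^\bullet(\Topes_n(\om|_{\sC}))$ to the same expression in $H^\bullet(\Topes_n(\om))$. So it suffices to prove $\partial\omega_{C}^+=0$ in $H^\bullet(\Topes_n(\om|_{\sC}))$.

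\textbf{The circuit case.} Suppose $k\geq 2$. We first reorient $\om|_{\sC}$ by an isomorphism $(\id,\epsilon_S)$ so that $C$ becomes $(+,\ldots,+)$. Lemma~\ref{lem:induced-cohomology-topes}(i) gives $\omega^+_e\mapsto\epsilon(e)^n\omega^+_e$. One then checks that the relation $\partial\omega_C$ is carried to $\pm\partial\omega_{(+,\ldots,+)}$. For even $n$ the signs are trivial. For odd $n$, the coefficients $C(e)$ together with the factors $\epsilon(e)$ combine exactly as in the computation in Proposition~\ref{prop:presheaf-Orlik-Solomon}. In the reoriented matroid the relation $\partial\omega^+_C=0$ was established inside the proof of Lemma~\ref{lem:presentation-circuit-matroids}. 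This covers $k\geq 3$ by the induction there. For $k=2$ it comes from the parallel/antiparallel identities of Lemma~\ref{lem:induced-cohomology-topes}(ii), which give $\omega^+_{e_1}=\pm\omega^+_{e_2}$ with exactly the sign that the degree-$(n-1)$ relation $\partial\omega_C$ requires. Transporting the relation back through the reorientation isomorphism finishes the argument.

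\textbf{Main obstacle.} The hardest step is the sign bookkeeping. We must check that, under reorientation and in both parities of $n$, the relation $\partial\omega_C$ of Definition~\ref{def:generalized-OS} corresponds to the relation $\sum_j(-1)^{j(n-1)}\prod_{i\neq j}\omega_{e_i}$ used in Lemma~\ref{lem:presentation-circuit-matroids}, up to a global unit. I would handle this by reusing the argument of Proposition~\ref{prop:presheaf-Orlik-Solomon}. That argument applies verbatim to the isomorphism $(\id,\epsilon_S)$, because $\OS_n$ and $H^\bullet(\Topes_n)$ transform identically on generators, namely by $\epsilon^n$. Hence the relation ideal is preserved.
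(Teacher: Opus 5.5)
Your proposal is correct and follows the paper's proof. The square-zero relations come from $\omega^+$ being pulled back from $H^{n-1}(S^{n-1})$, and each circuit relation is checked in $H^\bullet(\Topes_n(\om|_{\sC}))$ via Lemma~\ref{lem:presentation-circuit-matroids}, then transported to $\om$ along the inclusion $\om|_{\sC}\to\om$ using Lemma~\ref{lem:induced-cohomology-topes}. Your explicit reorientation and sign bookkeeping is harmless but not needed, since Lemma~\ref{lem:presentation-circuit-matroids} is already stated for every orientation with underlying matroid $\U_{k-1,k}$ (its proof performs that reorientation itself).
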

\begin{proof}
The square-zero relations hold because each $\omega_e^+$ is pulled back from the top cohomology class of a sphere. For a signed circuit $C$ with $k$ elements in its support, the restriction $\om|_{\sC}$ has underlying matroid $\U_{k-1,k}$. Its circuit relation holds by Lemma~\ref{lem:presentation-circuit-matroids}, and the same relation holds in $H^\bullet(\Topes_n(\om))$ by Lemma~\ref{lem:induced-cohomology-topes} as well.
\end{proof}

\begin{proof}[Proof of Theorem~\ref{thm::pres_H(Topes)}]
We argue by induction on the cardinality $k=|E(\om)|$, using deletion and contraction. Notice that if $e$ and $e'$ are two parallel or antiparallel elements of $\om$, then removing $e$ does not change either side of~\eqref{eq:comparison-cohomology-OS}. Hence, we can assume that there exists $e\in E(\om)$ such that $\{e\}$ is a flat. By induction,
\[
H^\bullet(\Topes_n(\om/e))\cong\OS_n^\bullet(\om/e)
\]
is generated by the classes $\omega_f^+$. As in the proof of Lemma~\ref{lem:presentation-circuit-matroids}, the map $\mu_e^*$ is surjective, and Lemma~\ref{lem:connecting-gysin} implies that $\partial_e$ is surjective. The cohomological deletion-contraction sequence therefore breaks into short exact sequences. Comparing it with the algebraic deletion-contraction sequence gives the commutative diagram
\[
\begin{tikzcd}[column sep=small]
0 \arrow[r]
  & H^q(\Topes_n(\om\setminus e)) \arrow[r]
  & H^q(\Topes_n(\om)) \arrow[r,"\partial_e"]
  & H^{q-(n-1)}(\Topes_n(\om/e)) \arrow[r]
  & 0\\
0 \arrow[r]
  & \OS_n^q(\om\setminus e) \arrow[r] \arrow[u,"\cong"]
  & \OS_n^q(\om) \arrow[r] \arrow[u,"\Phi_{\om}"]
  & \OS_n^{q-(n-1)}(\om/e) \arrow[r] \arrow[u,"\cong"]
  & 0.
\end{tikzcd}
\]
The outer vertical maps are isomorphisms by induction, so the middle map is also an isomorphism. \\

Compatibility with the copresheaf structure follows from the definition of $\Phi_{\om}$ on generators described in Lemma~\ref{lem:induced-cohomology-topes}. 

\end{proof}
\subsection{Applications}\label{sec:applications-pushout}

In this section we give three applications of the results of the previous sections. The first application is a generalization of a result of Schaper~\cite[Corollary 3]{Schaper1997}.
\begin{corollary}[Stable decomposition]
For every oriented matroid $\om$, we have the wedge decomposition
\[
\Sigma|\Topes_n(\om)|\simeq \bigvee^{\rk\om}_{i=1} \bigvee_{j=1}^{b_i} S^{i(n-1)+1},
\]
where $b_i = \dim H^{(n-1)i}(\Topes_n(\om))$ for all $i\geqslant 1$.
\end{corollary}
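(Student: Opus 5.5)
Our plan is to induct on $|E(\om)|$ using the deletion--contraction homotopy pushout of Theorem~\ref{thm:deletion-contraction-pushout}, mirroring Schaper's argument for complexified arrangements. As in the proof of Theorem~\ref{thm::pres_H(Topes)}, deleting one of two parallel or antiparallel elements does not change the homotopy type of $|\Topes_n(\om)|$, since $\Topes_n(\om)\simeq\U_n$ and the cohomology is unchanged. So we may assume there is an element $e$ with $\{e\}$ a flat. The base cases are the empty matroid, a point, and free matroids, where $|\Topes_n(\U_{k,k})|\simeq(S^{n-1})^k$. For these the stable splitting of a product of spheres, $\Sigma(X\times Y)\simeq\Sigma X\vee\Sigma Y\vee\Sigma(X\wedge Y)$, gives the wedge of $\binom{k}{i}$ spheres $S^{i(n-1)+1}$. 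We read the wedge as including the $i=0$ summand, i.e.\ we really prove $\Sigma|\Topes_n(\om)|_+\simeq S^1\vee\bigvee_{i}\bigvee_j S^{i(n-1)+1}$ for the unreduced suspension, and then drop the basepoint piece.

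For the inductive step, the natural first attempt is to use the cofiber sequence of Corollary~\ref{cor:cofiber}, but with the roles arranged so that $|\Topes_n(\om)|$ is the cofiber. The homotopy pushout shows that $|\Topes_n(\om\setminus e)|$ is obtained from $|\Topes_n(\om)|$ by attaching $|\Topes_n(\om/e)|\times D^n$ along $|\Topes_n(\om/e)|\times S^{n-1}$. Hence we get a cofiber sequence
\[
|\Topes_n(\om)|\longrightarrow|\Topes_n(\om\setminus e)|\longrightarrow\Sigma^n|\Topes_n(\om/e)|_+ .
\]
Applying one suspension, the induction hypothesis makes both $\Sigma|\Topes_n(\om\setminus e)|$ and $\Sigma^{n+1}|\Topes_n(\om/e)|_+$ wedges of spheres. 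By the short exact sequences established in the proof of Theorem~\ref{thm::pres_H(Topes)}, the cofiber sequence is split surjective in cohomology: the restriction $\res^*$ is injective and $\partial_e$ is surjective, and all groups are free abelian. The plan is to show that the cofiber sequence actually splits stably. Concretely, we want $\Sigma|\Topes_n(\om)|\simeq\Sigma|\Topes_n(\om\setminus e)|\vee\Sigma^{n}|\Topes_n(\om/e)|_+$, with suitable degree bookkeeping: the connecting map raises degree by $n$ on suspensions, which matches the $(n-1)$ shift plus one.

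The main obstacle is upgrading algebraic splitting to a geometric splitting. Our first approach is to produce a map $\Sigma|\Topes_n(\om)|\to\Sigma|\Topes_n(\om\setminus e)|$ inducing the splitting in cohomology. Since the target is a wedge of spheres $S^{m}$, a map from it into anything is determined by homotopy classes in $\pi_m$. Instead we map out of the wedge. For each sphere summand of $\Sigma|\Topes_n(\om\setminus e)|$ we take its composite into $\Sigma|\Topes_n(\om)|$ via a lift. Such lifts exist because the suspended cofiber sequence is a Puppe sequence and the boundary map $\Sigma^{n}|\Topes_n(\om/e)|_+\to\Sigma|\Topes_n(\om)|$ composed appropriately is trivial in cohomology between wedges of spheres. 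A map between wedges of spheres inducing zero in integral cohomology is nullhomotopic after suspension, which is where the extra $\Sigma$ is used. Combining the resulting maps into a map from a wedge of spheres to $\Sigma|\Topes_n(\om)|$, we check that it is a cohomology isomorphism by Theorem~\ref{thm::pres_H(Topes)}. Since all spaces are simply connected after suspension when $n\ge2$, Whitehead's theorem finishes. If the lifting step proves delicate, a fallback is to invoke the known fact that a suspension with free integral cohomology concentrated in degrees that are multiples of $n-1$ plus one, together with a coalgebra structure arising from the cup product, splits. This uses that products vanish in a suspension; the Hurewicz map for each generator is realized by restriction to tori $|\Topes_n(\U_{k,k})|$ via the maps $\mu$ induced by independent sets. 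Finally, $b_i=0$ for $i>\rk\om$ since $\OS_n$ vanishes beyond the rank.
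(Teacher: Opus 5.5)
There is a genuine gap in the inductive step. Your argument needs to lift each sphere summand of $\Sigma|\Topes_n(\om\setminus e)|$ along $\Sigma|\Topes_n(\om)|\to\Sigma|\Topes_n(\om\setminus e)|$. You reduce this to the vanishing of the composite with $\Sigma q\colon\Sigma|\Topes_n(\om\setminus e)|\to\Sigma^{n+1}|\Topes_n(\om/e)|_+$. You then justify that vanishing by the principle that a map between wedges of spheres inducing zero in integral cohomology is nullhomotopic after suspension.

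That principle is false: $\Sigma^k\eta\colon S^{k+3}\to S^{k+2}$ is zero in cohomology and essential for every $k\geq0$. The degrees do not exclude such maps here. The source spheres are $S^{i(n-1)+1}$ and the target spheres are $S^{(l+1)(n-1)+2}$, so components $S^a\to S^b$ with $a-b=m(n-1)-1$, $m\geq1$, can occur. The case $a-b=1$, where $\eta$ lives, already occurs for $n=2$ and $n=3$.

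Moreover, a cofiber sequence is not exact for $[S^a,-]$ outside the Blakers--Massey range. So even a null composite would not by itself produce the lifts you want.

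The fallback does not repair this. Free cohomology in degrees $i(n-1)+1$ does not force a suspension to split: for $n=3$, $\Sigma\mathbb{C}P^2=S^3\cup_{\Sigma\eta}e^5$ is not a wedge. So some geometric map realising the classes is indispensable. Your remark about tori could supply maps from spheres, but you would have to construct them and prove that their fundamental classes form a $\Z$-basis of integral homology. That is the real content, and it is not done.

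The missing idea is to build the comparison map \emph{from} $\Sigma|\Topes_n(\om)|$ \emph{to} a wedge of spheres, which the presheaf structure gives for free. The canonical weak map $\U_E\to\om$ yields $j\colon|\Topes_n(\om)|\to|\Topes_n(\U_E)|\simeq(S^{n-1})^{E}$. After one suspension the target splits as $\bigvee_{\varnothing\neq I\subseteq E}S^{(n-1)|I|+1}$. Collapsing the spheres not indexed by NBC sets gives a map $\Sigma|\Topes_n(\om)|\to\bigvee_{I\ \mathrm{NBC}}S^{(n-1)|I|+1}$.

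By Theorem~\ref{thm::pres_H(Topes)} this map pulls the sphere classes back to the NBC monomials $\omega^+_I$, which form a $\Z$-basis of $H^\bullet(\Topes_n(\om))$. It is therefore an integral cohomology isomorphism between simply connected spaces, hence an equivalence by Whitehead's theorem. This is the paper's argument, and it needs no induction on $|E(\om)|$ and no obstruction theory. Your inductive scheme could only be made to work by carrying along exactly these explicit maps, at which point the induction becomes redundant.
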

\begin{proof}
Write $E= E(\om).$ The canonical weak map (Example~\ref{ex:canonical-weak-map}) induces a map
\[
    j\colon|\Topes_n(\om)| \longrightarrow|\Topes_n(\U_{E})| \simeq(S^{n-1})^{{E}}.
\]
This induces 
\[
    \Sigma j\colon \Sigma|\Topes_n(\om)| \longrightarrow \Sigma|\Topes_n(\U_{E})| \simeq \bigvee_{\emptyset \neq I \subseteq E}(S^{(n-1)|I|+ 1}).
\]
Recall that for all $n\geqslant 2$, the generalized Orlik-Solomon algebra $\OS_n^{\bullet}(\om)$ has a $\Z$-basis consisting of all the monomials $\omega_I$ where $I$ is an NBC basis (see~\cite[Theorem~3.43]{OrlikTerao1992} for the case $n=2$; the proof works for any $n$). We can compose $\Sigma j$ with the map that collapses of every sphere indexed by a set $I$ which is not an NBC basis, to obtain a map 
\[
    \Sigma |\Topes_n(\om)|\rightarrow \bigvee_{\substack{\emptyset \neq I \subseteq E \\ I \textrm{ NBC basis}}}(S^{(n-1)|I|+ 1}).
\]
By Theorem~\ref{thm::pres_H(Topes)}, this map is an isomorphism in integral cohomology, so it is an isomorphism in integral homology. Moreover, the two spaces are simply-connected, so by the Whitehead theorem~\cite[Corollary 4.33]{Hatcher2002}, this map is a homotopy equivalence. 
\end{proof}

The next consequence concerns fundamental groups and generalizes known results for hyperplane arrangement complements (see, for instance, \cite[Proposition 5.6.1]{ShimadaZariskiVanKampen}). 
\begin{corollary}[Fundamental groups]\label{prop::fund_grps}
Let $\om$ be an oriented matroid.
\begin{itemize} 
    \item[(i)] For $n=2$, if $e\in E(\om)$ is such that $\{e\}$ is a flat, then we have a short exact sequence of groups
\[
1\to \langle\langle t_e\rangle\rangle\to \pi_1(\Topes_2(\om))\to \pi_1(\Topes_n(\om\setminus e))\to 1,
\] where $\langle\langle t_e\rangle\rangle$ is a normal closure and $t_e$ comes from $t_e=\mu_e(X_0,-)\colon S^1\cong \Topes_n(\om|_e)\to \Topes_n(\om)$.
\item[(ii)] For $n\geq 2$, the poset $\Topes_n(\om)$ is $(n-2)$-connected
\end{itemize}
\end{corollary}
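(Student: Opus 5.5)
Both parts will come from the deletion--contraction homotopy pushout of Theorem~\ref{thm:deletion-contraction-pushout}, by induction on $|E(\om)|$. As in the proof of Theorem~\ref{thm::pres_H(Topes)}, parallel and antiparallel copies can be deleted without changing the homotopy type: both sides are modelled by the same arrangement, or we can argue directly via Theorem~\ref{thm::equivalence-Tope-Topeext}. We may therefore assume that $\{e\}$ is a flat. The base cases are the empty matroid, where $\Topes_n$ is a point, and free matroids, where $|\Topes_n(\U_{k,k})|\simeq (S^{n-1})^k$. The latter is $(n-2)$-connected, and for $n=2$ it has fundamental group $\Z^k$.

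For (ii), write $X=|\Topes_n(\om/e)|$, $Y=|\Topes_n(\om)|$ and $Z=|\Topes_n(\om\setminus e)|$, so that $Z\simeq X\cup_{X\times S^{n-1}} Y$. For $n=2$ the claim is only connectedness. I will prove it separately by induction: the pushout is connected once $X$ and $Y$ are, since $X\times S^1\to X$ is surjective on components. That alone is circular, so instead I will show directly that any two weighted topes in $\Topes_n(\om)$ are joined by a zigzag. Every $(T,w)$ lies above $(T,\mathbf 1)$ and below $(T,\mathbf n)$. Moreover, $(T,\mathbf 1)\le (T',\mathbf n)$ for all topes $T,T'$, because the weight strictly increases at every coordinate. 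Hence any two elements are connected through the comparison $(T,\mathbf 1)\le(T',\mathbf n)$, for every $n\ge 2$.

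For higher connectivity with $n\ge3$, I use van Kampen on the pushout. We have $\pi_1(X\times S^{n-1})=\pi_1(X)$, so by induction $\pi_1(Z)$ is the pushout of $\pi_1(Y)\leftarrow \pi_1(X)\to\pi_1(X)$, which is $\pi_1(Y)$ (with $X$ connected). This direction runs the wrong way: deletion gives $Z$, not $Y$. So I will rather argue homologically. By Theorem~\ref{thm::pres_H(Topes)}, $\tilde H_i(Y;\Z)=0$ for $0<i<n-1$, since $\OS_n$ is generated in degree $n-1$ and $\OS_n$ is torsion-free, so universal coefficients give homology as well. It then remains to show $\pi_1(Y)=1$ for $n\ge3$ and apply Hurewicz. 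For $\pi_1$, I use the covering of the Salvetti-type model $\Prop_n(\om)\simeq Y$ (Corollary~\ref{coro:prop-tope-equivalence} and Proposition~\ref{prop:prop-iso-sal}). Its $2$-skeleton in the CW structure of Proposition~\ref{prop:proper-topes-is-CW} has cells of dimension equal to the rank in $\Prop_n$. Elements $(X,w)$ with $w$ taking values only in $\{1,2\}$ produce cells of dimension at least $\#\{w=2\}\cdot$(something); more concretely, I will show that in $\Prop_n(\om)$ for $n\ge3$ all $1$-cells are homotopic rel endpoints. Any edge $(T,\mathbf 1\text{-type})<(T',\cdot)$ can be pushed up through the weight-$3$ level, where a cell of dimension $\ge2$ bounds it. The main obstacle is making this cell-by-cell argument clean. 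The fallback is to take the homotopy pushout with $n\ge3$: $X\times S^{n-1}\to X$ is an isomorphism on $\pi_1$, so van Kampen gives $\pi_1(Z)\cong \pi_1(Y)$. Then induct by adding elements: free matroids have trivial $\pi_1$ and deletion preserves it. This realizes $\om$ by reversing the deletion step, using that every simple $\om$ is reached from $\om\setminus e$.

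For (i), with $n=2$, the pushout with $S^{n-1}=S^1$ and van Kampen give $\pi_1(Z)=\pi_1(Y)\ast_{\pi_1(X)\times\Z}\pi_1(X)$. This is $\pi_1(Y)$ modulo the normal closure of the image of the $\Z$ factor, which is exactly $t_e=\mu_e(X_0,-)$. Basepoints are taken at $\mu_e(X_0,(+,1))$, and connectedness comes from the zigzag argument above. Surjectivity of $\pi_1(Y)\to\pi_1(Z)$ and identification of the kernel with $\langle\langle t_e\rangle\rangle$ then follow formally from the presentation of an amalgamated pushout along a surjection $\pi_1(X)\times\Z\to\pi_1(X)$.
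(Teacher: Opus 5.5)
Your fallback argument is the paper's proof: van Kampen applied to the deletion--contraction homotopy pushout gives $\pi_1(\Topes_n(\om\setminus e))\cong\pi_1(\Topes_n(\om))$ for $n\geq 3$ and the quotient by $\langle\langle t_e\rangle\rangle$ for $n=2$; induction on $|E(\om)|$ then kills $\pi_1$, and Hurewicz together with the torsion-free $\OS_n$ computation gives $(n-2)$-connectedness, while your zigzag $(T,w)\geq(T,\mathbf 1)\leq(T',\mathbf n)\geq(T',w')$ supplies the path-connectedness that van Kampen needs and that the paper leaves implicit. Your worry that this isomorphism ``runs the wrong way'' is unfounded: it is an isomorphism and $\om\setminus e$ is the smaller matroid, so the induction applies directly and the unfinished Salvetti-cell detour can be dropped.
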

\begin{proof}
Recall that, by Van Kampen theorem, the fundamental group of a homotopy pushout is a pushout of fundamental groups. Hence we have
\[
\pi_1(\Topes_n(\om\setminus e))\cong \mathrm{colim}(\pi_1(\Topes_n(\om/e))\leftarrow \pi_1(\Topes_n(\om/e))\times\pi_1(\Topes_n(\om|_e))\rightarrow \pi_1(\Topes_n(\om)))
\] Recall that $\Topes_n(\om|_e)\cong S^{d-1}$, which is simply-connected for $n>2$; hence $\pi_1(\Topes_n(\om\setminus e))\cong \pi_1(\Topes_n(\om))$. By induction on the cardinality of $E(\om)$ and using $\Topes_n(\U_{r,r})\cong (S^{n-1})^{r}$ which is simply-connected, we deduce that $\Topes_n(\om)$ is simply-connected. Since $H^{\bullet}(\Topes_n)$ are torsion free and zero for $\bullet<n-1$, the Hurewicz theorem implies that $\Topes_n(\om)$ is $(n-2)$-connected.
\end{proof} 

Finally, we obtain the following generalization of Randell's vanishing result~\cite{randell1997homotopy}.
\begin{corollary}[Vanishing result]
For every oriented matroid $\om$ and every $n\geqslant 2$, the Hurewicz map
\[
\pi_q(|\Topes_n(\om)|)
\longrightarrow H_q(|\Topes_n(\om)|;\Z)
\]
is zero for $q\geqslant n$.
\end{corollary}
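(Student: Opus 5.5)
The plan is to deduce the vanishing of the Hurewicz map from the stable decomposition proved just above, together with the multiplicative structure of $H^\bullet(\Topes_n(\om))\cong\OS_n^\bullet(\om)$ from Theorem~\ref{thm::pres_H(Topes)}. I will use a classical principle that I take as standard. Suppose the cohomology of $X$ in a range of degrees consists of decomposable classes, i.e. it is spanned by cup products of positive-degree classes. Then the dual homology classes cannot be spherical. Indeed, a spherical class $h_*[S^q]$ pairs trivially with every cup product $a\cup b$ with $|a|,|b|>0$, because $h^*(a\cup b)=h^*a\cup h^*b=0$ in $H^\bullet(S^q)$.

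\textbf{Step 1: degrees outside the generator degree.} By Theorem~\ref{thm::pres_H(Topes)}, $H^\bullet(\Topes_n(\om);\Z)$ is free abelian, with NBC monomial basis $\omega_I$, and is concentrated in degrees $i(n-1)$. In degree $q=i(n-1)$ with $i\geq2$, every basis element $\omega_I$ with $|I|=i\geq2$ is a product of degree $n-1$ generators, hence decomposable. Since the groups are free, $H_q$ is dual to $H^q$, so every homology class is detected by pairing with cohomology. Let $x=h_*[S^q]$ be a spherical class. For every $I$ we get $\langle\omega_I,x\rangle=\langle h^*\omega_I,[S^q]\rangle=0$, because $h^*\omega_I$ is a product of at least two positive-degree classes on $S^q$. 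Therefore $x=0$. In all other degrees $q\geq n$ with $q\neq i(n-1)$, the homology $H_q$ vanishes, so there is nothing to check.

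\textbf{Step 2: the remaining degree.} The only degree that is not decomposable is $q=n-1$, and it lies outside the range $q\geq n$ except when $n-1\geq n$, which never happens. There is one subtlety for $n=2$: degree $q=2=2(n-1)$ is covered by Step 1, and the stated range begins at $q\geq n=2$, which is exactly where decomposability starts. So the range $q\geq n$ is precisely the range where the argument applies.

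\textbf{Main obstacle.} The key step is the passage from cohomology to homology. It relies on torsion-freeness: the universal coefficient theorem identifies $H^q$ with $\Hom(H_q,\Z)$, so a class killed by all cohomology classes is zero. I expect no difficulty here, since the NBC basis gives free groups in all degrees. For $n=2$ one should also note that $\pi_1$ is not abelian. This does not matter, because the argument only evaluates cohomology classes on maps $S^q\to|\Topes_n(\om)|$ with $q\geq2$.
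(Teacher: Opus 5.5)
Your proof is correct, but it follows a different route from the paper's.

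\textbf{The paper's route.} It uses the canonical weak map $\U_{E}\to\om$ to build a map $j\colon|\Topes_n(\om)|\to(S^{n-1})^{E}$. By Theorem~\ref{thm::pres_H(Topes)} and its copresheaf compatibility, $j^*$ is surjective. Torsion-freeness and the universal coefficient theorem then make $j_*$ injective on integral homology. Finally, naturality transports the vanishing of the Hurewicz map of $(S^{n-1})^{E}$ in degrees $q\geq n$.

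\textbf{Your route.} You argue intrinsically. For $q\geq n$, the group $H^q$ is either zero or spanned by products of at least two degree-$(n-1)$ classes, and such products pull back to zero on $S^q$. Since all cohomology groups are free, homology is free too: any torsion in $H_q$ would show up in $H^{q+1}$. So pairing with $H^q$ detects classes in $H_q$.

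\textbf{What each buys.} Your version uses only the ring structure, namely generation in degree $n-1$ plus freeness. It does not need the naturality of $\omega_e\mapsto\omega_e^+$ under weak maps. It therefore applies verbatim to any finite complex whose integral cohomology is torsion-free and generated in a single positive degree. The paper's version instead obtains the result through the presheaf structure, via an explicit map to a space where the statement is evident. The two arguments are close cousins: surjectivity of $j^*$ is exactly the generation statement you use, and the vanishing for $(S^{n-1})^{E}$ could itself be proved by your decomposability argument.

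\textbf{Two cosmetic points.} Your opening sentence announces a deduction from the stable decomposition, which you never use, so it can be dropped. Also, the NBC basis is needed only for freeness; generation by the $\omega_e$ already follows from the presentation.
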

\begin{proof}
Denote $E = E(\om)$. The canonical weak map (Example~\ref{ex:canonical-weak-map}) induces a map
\[
j\colon|\Topes_n(\om)|
\longrightarrow|\Topes_n(\U_{E})|
\simeq(S^{n-1})^{{E}}.
\]
By Theorem~\ref{thm::pres_H(Topes)}, the induced cohomology map is a surjection
\[
\OS_n^\bullet(\U_{E})\twoheadrightarrow\OS_n^\bullet(\om).
\]
The generalized Orlik--Solomon algebras are free abelian, so the integral homology groups are also free abelian. The universal coefficient theorem then shows that $j_*$ is injective.

For $q\geqslant n$, the Hurewicz map of $(S^{n-1})^{E}$ vanishes: its homotopy groups are products of those of the sphere factors, and the Hurewicz map of each factor lands in $H_q(S^{n-1};\Z)=0$. Naturality of the Hurewicz map and injectivity of $j_*$ give the result.
\end{proof}

\section{Deligne groupoids and Fundamental groups}\label{sec:deligne}

Thanks to Proposition~\ref{prop::fund_grps}, higher complement arrangements are simply-connected for $n>2$. In the two-dimensional case, for an oriented matroid $\om$, the so-called Deligne groupoid $\Del(\om)$ defined in terms of tope graphs forms a combinatorial model of the fundamental groupoid $\Pi_1(\Sal(\om))$. These groupoids were first introduced by Deligne~\cite{deligne1972immeubles} for simplicial arrangements, and were later
formulated by Paris for arbitrary hyperplane arrangements~\cite{paris2000fundamental}. See~\cite{delucchi2006combinatorial} for a modern approach in the setting of oriented matroids.

We define an $\OM$-presheaf structure on the Deligne groupoids $\Del(\om)$ and show that the resulting presheaf is equivalent to the fundamental groupoid $\OM$-presheaf $\Pi_1(\Topes_2)$. Using this model, we recognize a categorical circle action of $\B\Z\simeq S^1$ on $\Del$ that corresponds to the action of $\Cmplx^*$ on the complements of complex hyperplane arrangements $\U_{\Cmplx}(\A)$ in the realizable case. 
\subsection{Tope graph and Deligne positive categories} 
For an oriented matroid $\om$, its tope graph is an undirected graph $\Gr_{\om}$ on the vertex set $\Topes(\om)$ with an edge between $X$ and $Y$ when the separator $S(X,Y)$ is a rank $1$ flat (when $\om$ is simple, it means $|S(X,Y)|=1$). Geometrically, the corresponding regions share a facet.

A path $\gamma\colon X\to Y$ in $\Gr_{\om}$ is a \textit{geodesic} if it is of minimal lenghth among all paths from $X$ to $Y$. Every geodesic crosses each parallel class in $S(X,Y)$ exactly once; in particular, $l(\gamma)=|S(X,Y)|$ when $\om$ is simple~\cite[Prop~4.2.3]{bjorner1999oriented}. Hence, two geodesics $\gamma\colon X\to Y, \eta\colon Y\to Z$ concatenate to a geodesic precisely when 
\[
S(X,Z)=S(X,Y)\coprod S(Y,Z).
\]

We recall the construction of the positive Deligne category $\Del^+(\om)$. Let $\vec\Gr_{\om}$ be the directed graph obtained by replacing each edge of $\Gr_{\om}$ with two opposite directed arrows. The \textit{positive Deligne category} $\Del^+(\om)$ is the category generated by this directed graph modulo geodesic relations: any two geodesics with the same endpoints are identified.

\begin{figure}[ht]
    \centering
    \begin{tikzpicture}[
  vertex/.style={
    circle, fill=black, inner sep=0pt,
    minimum size=4pt, outer sep=1pt
  },
  directed/.style={
    -{Stealth[length=2.1mm,width=1.5mm]},
    shorten <=1pt, shorten >=1pt
  },
  reverse/.style={directed, draw=black!50, line width=0.55pt},
  gammapath/.style={directed, draw=blue!70!black, line width=1.05pt},
  etapath/.style={directed, draw=red!70!black, line width=1.05pt},
  every label/.style={font=\small, label distance=5pt}, scale=0.7
]
  \foreach \i/\ang/\signs in {
    0/180/{+,+,+,+},
    1/135/{-,+,+,+},
    2/90/{-,-,+,+},
    3/45/{-,-,-,+},
    4/0/{-,-,-,-},
    5/-45/{+,-,-,-},
    6/-90/{+,+,-,-},
    7/-135/{+,+,+,-}
  }{
    \node[vertex, label={\ang:{$(\signs)$}}]
      (v\i) at (\ang:3.5cm) {};
  }
  \foreach \a/\b in {0/1,1/2,2/3,3/4}{
    \draw[reverse]   (v\b) to[bend left=25] (v\a); 
    \draw[gammapath] (v\a) to[bend left=30] (v\b); 
  }
  \foreach \a/\b in {0/7,7/6,6/5,5/4}{
    \draw[reverse] (v\b) to[bend left=30] (v\a); 
    \draw[etapath] (v\a) to[bend left=25] (v\b); 
  }
  \node[text=blue!70!black, font=\Large] at (0, 2.25) {$\gamma$};
  \node[text=red!70!black,  font=\Large] at (0,-2.25) {$\eta$};
  
  \node at (0,0) {$[\gamma]=[\eta]$};
\end{tikzpicture}
    \caption{$\Del^+(\U_{2,4})$ and an example of a relation}
    \label{fig:deligne}
\end{figure}

We denote by $\gamma_{XY}\colon X\to Y$ the common class of geodesics from $X$ to $Y$. Equivalently, these classes generate $\Del^+(\om)$ modulo the relation
\begin{equation}\label{eq::geodesic_rel}
    \gamma_{YZ}\gamma_{XY}=\gamma_{XZ}, \text{ if }S(X,Z)=S(X,Y)\coprod S(Y,Z)
\end{equation}

\begin{prop}
Positive Deligne categories form $\OM$-presheaf in the category $\mathsf{Cat}$ of small categories 
\[
\Del^+\colon\OM^{\op}\to \mathsf{Cat}
\]
\end{prop}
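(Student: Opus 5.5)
We have to define, for each signed weak map $\phi\colon\om\to\N$, a functor $\phi^*\colon\Del^+(\N)\to\Del^+(\om)$, check that these functors compose correctly, and check the monoidal decomposition. On objects the choice is forced: $X\mapsto\phi^*X$, which is a tope of $\om$ by the definition of a signed weak map. On morphisms we use the presentation \eqref{eq::geodesic_rel} by geodesic classes. We set
\[
\phi^*(\gamma_{XY}) := \gamma_{\phi^*X\,\phi^*Y},
\]
the geodesic class in $\Del^+(\om)$ between the pulled-back topes. Nothing is lost by this choice. A tope-graph edge of $\N$ need not pull back to an edge of $\om$, since the separator $S(\phi^*X,\phi^*Y)=f_\phi^{-1}(S(X,Y))$ can be a union of several parallel classes of $\om$. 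Sending the generator to the geodesic class handles both cases uniformly.

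\textbf{Well-definedness.} The only relations are \eqref{eq::geodesic_rel}. Suppose $S(X,Z)=S(X,Y)\sqcup S(Y,Z)$ in $\N$. Applying $f_\phi^{-1}$ and using the identity $S(\phi^*X,\phi^*Y)=f_\phi^{-1}(S(X,Y))$ (already used for $\DF$) gives
\[
S(\phi^*X,\phi^*Z)=S(\phi^*X,\phi^*Y)\sqcup S(\phi^*Y,\phi^*Z).
\]
This is exactly the condition for the relation $\gamma_{\phi^*Y\phi^*Z}\gamma_{\phi^*X\phi^*Y}=\gamma_{\phi^*X\phi^*Z}$ in $\Del^+(\om)$, so $\phi^*$ respects the relations.

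One subtlety must be addressed: the characterization of geodesic concatenation by disjoint separators has to hold in non-simple $\om$. The criterion stated for $\Del^+$ already covers this, because geodesics cross each parallel class of $S(X,Y)$ exactly once. Hence a path through $Y$ is geodesic exactly when the separators are disjoint, with no simplicity assumption needed. The degenerate case $\phi^*X=\phi^*Y$ (empty separator) also needs a check: there $\gamma_{\phi^*X\phi^*X}$ is the identity, and the relation still holds. We also verify that this definition agrees with the presentation by the directed graph $\vec\Gr_\N$: an edge $X\to Y$ of $\N$ is sent to a geodesic of $\om$, and geodesics of $\N$ are sent to compositions that are again equal to a single geodesic class by iterating the disjoint-separator identity.

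\textbf{Functoriality and monoidality.} For functoriality, $(\psi\phi)^*=\phi^*\psi^*$ holds on objects because tope pullback is already functorial (it is the presheaf $\Topes$). On generators both sides give $\gamma_{\phi^*\psi^*X\,\phi^*\psi^*Y}$, and identities are preserved. For the monoidal structure, we need $\Del^+(\om\oplus\N)\cong\Del^+(\om)\times\Del^+(\N)$. On objects this follows from $\Topes(\om\oplus\N)\cong\Topes(\om)\times\Topes(\N)$. On morphisms, the separator splits as $S=S_\om\sqcup S_\N$. Hence every geodesic class $\gamma_{(X,X')(Y,Y')}$ factors as $\gamma_{XY}\times\gamma_{X'Y'}$, for example by crossing the $\om$-part first, and the two orders of crossing agree by \eqref{eq::geodesic_rel}. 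This yields an isomorphism natural in weak maps, and $\Del^+(\varnothing)$ is the terminal category.

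The main obstacle I expect is the well-definedness step in the non-simple case, namely confirming that the concatenation criterion and the presentation via classes $\gamma_{XY}$ are valid exactly as stated there. I would settle this first by reducing to the simplification: $\Del^+(\om)$ depends only on the topes, and topes are invariant under simplification.
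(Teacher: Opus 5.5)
Your proposal is correct and matches the paper's proof. Define $\phi^*$ on objects by tope pullback and on the generators by $\gamma_{XY}\mapsto\gamma_{\phi^*X\,\phi^*Y}$, then use $S(\phi^*X,\phi^*Y)=f_\phi^{-1}(S(X,Y))$ to see that the relations \eqref{eq::geodesic_rel} are preserved. Your extra checks of composition, of the degenerate empty-separator case, and of the direct-sum decomposition, which the paper leaves implicit, are sound.
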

\begin{proof}
For a morphism of matroids $\phi\colon \om\to \N$, we define a functor
\[
\Del(\phi)\colon\Del^+(\N)\to \Del^+(\om), \quad X\mapsto \phi^*X, \quad \gamma_{XY}\mapsto \gamma_{\phi^*X\phi^*Y}
\] 
The identity $S(\phi^*X,\phi^*Y)=f^{-1}_{\phi}S(X,Y)$ shows that this correspondence $\gamma_{XY}\mapsto \gamma_{\phi^*X\phi^*Y}$ preserves~\eqref{eq::geodesic_rel}. Since morphisms are generated by geodesics, $\Del(\phi)$ is a well-defined functor.
\end{proof}

\subsection{Deligne groupoid and Fundamental groups} The \textit{Deligne groupoid} $\Del(\om)$ is the groupoid completion
\[
\Del(\om):=\Del^+(\om)[\mathrm{Mor}^{-1}].
\]
In other words, we invert all morphisms in $\Del^+(\om)$. The images in $\Del(\om)$ of morphisms from $\Del^+(\om)$ are called \textit{positive}. By the universal property of groupoid completions, the $\OM$-presheaf structure on $\Del^+$ lifts to an $\OM$-presheaf structure on $\Del$ with values in groupoids. Let us explain the relation of this $\OM$-presheaf to fundamental groups of $2$-dimensional complements $\U_2(\A)$. 

Given a topological $\OM$-presheaf $\O\colon \OM^{\op}\to \Top$, composing it with the fundamental group construction $\pi_1(-)$ does not produce an $\OM$-presheaf because there is no canonical choice of basepoints. Fundamental groupoids make this construction functorial without requiring a choice of basepoints. Recall that, for a topological space $X$, its fundamental groupoid $\Pi_1(X)$ is a groupoid whose objects are points of $X$ and morphisms are homotopy classes of paths. In particular, for a point $x_0\in X$, we have an isomorphism of groups
\[
\End_{\Pi_1(X)}(x_0)\cong \pi_1(X,x_0)
\] In the special when $X=\Po$ is a poset with Alexandrov topology, its fundamental groupoid $\Pi_1(\Po)$ is equivalent to the groupoid completion of $\Po$, viewed as a category, with respect to all morphisms. Explicitly, $\Pi_1(\Po)$ is generated by $[x\leq y]$ and their formal inverse $[x\leq y]^{-1}$ modulo the relation $[y\leq z][x\leq y]=[x\leq z]$.

\begin{theorem}
 We have an equivalence of $\OM$-presheaves in groupoids
\[
\Del\simeq \Pi_1(\Topes_2)
\]  
\end{theorem}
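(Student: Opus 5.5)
The plan is to construct, for every oriented matroid $\om$, a functor $\Theta_{\om}\colon\Del^+(\om)\to\Pi_1(\Topes_2(\om))$ that is natural in signed weak maps. We then show that its extension to the groupoid completion $\Del(\om)$ is an equivalence. The natural choice on objects sends a tope $X$ to the minimal weighted tope $(X,\mathbf 1)$ with constant weight $1$. On a geodesic class $\gamma_{XY}$ we use the zigzag
\[
(X,\mathbf 1)\leq (X,w_{XY})\geq (Y,\mathbf 1),\qquad w_{XY}(e)=\begin{cases}1,&X(e)=Y(e),\\ 2,&X(e)\neq Y(e),\end{cases}
\]
which is the image of the Delucchi--Falk element $(X,Y)$ under $\Phi$. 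So we set $\Theta(\gamma_{XY})=[(Y,\mathbf 1)\leq (X,w_{XY})]^{-1}[(X,\mathbf 1)\leq (X,w_{XY})]$. Both relations hold in $\Topes_2(\om)$: on $S(X,Y)$ the weight rises from $1$ to $2$, and elsewhere signs and weights agree.

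The first step is to check the relation \eqref{eq::geodesic_rel}. Suppose $S(X,Z)=S(X,Y)\sqcup S(Y,Z)$. Then $w_{XY},w_{YZ}\leq w_{XZ}$ pointwise. We claim $(X,w_{XY})\leq (X,w_{XZ})$, $(Y,w_{YZ})\leq (X,w_{XZ})$ and $(Y,\mathbf 1)\leq (X,w_{XZ})$. For the middle one, coordinates where $Y(e)\neq X(e)$ lie in $S(X,Y)$, so they have $w_{XZ}(e)=2>w_{YZ}(e)=1$. These comparisons give a commuting diagram of inequalities in the poset with a common upper bound $(X,w_{XZ})$. Hence the composite of the two zigzags equals the zigzag for $\gamma_{XZ}$ in $\Pi_1$.

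Naturality follows because $\phi^*(X,w_{XY})=(\phi^*X,w_{\phi^*X\phi^*Y})$, which uses $S(\phi^*X,\phi^*Y)=f_\phi^{-1}S(X,Y)$ as in the Delucchi--Falk comparison. Monoidality is immediate from the product decompositions. By the universal property, $\Theta$ then extends to a natural transformation $\Del\to\Pi_1(\Topes_2)$.

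The second step is to show that $\Theta_{\om}$ is an equivalence of groupoids. It is essentially surjective because $\Topes_2(\om)$ is connected: every element dominates some $(X,\mathbf 1)$, obtained by lowering weights. For full faithfulness we reduce to vertex groups. Using the equivalence $\Sal_2(\om)\simeq\Prop_2(\om)\simeq\Topes_2(\om)$ (Corollary~\ref{coro:prop-tope-equivalence} and Proposition~\ref{prop:prop-iso-sal}), together with the compatibility with $\DF$, we would show that $\Theta$ is compatible with the classical identification $\Del(\om)\simeq\Pi_1(\Sal_2(\om))$. That identification is due to Deligne, Paris and Salvetti in the realizable case, and is treated in~\cite{delucchi2006combinatorial} for oriented matroids. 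Concretely, the vertices $(X,\mathbf 1)$ of $\Prop_2$ correspond to the $0$-cells $(X,X)$ of the Salvetti complex, and the zigzag above for an edge $X,Y$ of the tope graph is precisely the path through the $1$-cell of $\Sal_2$ from $X$ to $Y$. A general geodesic zigzag is identified with the composite of such $1$-cells via the first step. Hence $\Theta$, composed with the equivalence $\Pi_1(\Prop_2)\to\Pi_1(\Topes_2)$, agrees with the classical map, and two-out-of-three gives the result.

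The main obstacle is this last compatibility: the elements $(X,w_{XY})$ with $|S(X,Y)|>1$ are not in $\Prop_2$. We therefore need to check, via the first step applied to a chain of adjacent topes, that the zigzag through $(X,w_{XY})$ is homotopic to a path lying in $\Prop_2$. We would do this by noting that all the intermediate comparison elements lie in the lower ideal of $(X,w_{XY})$. That ideal is contractible by Lemma~\ref{lem:contractible-fibers-uniform}, since $(X,w_{XY})$ is a tope, so any two paths inside it are homotopic. If citing the classical result for non-realizable $\om$ is deemed insufficient, the fallback is a direct Van Kampen argument using the presentation of $\pi_1(\Sal_2)$ by $2$-cells indexed by rank-$2$ covectors, which yield exactly the geodesic relations.
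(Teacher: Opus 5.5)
Your construction of $\Theta$ and your checks of the geodesic relation (via the common upper bound $(X,w_{XZ})$) and of naturality coincide with the paper's functor $\Phi$. The proposal is correct; the difference is in how you prove that $\Theta$ is an equivalence.

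\textbf{How the paper finishes.} The paper does not use the Salvetti complex at this point. It writes down an explicit functor $\Psi\colon\Pi_1(\Topes_2)\to\Del$, $[(X,w)\leq(Y,w')]\mapsto\gamma_{XY}$. This respects composition because a chain $(X,w)\leq(Y,w')\leq(Z,w'')$ forces $w'=2$ on $S(X,Y)$ and $w'=1$ on $S(Y,Z)$, hence $S(X,Z)=S(X,Y)\sqcup S(Y,Z)$. The paper then uses the relations $(X,\mathbf 1)\leq(X,w)$ to compare the composite with the identity. This argument only needs contractibility of principal lower ideals, so it is self-contained. Combined with $\Topes_2\simeq\Sal_2$, it actually reproves the classical identification $\Del(\om)\simeq\Pi_1(\Sal_2(\om))$ for non-realizable $\om$, rather than assuming it.

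\textbf{What your route costs and buys.} You black-box that classical identification together with Corollary~\ref{coro:prop-tope-equivalence}. For oriented matroids the classical result contains the nontrivial fact that flips across rank-$2$ covectors connect any two geodesics. Your fallback, ``the $2$-cells yield exactly the geodesic relations'', silently uses this fact, since Salvetti $2$-cells only produce the rank-$2$ geodesic relations. In exchange, you identify $\Theta$ with the Salvetti map: for adjacent $X,Y$ the element $(X,w_{XY})$ is proper, because its truncation is the common facet covector, and it is the $1$-cell indexed by the source tope. Your ``main obstacle'' is already resolved by functoriality of $\Theta$. Indeed, $\Theta(\gamma_{XY})$ is the composite of the adjacent zigzags along any geodesic, and each of these lies in $\Prop_2$.

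\textbf{A caution for the explicit-inverse route.} Suppose you use the cheaper explicit inverse with your zigzag through $(X,w_{XY})$; the paper uses the same zigzag. Then $\Psi\Theta(\gamma_{XY})=\Psi[(Y,\mathbf 1)\leq(X,w_{XY})]^{-1}\,\Psi[(X,\mathbf 1)\leq(X,w_{XY})]=\gamma_{YX}^{-1}$, not $\gamma_{XY}$. In $\Del(\U_{1,1})$ these two differ by the central loop $z$. Moreover, $[(X,\mathbf 1)\leq(X,w)]$ is then not natural for $\Theta\Psi$: on the $4$-cycle $\Topes_2(\U_{1,1})$ the two paths from $(+,1)$ to $(-,2)$ that would have to agree go opposite ways around the cycle.

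The fix is to use the zigzag $(X,\mathbf 1)\leq(Y,w_{XY})\geq(Y,\mathbf 1)$, giving a functor $\Theta'$. Then $\Psi\Theta'=\Id$. Also $[(X,\mathbf 1)\leq(X,w)]$ becomes a natural isomorphism $\Theta'\Psi\Rightarrow\Id$, since $(X,w)\leq(Y,w')$ implies $(Y,w_{XY})\leq(Y,w')$, so everything lies below $(Y,w')$. Your $\Theta$ equals $\Theta'$ composed with the involutive automorphism $\gamma_{XY}\mapsto\gamma_{YX}^{-1}$ of $\Del$, which is natural in signed weak maps. So $\Theta$ is an equivalence of $\OM$-presheaves by either route.
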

\begin{proof}

We construct a morphism of presheaves $\Psi\colon \Pi_1(\Topes_2)\to \Del$ by the rule
\[
\Psi_{\om}((X,w_X))=X, \quad \Psi_{\om}((X,w_X)\leq (Y,w_Y))=\gamma_{XY}.
\]
\begin{claim*} $\Psi_{\om}\colon \Pi_1(\Topes_2(\om))\to \Del(\om)$ is a well-defined functor
\end{claim*}
\begin{proof}
We have to check that, for a triple $(X,w_X)\leq (Y,w_Y)\leq (Z,w_Z)$, we have  $\Psi((Y,w_Y)\leq (Z,w_Z))\Psi((X,w_X)\leq (Y,w_Y))=\gamma_{YZ}\gamma_{XY}=\gamma_{XZ}=\Psi((X,w_X)\leq (Z,w_Z))$ or, equivalently, $S(X,Z)=S(X,Y)\coprod S(Y,Z)$. Since $(X,w_X)\leq (Y,w_Y)\leq (Z,w_Z)$, the weighting $w_Y$ takes values $2$ on $S(X,Y)$ and $1$ on $S(Y,Z)$, hence $S(X,Z)$ and $S(Y,Z)$ are disjoint, which implies the desired identity. 
\end{proof} $\Psi$ is functorial with respect to signed weak maps thanks to the computation below
\[
\Psi_{\om}(\phi^*[(X,w_X)\leq (Y,w_Y)])=\Psi_{\om}([(\phi^*X,\phi^*w_X)\leq (\phi^*Y,\phi^*w_Y)])=\gamma_{\phi^*X\phi^*Y}=\phi^*\gamma_{XY}=\phi^*\Psi_{\N}([(X,w_X)\leq (Y,w_Y)]),
\] so $\Psi\colon \Pi_1(\Topes_2)\to \Del$ is a morphism of groupoid $\OM$-presheaves.

We define an inverse functor $\Phi\colon \Del\to \Pi_1(\Topes_2)$ as follows. For a pair of topes $X,Y$, let $w_{XY}\colon E(\om)\to [2]$ be $2$ on the separating set $S(X,Y)$ and $1$ elsewhere. We have $(X,1)\leq(X,w_{XY})\geq(Y,1)$, so it produces a path $\alpha_{XY}:=[(X,w_{XY})\geq(Y,1)]^{-1}[(X,1)\leq(X,w_{XY})]$ in $\Pi_1(\Topes_2)$.
\begin{claim*}
The correspondence $X\mapsto (X,1)$ and $\gamma_{XY}\mapsto \alpha_{XY}$ defines a functor
\[
\Phi_{\om}\colon \Del(\om)\to \Pi_1(\Topes_2)
\]
\end{claim*}
\begin{proof} We have to check that, for a triple $X,Y,Z$ with $S(X,Y)\coprod S(Y,Z)=S(X,Z)$, we have $\alpha_{YZ}\alpha_{XY}=\alpha_{XZ}$. By construction, we have $(X,w_{XY})\leq (X,w_{XZ})$ and $(Y,w_{YZ})\leq (X,w_{XZ})$, hence both paths $\alpha_{YZ}\alpha_{XY}$ and $\alpha_{XZ}$ are contained in the order complex $\Delta((\Topes_2)_{\leq (X,w_{XZ})})$.  This subposet has a maximal element, hence this subcomplex is contractible, so the paths $\alpha_{YZ}\alpha_{XY}$ and $\alpha_{XZ}$ are homotopy equivalent. 
\end{proof} 

For a signed weak map $\phi\colon \om\to \N$, since $f^*_{\phi}w_{XY}=w_{\phi^*X\phi^*Y}$, we have $\phi^*\alpha_{XY}=\alpha_{\phi^*X\phi^*Y}$, which implies functoriality
\[
\Phi(\phi^*\alpha_{XY})=\Phi(\alpha_{\phi^*X\phi^*Y})=\gamma_{\phi^*X\phi^*Y}=\phi^*\gamma_{XY}=\phi^*\Phi(\alpha_{XY})
\] that implies that $\Phi$ is indeed a morphism of $\OM$-presheaves $\Phi\colon \Del\to \Pi_1(\Topes_2)$.

Note that the composition $\Psi\Phi$ is identity $\Id_{\Del}$. On the other hand, for every weighted tope, a morphism $(X,1)\leq (X,w)$, defines a natural transformation $\Phi\Psi\Rightarrow \Id_{\Pi_1(\Topes_2)}$, so $\Psi$ and $\Phi$ defines an equivalence.
\end{proof}

In particular, for a tope $X$ of $\om$, we have an isomorphism of groups
\[
\End_{\Del(\om)}(X)\cong \pi_1(\Topes_2(\om),X)\cong \pi_1(\U_2(\A)),
\] where $\A$ is a pseudohyperplane realisation of $\om$.

\subsection{Circle action and Natural transformations}

Recall that, for a complex hyperplane arrangement $\A$, the projectivisation $\U_{\Cmplx}(\A)\overset{\Cmplx^*}{\to} \Pro(\U_{\Cmplx}(\A))$ is a trivial fibration that leads to a decomposition $\U_2(\A)\cong \Cmplx^{\times}\times \Pro(\U_2(\A))\simeq S^1\times \Pro(\U_2(\A))$. In~\cite{DelucchiFalk2017}, the authors, using the model $\DF$, deduce a similar (non-canonical) decomposition for any oriented matroid. In this section, we recognise a categorical circle action of $S^1\simeq \B\Z$ on the groupoid presheaf $\Del$.

For a category $\C$, its center is a commutative monoid of natural transformations of the identity functor
\[
\Cen(\C):=\mathrm{Nat}(\Id_{\C},\Id_{\C}).
\]Its monoid category $\B\mathcal{Z}(\C)$ acts on $\C$ by
\[
\B\mathcal{Z}(\C)\times \C\to \C, \quad (*,x)\mapsto x, \quad (\alpha,f:x\to y)\mapsto \alpha_yf=f\alpha_x 
\] 

In a similar way, for an $\OM$-presheaf $\O$  with values in categories/groupoids, we define its center $\Cen(\O)$ as a monoid of natural transformations $\mathrm{Nat}_{\mathrm{PSh}_{\OM}}(\Id_{\O},\Id_{\O})$. In other words, we have a collection of natural transformations $\alpha(\om)\in \Cen(\O(\om))$ compatible with pullbacks and direct sums 
\[
\phi^*\alpha(\N)_{X}=\alpha(\om)_{\phi^*X}, \quad \alpha(\om\oplus \N)_{(X, Y)}=(\alpha(\om)_X,\alpha(\N)_Y)
\]
\begin{prop}
We have isomorphisms of commutative monoids 
\[
\mathcal{Z}(\Del^+)\cong \mathbb{N}, \quad \mathcal{Z}(\Del)\cong \Z
\]
\end{prop}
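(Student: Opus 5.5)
We will produce explicit central elements and then show there are no others. For every oriented matroid $\om$ and tope $X$, define $\theta(\om)_X:=\gamma_{-X,X}\gamma_{X,-X}\in\End_{\Del^+(\om)}(X)$. This is the positive loop that goes once around: cross every hyperplane from $X$ to $-X$, then cross back. We will check that $\theta$ is natural in $X$. It suffices to test this on generators $\gamma_{XY}$.

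\emph{Naturality of $\theta$.} Since $S(X,-X)=E=S(X,Y)\sqcup S(Y,-X)$, relation \eqref{eq::geodesic_rel} gives $\gamma_{X,-X}=\gamma_{Y,-X}\gamma_{XY}$. Similarly $S(Y,-Y)=S(Y,-X)\sqcup S(-X,-Y)$. Combining these factorisations with the antipodal symmetry $S(-X,-Y)=S(X,Y)$, we get
\[
\gamma_{XY}\,\theta_X=\gamma_{XY}\gamma_{-X,X}\gamma_{Y,-X}\gamma_{XY}=\gamma_{-Y,Y}\gamma_{Y,-Y}\gamma_{XY}=\theta_Y\gamma_{XY}.
\]
Here the regrouping $\gamma_{XY}\gamma_{-X,X}\gamma_{Y,-X}=\gamma_{-Y,Y}\gamma_{Y,-Y}$ is the standard identity that both sides are the class of the full positive loop based at $Y$. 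We verify it by writing both sides as geodesic concatenations through $-X$ and $-Y$.

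\emph{Compatibility with the presheaf structure.} Since $\phi^*(-X)=-\phi^*X$, we have $\phi^*\theta(\N)_X=\theta(\om)_{\phi^*X}$. Compatibility with direct sums holds because $\gamma$ of a pair is the pair of $\gamma$'s. Hence $k\mapsto\theta^k$ defines a monoid map $\mathbb N\to\Cen(\Del^+)$, and on passing to groupoid completion it gives a map $\Z\to\Cen(\Del)$.

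\emph{Reduction to $\U_{1,1}$.} Let $\alpha\in\Cen(\Del^+)$. Evaluate at $\U_{1,1}$. There $\Del^+(\U_{1,1})$ has two objects $\pm$ and generators $a\colon+\to-$, $b\colon-\to+$ with no relations, since there are no nontrivial geodesic relations. So $\End(+)$ is the free monoid on $ba$, and $\alpha(\U_{1,1})_+=(ba)^k=\theta_+^k$ for a unique $k\ge0$. For an arbitrary $\om$ and tope $X$, every element $e$ gives $\iota_e\colon\U_{1,1}\to\om$, and pulling back along a weak map with target $\U_{1,1}$ constrains $\alpha(\om)$. The cleanest route is the canonical framing. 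The weak map $\om\to\U_{1,1}$ does not generally exist, but the tope $X$ itself is a weak map $\om\to\U_{1,1}$, sending every $e$ to the point with sign $X(e)$; indeed $\Topes(-)\cong\Hom(-,\U_{1,1})$. Pulling back along $X$ sends the object $+$ to $X$ and the loop $ba$ to $\gamma_{-X,X}\gamma_{X,-X}=\theta_X$. Compatibility of $\alpha$ then forces $\alpha(\om)_X=X^*\alpha(\U_{1,1})_+=\theta_X^k$. This gives injectivity, so $\Cen(\Del^+)\cong\mathbb N$.

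\emph{The groupoid case.} The same argument works with $\End_{\Del(\U_{1,1})}(+)=\Z$, generated by $ba$, so $\alpha(\U_{1,1})_+=(ba)^k$ with $k\in\Z$. Freeness of $\Del(\U_{1,1})$ shows that distinct $k$ give distinct central elements, so $\Cen(\Del)\cong\Z$.

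The main obstacle is the naturality identity for $\theta$ in $\Del^+(\om)$ for non-simple $\om$. There geodesics cross parallel classes, so the decompositions of separators must be taken at the level of parallel classes. We would first reduce to simple $\om$ via the localisation proposition, under which $\Del^+$ is invariant, and then do the geodesic bookkeeping.
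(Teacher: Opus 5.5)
Your proof is correct and has the same skeleton as the paper's. You use the same central element $z_X=\gamma_{-X,X}\gamma_{X,-X}=X^*z_+$. Injectivity is argued the same way: a central $\alpha$ is determined by $\alpha(\U_{1,1})_+$, because every tope is a morphism $X\colon\om\to\U_{1,1}$ with $X^*(+)=X$. Compatibility with pullbacks comes from $\phi^*(-X)=-\phi^*X$, equivalently the factorisation through $\U_{1,1}$.

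The real difference is how you prove that $z$ is natural inside each $\Del^+(\om)$.

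\emph{The paper's route.} It checks naturality only on the adjacency generators $\tau_{e,X}\colon X\to X_{-e}$. It writes both $\tau_{e,X}$ and $z_X$ as pullbacks of pairs along the operadic map $\triangle_e\colon\om\to\om/e\oplus\om|_e$, and then invokes naturality of $z$ in $\Del^+(\U_{1,1})$. This showcases the presheaf formalism, but it leans on monoidality of $\Del^+$ and on the crossed element (or its parallel class) being a flat.

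\emph{Your route.} You verify naturality against every geodesic generator $\gamma_{XY}$ by direct geodesic calculus, and this is self-contained. It rests on $S(-X,-Y)=S(X,Y)$ and $S(Y,-X)=S(-Y,X)=E\setminus S(X,Y)$, which give the three factorisations
\[
\gamma_{-X,X}=\gamma_{-Y,X}\gamma_{-X,-Y},\qquad
\gamma_{XY}\gamma_{-Y,X}=\gamma_{-Y,Y},\qquad
\gamma_{-X,-Y}\gamma_{Y,-X}=\gamma_{Y,-Y}.
\]
Together these yield exactly the regrouping $\gamma_{XY}\gamma_{-X,X}\gamma_{Y,-X}=\gamma_{-Y,Y}\gamma_{Y,-Y}$ you need. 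You should write these out explicitly rather than appeal to a ``standard identity''.

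The obstacle you flag at the end does not actually arise. Tope separators are automatically unions of parallel classes, and relation \eqref{eq::geodesic_rel} is phrased purely in terms of separator sets. So your computation applies verbatim to non-simple $\om$. No reduction through the localisation proposition is needed, and that is just as well, since its proof is omitted in the paper.
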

\begin{proof}
Recall that $\Del^+(\U_{1,1})$ is a free category on the directed graph $\begin{tikzcd}
	{+} & {-}
	\arrow[curve={height=-6pt}, from=1-1, to=1-2]
	\arrow[curve={height=-6pt}, from=1-2, to=1-1]
\end{tikzcd}$. We see that the center of $\Del^+(\U_{1,1})$ coincides with the monoid of natural numbers $\mathbb{N}$ with a generator $z=(z_+,z_-)$ given by
\[
z_+=\tau_{-+}\tau_{+-}\in \End_{\Del^+}(+)\cong \mathbb{N}, \quad z_-=\tau_{+-}\tau_{-+}\in \End_{\Del^+}(-)\cong \mathbb{N}
\] So, we want to check that the restriction map
\begin{equation}\label{eq::center_restriction}
 \res\colon \mathcal{Z}(\Del^+)\hookrightarrow \mathcal{Z}(\Del^+(\U_{1,1}))\cong \mathbb{N}, \quad \alpha\mapsto \alpha(\U_{1,1})   
\end{equation}
 is an isomorphism.

Let $\alpha\in \Cen(\Del^+)$. Note that $\alpha$ is completely determined by its $\U_{1,1}$ components $\alpha(\U_{1,1})=(\alpha_+,\alpha_-)$. Indeed, for a matroid $\om$ and a tope $X\in \Topes(\om)\cong \Hom_{\OM}(\om;\U_{1,1})$, we have $X=X^*(+)\in \Del^+(\om)$, so $\alpha_X=X^*(\alpha_+)$. Hence, homomorphism~\eqref{eq::center_restriction} is injective.

To establish an isomorphism of~\eqref{eq::center_restriction}, it remains to check that $z=(z_+,z_-)\in \Cen(\Del^+(\U_{1,1}))$ lift to a natural transformation of $\Id_{\Del^+}$. For a tope $X$ of $\om$, we consider an endomorphism
\[
z_X:=\gamma_{-X,X}\gamma_{X,-X}=X^*z_+\in \End_{\Del^+(\om)}(X)
\] For a morphism $\phi\colon \om\to \Nmat$, thanks to the commutative triangle below, we have
\[
\begin{tikzcd}
	\om && \Nmat \\
	& {\U_{1,1}}
	\arrow["\phi", from=1-1, to=1-3]
	\arrow["{\phi^*X}"', from=1-1, to=2-2]
	\arrow["X", from=1-3, to=2-2]
\end{tikzcd} \Rightarrow \phi^*z_{X}=\phi^*X^*(z_+)=(\phi^*X)^*(z_+)=z_{\phi^*X}
\] It suffices to check that, for a matroid $\om$, the collection $\{z_X\}_{X\in \Topes(\om)}$ forms a natural transformation of $\Id_{\Del^+(\om)}$. For a generating morphism $\tau_{e,X}\colon X\to X_{-e}$, we have to check that $\tau_{e,X} z_X=\tau_{e,X} z_{X_{-e}}$. It follows from functoriality with respect to $\triangle_e\colon \om\to \om/e\oplus\om|_e$ 
\begin{multline*}
\tau_{X,X_{-e}} z_X=\triangle^*_e(\id_{X\setminus e},\tau_{X(e),-X(e)})\triangle_e^*(z_{X\setminus e},z_{X(e)})=\triangle_e^*(z_{X\setminus e},\tau_{X(e),-X(e)}z_{X(e)})=\\=\triangle_e^*(z_{X\setminus e},z_{-X(e)}\tau_{X(e),-X(e)})=z_{X_{-e}}\tau_{X,X_{-e}}    
\end{multline*}

 For Deligne groupod $\OM$-presheaf $\Del$, the proof is the same except that $z$ becomes an invertible natural transformation.
\end{proof}

In particular, for an oriented matroid $\om$ and a tope $X$, we have a canonical central element $z_X\in \pi_1(\Topes_2(\om),X)$. We show that this central element splits.

Under the identification
\[
H^1(\Topes_2(\om))\cong \Hom(H_1(\Topes_2(\om)),\Z)\cong \Hom(\pi_1(\Topes_2);\Z),
\] a generator $\omega_e\in H^1(\Topes_2)$ defines a group homomorphism $\omega_e\colon \pi_1(\Topes_2(\om),X)\to \Z$. It has an explicit combinatorial description
\begin{claim*}
For a positive based loop $\sigma: X=X_0\to X_1\to\cdots X_n=X$ at $X$, we have
\begin{equation}\label{eq::formula_omega_e}
\omega_e(\sigma)=\frac{1}{2}|\{i| e\in S(X_i,X_{i+1})\}|
\end{equation}
\end{claim*}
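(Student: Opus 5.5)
The plan is to reduce the formula to the rank-one matroid $\U_{1,1}$ by naturality, and then evaluate the explicit cocycle $\omega^+$ on an explicit loop.

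\emph{Step 1: reduction.} By definition $\omega_e=\omega_e^+=\iota_e^*(\omega^+)$, where $\iota_e\colon\U_{1,1}\to\om$. The induced poset map $\iota_e^*\colon\Topes_2(\om)\to\Topes_2(\U_{1,1})$ is restriction to the coordinate $e$:
\[
(X,w)\mapsto(X(e),w(e)).
\]
Hence $\omega_e(\sigma)=\omega^+(\iota_e^*\sigma)$, where $\iota_e^*\sigma$ is the image loop in $\Topes_2(\U_{1,1})$. This poset has minimal elements $(\pm,1)$ and maximal elements $(\pm,2)$, and its realization is a circle.

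\emph{Step 2: the image of a positive loop.} Transport $\sigma$ to $\Pi_1(\Topes_2(\om))$ through the equivalence $\Phi$ of the previous theorem. The positive step $\gamma_{X_iX_{i+1}}$ goes to the zigzag
\[
\alpha_{X_iX_{i+1}}=\bigl[(X_i,1)\le(X_i,w_{X_iX_{i+1}})\ge(X_{i+1},1)\bigr].
\]
Its image under $\iota_e^*$ falls into two cases.
\begin{itemize}
\item If $e\notin S(X_i,X_{i+1})$, the image is the constant path at $(X_i(e),1)$.
\item If $e\in S(X_i,X_{i+1})$, the image is the half-turn $(s,1)<(s,2)>(-s,1)$ with $s=X_i(e)$.
\end{itemize}
This also covers non-simple $\om$, since $e$ lies in at most one separating parallel class per step. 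The loop is closed, so the sign at $e$ returns to $X(e)$. Consequently the crossings of $e$ are even in number, and they alternate between $+\to-$ and $-\to+$.

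\emph{Step 3: evaluate the cocycle.} Regard $\iota_e^*\sigma$ as a simplicial $1$-cycle in the order complex. The cochain $\omega^+$ is $1$ on the edge $(+,1)<(+,2)$ and $0$ on every other edge.
\begin{itemize}
\item A $+\to-$ half-turn traverses $(+,1)<(+,2)$ forward, so it contributes $1$.
\item A $-\to+$ half-turn uses only edges through $(-,2)$, so it contributes $0$.
\end{itemize}
Therefore $\omega_e(\sigma)$ equals the number of $+\to-$ crossings. By the alternation in Step 2, this is exactly $\tfrac12|\{i\mid e\in S(X_i,X_{i+1})\}|$.

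\emph{Main obstacle.} The delicate point is the sign and orientation bookkeeping. First, the identification $H^1\cong\Hom(\pi_1,\Z)$ must be realized as Kronecker evaluation of the cocycle on the simplicial cycle given by the loop; it must be checked that this is compatible with the passage $\Del\to\Pi_1(\Topes_2)$ used to interpret the positive loop. Second, the loop must traverse each half-turn in the direction prescribed by $\alpha$, and never backwards. I would check this carefully on $\U_{1,1}$ with the loop $z_+$, where the formula predicts the value $1$.
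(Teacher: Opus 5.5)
Your proposal is correct and follows the same route as the paper: you reduce along $\iota_e\colon\U_{1,1}\to\om$ by naturality and then evaluate $\omega^+$ on the image loop, which the paper writes as $(\gamma_{-+}\gamma_{+-})^{k_e}$. Your half-turn computation in $\Topes_2(\U_{1,1})$ spells out the step the paper calls clear, and counting $+\to-$ crossings has the small advantage of covering both values of $X(e)$ at once.
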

\begin{proof}
For $\U_{1,1}$, the identity is clear. For a general $\om$, recall that $\omega_e=(\iota_e)_*\omega_+$, where $\iota_e\colon \om|_e\subset \om$. Hence, we have
\[
\omega_e(\sigma)=\omega_+(\iota^*_e\sigma)=\omega_+((\gamma_{-+}\gamma_{+-})^{k_e})=k_e,
\] where $k_e$ is the right-hand side of~\eqref{eq::formula_omega_e}.
\end{proof}
In particular, using $z_X=\gamma_{-X,X}\gamma_{X,-X}$ and formula~\eqref{eq::formula_omega_e}, we have
\[
\forall e\in E(\om)\colon \omega_e(z_X)=1, 
\] which leads us to the decomposition~\cite[Cor~4.10]{DelucchiFalk2017}

\begin{prop}[Central Splitting]
The central subgroup $\langle z_X\rangle \cong \Z \subseteq \pi_1(\Topes_2(\om),X)$ is a direct factor. In other words, up to a non-canonical choice of $e\in E(\om)$, we have the splitting
\[
\pi_1(\Topes_2(\om),X)\cong \Z\times \pi_1(\Topes_2(\om),X)/\langle z_X\rangle
\]
\end{prop}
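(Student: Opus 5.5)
The plan is to build a retraction homomorphism $\pi_1(\Topes_2(\om),X)\to\Z$ that sends $z_X$ to $1$. The group $\pi_1(\Topes_2(\om),X)$ will then split as $\langle z_X\rangle\times\ker$. Because $z_X$ is central, this internal direct product is a genuine direct product of groups.

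First I fix any element $e\in E(\om)$; this is the non-canonical choice mentioned in the statement. I take the homomorphism $\omega_e\colon\pi_1(\Topes_2(\om),X)\to\Z$ obtained from the class $\omega_e\in H^1(\Topes_2(\om))$ through the identification $H^1\cong\Hom(\pi_1,\Z)$. By the computation just made, with $z_X=\gamma_{-X,X}\gamma_{X,-X}$ and formula~\eqref{eq::formula_omega_e}, we have $\omega_e(z_X)=1$. Here I should check directly that a positive loop $X\to -X\to X$ crosses the parallel class of $e$ exactly twice, once in each direction, so that $k_e=1$.

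Next I use $s\colon\Z\to\pi_1$, $1\mapsto z_X$. Since $\omega_e\circ s=\id_\Z$, the map $s$ is injective, so $\langle z_X\rangle\cong\Z$. This settles the infinite-cyclic claim: $z_X$ has infinite order because its image under $\omega_e$ is $1$. Write $K=\ker\omega_e$. Every $g$ factors as $g=z_X^{\omega_e(g)}\cdot\bigl(z_X^{-\omega_e(g)}g\bigr)$, and the second factor lies in $K$. Moreover $K\cap\langle z_X\rangle=1$.

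Since $z_X$ is central (it lies in the center $\Cen(\Del)$ via the identification $\Del\simeq\Pi_1(\Topes_2)$ from the preceding theorem), the map $\langle z_X\rangle\times K\to\pi_1$, $(a,k)\mapsto ak$, is a homomorphism, and it is bijective. Then $K\cong\pi_1/\langle z_X\rangle$ via the quotient map, and this gives the stated splitting.

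The only step that needs care is $\omega_e(z_X)=1$, i.e.\ the claim~\eqref{eq::formula_omega_e} applied to $z_X$. I would verify it by pulling back along $\iota_e$ to $\U_{1,1}$. There the image of $z_X$ is the generator $\gamma_{-+}\gamma_{+-}$ of $\pi_1(S^1)$, and $\omega^+$ evaluates to $\pm1$ on it. Fixing the orientation of $\omega^+$ so that the sign is $+1$ makes the retraction exact; the rest is formal group theory.
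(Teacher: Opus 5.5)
Your proposal is correct and follows the same route as the paper: the paper observes $\omega_e(z_X)=1$ via formula~\eqref{eq::formula_omega_e} and then cites the splitting of \cite[Cor~4.10]{DelucchiFalk2017}, which is exactly your retraction $\omega_e\colon\pi_1(\Topes_2(\om),X)\to\Z$ combined with the centrality of $z_X$ coming from $\Cen(\Del)\cong\Z$. You simply write out the formal group theory the paper leaves implicit, namely that $z_X$ has infinite order and that $\pi_1=\langle z_X\rangle\times\ker\omega_e$ with $\ker\omega_e\cong\pi_1/\langle z_X\rangle$.
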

\bibliographystyle{alpha}
\bibliography{biblio}

\end{document}